\newtheorem{theorem}{Theorem}[section]
\newtheorem{lemma}[theorem]{Lemma}
\newtheorem{proposition}[theorem]{Proposition}
\theoremstyle{definition}
\newtheorem{remark}[theorem]{Remark}
\newtheorem{definition}[theorem]{Definition}
\numberwithin{equation}{section}
\begin{document}
\title{\bf\Large Matrix-Weighted Variable Besov Spaces
\footnotetext{\hspace{-0.35cm} 2020 {\it
Mathematics Subject Classification}. Primary 46E35; Secondary 47A56, 15A15,
46E40, 42B35. \endgraf
{\it Key words and phrases}. variable Besov space,
variable matrix weight, almost diagonal operator, atom,
molecule, wavelet, trace operator, Calder\'on--Zygmund operator.
\endgraf
This project is partially supported by
the National Natural Science Foundation of China
(Grant Nos. 12371093 and 12431006), the Beijing Natural
Science Foundation (Grant No. 1262011),
and the Fundamental Research Funds for the
Central Universities (Grant No. 2253200028).
}}
\date{}
\author{}
\author{Dachun Yang\footnote{Corresponding author,
E-mail: \texttt{dcyang@bnu.edu.cn}/{\color{red}\today}/Final version.},\ \
Wen Yuan and Zongze Zeng}
\maketitle

\vspace{-0.8cm}

\begin{center}
\begin{minipage}{13cm}
{\small {\bf Abstract:}\quad
In this article, using variable matrix ${\mathscr{A}}_{p(\cdot),\infty}$
weights, we introduce the matrix-weighted variable Besov
space $B^{s(\cdot)}_{p(\cdot),q(\cdot)}(W)$
and the corresponding
averaging variable Besov space $B^{s(\cdot)}_{p(\cdot),q(\cdot)}(\mathbb{A})$
and prove that  they are equivalent. Applying this,
we establish the $\varphi$-transform characterization of
$B^{s(\cdot)}_{p(\cdot),q(\cdot)}(W)$.
By this and via first establishing  the boundedness of
$\alpha$-convexification $\eta$-type operators on variable Lebesgue spaces,
we obtain the boundedness  of almost diagonal operators on the sequence space $b^{s(\cdot)}_{p(\cdot),q(\cdot)}(W)$ related to $B^{s(\cdot)}_{p(\cdot),q(\cdot)}(W)$,
which is further used to establish various decomposition characterizations of $B^{s(\cdot)}_{p(\cdot),q(\cdot)}(W)$, respectively,
in terms of molecules, wavelets, and atoms. Applying the wavelet decomposition of  $B^{s(\cdot)}_{p(\cdot),q(\cdot)}(W)$,
we   obtain the   trace theorem  and the extension properties of $B^{s(\cdot)}_{p(\cdot),q(\cdot)}(W)$,
and, applying  the molecular characterization,
we obtain  the boundedness of Calder\'on--Zygmund operators
on $B^{s(\cdot)}_{p(\cdot),q(\cdot)}(W)$.
}
\end{minipage}
\end{center}

\vspace{0.2cm}

\tableofcontents

\section{Introduction}
The theory of Besov spaces has found wide applications
in harmonic analysis and partial differential equations
(see, for example, \cite{b20 2,cgn17,cgn19,gn16,os24,oao24,whhy21}).
In recent decades, there also
exists an increasing interest in various Besov-type spaces,
such as variable Besov spaces (\cite{adh18,ah10,ah14,d12,d15,d15 1,yzy15}),
Besov--Morrey spaces (\cite{fx11,tx05,x05,xf12,ysy10}),
and Besov spaces associated with operators
(\cite{bbd20,b20,bd17,bd21,bd21 2,gkkp17,gkkp19,zy20}).

Besov spaces with variable smoothness $s(\cdot)$ and fixed $p = q$
was first studied by Leopold \cite{l89,l89 2,l91}
and Leopold and Schrohe \cite{ls96}
in order to characterize the boundedness of pseudo-differential operators,
which were further generalized to the case that $p\neq q$ by Besov \cite{b99,b03,b05}.
Besov spaces with variable integrability
$p(\cdot)$ and fixed $q$ and $s$ were later introduced by Xu \cite{x08,x08 2}
along a different line of study.
Through the variable mixed Lebesgue-sequence
space $l^{q(\cdot)}(L^{p(\cdot)})$,
Almeida and H\"ast\"o \cite{ah10} introduced variable
Besov spaces $B^{s(\cdot)}_{p(\cdot),q(\cdot)}$
with variable integrability indices $p(\cdot)$ and $q(\cdot)$
and variable smoothness index $s(\cdot)$.
Function spaces with variable exponents have been used to study the existence and the regularity
of solutions of  some
partial differential equations such as (fractional) Navier--Stokes equations
(see, for example, \cite{ac21,ac21 2,cf13,dhr17,os24}).
We also refer to \cite{adh18,ah14,dx14,d12,d15,dh17,yzy15,zd23}
for more studies about variable Besov  and   Besov-type spaces.

On the other hand, in the study of the boundedness of the Hardy--Littlewood maximal operator
on weighted variable Lebesgue spaces,
Cruz-Uribe et al. \cite{cdh11} introduced the concept of
variable weights and
proved in \cite{cfn12} the related weak boundedness
of the maximal operators.
Later on  Cruz-Uribe and Wang \cite{cw17} established the extrapolation
theorem of Lebesgue spaces with variable weights.
Recently, Cruz-Uribe and Penrod \cite{cp24} proved the reverse H\"older inequality
for variable weights in variable Lebesgue spaces.
We refer to \cite{cs23,wgx25} for more studies about variable  weights.
Recently, Wang and Xu \cite{wx22} studied the embedding and
the interpolation properties of weighted variable Besov spaces,
Guo et al. \cite{gwx23} obtained an equivalent characterization
of weighted variable Besov spaces,
and Wang et al. \cite{wgx24} further characterized
weighted variable Besov spaces by decompositions in terms of
 atoms, molecules, and wavelets.
We refer also to \cite{cms25} for
weighted variable Besov spaces associated with operators.

The study of matrix weights can be tracked back to
the work of Wiener and Masani \cite{wm58}
on the prediction theory for multivariate stochastic processes.
In 1990s, Nazarov and Treil \cite{nt96}, Treil and Volberg \cite{tv97},
and Volberg \cite{v97} generalized the scalar Muckenhoupt $A_p$ weights
to the matrix $A_p$ weights acting on vector-valued functions.
From then on, a lot of attention  has been paid to the theory of
matrix $A_p$ weights;
see, for example, \cite{bpw16,b01,bc22,c01,dhl20,g03,llor24,llor24 2,nptv17}
and see, for example, \cite{n13,nr18,ns21,ns25,ns26} for more studies about matrix weights
on more general bases.
The matrix $A_p$ weighted Besov spaces $B^s_{p,q}(W)$
were later introduced by Roudenko \cite{r03}
for any $p\in (1,\infty)$ and by Frazier and Roudenko
\cite{fr04} for any $p\in (0,1]$.
In these works, they proved the boundedness of almost
diagonal operators
and, using this boundedness, studied the boundedness
of Calder\'on--Zygmund operators
and established the wavelet characterization of matrix
$A_p$ weighted Besov spaces.
In   \cite{fr21}, Frazier and Roudenko
also introduced and studied the matrix $A_p$ weighted Triebel--Lizorkin spaces.
Very recently, Bu et al. \cite{bhyy23,bhyy23 2,bhyy23 3} systematically
developed the matrix $A_p$ weighted Besov-type and Triebel--Lizorkin-type spaces.
In particular, Bu et al.  introduced the upper and the lower  dimensions
for matrix weights, which were used to obtain the optimal ranges of  indexes
of the related boundedness of almost diagonal operators
and then improve  some corresponding results in \cite{r03,fr04,fr21}.
We refer the survey \cite{byyz26} for more details on the  history and
developments of matrix weights.

The matrix variants of Muckenhoupt $A_\infty$ weights were also first
 introduced by Volberg \cite{v97}. Differently from the scalar-valued case,
the class of matrix Muckenhoupt $A_\infty$ weights are split into different
 matrix $A_{p,\infty}$ weights with $p\in(0,\infty)$.
Recently, Bu et al.  \cite{bhyy23 1}
found various equivalent characterizations of $A_{p,\infty}$ weights
and gave several important properties of these weights.
Based on these, Yang et al. \cite{bhyy24,byyz25,yymz25}
further introduced and studied the matrix $A_{p,\infty}$ weighted
Besov-type spaces.
We also refer to \cite{bcyy24,cyy25} in which Yang et al. introduced the
matrix $A_{p,\infty}$ weighted Hardy spaces and characterized them
in terms of various maximal functions and, especially, atoms
in the matrix $A_p$ weight case and, as applications,
Yang et al. introduced matrix-weighted
Campanato spaces, proved that they are the dual spaces
of matrix $A_p$ weighted Hardy spaces,
and obtained the boundedness of Calder\'on--Zygmund operators
on both matrix-weighted Hardy spaces and Campanato spaces.

Combining the theories of variable weights and matrix weights,
Cruz-Uribe and Penrod \cite{cp23} introduced  variable matrix
$\mathscr{A}_{p(\cdot)}$ weights, established the related 1identity
approximation theorem and
introduced the  variable matrix $\mathscr{A}_{p(\cdot)}$ weighted Sobolev spaces.
They also established the reverse H\"older inequality
for matrix $\mathscr{A}_{p(\cdot)}$ weights on variable Lebesgue spaces in \cite{cp24}.
Nieraeth and Penrod \cite{np25} later proved the boundedness of Christ--Goldberg
maximal operators and Calder\'on--Zygmund operators on
matrix $\mathscr{A}_{p(\cdot)}$ weighted
variable Lebesgue spaces.
Inspired by these, we  \cite{yyz25} introduced
variable matrix $\mathscr{A}_{p(\cdot),\infty}$
weights, proved the existence of their related
reducing operators, and  established the related
reverse H\"older inequality.

Based on \cite{yyz25}, the main purpose of this  article is to
develop the   matrix weighted variable Besov spaces
with matrix $\mathscr{A}_{p(\cdot),\infty}$ weights.
We introduce the matrix $\mathscr{A}_{p(\cdot),\infty}$
weighted variable Besov space $B^{s(\cdot)}_{p(\cdot),q(\cdot)}(W)$
and the corresponding ``averaging'' variable Besov space $B^{s(\cdot)}_{p(\cdot),q(\cdot)}(\mathbb{A})$, as well as their corresponding sequences spaces $b^{s(\cdot)}_{p(\cdot),q(\cdot)}(W)$ and $b^{s(\cdot)}_{p(\cdot),q(\cdot)}(\mathbb{A})$,
and prove the  equivalences
$$B^{s(\cdot)}_{p(\cdot),q(\cdot)}(\mathbb{W})
=B^{s(\cdot)}_{p(\cdot),q(\cdot)}(\mathbb{A})\ \mathrm{and}\
b^ {s(\cdot)}_{p(\cdot),q(\cdot)}(\mathbb{W})
=b^{s(\cdot)}_{p(\cdot),q(\cdot)}(\mathbb{A}).$$
Using these,
we establish the $\varphi$-transform characterization of  $B^{s(\cdot)}_{p(\cdot),q(\cdot)}(W)$.
By means of the $\alpha$-convexification $\eta$-type operator
[see \eqref{def etaf}] and
the $\alpha$-convexification maximal operator
[see \eqref{def alpha max}] and by first
obtaining their boundedness on $\mathscr{A}_{p(\cdot),\infty}$ matrix
weighted variable Lebesgue spaces,
we establish a vector-valued inequality
related to $\alpha$-convexification $\eta$-type operators, and employ it to
obtain the  boundedness of almost diagonal operators on
$b^{s(\cdot)}_{p(\cdot),q(\cdot)}(W)$,
which are further used to establish
various decomposition characterizations of
$B^{s(\cdot)}_{p(\cdot),q(\cdot)}(W)$, respectively,
in terms of molecules, wavelets, and atoms. Applying the wavelet decomposition of  $B^{s(\cdot)}_{p(\cdot),q(\cdot)}(W)$,
we then obtain their   trace and   extension properties.
Finally, by  the aforementioned  molecular characterization,
we obtain the boundedness of Calder\'on--Zygmund operators
on $B^{s(\cdot)}_{p(\cdot),q(\cdot)}(W)$.

We point out that, since the almost diagonal operators are the discrete version
of the convolution of $\eta$ functions [see \eqref{def eta}] with vectors,
to obtain their boundedness on $b^{s(\cdot)}_{p(\cdot),q(\cdot)}(W)$,
we need to establish a vector-valued inequality (see Proposition
\ref{eta bound seq}) of some  $\eta$-type operators, namely the $\alpha$-convexification
$\eta$-type operators related to
matrix weights [see \eqref{def etaf}], which strongly relies on
their boundedness on variable Lebesgue spaces.
In the scalar-valued case,
since weights and functions can be separated from each other,
the boundedness of $\alpha$-convexification $\eta$-type operators
can be directly obtained by convexifying the weights.
However, in the matrix case, weights and functions cannot be
separated, and so the  aforementioned approach fails.
To overcome this obstacle, we introduce the concept of $\alpha$-convexification
maximal operators related to matrix weights.
Then, by establishing their sparse domination and using the reverse H\"older inequality
for matrix $\mathscr{A}_{p(\cdot),\infty}$ weights in variable Lebesgue spaces
obtained in our previous article \cite{yyz25},
we establish the boundedness of $\alpha$-convexification maximal operators
and hence the boundedness of $\alpha$-convexification $\eta$-type operators.
These boundedness results, to the best of our knowledge,
are new even when $p(\cdot)\equiv p$ is a constant exponent.

The organization of the reminder of this article is as follows.

In Section \ref{sec Apinfty}, we recall some basic concepts
and properties of matrix $\mathscr{A}_{p(\cdot),\infty}$ weights obtained in \cite{yyz25},
including the related  reducing operators
(see Definition \ref{def reducing operator})
and their upper and their lower dimensions
(see Theorem \ref{dim ext}).
Then we introduce the $\alpha$-convexification $\eta$-type operator
and the $\alpha$-convexification maximal operator
for $\mathscr{A}_{p(\cdot),\infty}$ matrix weights and,
through establishing some sparse domination,
we prove their boundedness on variable Lebesgue spaces (see Theorem \ref{bound max}),
which are further used to show the boundedness of  almost diagonal operators
in Section \ref{sec abo}.

In Section \ref{sec Besov}, we introduce the matrix $\mathscr{A}_{p(\cdot),\infty}$
weighted variable Besov space and the corresponding variable Besov sequence space.
More precisely, in Subsection \ref{sec Besov 1},
we introduce the matrix $\mathscr{A}_{p(\cdot),\infty}$
weighted variable Besov space $B^{s(\cdot)}_{p(\cdot),q(\cdot)}(W)$
and the corresponding averaging weighted variable Besov space
$B^{s(\cdot)}_{p(\cdot),q(\cdot)}(\mathbb{A})$, and
prove their equivalence (see Theorem \ref{W aa supp}).
Then, in Subsection \ref{sec Besov 2},
we introduce the matrix $\mathscr{A}_{p(\cdot),\infty}$ weighted
variable Besov sequence space $b^{s(\cdot)}_{p(\cdot),q(\cdot)}(W)$
and the corresponding averaging weighted variable Besov sequence space
$b^{s(\cdot)}_{p(\cdot),q(\cdot)}(\mathbb{A})$
and prove their equivalence (see Theorem \ref{W aa 3}).
Finally, in Subsection \ref{sec varphi},
by using these equivalences,
we establish the $\varphi$-transform characterization
of $B^{s(\cdot)}_{p(\cdot),q(\cdot)}(W)$ (see Theorem \ref{phi bound}),
which, as an application, implies
that $B^{s(\cdot)}_{p(\cdot),q(\cdot)}(W)$ is independent of the choice of $\varphi$
(see Proposition \ref{ind choice}).

In Section \ref{sec abo}, by using the above
obtained boundedness of $\alpha$-convexification $\eta$-type operators
for $\mathscr{A}_{p(\cdot),\infty}$ matrix weights,
we establish a vector-valued inequality related to $\alpha$-convexification $\eta$-type operators
and then, applying this, we obtain the boundedness of almost diagonal operators
on $b^{s(\cdot)}_{p(\cdot),q(\cdot)}(W)$ (see Theorem \ref{abo bound 1}).

In Section \ref{sec mole wave}, we apply
the boundedness of almost diagonal operators
to obtain several characterizations of $B^{s(\cdot)}_{p(\cdot),q(\cdot)}(W)$.
Precisely, in Subsection \ref{sec mole},
we establish the molecular characterization of $B^{s(\cdot)}_{p(\cdot),q(\cdot)}(W)$
(see Theorem \ref{mole com}) by combining the $\varphi$-transform
characterization (Theorem \ref{phi bound}) with the boundedness of
almost diagonal operators (Theorem \ref{abo bound 1}).
Then, in Subsection \ref{sec wave}, as an application of
the molecular characterization (Theorem \ref{mole com}),
we obtain the wavelet characterization of $B^{s(\cdot)}_{p(\cdot),q(\cdot)}(W)$
(see Theorem \ref{dep wave 1})
and then, by using this,
we establish the atomic characterization of $B^{s(\cdot)}_{p(\cdot),q(\cdot)}(W)$
(see Theorem \ref{dep atom}).

Finally, in Section \ref{sec app}, applying the wavelet
and the molecular characterizations obtained in Section \ref{sec mole wave},
we further establish the boundedness of trace operators
and Calder\'on--Zygmund operators on $B^{s(\cdot)}_{p(\cdot),q(\cdot)}(W)$.
Indeed, in Subsection \ref{sec trace},
we introduce the trace and the extension operators
on $B^{s(\cdot)}_{p(\cdot),q(\cdot)}(W)$ by using its wavelet characterization
(Theorem \ref{dep wave 1}) and then, together this with the above
obtained molecular characterization (Theorem \ref{mole com}),
we establish the boundedness of these trace and extension
operators (see Theorems \ref{trace 1} and \ref{WV ext}).
In Subsection \ref{sec CZ}, we further obtain the boundedness of
Calder\'on--Zygmund operators on $B^{s(\cdot)}_{p(\cdot),q(\cdot)}(W)$
by using its molecular characterization (see Theorem \ref{CZ theo}).

We end this introduction by making some conventions on symbols.
Throughout this article, we work in $\mathbb{R}^n$
and, unless otherwise specified, we always take $\mathbb{R}^n$
as the default underlying space.
Let $\mathbb{Z}$ be the collection of all integers,
$\mathbb{Z}_+:=\{0,1,\dots\}$,
and $\mathbb{N} := \{1,2,\dots\}$.
For any $\gamma := (\gamma_1,\dots,\gamma_n)\in \mathbb{Z}^n_+$,
let $|\gamma| := \gamma_1 + \cdots + \gamma_n$ and, for any
$x := (x_1,\dots,x_n) \in \mathbb{R}^n$, let $x^\gamma := x_1^{\gamma_1}\cdots x_n^{\gamma_n}$
and $D^\gamma := (\frac{\partial}{\partial x_1})^{\gamma_1}\cdots(\frac{\partial}{\partial x_n})^{\gamma_n}$.
For any measurable set $E$ in $\mathbb{R}^n$,
denote by the \emph{symbol $\mathscr{M}(E)$} the set
of all measurable functions on $E$
and, when $E = \mathbb{R}^n$, simply write $\mathscr{M}(\mathbb{R}^n)$ as $\mathscr{M}$.
In addition, we use the symbol $L^p_{\rm loc}$ with $p\in (0,\infty)$ to denote
the set of all locally $p$-integrable functions on $\mathbb{R}^n$
and use the symbol $C^{\infty}$ to denote the set of all infinitely differentiable functions on $\mathbb{R}^n$.
For any $t\in (0,\infty)$, let $\log_+ t := \max\{0,\log t\}$.
For any $x\in\mathbb{R}^n$ and $r\in (0,\infty)$,
the \emph{open ball $B(x,r)$} is defined to be the set $\{y\in\mathbb{R}^n:\ |x-y|< r\}$ and let
$\mathbb{B} := \{B(x,r):\ x\in\mathbb{R}^n\ \text{and}\ r\in(0,\infty)\}.$
A \emph{cube} $Q$ in $\mathbb{R}^n$ always has finite edge length
and edges of cubes are always assumed to be parallel to the coordinate axes,
but $Q$ is not necessary to be open or closed.
For any cube $Q$ in $\mathbb{R}^n$,
we always use $l(Q)$ to denote its edge length and $x_Q$ to denote its center.
For any $k\in\mathbb{Z}^n$ and $j\in\mathbb{Z}$,
let
$\mathcal{Q}:=\{ Q_{k,j}:= 2^{-j}([0,1)^n + k):\
k\in\mathbb{Z}^n\ \text{and}\ j\in\mathbb{Z}\}$
and, for any $j\in\mathbb{Z}$,
let
$$\mathcal{Q}_j:=\left\{ Q_{k,j}:= 2^{-j}([0,1)^n + k):\ k\in\mathbb{Z}^n\right\}$$
and
$$\mathcal{Q}_+:= \left\{ Q_{k,j}:= 2^{-j}([0,1)^n + k):\ k\in\mathbb{Z}^n\ \text{and}\ j\in\mathbb{Z}_+\right\}.$$
If $E$ is a measurable set in $\mathbb{R}^n$,
then we denote by $\mathbf{1}_E$ its \emph{characteristic function}
and, for any bounded measurable set $E\subset \mathbb{R}^n$ with $|E| \neq 0$
and for any $f\in L^1_{\rm loc}$,
let $\fint_E f(x)\,dx := \frac{1}{|E|} \int_E f(x)\,dx. $
For any $p\in [1,\infty]$, let $p'$ be its conjugate number,
that is, $\frac1p+\frac{1}{p'} = 1$.
We always use $C$ to denote a positive constant
independent of the main parameters involved,
but it may vary from line to line.
The symbol $f\lesssim g$ means $f\leq  Cg$
and, if $f\lesssim g\lesssim f$, we then write $f\sim g$.
Finally, in all proofs we
consistently retain the symbols introduced
in the original theorem (or related statement).

\section{Matrix $\mathscr{A}_{p(\cdot),\infty}$ Weights\label{sec Apinfty}}

In this section, we first recall some basic properties of matrix
$\mathscr{A}_{p(\cdot),\infty}$ weights
obtained in \cite{yyz25}.
Then we introduce the  concepts of $\alpha$-convexification maximal operators
and $\alpha$-convexification $\eta$-type operators  related to $\mathscr{A}_{p(\cdot),\infty}$ weights,
and prove their boundedness on variable
Lebesgue spaces. These boundedness results are further used to establish a
matrix weighted version of the vector-valued inequality
involving $\eta$ functions in Section \ref{sec abo}, which is a key tool for establishing the boundedness of  almost diagonal operators.

We begin with the variable Lebesgue spaces.
A measurable function $p:\ \mathbb{R}^n\to(0,\infty]$ is called an \emph{exponent function}.
We use the \emph{symbol $\mathcal{P}$} to denote the set of all exponent functions $p:\ \mathbb{R}^n \rightarrow [1,\infty]$,
and the \emph{symbol $\mathcal{P}_0$} to denote the set of all exponent functions $p:\ \mathbb{R}^n \rightarrow (0,\infty]$
satisfying $\mathop{\rm{ess}\inf}_{x\in \mathbb{R}^n} p(x) > 0$.
For any $p(\cdot)\in \mathcal{P}_0$ and any set $E$ in $\mathbb{R}^n$,
let
$$ p_+(E) := \mathop{\rm{ess}\sup}_{x\in E} p(x)\quad \text{and}\quad p_-(E) := \mathop{\rm{ess}\inf}_{x\in E} p(x); $$
moreover, write $p_+ := p_+(\mathbb{R}^n)$ and $p_- := p_-(\mathbb{R}^n)$.

Then we recall the definition of variable Lebesgue spaces
(see, for instance, \cite[Definition 2.16]{cf13}).
\begin{definition}\label{def Leb}
The \emph{variable Lebesgue space $L^{p(\cdot)}$} associated
with $p(\cdot)\in \mathcal{P}_0$
is defined to be the set of all $f\in\mathscr{M}$ such that
$$ \|f\|_{L^{p(\cdot)}} := \inf\left\{ \lambda\in (0,\infty):\ \rho_{L^{p(\cdot)}}\left(\frac{f}{\lambda}\right) \leq 1 \right\} < \infty, $$
where $\rho_{L^{p(\cdot)}}$ is the \emph{variable exponent modular}
defined by setting
$$\rho_{L^{p(\cdot)}} (f) :=  \int_{\mathbb{R}^n\setminus \Omega_\infty} \left|f(x)\right|^{p(x)}\,dx
+ \mathop{\rm{ess}\sup}_{x\in \Omega_\infty} |f(x)| $$
with $\Omega_\infty := \{x\in\mathbb{R}^n:\ p(x) = \infty\}$.
\end{definition}

The following log-H\"older continuous condition of variable exponents
(see, for instance, \cite[Definition 2.2]{cf13}) is frequently used in the theory of variable function spaces.

\begin{definition}
A measurable real-valued function $r$ on $\mathbb{R}^n$ is said to be
\emph{locally log-H\"older continuous},
denoted by $r(\cdot) \in LH_0$,
if there exists a positive constant $C_0$
such that, for any $x,y\in\mathbb{R}^n$ with $|x-y| < \frac12$,
\begin{align}\label{clogp 1}
|r(x)-r(y)| \leq -\frac{C_0}{\log(|x-y|)}.
\end{align}
A measurable real-valued function $r$ on $\mathbb{R}^n$
is \emph{log-H\"older continuous at infinity},
denoted by $r(\cdot) \in LH_\infty$,
if there exist positive constants $r_\infty$ and $C_{\infty}$
such that, for any $x\in \mathbb{R}^n$,
\begin{align*}
|r(x)-r_\infty| \leq \frac{C_\infty}{\log(e+|x|)}.
\end{align*}
Furthermore, a measurable real-valued function $r$  on $\mathbb{R}^n$ is said to be
\emph{globally log-H\"older continuous},
denoted by $r(\cdot) \in LH$,
if $r(\cdot)$ is both locally log-H\"older continuous
and log-H\"older continuous at infinity.
\end{definition}
\begin{remark}
\begin{itemize}
\item[{\rm (i)}] If $r(\cdot) \in LH$,
then \eqref{clogp 1} can be replaced by the following condition:
\begin{align}\label{clogp}
|r(x)-r(y)| \leq \frac{C_{\rm log}}{\log(e + \frac{1}{|x-y|})}
\ \ \text{for any}\ \  x,y\in\mathbb{R}^n.
\end{align}
\item[{\rm (ii)}] From \cite[Proposition 2.3]{cf13},
we infer that, if $r(\cdot) \in LH$,
then $\frac{1}{r(\cdot)} \in LH$.
\end{itemize}
\end{remark}

We now recall some basic properties of $L^{p(\cdot)}$ which are used below.
In what follows, for any $p(\cdot) \in \mathcal{P}_0$ and any cube $Q$ in $\mathbb{R}^n$,
let $p_Q:= [\fint_{Q} \frac{1}{p(x)}\,dx]^{-1}$.
Also, if a positive constant $C$ depends on some positive constants associated with $p(\cdot)$
or, more precisely, $C$ depends on some of $\{p_-,p_+,p_\infty,C_0,C_\infty\}$,
then we simply say that \emph{$C$ depends on $p(\cdot)$}.
The following lemma is precisely \cite[Theorem 4.5.7]{dhr17}.

\begin{lemma}\label{est Q}
Let $p(\cdot) \in \mathcal{P}\cap LH$.
Then, for any cube $Q$ in $\mathbb{R}^n$,
$$ \left\| \mathbf{1}_Q \right\|_{L^{p(\cdot)}}\sim\left| Q \right|^{\frac{1}{p_Q}},
\ \
\left\| \mathbf{1}_Q \right\|_{L^{p'(\cdot)}}\sim \left| Q \right|^{\frac{1}{p'_Q}},
\ \ \text{and}\ \
\left\| \mathbf{1}_Q \right\|_{L^{p(\cdot)}}\left\| \mathbf{1}_Q \right\|_{L^{p'(\cdot)}} \sim |Q|$$
where the positive equivalence constants depend only on $p(\cdot)$ and $n$.
\end{lemma}

The following H\"older's inequality in variable Lebesgue spaces is exactly \cite[Theorem 2.26]{cf13}.

\begin{lemma}\label{Holder}
Let $p(\cdot) \in \mathcal{P}$.
If $f\in L^{p(\cdot)}$ and $g\in L^{p'(\cdot)}$,
then $fg\in L^1$ and, moreover,
$$ \int_{\mathbb{R}^n} \left| f(x)g(x) \right|\,dx \lesssim \|f\|_{L^{p(\cdot)}} \|g\|_{L^{p'(\cdot)}}, $$
where the implicit positive constant depends only on $p(\cdot)$.
\end{lemma}

As a consequence of Lemmas \ref{est Q} and \ref{Holder}, we have the following conclusion (see, for instance, \cite[Lemma 2.8]{yyz25}).

\begin{lemma}\label{est fQ}
Let $p(\cdot) \in \mathcal{P}\cap LH$.
Then, for any $f\in \mathscr{M}$ and any cube $Q$ in $\mathbb{R}^n$,
\begin{align*}
\fint_Q \left|f(x)\right|\,dx \lesssim \frac{1}{\|\mathbf{1}_Q\|_{L^{p(\cdot)}}} \left\| f \mathbf{1}_Q \right\|_{L^{p(\cdot)}},
\end{align*}
where the implicit positive constant depends only on $p(\cdot)$ and $n$.
\end{lemma}

The following one is precisely \cite[Theorem 2.34]{cf13}.

\begin{lemma}\label{fg Lp}
Let $p(\cdot) \in \mathcal{P}$.
Then, for any $f\in \mathscr{M}$,
$f\in L^{p(\cdot)}$ if and only if
$$ \left\|f\right\|'_{L^{p(\cdot)}} := \sup_{\|g\|_{L^{p'(\cdot)}} \leq 1} \left|\int_{\mathbb{R}^n}  f(x)g(x) \,dx\right| < \infty $$
and, moreover, $ \|f\|_{L^{p(\cdot)}} \sim \|f\|'_{L^{p(\cdot)}}$,
where the positive equivalence constants depend only on $p(\cdot)$.
\end{lemma}

The following is the convexification of variable Lebesgue spaces
(see, for instance, \cite[Proposition 2.18]{cf13} and \cite[Lemma 3.2.6]{dhr17}).

\begin{lemma}\label{con f}
Let $p(\cdot) \in\mathcal{P}_0$ with $p_+ < \infty$.
Then, for any $r \in (0,\infty)$ and $f\in\mathscr{M}$,
$ \|f\|_{L^{rp(\cdot)}} = \| |f|^r \|^\frac1r_{L^{p(\cdot)}} $.
\end{lemma}

Next, we recall some basic concepts of matrices and matrix weights.
For any $m,n\in\mathbb{N}$, the set of all $m\times n$ complex-valued matrices
is denoted by the \emph{symbol $M_{m, n}$},
and $M_{m, m}$ is simply denoted by $M_m$.
For any $A\in M_m$, let
$\|A\| := \sup_{\vec{z}\in\mathbb{C}^m, |\vec{z}| = 1}| A\vec{z}|.$
Then $(M_m, \|\cdot\|)$ is a Banach space.
Moreover, we have the following well-known result
(see, for instance, \cite[Lemma 2.3]{bhyy23}).

\begin{lemma}\label{pA norm}
Let $A,B\in M_m$ be two nonnegative definite matrices.
Then $\|AB\| = \|BA\|$.
\end{lemma}

Now, we recall the concept of matrix weights
(see, for instance, \cite[Definition 2.7]{bhyy23}).
\begin{definition}
A matrix-valued function $W:\ \mathbb{R}^n \rightarrow M_m$ is called a \emph{matrix weight}
if $W$ satisfies that
\begin{itemize}
\item[{\rm (i)}] for almost every $x\in\mathbb{R}^n$, $W(x)$ is nonnegative definite,
\item[{\rm (ii)}] for almost every $x\in\mathbb{R}^n$, $W(x)$ is invertible,
\item[{\rm (iii)}] the entries of $W$ are all locally integrable.
\end{itemize}
\end{definition}

Now we recall the  matrix $\mathscr{A}_{p(\cdot),\infty}$ weights
introduced in   \cite[Definition 1.1(ii)]{yyz25}.

\begin{definition}
Let $p(\cdot)\in \mathcal{P}_0$.
A matrix weight $W$ on $\mathbb{R}^n$ is called a \emph{matrix $\mathscr{A}_{p(\cdot),\infty}$ weight}
if
$$ \left[W\right]_{\mathscr{A}_{p(\cdot),\infty}} := \sup_{Q} \exp\left( \fint_{Q} \log\left( \frac{1}{\|\mathbf{1}_Q\|_{L^{p(\cdot)}}} \left\|\, \left\| W(\cdot)W^{-1}(y) \right\| \mathbf{1}_Q \right\|_{L^{p(\cdot)}}
 \right)\,dy \right) <\infty , $$
where the supremum is taken over all cubes $Q$ in $\mathbb{R}^n$.
\end{definition}
\begin{remark}\label{rem Apinfty}
\begin{itemize}
\item[{\rm (i)}] If $p(\cdot) \equiv p$ is a constant exponent,
then, for any $W\in \mathscr{A}_{p,\infty}$,
the $p$-th power of $W$ is a matrix $A_{p,\infty}$ weight
(see, for instance, \cite[(2.2)]{v97} or \cite[Definition 3.1]{bhyy23 1} for the definition of $A_{p,\infty}$ weights).
\item [{\rm (ii)}] From \cite[Theorem 3.1]{yyz25},
it follows that, for any scalar-valued weight $w$,
if $p(\cdot)\in \mathcal{P}_0$ with $p(\cdot) \in LH$, then
$w \in \mathscr{A}_{p(\cdot),\infty}$ if and only if $w^{p(\cdot)} \in A_\infty$.
\end{itemize}
\end{remark}

Next, we recall the concept of  reducing operators of order $p(\cdot)$
for matrix weights,
which is exactly \cite[Definition 3.8]{yyz25}.

\begin{definition}\label{def reducing operator}
Let $p(\cdot) \in \mathcal{P}_0$ and $W$ be a matrix weight
and let $Q$ be any cube in $\mathbb{R}^n$.
The matrix $A_Q\in M_m$ is called a
\emph{reducing operator of order $p(\cdot)$ for $W$}
if $A_Q$ is positive definite and self-adjoint such that,
for any $\vec{z} \in \mathbb{C}^m$,
\begin{align}\label{eq redu}
\left| A_Q \vec{z} \right|
\sim \frac{1}{\|\mathbf{1}_Q\|_{L^{p(\cdot)}}} \left\|\, \left| W(\cdot) \vec{z} \right|  \mathbf{1}_{Q} \right\|_{L^{p(\cdot)}},
\end{align}
where the positive equivalence constants depend only on $m$ and $p(\cdot)$.
\end{definition}

The following lemma guarantees the existence of reducing operators of order $p(\cdot)$
for matrix weights, which is precisely \cite[Proposition 3.9]{yyz25}.

\begin{lemma}
Let $p(\cdot) \in \mathcal{P}_0$.
Then, for any matrix weight $W$ and any cube $Q$ in $\mathbb{R}^n$,
the reducing operator $A_Q$ of order $p(\cdot)$ for $W$ exists.
\end{lemma}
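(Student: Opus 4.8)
The plan is to realize the right-hand side of \eqref{eq redu} as a (quasi-)norm on $\cc^m$ and then to produce $A_Q$ from a John-ellipsoid argument. Fix a matrix weight $W$ and a cube $Q\subset\rn$, and define $N_Q:\ \cc^m\to[0,\infty)$ by
\[
N_Q(\vec z):=\frac{1}{\|\one_Q\|_{L^{p(\cdot)}}}\left\|\,\left|W(\cdot)\vec z\right|\,\one_Q\right\|_{L^{p(\cdot)}},\qquad \vec z\in\cc^m .
\]
First I would check that $N_Q$ is a (quasi-)norm. Finiteness of $N_Q(\vec z)$ follows from the local integrability of the entries of $W$ (in the setting of this article, where $W\in\ca_{p(\cdot),\infty}$, one moreover gets $\|W(\cdot)\|\one_Q\in L^{p(\cdot)}$ directly, since otherwise $[W]_{\ca_{p(\cdot),\infty}}=\infty$). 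Absolute homogeneity, and in particular the circle invariance $N_Q(e^{i\theta}\vec z)=N_Q(\vec z)$ for all $\theta\in\rr$, is immediate from $|W(x)(\lambda\vec z)|=|\lambda|\,|W(x)\vec z|$ and the homogeneity of $\|\cdot\|_{L^{p(\cdot)}}$. The (quasi-)triangle inequality comes from the pointwise bound $|W(x)(\vec z+\vec w)|\le|W(x)\vec z|+|W(x)\vec w|$ combined with the lattice property and the (quasi-)subadditivity of $\|\cdot\|_{L^{p(\cdot)}}$ (a genuine triangle inequality when $p_-\ge1$, and otherwise a quasi-triangle inequality whose constant $K_0$ depends only on $p_-$). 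Positive definiteness is where the structure of $W$ is used essentially: if $N_Q(\vec z)=0$, then $|W(x)\vec z|=0$ for a.e. $x\in Q$, and since $|Q|>0$ and $W(x)$ is invertible for a.e. $x$, this forces $\vec z=\vec0$; lower semicontinuity of $N_Q$ (via Fatou for the modular) then shows $\{N_Q\le1\}$ is a closed, bounded, symmetric, star-shaped neighborhood of the origin.

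Next I would pass from $N_Q$ to a matrix. Let $K$ be the convex hull of $\{\vec z\in\cc^m:\ N_Q(\vec z)\le1\}$; since $N_Q$ is circle invariant, so is $K$. Let $\mathcal{E}=\{\vec z:\ |A_Q\vec z|\le1\}$ be the maximal-volume ellipsoid contained in $K$; by uniqueness of the John ellipsoid it is again circle invariant, and a circle-invariant ellipsoid in $\cc^m\cong\rr^{2m}$ is exactly a set of this form with $A_Q\in M_m$ Hermitian positive definite (a real quadratic form on $\cc^m$ invariant under the scalar circle action is the real part of a Hermitian form). John's theorem for symmetric convex bodies gives $\mathcal{E}\subseteq K\subseteq\sqrt{2m}\,\mathcal{E}$. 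Using Carath\'eodory's theorem to write any point of $K$ as a convex combination of at most $2m+1$ points of $\{N_Q\le1\}$ and iterating the quasi-triangle inequality over these finitely many terms, one obtains $K\subseteq\{\vec z:\ N_Q(\vec z)\le C(m,p(\cdot))\}$ with $C(m,p(\cdot))\le K_0^{2m}$. Reading off the Minkowski functionals from the inclusions $\{N_Q\le1\}\subseteq K\subseteq\sqrt{2m}\,\mathcal{E}$ and $\mathcal{E}\subseteq K\subseteq\{N_Q\le C(m,p(\cdot))\}$ then yields
\[
\frac{1}{\sqrt{2m}}\,|A_Q\vec z|\le N_Q(\vec z)\le C(m,p(\cdot))\,|A_Q\vec z|\qquad\text{for all }\vec z\in\cc^m ,
\]
which is precisely \eqref{eq redu} with equivalence constants depending only on $m$ and $p(\cdot)$. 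Hence $A_Q$ is a reducing operator of order $p(\cdot)$ for $W$, and in particular one exists.

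I expect the main obstacle to be the bookkeeping at the quasi-Banach endpoint $p_-<1$: there $N_Q$ is only a quasi-norm, so John's theorem must be applied to the convex hull $K$ rather than to the non-convex body $\{N_Q\le1\}$ itself, and one must verify that the resulting comparison constants depend only on $m$ and—through the quasi-triangle constant $K_0=K_0(p_-)$ of $\|\cdot\|_{L^{p(\cdot)}}$—on $p(\cdot)$, but in particular not on $W$ or on $Q$. When $p_-\ge1$ this step is routine, since $N_Q$ is already a genuine norm and John's theorem applies directly with $K=\{N_Q\le1\}$. The only other point needing a little care is ensuring that $A_Q$ can be taken genuinely Hermitian positive definite on $\cc^m$ (rather than merely a real positive-definite ellipsoid on $\rr^{2m}$), which is exactly what the circle invariance of $N_Q$ provides.
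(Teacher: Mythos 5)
Your proposal is correct and follows, up to minor phrasing, the standard John--ellipsoid construction of reducing operators, which is also what underlies the cited \cite[Proposition~3.9]{yyz25}. One small inaccuracy to flag: you assert that finiteness of $N_Q(\vec z)$ for $\vec z\in\cc^m$ ``follows from the local integrability of the entries of $W$.'' This is false once $p_+>1$: for $m=1$, $p(\cdot)\equiv 2$, and $W(x)=|x|^{-1/2}$ near $x=0$, $W$ is locally integrable yet $\|W\one_Q\|_{L^2}=\infty$ for any cube $Q$ containing the origin, so $N_Q$ takes the value $+\infty$. Your parenthetical observation is the correct justification in the setting where the lemma is actually invoked: if $[W]_{\ca_{p(\cdot),\infty}}<\infty$, then $\|\,\|W(\cdot)W^{-1}(y)\|\,\one_Q\|_{L^{p(\cdot)}}<\infty$ for a.e.\ $y\in Q$, and the pointwise bound $|W(x)\vec z|\le\|W(x)W^{-1}(y)\|\,|W(y)\vec z|$ then gives $N_Q(\vec z)<\infty$ for all $\vec z$. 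The remaining steps --- circle invariance of $N_Q$, positive definiteness from a.e.\ invertibility of $W$, John's theorem applied to the convex hull $K$ of the quasi-norm unit ball with the $\sqrt{2m}$ constant from the real dimension, and the Carath\'eodory plus iterated quasi-triangle bound $K_0^{2m}$ to compare $K$ with $\{N_Q\le1\}$ --- are all sound and yield equivalence constants depending only on $m$ and $p(\cdot)$, as required by Definition~\ref{def reducing operator}.
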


The next lemma  extends \eqref{eq redu} from any vector $\vec{z}$ to any matrix $M\in M_m$,
which is exactly \cite[Lemma 3.10]{yyz25}.

\begin{lemma}\label{eq reduc M}
Let $p(\cdot) \in \mathcal{P}_0$ and $W$ be a matrix weight and
let $Q$ be any cube in $\mathbb{R}^n$.
If $A_Q$ is a reducing operator of order $p(\cdot)$ for $W$,
then, for any matrix $M\in M_m$,
$$ \left\| A_Q M \right\| \sim \frac{1}{\|\mathbf{1}_Q\|_{L^{p(\cdot)}}} \left\| \, \left\| W(\cdot) M  \right\|  \mathbf{1}_{Q} \right\|_{L^{p(\cdot)}}, $$
where the positive equivalence constants depend only on $m$ and $p(\cdot)$.
\end{lemma}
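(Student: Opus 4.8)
The plan is to prove the claimed two-sided estimate by establishing the inequalities ``$\ls$'' and ``$\gs$'' separately, in each direction reducing the matrix assertion to the already-known vector assertion \eqref{eq redu} by testing against a suitable finite family of vectors.

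For ``$\ls$'', I would start from \eqref{def norm matrix}, which gives $\|A_Q M\| = \sup_{\vec{z}\in\cc^m,\,|\vec{z}|=1} |A_Q(M\vec{z})|$. Applying \eqref{eq redu} to the vector $M\vec{z}$ in place of $\vec{z}$ and noting that $|W(x)M\vec{z}| \le \|W(x)M\|$ for every $x\in\rn$ and every unit vector $\vec{z}$, the monotonicity of $\|\cdot\|_{L^{p(\cdot)}}$ yields
$$ \left|A_Q(M\vec{z})\right| \ls \frac{1}{\|\one_Q\|_{L^{p(\cdot)}}} \left\| \, \left| W(\cdot)M\vec{z} \right| \one_Q \right\|_{L^{p(\cdot)}} \le \frac{1}{\|\one_Q\|_{L^{p(\cdot)}}} \left\| \, \left\| W(\cdot)M \right\| \one_Q \right\|_{L^{p(\cdot)}}; $$
taking the supremum over $|\vec{z}|=1$ then gives ``$\ls$'' with implicit constant equal to that in \eqref{eq redu}.

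The reverse inequality ``$\gs$'' is the main point: the operator norm $\|W(x)M\|$ is a supremum over unit vectors whose optimal choice varies with $x$, so \eqref{eq redu} cannot be applied to it directly. To linearize it, I would dominate the operator norm by the Hilbert--Schmidt norm; writing $\{\vec{e}_k\}_{k=1}^m$ for the standard basis of $\cc^m$, one has, for every $x\in\rn$,
$$ \left\|W(x)M\right\| \le \left( \sum_{k=1}^m \left|W(x)M\vec{e}_k\right|^2 \right)^{1/2} \le \sum_{k=1}^m \left|W(x)\left(M\vec{e}_k\right)\right|. $$
Applying the quasi-triangle inequality for finite sums in $L^{p(\cdot)}$ (whose constant depends only on $m$ and $p_-$, since $p(\cdot)\in\ccp_0$), then \eqref{eq redu} to each vector $M\vec{e}_k$, and finally $|A_Q(M\vec{e}_k)| \le \|A_Q M\|\,|\vec{e}_k| = \|A_Q M\|$, I would obtain
$$ \frac{1}{\|\one_Q\|_{L^{p(\cdot)}}} \left\| \, \left\| W(\cdot)M \right\| \one_Q \right\|_{L^{p(\cdot)}} \ls \sum_{k=1}^m \left|A_Q\left(M\vec{e}_k\right)\right| \le m \left\|A_Q M\right\|, $$
which is ``$\gs$'' with implicit constant depending only on $m$ and $p(\cdot)$. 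Combining the two directions proves the lemma.

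The only non-routine ingredient is the observation that the operator norm of $W(\cdot)M$ can be controlled, uniformly in the space variable, by the finitely many vector-valued functions $|W(\cdot)(M\vec{e}_k)|$ (equivalently, by testing against a finite $\tfrac12$-net of the unit sphere of $\cc^m$, which gives the same conclusion with a different explicit constant); everything else is monotonicity and the quasi-triangle inequality of the variable Lebesgue norm together with a direct appeal to \eqref{eq redu}.
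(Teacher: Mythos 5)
Your argument is correct. The paper itself does not give a proof of this lemma; it simply quotes it as \cite[Lemma 3.10]{yyz25}, so there is no internal proof to compare against. Your reduction is the standard one: the ``$\lesssim$'' direction is an immediate consequence of \eqref{eq redu} applied to $M\vec z$ together with the pointwise bound $|W(x)M\vec z|\le \|W(x)M\|$ and monotonicity of $\|\cdot\|_{L^{p(\cdot)}}$, while the ``$\gtrsim$'' direction correctly linearizes the variable operator norm via the Hilbert--Schmidt domination $\|W(x)M\|\le \bigl(\sum_{k=1}^m |W(x)Me_k|^2\bigr)^{1/2}\le\sum_{k=1}^m |W(x)Me_k|$, after which the quasi-triangle inequality in $L^{p(\cdot)}$ (constant depending only on $m$ and $p_-$), \eqref{eq redu} applied to each $Me_k$, and $|A_Q Me_k|\le\|A_Q M\|$ close the estimate. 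The resulting constants depend only on $m$ and $p(\cdot)$, as claimed. No gaps.
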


We also recall the following concepts of  the lower and the upper $\mathscr{A}_{p(\cdot),\infty}$
weight dimensions introduced in \cite[Definition 3.21]{yyz25}.

\begin{definition}
Let $p(\cdot) \in \mathcal{P}_0$ and $d\in\mathbb{R}$.
A matrix weight $W$ is said to have \emph{$\mathscr{A}_{p(\cdot),\infty}$-lower dimension} $d$,
denoted by $W \in \mathbb{D}^{\rm lower}_{p(\cdot),\infty,d}$,
if there exists a positive constant $C$ such that,
for any $\lambda \in [1,\infty)$ and any cube $Q$ in $\mathbb{R}^n$,
\begin{align*}
\exp\left( \fint_{\lambda Q} \log\left( \frac{1}{\|\mathbf{1}_Q\|_{L^{p(\cdot)}}} \left\|\, \left\| W(\cdot)W^{-1}(y) \right\| \mathbf{1}_Q \right\|_{L^{p(\cdot)}}
 \right)\,dy \right) \leq C \lambda^d.
\end{align*}
A matrix weight $W$ is said to have \emph{$\mathscr{A}_{p(\cdot),\infty}$-upper dimension} $d$,
denoted by $W \in \mathbb{D}^{\rm upper}_{p(\cdot),\infty,d}$,
if there exists a positive constant $C$ such that,
for any $\lambda \in [1,\infty)$ and any cube $Q$ in $\mathbb{R}^n$,
\begin{align*}
\exp\left( \fint_{Q} \log\left( \frac{1}{\|\mathbf{1}_{\lambda Q}\|_{L^{p(\cdot)}}} \left\|\, \left\| W(\cdot)W^{-1}(y) \right\| \mathbf{1}_{\lambda Q} \right\|_{L^{p(\cdot)}}
 \right)\,dy \right) \leq C \lambda^d.
\end{align*}
\end{definition}

We have the following basic properties,
which is precisely \cite[Proposition 3.22]{yyz25}.

\begin{proposition}\label{dim ext}
Let $p(\cdot) \in \mathcal{P}_0\cap LH$.
Then the following statements hold:
\begin{itemize}
\item[{\rm (i)}] For any $d \in (-\infty,0)$,
$\mathbb{D}^{\rm lower}_{p(\cdot),\infty,d} = \emptyset$
and $\mathbb{D}^{\rm upper}_{p(\cdot),\infty,d} = \emptyset$.
\item[{\rm (ii)}] For any $W \in \mathscr{A}_{p(\cdot),\infty}$,
there exists $d_1 \in [0,\frac{n}{p_-})$
such that $W \in \mathbb{D}^{\rm lower}_{p(\cdot),\infty,d_1}$.
\item[{\rm (iii)}] For any $W \in \mathscr{A}_{p(\cdot),\infty}$,
there exists $d_2 \in [0,\infty)$
such that $W \in \mathbb{D}^{\rm upper}_{p(\cdot),\infty,d_2}$.
\end{itemize}
\end{proposition}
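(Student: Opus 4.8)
The plan is to reformulate both dimension conditions via reducing operators and then to combine the defining $\ca_{p(\cdot),\infty}$ estimate with the reverse--H\"older ($A_\infty$) machinery on variable Lebesgue spaces, keeping careful track of the variable exponent. For each cube $R$ in $\rn$, fix a reducing operator $A_R$ of order $p(\cdot)$ for $W$ (Lemma~\ref{ext redu}) and put $h_R(y):=\|\one_R\|_{L^{p(\cdot)}}^{-1}\,\|\,\|W(\cdot)W^{-1}(y)\|\,\one_R\|_{L^{p(\cdot)}}$. Applying Lemma~\ref{eq reduc M} with $M=W^{-1}(y)$ gives $h_R(y)\sim\|A_RW^{-1}(y)\|$, with constants depending only on $m$ and $p(\cdot)$; hence $W\in\dd^{\rm lower}_{p(\cdot),\infty,d}$ is equivalent to $\exp(\fint_{\lambda Q}\log\|A_QW^{-1}(y)\|\,dy)\lesssim\lambda^d$, and $W\in\dd^{\rm upper}_{p(\cdot),\infty,d}$ to $\exp(\fint_{Q}\log\|A_{\lambda Q}W^{-1}(y)\|\,dy)\lesssim\lambda^d$, uniformly over $\lambda\in[1,\infty)$ and cubes $Q$. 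I would also record two elementary facts used repeatedly: (a) the variable-exponent arithmetic--geometric inequality $\|\one_R\|_{L^{p(\cdot)}}^{-1}\,\|f\one_R\|_{L^{p(\cdot)}}\gtrsim\exp(\fint_R\log f)$ for nonnegative $f$, with constant depending only on $p_\pm$ (split at the level where $f\sim1$ and use $0<p_-\le p(\cdot)\le p_+<\infty$ together with Remark~\ref{rem p}); and (b) $\|W(x)W^{-1}(y)\|\,\|W(y)W^{-1}(x)\|\ge\|I\|=1$, which with Lemma~\ref{norm matrix} and symmetrization in $(x,y)$ yields the uniform lower bound $\exp(\fint_R\log h_R(y)\,dy)\gtrsim1$ for every cube $R$.

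For (i) I would test the defining inequalities against cubes $Q$ centered far from the origin, of possibly small side, with $\lambda$ of the order of the ratio (distance to origin)/(side of $Q$); then $\lambda Q$ still lives at comparable distance from the origin, so by Remark~\ref{rem p} the exponent $p(\cdot)$ is essentially constant on $Q$ and $\lambda Q$. Combining this with the uniform lower bound (b) (and its mixed-cube version, obtained by applying (a) to $x\mapsto\|W(x)W^{-1}(y)\|$ on $\lambda Q$ and averaging in $y$), one checks that, along such configurations, the left-hand sides stay $\gtrsim1$ while $\lambda\to\infty$. Since $C\lambda^d\to0$ when $d<0$, this forces $\dd^{\rm lower}_{p(\cdot),\infty,d}=\dd^{\rm upper}_{p(\cdot),\infty,d}=\emptyset$.

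For the upper dimension (iii): from $\|A_{\lambda Q}W^{-1}(y)\|\le\|A_{\lambda Q}A_Q^{-1}\|\,\|A_QW^{-1}(y)\|$ and the defining $\ca_{p(\cdot),\infty}$ bound, matters reduce to producing $d_2\in[0,\infty)$ with $\|A_{\lambda Q}A_Q^{-1}\|\lesssim\lambda^{d_2}$. Chaining through the dyadic dilates $Q\subset2Q\subset\cdots\subset2^NQ$ with $2^N\sim\lambda$ reduces this to a uniform bound $\|A_{2R}A_R^{-1}\|\lesssim C$. By Lemma~\ref{eq reduc M}, $\|A_{2R}A_R^{-1}\|\sim\frac{\|\one_R\|_{L^{p(\cdot)}}}{\|\one_{2R}\|_{L^{p(\cdot)}}}\cdot\frac{\|\,\|W(\cdot)A_R^{-1}\|\,\one_{2R}\|_{L^{p(\cdot)}}}{\|\,\|W(\cdot)A_R^{-1}\|\,\one_R\|_{L^{p(\cdot)}}}$; the first factor is $\lesssim1$, and the second is a doubling ratio whose uniform control is exactly the reverse-doubling (reverse--H\"older) inequality for matrix $\ca_{p(\cdot),\infty}$ weights from \cite{yyz25}, applied to the scalar weights $x\mapsto|W(x)A_R^{-1}\vec{e}|^{p(x)}$ uniformly in $\vec{e}$ and $R$; the constant $C$ is a power of $[W]_{\ca_{p(\cdot),\infty}}$ and $d_2$ is a fixed multiple of $\log_2C$. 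For the lower dimension (ii): I would estimate $\exp(\fint_{\lambda Q}\log h_Q(y)\,dy)$ directly, tiling $\lambda Q$ by $O(\lambda^n)$ pairwise disjoint translates $\{Q_k\}$ of $Q$; on each $Q_k$ one has $h_Q(y)\sim\|A_QW^{-1}(y)\|\lesssim\|A_QA_{Q_k}^{-1}\|\,h_{Q_k}(y)$ with $\fint_{Q_k}\log h_{Q_k}(y)\,dy\le\log[W]_{\ca_{p(\cdot),\infty}}$, while $\|A_QA_{Q_k}^{-1}\|$ is bounded by a power of the distance between $Q$ and $Q_k$ (hence by $\lambda^{d_2}$) via the chaining in (iii). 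Averaging the $O(\lambda^n)$ contributions against $|\lambda Q|^{-1}$ and using $\|\one_{\lambda Q}\|_{L^{p(\cdot)}}\sim\lambda^{n/\overline{p}}\|\one_Q\|_{L^{p(\cdot)}}$ for an averaged exponent $\overline{p}\in[p_-,p_+]$ --- so that the variable-Lebesgue normalization costs at most $\lambda^{n/p_-}$ --- then gives $\exp(\fint_{\lambda Q}\log h_Q(y)\,dy)\lesssim\lambda^{d_1}$ with $d_1<n/p_-$, the strict inequality coming from the open-ended (self-improving) exponent in the reverse--H\"older inequality; and $d_1\ge0$ is again fact (b).

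The genuine obstacle is the variable-exponent bookkeeping pervading all three parts: quantities such as $2^{jp(\cdot)}$ and $\|\one_{\lambda Q}\|_{L^{p(\cdot)}}/\|\one_Q\|_{L^{p(\cdot)}}$, which are plain constants when $p(\cdot)\equiv p$, must here be compared across dyadic scales and across the location of the cube; controlling them both at small scales and at infinity is precisely where the hypothesis $p(\cdot)\in LH$ enters essentially (through Remark~\ref{rem p}), and it is also what forces the slightly lossy bound $n/p_-$ in (ii) rather than the sharper $n/p_+$ visible in explicit scalar examples.
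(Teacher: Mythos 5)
The paper offers no proof of this proposition: it is stated as ``exactly \cite[Proposition 3.22]{yyz25}'', so there is nothing internal to compare against. Judged on its own merits, your write-up identifies the right ingredients (the reduction $h_Q(y)\sim\|A_QW^{-1}(y)\|$ via Lemma~\ref{eq reduc M}, a chain of dyadic dilates controlled by a uniform reverse-doubling bound on $\|A_{2R}A_R^{-1}\|$, and the reverse H\"older self-improvement), and part (iii) as you outline it is structurally sound. But two of the three parts have genuine gaps.

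For part (i), the ``test far from the origin'' idea conflates two unrelated things. Choosing $Q$ with $\mathrm{dist}(Q,0)/l(Q)\sim\lambda$ guarantees, via $LH_\infty$, that $p(\cdot)$ is nearly constant on $Q$ and $\lambda Q$; it says nothing about the behavior of $W$ there, and the anomalies of a general matrix weight need not sit near the origin. The more serious issue is that the ``mixed-cube version'' of (b) does not deliver what you need. Applying (a) and averaging gives $\fint_{\lambda Q}\log h_Q\gtrsim\fint_{\lambda Q}\fint_Q\log\|W(x)W^{-1}(y)\|\,dx\,dy-C$ and similarly $\fint_Q\log h_{\lambda Q}\gtrsim\fint_Q\fint_{\lambda Q}\log\|W(x)W^{-1}(y)\|\,dx\,dy-C$, and submultiplicativity (your (b)) bounds the \emph{sum} of these two double integrals from below by $0$. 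That controls the product of the lower- and upper-dimension quantities, so it only yields $d_1+d_2\ge 0$, not $d_1\ge 0$ and $d_2\ge 0$ separately; the example $W(x)=(1+|x|)^a$ with $Q$ centered at the origin shows that $\exp(\fint_{\lambda Q}\log h_Q)$ \emph{can} decay like $\lambda^{-a}$ for a fixed cube, so the lower bound ``$\gtrsim 1$'' cannot hold for arbitrary $(Q,\lambda)$ and must be localized correctly---something your sketch does not do. A real argument for (i) is needed.

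For part (ii), the tiling estimate is off. When $Q_k$ is a translate of $Q$ inside $\lambda Q$, the chaining bound on $\|A_QA_{Q_k}^{-1}\|$ (same side length, distance up to $\lambda l(Q)$) is of order $\lambda^{\Delta}$ with $\Delta=d_1+d_2$ (cf.\ Lemma~\ref{QP5}), not $\lambda^{d_2}$; feeding that back in gives only the tautology $d_1\le d_1+d_2$. The clean route is one step: $\|A_QW^{-1}(y)\|\le\|A_QA_{\lambda Q}^{-1}\|\,\|A_{\lambda Q}W^{-1}(y)\|$, so $\fint_{\lambda Q}\log h_Q\le\log\|A_QA_{\lambda Q}^{-1}\|+\log[W]_{\ca_{p(\cdot),\infty}}+C$, and the strict bound $\|A_QA_{\lambda Q}^{-1}\|\lesssim\lambda^{n/p_--\varepsilon}$ for some $\varepsilon>0$ is exactly what the reverse H\"older self-improvement on variable Lebesgue spaces provides; the tiling and the ratio $\|\one_{\lambda Q}\|_{L^{p(\cdot)}}/\|\one_Q\|_{L^{p(\cdot)}}$ (which does not appear in the defining expression at all) are red herrings. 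Finally, your fact (a) should not be advertised with constants ``depending only on $p_\pm$'': for a genuinely variable exponent a weighted Jensen argument is needed, and the constants unavoidably involve the log-H\"older data of $p(\cdot)$ (through Lemma~\ref{weight bound} or Lemma~\ref{est BQ}), not just the bounds $p_\pm$.
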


Let $p(\cdot)\in \mathcal{P}_0\cap LH$.
Then, for any matrix weight $W \in \mathscr{A}_{p(\cdot),\infty}$,
let
\begin{align*}
d^{\rm lower}_{p(\cdot),\infty}(W) := \inf\left\{ d\in \left(0,\frac{n}{p_-}\right):\ W\ \text{has}\ \mathscr{A}_{p(\cdot),\infty}\text{-lower dimension}\ d \right\}
\end{align*}
and
\begin{align*}
d^{\rm upper}_{p(\cdot),\infty}(W) := \inf\left\{ d\in (0,\infty):\ W\ \text{has}\ \mathscr{A}_{p(\cdot),\infty}\text{-upper dimension}\ d \right\}.
\end{align*}
Let
\begin{align*}
\left[\!\left[ d^{\rm lower}_{p(\cdot),\infty}(W), \infty\right)\right. :=
\begin{cases}
\displaystyle \left[d^{\rm lower}_{p(\cdot),\infty}(W), \frac{n}{p_-}\right) &\text{if } d^{\rm lower}_{p(\cdot),\infty}(W)\ \text{is an} \ \mathscr{A}_{p(\cdot),\infty}\text{-lower dimension of }W \\
\displaystyle \left(d^{\rm lower}_{p(\cdot),\infty}(W), \frac{n}{p_-}\right) &{\rm otherwise}
\end{cases}
\end{align*}
and
\begin{align*}
\left[\!\left[ d^{\rm upper}_{p(\cdot),\infty}(W), \infty\right)\right. :=
\begin{cases}
\displaystyle \left[d^{\rm upper}_{p(\cdot),\infty}(W), \infty\right) &\text{if } d^{\rm upper}_{p(\cdot),\infty}(W)\ \text{is an} \ \mathscr{A}_{p(\cdot),\infty}\text{-upper dimension of }W \\
\displaystyle \left(d^{\rm upper}_{p(\cdot),\infty}(W), \infty\right) &{\rm otherwise.}
\end{cases}
\end{align*}

\begin{remark}
If $p(\cdot) \equiv p$ is a constant exponent,
then Proposition \ref{dim ext}{\rm (ii)}
shows that, for any $W\in\mathscr{A}_{p,\infty}$,
$ d^{\rm lower}_{p(\cdot),\infty}(W) \in [0,\frac{n}{p}) $.
From Remark \ref{rem Apinfty}{\rm (i)},
it follows that $W \in \mathscr{A}_{p,\infty}$ if and only if $\widetilde{W} := W^p \in A_{p,\infty}$.
Hence, by this and Proposition \ref{dim ext}{\rm (ii)},
we find that, if $\widetilde{W} \in A_{p,\infty}$ and $d \in (-\infty,\infty)$ satisfy
\begin{align*}
\exp\left( \fint_{\lambda Q} \log\left( \fint_Q \left\| \widetilde{W}^{\frac1p}(x)\widetilde{W}^{-\frac1p}(y) \right\|^p \,dx \right)\,dy \right) \lesssim \lambda^d
\end{align*}
for any cube $Q$ in $\mathbb{R}^n$ and any $\lambda\in (1,\infty)$,
then $d \in (0,n)$,
which is exactly \cite[Proposition 6.3{\rm (ii)}]{bhyy23}.
\end{remark}

The upper and the lower $\mathscr{A}_{p(\cdot),\infty}$ weight dimensions play an important role in the
following estimate,
which is precisely \cite[Lemma 3.27]{yyz25}
(see, for instance, \cite[Proposition 6.5]{bhyy23 1} for a similar result for matrix $A_{p,\infty}$ weights) and is frequently used below.

\begin{lemma}\label{QP5}
Let $p(\cdot) \in \mathcal{P}_0\cap LH$, $W \in \mathscr{A}_{p(\cdot),\infty}$,
$d_1 \in [\![ d^{\rm lower}_{p(\cdot),\infty}(W) ,\frac{n}{p_-})$,
$d_2 \in [\![ d^{\rm upper}_{p(\cdot),\infty}(W) ,\infty)$,
and $\{A_Q\}_{{\rm cube}\ Q}$ be a family of reducing operators of order $p(\cdot)$ for $W$.
Then, for any cubes $Q$ and $R$ in $\mathbb{R}^n$, any $x\in Q$, and $y\in R$,
\begin{align}\label{eq strong doubling}
\left\| A_Q A_R^{-1} \right\| \lesssim \max\left\{ \left[ \frac{l(R)}{l(Q)} \right]^{d_1},
\left[ \frac{l(Q)}{l(R)} \right]^{d_2} \right\}\left[ 1+ \frac{|x-y|}{l(Q)\vee l(R)} \right]^{\Delta},
\end{align}
where $\Delta := d_1 + d_2$
and the implicit positive constant is independent of $Q$ and $R$.
\end{lemma}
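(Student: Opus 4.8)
The plan is to bound $\|A_QA_R^{-1}\|$ by inserting an intermediate cube and reducing everything to the case of nested cubes. Given $Q$ and $R$, I would first pick, by an elementary geometric argument, a cube $P$ with $Q\cup R\subseteq P$ and
\begin{align*}
l(P)\sim\left(l(Q)\vee l(R)\right)\left[1+\frac{|x_Q-x_R|}{l(Q)\vee l(R)}\right],
\end{align*}
taking $P$ to be a concentric dilate of whichever of $Q,R$ is larger. Factoring $A_QA_R^{-1}=(A_QA_P^{-1})(A_PA_R^{-1})$ and using $\|XY\|\le\|X\|\,\|Y\|$, the problem reduces to the two one-sided estimates
\begin{align*}
\text{(a)}\quad\left\|A_TA_S^{-1}\right\|\ls\left[\frac{l(T)}{l(S)}\right]^{d_2}
\qquad\text{and}\qquad
\text{(b)}\quad\left\|A_SA_T^{-1}\right\|\ls\left[\frac{l(T)}{l(S)}\right]^{d_1}
\end{align*}
for nested cubes $S\subseteq T$. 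Indeed, applying (b) to $Q\subseteq P$ and (a) to $R\subseteq P$ (with the roles of $Q,R$ interchanged when $l(Q)>l(R)$), the two resulting powers of $l(P)/l(Q)$ and $l(P)/l(R)$ recombine, via the displayed size of $l(P)$ and $\Delta=d_1+d_2$, into precisely $\max\{[l(R)/l(Q)]^{d_1},[l(Q)/l(R)]^{d_2}\}[1+|x_Q-x_R|/(l(Q)\vee l(R))]^{\Delta}$; this last step is routine bookkeeping.

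For (a) and (b) I would argue the same way. Fix $S\subseteq T$ and choose a concentric dilate $\widetilde T=\lambda S$ of $S$ with $\lambda\sim l(T)/l(S)$ and $T\subseteq\widetilde T$. From the matrix identity $A_TA_S^{-1}=(A_TW^{-1}(y))(W(y)A_S^{-1})$, valid for every $y$, together with $\|XY\|\le\|X\|\,\|Y\|$, Lemma \ref{norm matrix}, and averaging the (constant) left-hand side over $y\in S$, I get
\begin{align*}
\log\left\|A_TA_S^{-1}\right\|\le\fint_S\log\left\|A_TW^{-1}(y)\right\|\,dy+\fint_S\log\left\|W(y)A_S^{-1}\right\|\,dy.
\end{align*}
By Lemma \ref{eq reduc M} (with $M=W^{-1}(y)$) the first integrand equals, up to $O(1)$, $\log\big(\|\one_T\|_{L^{p(\cdot)}}^{-1}\|\,\|W(\cdot)W^{-1}(y)\|\one_T\|_{L^{p(\cdot)}}\big)$; since $T\subseteq\widetilde T$ with $|T|\sim|\widetilde T|$ and $\|\one_T\|_{L^{p(\cdot)}}\sim\|\one_{\widetilde T}\|_{L^{p(\cdot)}}$ (a standard consequence of $p(\cdot)\in LH$ for overlapping cubes of comparable size), this is $\ls$ the same expression with $\one_T$ replaced by $\one_{\widetilde T}=\one_{\lambda S}$, whence the first term is $\ls d_2\log\lambda+O(1)\ls d_2\log(l(T)/l(S))+O(1)$ by the definition of the $\ca_{p(\cdot),\infty}$-upper dimension, $d_2\in[\![d^{\rm upper}_{p(\cdot),\infty}(W),\infty)$, and Proposition \ref{dim ext}. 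Averaging instead over $\widetilde T$ and invoking the $\ca_{p(\cdot),\infty}$-lower dimension gives (b) in the same way (the only extra point is that the second term then lives on a cube comparable to but slightly larger than $T$, which is absorbed using (a) for comparable cubes). In all cases everything reduces to controlling, uniformly in the cube $S$, the ``dual'' second term $\fint_S\log\|W(y)A_S^{-1}\|\,dy$.

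This dual reducing-operator estimate is the crux. Writing $\|W(y)A_S^{-1}\|=\|A_S^{-1}W(y)\|$ (Lemma \ref{norm matrix}) and using $\sigma_{\min}(M)\ge|\det M|\,\|M\|^{-(m-1)}$ with $M=W^{-1}(y)A_S$ yields the pointwise bound
\begin{align*}
\left\|W(y)A_S^{-1}\right\|\le\frac{\det W(y)}{\det A_S}\left\|A_SW^{-1}(y)\right\|^{m-1}.
\end{align*}
After taking $\log$ and averaging over $S$, the last factor contributes only $O(1)$ by the definition of $\ca_{p(\cdot),\infty}$ (and Lemma \ref{eq reduc M}), so the whole matter reduces to the scalar inequality $\exp(\fint_S\log\det W(y)\,dy)\ls\det A_S$. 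To prove this I would pick an orthonormal eigenbasis $\{e_i\}_{i=1}^m$ of the positive-definite self-adjoint matrix $A_S$, so that $\det A_S=\prod_{i=1}^m|A_Se_i|$, and combine Hadamard's inequality $\det W(y)\le\prod_{i=1}^m|W(y)e_i|$ with the coordinatewise bound $\exp(\fint_S\log|W(y)e_i|\,dy)\ls|A_Se_i|\sim\|\one_S\|_{L^{p(\cdot)}}^{-1}\|\,|W(\cdot)e_i|\one_S\|_{L^{p(\cdot)}}$ (Definition \ref{def reducing operator}); this last bound is the ``geometric average $\le$ $L^{p(\cdot)}$-average'' inequality for the nonnegative function $|W(\cdot)e_i|$, which follows from Jensen's inequality and the generalized H\"older inequality in $L^{p(\cdot)}$, after first reducing to the exponent $p(\cdot)/p_-\ge1$ via $\|\one_S\|_{L^{p(\cdot)}}=\|\one_S\|_{L^{p(\cdot)/p_-}}^{1/p_-}$ when $p_-<1$. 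I expect the main obstacle to be exactly this interplay between the matrix identities and the variable-exponent estimates (in particular the reduction to exponents $\ge1$ and the handling of $\exp$ of log-averages); by contrast, the geometric choices of $P$ and $\widetilde T$, the comparison $\|\one_T\|_{L^{p(\cdot)}}\sim\|\one_{\widetilde T}\|_{L^{p(\cdot)}}$ for comparable overlapping cubes, and the $O(1)$-stability of $\fint_S\log\det W\,dy$ under passing to a comparable cube (which follows from $\det W$ being, up to a fixed power, a scalar Muckenhoupt weight) are routine.
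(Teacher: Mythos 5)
The paper does not prove this lemma; it simply cites \cite[Lemma~3.27]{yyz25} (with a pointer to \cite[Proposition~6.5]{bhyy23 1} for the $A_{p,\infty}$ analogue), so there is no in-paper argument to compare against line by line. Your proposal is, however, a correct and complete proof, and it follows the same structural route used in the matrix-$A_\infty$ literature that the paper is invoking: reduce to nested cubes via an enclosing cube $P$ of side $\sim (l(Q)\vee l(R))\bigl[1+|x_Q-x_R|/(l(Q)\vee l(R))\bigr]$ and submultiplicativity $A_QA_R^{-1}=(A_QA_P^{-1})(A_PA_R^{-1})$; for nested cubes, log-average $\|A_TA_S^{-1}\|\le\|A_TW^{-1}(y)\|\,\|W(y)A_S^{-1}\|$ over the appropriate cube and read off $\lambda^{d_2}$ (resp.\ $\lambda^{d_1}$) directly from the definitions of the upper (resp.\ lower) $\ca_{p(\cdot),\infty}$ dimension, together with the comparison $\|\one_T\|_{L^{p(\cdot)}}\sim\|\one_{\lambda S}\|_{L^{p(\cdot)}}$ from Lemma~\ref{est BQ}; and finish with the ``dual'' estimate $\exp\bigl(\fint_S\log\|W(y)A_S^{-1}\|\,dy\bigr)\lesssim1$ obtained from $\sigma_{\min}(M)\ge|\det M|\,\|M\|^{-(m-1)}$, Hadamard's inequality, and the geometric-mean-versus-$L^{p(\cdot)}$-mean inequality $\exp\bigl(\fint_S\log f\bigr)\lesssim \|\one_S\|_{L^{p(\cdot)}}^{-1}\|f\one_S\|_{L^{p(\cdot)}}$, with the convexification $p(\cdot)\mapsto p(\cdot)/p_-$ handling $p_-<1$. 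Your bookkeeping in the final recombination is correct (note $d_1,d_2\ge0$ by Proposition~\ref{dim ext}, which is what lets the $\max$ collapse to the right factor), and the minor circularity concern in part (b) --- passing from $\fint_T$ to $\fint_{\widetilde T}$ using (a) for comparable cubes --- is benign since both (a) and (b) rest on the same dual estimate. In short: correct, and essentially the same approach as the cited source.
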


\begin{definition}
Let $\mathbb{A} := \{A_Q\}_{Q\in\mathcal{Q}_+}$ be a sequence of positive definite and
self-adjoint matrices.
Then $\mathbb{A}$ is said to be \emph{strong $(d_1, d_2)$-doubling}
if \eqref{eq strong doubling} holds for any $Q,R \in \mathcal{Q}_+$.
\end{definition}

For any $j\in \mathbb{Z}_+$, $m\in (0,\infty)$,
and $x\in\mathbb{R}^n$, let
\begin{align}\label{def eta}
\eta_{j,m}(x) := \frac{2^{jn}}{(1+2^j|x|)^m}.
\end{align}
Let $p(\cdot) \in \mathcal{P}_0$ and $W\in \mathscr{A}_{p(\cdot),\infty}$.
For any $j\in \mathbb{N}$, $m\in (0,\infty)$, and $\alpha\in (0,\infty)$,
the \emph{$\alpha$-convexification $\eta$-type operator $\eta_{j,m,W}^{(\alpha)}$}
is defined by setting, for any $\vec{f} \in (L^1_{\rm loc})^m$ and $x\in\mathbb{R}^n$,
\begin{align}\label{def etaf}
\eta_{j,m,W}^{(\alpha)}\left(\vec{f}\right)(x) :=
\left[\int_{\mathbb{R}^n} \frac{2^{jn}|W(x)W^{-1}(y)\vec{f}(y)|^{\alpha}}{(1 + 2^j|x-y|)^{\alpha m}}\,dy\right]^\frac{1}{\alpha}.
\end{align}
Then we have the following   boundedness of $\eta_{j,m,W}^{(\alpha)}$ on variable Lebesgue spaces.
In what follows,  for any vector-valued function $\vec f \in (L^1_{\rm loc})^m$, we write
$\|\vec f\|_{L^{p(\cdot)}}:=\|\, |\vec f|\, \|_{L^{p(\cdot)}}.$

\begin{theorem}\label{bound eta}
Let $p(\cdot) \in \mathcal{P}_0\cap LH$
and $W \in \mathscr{A}_{p(\cdot),\infty}$.
Then there exists $\alpha \in (0,1)$,
depending on $[W]_{\mathscr{A}_{p(\cdot),\infty}}$, such that, for any given
$m\in (\frac{n}{\alpha},\infty)$ and any $\vec{f} \in (L^{p(\cdot)})^m$,
\begin{align}\label{bound eta 1}
\left\|\eta_{j,m,W}^{(\alpha)}\left(\vec{f}\right)\right\|_{L^{p(\cdot)}} \lesssim \left\|\vec{f}\right\|_{L^{p(\cdot)}},
\end{align}
where the implicit positive constant is independent of $j$ and $\vec{f}$.
\end{theorem}

To prove Theorem \ref{bound eta},
we need to introduce the $\alpha$-convexification maximal operators related to matrix weights.
Let $p(\cdot) \in \mathcal{P}_0$, $\alpha \in (0,\infty)$,
and $W \in \mathscr{A}_{p(\cdot),\infty}$.
The \emph{$\alpha$-convexification maximal operator $\mathcal{M}_{W}^{(\alpha)}$} is defined by setting,
for any $\vec{f} \in (L^1_{\rm loc})^m$ and $x\in \mathbb{R}^n$,
\begin{align}\label{def alpha max}
\mathcal{M}_{W}^{(\alpha)} \left( \vec{f} \right)(x) :=
\sup_{Q} \left[ \fint_Q \left|W(x) W^{-1}(y) \vec{f}(y) \right|^\alpha\,dy \right]^{\frac{1}{\alpha}},
\end{align}
where the supremum is taken over all cubes $Q$ in $\mathbb{R}^n$.
When $\alpha = 1$, $\mathcal{M}^{(1)}_W$ is exactly the
\emph{Christ--Goldberg maximal operator $\mathcal{M}_W$} (see \cite{g03,np25}).
Then we have the following boundedness result.

\begin{theorem}\label{bound max}
Let $p(\cdot) \in \mathcal{P}_0\cap LH$.
Then, for any $W \in \mathscr{A}_{p(\cdot),\infty}$,
there exists $\alpha \in (0,1)$, depending on $[W]_{\mathscr{A}_{p(\cdot),\infty}}$, such that,
for any $\vec{f} \in (L^{p(\cdot)})^m$,
\begin{align}\label{bdal}
\left\| \mathcal{M}_{W}^{(\alpha)} \left( \vec{f} \right) \right\|_{L^{p(\cdot)}}
\lesssim \left\|\vec{f}\right\|_{L^{p(\cdot)}},
\end{align}
where the implicit positive constant is independent of $\vec{f}$.
\end{theorem}
\begin{remark}\label{rem bound}
Let $p(\cdot) \in \mathcal{P}\cap LH$ with $p_-\in (1,\infty)$.
Then it follows from \cite[Theorem A]{np25} that
$\mathcal{M}_{W}$ is bounded on $L^{p(\cdot)}$
if and only if $W \in \mathscr{A}_{p(\cdot)}$
(see \cite[(1.2)]{cp23} for its definition).
Thus, using this, we conclude that \eqref{bdal}   holds for $\alpha = 1$
if and only if $W \in \mathscr{A}_{p(\cdot)}$.
\end{remark}

To prove Theorem \ref{bound max}, we need more properties of
$\mathscr{A}_{p(\cdot),\infty}$ weights.
The first one is the following equivalent characterization of $\mathscr{A}_{p(\cdot),\infty}$ weights.

\begin{lemma}\label{Apinfty plus}
Let $p(\cdot) \in \mathcal{P}_0\cap LH$.
Then, for any matrix weight $W$, $W \in \mathscr{A}_{p(\cdot),\infty}$
if and only if
$$ [W]_{\widetilde{\mathscr{A}}_{p(\cdot),\infty}} := \sup_{Q} \exp\left( \fint_Q \log_+ \left\| W^{-1}(y) A_Q \right\|\,dy \right) < \infty, $$
where the supremum is taken over all cubes $Q$ in $\mathbb{R}^n$,
and, moreover, $ [W]_{\widetilde{\mathscr{A}}_{p(\cdot),\infty}} \sim [W]_{\mathscr{A}_{p(\cdot),\infty}} $,
where the positive equivalence constants depend only on $p(\cdot)$, $n$, and $m$.
\end{lemma}
\begin{proof}
We first show the sufficiency.
From Lemma \ref{eq reduc M} with $M := W^{-1}(y)$ and from the fact that $\log t\leq \log_+ t$
for any $t \in (0,\infty)$,
we infer that, for any cube $Q$ in $\mathbb{R}^n$,
\begin{align*}
\exp\left( \fint_Q \log\left( \frac{1}{\|\mathbf{1}_{Q}\|} \left\|\,\left\|W(\cdot)W^{-1}(y)\right\|\mathbf{1}_Q\right\|_{L^{p(\cdot)}} \right)\,dy\right)
\lesssim \exp\left( \fint_Q \log_+ \left\| W^{-1}(y) A_Q \right\|\,dy \right),
\end{align*}
which, combined with the definition of $\mathscr{A}_{p(\cdot),\infty}$,
further implies that $W\in \mathscr{A}_{p(\cdot),\infty}$.
This finishes the proof of the sufficiency.

Next, we give the proof of the necessity.
By the definition of $\log_+$,
we find that, for any $y\in (0,\infty)$, $\log_+ y \leq \log y + \log_+ y^{-1}$
and hence, for any cube $Q$ in $\mathbb{R}^n$,
\begin{align*}
\exp\left(\fint_Q \log_+ \left\| W^{-1}(y) A_Q \right\|\,dy\right)
& \leq \exp\left(\fint_Q \log \left\| W^{-1}(y) A_Q \right\|\,dy\right)
\exp\left(\fint_Q \log_+ \left\| W^{-1}(y) A_Q \right\|^{-1}\,dy\right)\\
& =: {\rm I} \times {\rm II}.
\end{align*}
Then it follows from Lemmas \ref{pA norm} and \ref{eq reduc M} with $M:= W^{-1}(y)$ that,
for any $y\in \mathbb{R}^n$,
$$ \left\| W^{-1}(y) A_Q \right\|
= \left\|A_Q W^{-1}(y)\right\| \sim \frac{1}{\|\mathbf{1}_{Q}\|} \left\|\,\left\|W(\cdot)W^{-1}(y)\right\|\mathbf{1}_Q\right\|_{L^{p(\cdot)}}.  $$
By this and the definition of $\mathscr{A}_{p(\cdot),\infty}$,
we conclude that
\begin{align}\label{Apinfty plus eq 1}
{\rm I} \lesssim \exp\left( \fint_Q \log\left( \frac{1}{\|\mathbf{1}_{Q}\|} \left\|\,\left\|W(\cdot)W^{-1}(y)\right\|\mathbf{1}_Q\right\|_{L^{p(\cdot)}} \right)\,dy\right) \leq [W]_{\mathscr{A}_{p(\cdot),\infty}},
\end{align}
where the implicit positive constant depends only on $p(\cdot)$, $n$, and $m$.

Let $I_m$ be the identity matrix of $M_m$ and $r \in (0, p_-)$.
Then, using the fact that, for any matrices $A, B\in M_m$,
$\|AB\| \leq \|A\|\|B\|$ and Lemma \ref{pA norm},
we obtain, for any cube $Q$ in $\mathbb{R}^n$ and any $y\in \mathbb{R}^n$,
$$1 = \left\| I_m \right\| = \left\|W^{-1}(y) A_Q A_Q^{-1} W(y)\right\|
\leq \left\|W^{-1}(y) A_Q\right\| \left\|A_Q^{-1}W(y)\right\| = \left\|W^{-1}(y) A_Q\right\| \left\|W(y)A_Q^{-1}\right\| $$
and hence $ \|W^{-1}(y) A_Q\|^{-1}\leq \|W(y)A_Q^{-1}\| $.
This, combined with the well-known inequality that $\log_+ t \leq t$ for any $t \in(0,\infty)$
and Lemma \ref{est fQ} with $p(\cdot) := \frac{p(\cdot)}{r}$,
further implies that
\begin{align*}
{\rm II}
&\leq \exp\left( \fint_Q \frac1r \log_+ \left(\left\| W(y) A_Q^{-1} \right\|^r\right)\,dy\right)
\leq \exp\left( \frac1r \fint_Q \left\| W(y) A_Q^{-1} \right\|^r\,dy\right)\\
&\lesssim \left[\exp\left( \frac{1}{\|\mathbf{1}_Q\|_{L^{\frac{p(\cdot)}{r}}}} \left\|\,\left\| W(y) A_Q^{-1} \right\|^r\mathbf{1}_Q\right\|_{L^{\frac{p(\cdot)}{r}}}\right)\right]^\frac1r.
\end{align*}
Applying this with Lemma \ref{con f} and Lemma \ref{eq reduc M} with $M := A_Q^{-1}$,
we conclude that there exists a positive constant $C$,
depending only on $p(\cdot)$, $n$, and $m$, such that
\begin{align*}
{\rm II} \lesssim \exp\left( \frac{1}{\|\mathbf{1}_Q\|_{L^{p(\cdot)}}} \left\|\,\left\| W(y) A_Q^{-1} \right\|\mathbf{1}_Q\right\|_{L^{p(\cdot)}}\right)
\lesssim \exp\left(C \left\| A_Q A_Q^{-1} \right\|\right)
\lesssim 1,
\end{align*}
where the implicit positive constant depends only on $p(\cdot)$ and $n$.
This, together with \eqref{Apinfty plus eq 1}, further implies that
$[W]_{\widetilde{\mathscr{A}}_{p(\cdot),\infty}} \lesssim [W]_{\mathscr{A}_{p(\cdot),\infty}}$
and hence finishes the proof of Lemma \ref{Apinfty plus}.
\end{proof}

For any given cube $Q_0$ in $\mathbb{R}^n$,
let $\mathcal{Q}(Q_0)$ be the set of all dyadic cubes in $Q_0$.
\begin{proposition}\label{Apinfty u}
Let $p(\cdot) \in \mathcal{P}_0\cap LH$.
Then there exist positive constants $C_1$ and $C_2$,
depending only on $p(\cdot)$ and $n$, such that,
for any $W\in \mathscr{A}_{p(\cdot),\infty}$ and $u\in (0,\frac{\log 2}{C_1 + C_2 \log([W]_{\mathscr{A}_{p(\cdot),\infty}})})$,
$$
\sup_Q \fint_Q \left\|W^{-1}(x) A_Q\right\|^u\,dx < \infty,
$$
where the supremum is taken over all cubes $Q$ in $\mathbb{R}^n$.
\end{proposition}

\begin{proof}
By Lemma \ref{Apinfty plus}, we find that
$$M := \log\left( [W]_{\widetilde{\mathscr{A}}_{p(\cdot),\infty}} \right)
= \sup_Q \fint_Q \log_+ \left\| W^{-1}(y) A_Q \right\| \,dy < \infty. $$
Let cube $Q \subset \mathbb{R}^n$ be fixed.
Then, since $\fint_Q \log_+ \|W^{-1}(y)A_Q\|\,dy \leq M$,
it follows that there exists a collection of pairwise disjoint maximal
dyadic subcubes $\{Q_j\}_{j\in\mathbb{Z}_+}\subset \mathcal{Q}(Q)$ such that, for any $j\in\mathbb{Z}_+$,
\begin{align}\label{Apinfty u 1}
\fint_{Q_j} \log_+ \left\|W^{-1}(y) A_Q\right\|\,dy > 2M.
\end{align}
Let $\Omega_1 := \bigcup_{j\in\mathbb{Z}_+} Q_j$.
Then, using the disjointness of cubes in $\{Q_j\}_{j\in\mathbb{Z}_+}$,
\eqref{Apinfty u 1}, and the definition of $M$, we find that
\begin{align*}
|\Omega_1| = \sum_{j=0}^\infty |Q_j|
&< \frac{1}{2M} \sum_{j=0}^\infty \int_{Q_j} \log_+ \left\|W^{-1}(y) A_Q\right\|\,dy\nonumber \\
&\leq \frac{1}{2M} \int_Q \log_+ \left\|W^{-1}(y) A_Q\right\|\,dy
\leq \frac{1}{2M} M|Q| = \frac12 |Q|.
\end{align*}
From the definition of $M$, we deduce that
$\fint_{Q} \log_+ \|W^{-1}(y) A_Q\|\,dy \leq M$
and hence, for any $j\in\mathbb{Z}_+$, $Q_j\subsetneqq Q$.
Now, let $\widehat{Q}_i := \min\{R\in \mathcal{Q}(Q):\ Q_i\subsetneqq R\}$.
Then, by the maximality of $Q_j$,
we find that, for any $j\in\mathbb{Z}_+$,
\begin{align}\label{Apinfty u 7}
\fint_{Q_j} \log_+ \|W^{-1}(y) A_Q\|\,dy
\leq 2^n \fint_{\widehat{Q_j}} \log_+ \|W^{-1}(y) A_Q\|\,dy
\leq 2^{n+1} M.
\end{align}
Observe that, by the fact that, for any matrices $A,B\in M_m$, $\|AB\| \leq \|A\|\|B\|$
and Lemmas \ref{est fQ} and \ref{pA norm},
we obtain, for any $j\in\mathbb{Z}_+$,
\begin{align*}
\log_+ \left\|A_{Q_j}^{-1}A_Q\right\| &= \fint_{Q_j} \log_+ \left\|A_{Q_j}^{-1}A_Q\right\|\,dy
\leq \fint_{Q_j} \log_+ \left( \left\|A_{Q_j}^{-1}W(y)\right\|\left\|W^{-1}(y)A_Q\right\|\right)\,dy\\
& = \fint_{Q_j} \log_+ \left( \left\|W(y)A_{Q_j}^{-1}\right\|\left\|W^{-1}(y)A_Q\right\|\right)\,dy\nonumber\\
& = \fint_{Q_j} \log_+ \left\|W(y)A_{Q_j}^{-1}\right\|\,dy + \fint_{Q_j}\log_+\left\|W^{-1}(y)A_Q\right\|\,dy
\end{align*}
Using this, \eqref{Apinfty u 7},
and Lemma \ref{eq reduc M} with $M := A_{Q_j}^{-1}$
and using the fact that $\log_+ t \leq t$ for any $t\in(0,\infty)$,
we find that
\begin{align}\label{Apinfty u 5}
\log_+ \left\|A_{Q_j}^{-1}A_Q\right\|
& \leq \fint_{Q_j} \log_+ \left\|W(y)A_{Q_j}^{-1}\right\|\,dy + 2^{n+1}M\nonumber\\
&\lesssim \frac{1}{\|\mathbf{1}_{Q_j}\|_{L^{p(\cdot)}}} \left\|\,\left\| W(y) A_{Q_j}^{-1} \right\|\mathbf{1}_{Q_j}\right\|_{L^{p(\cdot)}} + 2^{n+1}M\nonumber\\
&\sim \left\| A_{Q_j} A_{Q_j}^{-1} \right\| + 1
\lesssim 1 + 2^{n+1} M.
\end{align}
Notice that,
for any $y\in Q_j$ with $j\in\mathbb{Z}_+$,
\begin{align}\label{Apinfty u 4}
\log_+ \left\|W^{-1}(y)A_Q\right\| \leq \log_+ \left(\left\|W^{-1}(y)A_Q\right\| \left\|A_{Q_j}^{-1}A_Q\right\| \right)
\leq \log_+ \left\|W^{-1}(y)A_{Q_j}\right\| + \log_+ \left\|A_{Q_j}^{-1}A_Q\right\|.
\end{align}
Moreover, from the definition of $\{Q_j\}_{j\in\mathbb{Z}_+}$
and the Lebesgue differential theorem,
we infer that, for any $x\in Q\setminus (\bigcup_{j\in\mathbb{Z}_+} Q_j)$,
$\log_+ \|W^{-1}(x)A_Q\| \leq 2M. $
Thus, combining this with \eqref{Apinfty u 4} and \eqref{Apinfty u 5},
we conclude that there exists a positive constant $C := \widetilde{C}(1 + 2^{n+1}M)$,
where $\widetilde{C}$ is the positive constant appearing in \eqref{Apinfty u 5},
independent of $Q$, such that, for any $x\in Q$,
\begin{align}\label{Apinfty u 6}
\log_+ \left\|W^{-1}(x)A_Q\right\| \mathbf{1}_Q(x)
\leq C\mathbf{1}_Q(x) + \sum_{j=0}^\infty \log_+ \left\|W^{-1}(x)A_{Q_j}\right\| \mathbf{1}_{Q_j}(x).
\end{align}

Observing that each term on the right-hand side of \eqref{Apinfty u 6}
has the same expression as the term on the left-hand side with $Q$ replaced by $Q_j$,
we can iterate these estimations.
Thus, after infinitely many iterations,
we conclude that, for any $y \in \mathbb{R}^n$,
\begin{align*}
\log_+ \left\|W(y)^{-1}A_Q\right\| \leq C \sum_{k=0}^\infty \mathbf{1}_{\Omega_k}(y)
 = C \sum_{k=0}^\infty (k + 1) \mathbf{1}_{\Omega_k\setminus \Omega_{k+1}}(y),
\end{align*}
where $\Omega_0 := Q$ and, in general, for any $k\in\mathbb{N}$,
$\Omega_k \subset \Omega_{k-1}$ is the union of cubes with
$|\Omega_k| \leq |\Omega_{k-1}| \leq \cdots \leq 2^{-k}|Q|$.
Hence, applying these with letting $u \in (0, \frac{\log2}{C})$,
we obtain
\begin{align*}
\fint_Q \left\| W^{-1}(y) A_Q \right\|^u\,dy
& = \fint_Q \exp\left(u\log_+ \left\| W^{-1}(y) A_Q \right\|\right)\,dy
 \leq \fint_{Q} \sum_{k=0}^\infty e^{u(k + 1)C} \mathbf{1}_{\Omega_k\setminus \Omega_{k+1}}(y) \,dy\\
&\leq \sum_{k=0}^\infty e^{u(k + 1)C} 2^{-k}
 = e^{uC} \sum_{k=0}^\infty e^{k(Cu - \log 2)}< \infty,
\end{align*}
which completes the proof of Proposition \ref{Apinfty u}.
\end{proof}
Now, let $t \in \{0,\frac13\}^n$.
Then the dyadic grid $\mathcal{Q}^t$ is defined by setting
$$ \mathcal{Q}^t := \left\{ 2^k\left([0,1)^n + m + (-1)^kt\right):\ k\in\mathbb{Z},
\ m\in\mathbb{Z}^n \right\}. $$
The following result is known as the ``$\frac13$''-trick
(see, for instance, \cite[Lemma 4.3]{np25}).

\begin{lemma}\label{Q Qt}
For any cube $Q$ in $\mathbb{R}^n$,
there exist $t \in \{0,\frac13\}^n$ and $Q_t \in \mathcal{Q}^t$
such that $Q\subset Q_t$ and $l(Q_t) \leq 6l(Q)$.
\end{lemma}

For any dyadic grid $\mathcal{Q}^t$, we define the
corresponding \emph{$\alpha$-convexification dyadic maximal operator
$\mathcal{M}_{W, \mathcal{Q}^t}^{(\alpha)}$} by setting,
for any $f\in L^{1}_{\rm loc}$ and $x \in \mathbb{R}^n$,
$$\mathcal{M}_{W, \mathcal{Q}^t}^{(\alpha)} (f)(x) :=
\sup_{x\in Q\in \mathcal{Q}^t} \left[ \fint_Q \left|W(x) W^{-1}(y) \vec{f}(y) \right|^\alpha\,dy \right]^{\frac{1}{\alpha}}. $$
Then, by  Lemma \ref{Q Qt},
we obtain the following result;
we omit the details here.

\begin{lemma}\label{mm2}
Let $p(\cdot) \in \mathcal{P}_0$, $W \in \mathscr{A}_{p(\cdot),\infty}$, and $\alpha \in (0,\infty)$.
Then, for any $f \in L^{p(\cdot)}$ and $x \in \mathbb{R}^n$,
\begin{align*}
\mathcal{M}_{W}^{\alpha} (f)(x) \leq 6^n \sum_{t\in \{0,\frac13\}^n} \mathcal{M}_{W, \mathcal{Q}^t}^{(\alpha)} (f)(x).
\end{align*}
\end{lemma}

Next, for any given cube $Q_0$ in $\mathbb{R}^n$ and any $\alpha \in (0,\infty)$,
the \emph{local $\alpha$-convexification dyadic maximal operator $\mathcal{M}_{W,Q_0}^{d,(\alpha)}$
on $Q_0$} is defined by setting,
for any $\vec{f} \in \mathscr{M}^m$ and $x \in Q_0$,
\begin{align*}
\mathcal{M}_{W,Q_0}^{d,(\alpha)} \left( \vec{f} \right)(x) :=
\sup_{x\in Q\in \mathcal{Q}(Q_0)} \left[ \fint_Q \left|W(x) W^{-1}(y) \vec{f}(y) \right|^\alpha\,dy \right]^{\frac{1}{\alpha}}
\end{align*}
and, for any $x\in \mathbb{R}^n\setminus Q_0$,
$\mathcal{M}_{W,Q_0}^{d,(\alpha)} ( \vec{f} )(x) := 0$.
Moreover, let $\mathcal{M}^{d,(\alpha)}_{Q_0}$ denote the \emph{local $\alpha$-convexification dyadic maximal operator on $Q_0$}
in the scalar setting, that is, for any $f \in L^1_{\rm loc}$ and $x \in Q_0$,
\begin{align*}
\mathcal{M}^{d,(\alpha)}_{Q_0}(f)(x) := \sup_{x\in Q\in \mathcal{Q}(Q_0)} \left[\fint_Q |f(y)|^\alpha\,dy\right]^{\frac{1}{\alpha}}
\end{align*}
and, for any $x \in \mathbb{R}^n\setminus Q_0$,
$\mathcal{M}^{d,(\alpha)}_{Q_0}(f)(x) := 0$.

We now recall the concept of the sparse family (see, for instance, \cite[Definition 2.2]{nptv17}).

\begin{definition}
Let $\eta\in (0,1)$.
A collection of cubes $\mathscr{S}$ is called the \emph{$\eta$-sparse family}
if there exists a disjoint collection of measurable sets $E_Q \subset Q$ with $Q \in \mathscr{S}$
such that $|E_Q| \geq \eta |Q|$ for any $Q \in \mathscr{S}$.
\end{definition}

\begin{lemma}\label{Apinfty spa}
Let $p(\cdot) \in \mathcal{P}_0\cap LH$,
$W\in \mathscr{A}_{p(\cdot),\infty}$, and $\alpha \in (0,1)$.
Then, for any given cube $Q_0$ in $\mathbb{R}^n$
and any given $\vec{f} \in (L^{p(\cdot)})^m$,
there exists one $\frac12$-sparse family $\mathscr{S} \subset \mathcal{Q}(Q_0)$
such that, for any $r \in (0,1]$ and almost every $x \in Q_0$,
\begin{align*}
\mathcal{M}_{W,Q_0}^{d,(\alpha)} \left( \vec{f} \right)(x)
\lesssim \left\{ \sum_{R\in \mathscr{S}} \left\|W(x) A_R^{-1}\right\|^r
\left[\fint_{R} \left|A_R W^{-1}(y)\vec{f}(y)\right|^\alpha\,dy \right]^\frac{r}{\alpha}\right\}^\frac1r,
\end{align*}
where the implicit positive constant is independent of $Q_0$, $\mathscr{S}$, and $\vec{f}$.
\end{lemma}

\begin{proof}
Let
$$\Omega := \left\{x\in Q_0:\ \mathcal{M}^{d,(\alpha)}_{Q_0}\left(\left| A_{Q_0} W^{-1}\vec{f} \right|\right)(x) >
\left[2\fint_{Q_0} \left| A_{Q_0} W^{-1}(y) \vec{f}(y) \right|^\alpha\,dy\right]^{\frac{1}{\alpha}}\right\}.$$
Then, by the definition of $\mathcal{M}^{d,(\alpha)}_{Q_0}$,
for any $x \in \Omega$,
there exists a dyadic cube $Q_x \in \mathcal{Q}(Q_0)$
such that
$$\left[\fint_{Q_x} \left| A_{Q_0} W^{-1}(y) \vec{f}(y) \right|^\alpha\,dy\right]^{\frac{1}{\alpha}}
 > \left[2\fint_{Q_0} \left| A_{Q_0} W^{-1}(y) \vec{f}(y) \right|^\alpha\,dy\right]^{\frac{1}{\alpha}}. $$
Thus, using the definition of dyadic cubes,
we find that there exists a sequence of pairwise disjoint maximal dyadic cubes
$\{R_j\}_{j\in\mathbb{Z}_+} \subset \mathcal{Q}(Q_0)$
such that $\Omega = \bigcup_{j\in\mathbb{Z}_+} R_j$
and, for any $j\in\mathbb{Z}_+$,
\begin{align*}
\left[\fint_{R_j} \left| A_{Q_0} W^{-1}(y) \vec{f}(y) \right|^\alpha\,dy\right]^{\frac{1}{\alpha}}
 > \left[2\fint_{Q_0} \left| A_{Q_0} W^{-1}(y) \vec{f}(y) \right|^\alpha\,dy\right]^{\frac{1}{\alpha}}.
\end{align*}
Hence, from this and the disjointness of $\{R_j\}_{j\in\mathbb{Z}_+}$, we deduce that
\begin{align*}
|\Omega| &= \sum_{j\in\mathbb{Z}_+} |R_j|
< \left[2\fint_{Q_0} \left| A_{Q_0} W^{-1}(y) \vec{f}(y) \right|^\alpha\,dy\right]^{-1}
\sum_{j\in\mathbb{Z}_+} \int_{R_j} \left| A_{Q_0} W^{-1}(y) \vec{f}(y) \right|^\alpha\,dy\nonumber\\
& = \left[2\fint_{Q_0} \left| A_{Q_0} W^{-1}(y) \vec{f}(y) \right|^\alpha\,dy\right]^{-1}
\int_{\bigcup_{j\in\mathbb{Z}_+} R_j} \left| A_{Q_0} W^{-1}(y) \vec{f}(y) \right|^\alpha\,dy\\
&\leq \left[2\fint_{Q_0} \left| A_{Q_0} W^{-1}(y) \vec{f}(y) \right|^\alpha\,dy\right]^{-1}
\int_{Q_0} \left| A_{Q_0} W^{-1}(y) \vec{f}(y) \right|^\alpha\,dy
\leq \frac12 |Q_0|.
\end{align*}
Notice that, by the definition of $\Omega$,
we obtain, for any $x \in Q_0\setminus \Omega$,
\begin{align*}
\mathcal{M}^{d,(\alpha)}_{Q_0}\left(\left| A_{Q_0} W^{-1}\vec{f} \right|\right)(x) \leq
\left[2 \fint_{Q_0} \left| A_{Q_0} W^{-1}(y) \vec{f}(y) \right|^\alpha\,dy\right]^{\frac{1}{\alpha}}.
\end{align*}
Applying this with the definition of $\mathcal{M}_{W,Q_0}^{d,(\alpha)}$,
we conclude that, for any $x \in Q_0\setminus \Omega$,
\begin{align}\label{Apinfty spa 2}
\mathcal{M}_{W,Q_0}^{d,(\alpha)}\left(\vec{f}\right) (x)
&= \sup_{x \in Q\in \mathcal{Q}(Q_0)} \left[ \fint_Q \left|W(x) W^{-1}(y) \vec{f}(y) \right|^\alpha\,dy \right]^{\frac{1}{\alpha}}\nonumber\\
& \leq \sup_{x \in Q\in \mathcal{Q}(Q_0)} \left\|W(x) A_{Q_0}^{-1}\right\| \left[ \fint_Q \left|A_{Q_0} W^{-1}(y) \vec{f}(y) \right|^\alpha\,dy \right]^{\frac{1}{\alpha}}\nonumber\\
& \leq \left\|W(x) A_{Q_0}^{-1}\right\| \left[2 \fint_{Q_0} \left|A_{Q_0} W^{-1}(y) \vec{f}(y) \right|^\alpha\,dy \right]^{\frac{1}{\alpha}}.
\end{align}
Now, for any $j \in \mathbb{Z}_+$,
let
$F_j := \{x \in R_j :\ \mathcal{M}_{W,Q_0}^{d,(\alpha)}(\vec{f}) (x) \neq \mathcal{M}_{W,R_j}^{d,(\alpha)}(\vec{f}) (x)\}. $
By the definition of $\mathcal{M}_{W,Q_0}^{d,(\alpha)}$ and the fact that $R_j \subset Q_0$,
we find that, for any $j\in\mathbb{Z}_+$ and $x \in F_j$,
$\mathcal{M}_{W,Q_0}^{d,(\alpha)}(\vec{f}) (x) > \mathcal{M}_{W,R_j}^{d,(\alpha)}(\vec{f}) (x), $
which further implies that, in this case,
$$\mathcal{M}_{W,Q_0}^{d,(\alpha)}\left(\vec{f}\right) (x)
 = \sup_{R_j \subset Q \in \mathcal{Q}(Q_0)} \left[ \fint_Q \left|W(x) W^{-1}(y) \vec{f}(y) \right|^\alpha\,dy \right]^{\frac{1}{\alpha}}. $$
From this and the maximality of $R_j$,
we infer that, for any $j\in\mathbb{Z}_+$ and $x \in R_j$,
\begin{align*}
\mathcal{M}_{W,Q_0}^{d,(\alpha)}\left(\vec{f}\right) (x)
& = \sup_{R_j \subset Q \in \mathcal{Q}(Q_0)} \left[ \fint_Q \left|W(x) W^{-1}(y) \vec{f}(y) \right|^\alpha\,dy \right]^{\frac{1}{\alpha}}\\
&\leq \sup_{R_j \subset Q \in \mathcal{Q}(Q_0)} \left\|W(x)A_{Q_0}^{-1}\right\| \left[ \fint_Q \left|A_{Q_0} W^{-1}(y) \vec{f}(y) \right|^\alpha\,dy \right]^{\frac{1}{\alpha}}\\
&\leq \left\|W(x) A_{Q_0}^{-1}\right\| \left[2 \fint_{Q_0} \left|A_{Q_0} W^{-1}(y) \vec{f}(y) \right|^\alpha\,dy \right]^{\frac{1}{\alpha}}.
\end{align*}
Using this, the disjointness of $\{R_j\}_{j\in \mathbb{Z}_+}$,
and \eqref{Apinfty spa 2},
we conclude that, for any $x \in Q_0$ and $r\in (0,1]$,
\begin{align}\label{Apinfty spa 3}
\left[\mathcal{M}_{W,Q_0}^{d,(\alpha)}\left(\vec{f}\right)(x)\right]^r \mathbf{1}_{Q_0}(x)
&= \left[\mathcal{M}_{W,Q_0}^{d,(\alpha)}\left(\vec{f}\right) (x)\right]^r \mathbf{1}_{Q_0 \setminus \Omega}(x)
+ \left[\mathcal{M}_{W,Q_0}^{d,(\alpha)}\left(\vec{f}\right) (x)\right]^r \mathbf{1}_{\bigcup_{j\in\mathbb{Z}_+} F_j}(x)\nonumber\\
&\quad + \sum_{j \in \mathbb{Z}_+} \left[\mathcal{M}_{W,Q_0}^{d,(\alpha)}\left(\vec{f}\right) (x)\right]^r \mathbf{1}_{R_j}(x)\nonumber\\
& \leq 2^{1 + \frac{r}{\alpha}} \left\|W(x) A_{Q_0}^{-1}\right\|^r \left[ \fint_{Q_0} \left|A_{Q_0} W^{-1}(y) \vec{f}(y) \right|^\alpha\,dy \right]^{\frac{r}{\alpha}}\mathbf{1}_{Q_0}(x)\nonumber\\
& \quad + \sum_{j \in \mathbb{Z}_+} \left[\mathcal{M}_{W,R_j}^{d,(\alpha)}\left(\vec{f}\right) (x)\right]^r \mathbf{1}_{R_j}(x).
\end{align}
Notice that each term of the second factor on the right-hand side of \eqref{Apinfty spa 3}
has the same expression as the term on the left-hand side with $Q_0$ replaced by $R_j$.
Iterating all the above estimations, we obtain a series of collections of dyadic cubes $\{\{R^{(k)}_j\}_{j\in\mathbb{Z}_+}\}_{k\in\mathbb{Z}_+}$
with $R^{(0)}_j := Q_0$
such that, for any $k,j\in\mathbb{Z}_+$ with letting $\mathcal{J}^{k}_j := \{i\in\mathbb{Z}_+: R^{(k+1)}_i \subset R^{(k)}_j\}$,
$|\bigcup_{i\in \mathcal{J}^{k}_j}  R^{(k+1)}_i| < \frac12 |R^{(k)}_j| $ and, for any $x \in R^{(k)}_j$,
\begin{align*}
\left[\mathcal{M}_{W,R^{(k)}_j}^{d,(\alpha)}\left(\vec{f}\right)(x)\right]^r\mathbf{1}_{R^{(k)}_j}(x)
&\leq 2^{1 + \frac{r}{\alpha}} \left\|W(x) A_{R^{(k)}_j}^{-1}\right\|^r \left[ \fint_{Q_0} \left|A_{R^{(k)}_j} W^{-1}(y) \vec{f}(y) \right|^\alpha\,dy \right]^{\frac{r}{\alpha}}\mathbf{1}_{R^{(k)}_j}(x)\\
&\quad + \sum_{i \in \mathcal{J}^{k}_j} \left[\mathcal{M}_{W,R^{(k+1)}_i}^{d,(\alpha)}\left(\vec{f}\right) (x)\right]^r \mathbf{1}_{R^{(k+1)}_i}(x).
\end{align*}
Thus, using this and \eqref{Apinfty spa 3},
we conclude that, for almost every $x\in Q_0$,
\begin{align*}
\mathcal{M}_{W,Q_0}^{d,(\alpha)} \left( \vec{f} \right)(x)
\lesssim \left\{\sum_{R\in \mathscr{S}} \left\|W(x) A_R^{-1}\right\|^r
\left[\fint_{R} \left|A_R W^{-1}(y)\vec{f}(y)\right|^\alpha\,dy \right]^\frac{r}{\alpha} \mathbf{1}_{R}(x) \right\}^\frac1r,
\end{align*}
which completes the proof of Lemma \ref{Apinfty spa}.
\end{proof}

For any variable exponent $q(\cdot)$,
the \emph{variable maximal operator $ \mathcal{M}_{q(\cdot)} $}
is defined by setting, for any $f \in L^{1}_{\rm loc}$ and $x \in \mathbb{R}^n$,
\begin{align*}
\mathcal{M}_{q(\cdot)}(f)(x) := \sup_{x\in Q} \frac{1}{\|\mathbf{1}_Q\|_{L^{q(\cdot)}}} \left\|f\mathbf{1}_Q\right\|_{L^{q(\cdot)}},
\end{align*}
where the supremum is taken over all cubes $Q$ containing $x$.
The following is precisely \cite[Theorem 7.3.27]{dhr17}.
\begin{lemma}\label{Mq bound}
Let $p(\cdot),q(\cdot),r(\cdot)\in \mathcal{P}\cap LH$
such that $p(\cdot) = r(\cdot)q(\cdot)$ and $r_- \in (1,\infty)$.
Then, for any $f\in L^{p(\cdot)}$,
\begin{align*}
\left\|\mathcal{M}_{q(\cdot)}(f)\right\|_{L^{p(\cdot)}} \lesssim \left\|f\right\|_{L^{p(\cdot)}},
\end{align*}
where the implicit positive constant depends only on $p(\cdot),q(\cdot)$, and $r(\cdot)$.
\end{lemma}

Let $p(\cdot) \in \mathcal{P}_0$ and $r\in (0,1]$ and
let $\mathscr{S}$ be an $\eta$-sparse family in $\mathbb{R}^n$
and, for any cube $Q$ in $\mathbb{R}^n$,
$\lambda_Q$ be a non-negative function.
Define  $T^{(r)}_{p(\cdot), \lambda,\mathscr{S}}$  by setting,
for any $f \in L^{1}_{\rm loc}$ and $x \in \mathbb{R}^n$,
$$ T_{p(\cdot),\lambda,\mathscr{S}}^{(r)}\left(f\right)(x)
:= \left[\sum_{Q\in \mathscr{S}} \lambda^r_Q(x) T_{p(\cdot),Q}^{r} \left(f\right) \mathbf{1}_Q(x)\right]^\frac1r, $$
where $T_{p(\cdot),Q}(f) := \frac{1}{\|\mathbf{1}_Q\|_{L^{p(\cdot)}}} \|{f}\mathbf{1}_Q\|_{L^{p(\cdot)}}$.
We have the following conclusion.

\begin{lemma}\label{Tf bound}
Let $p(\cdot) \in \mathcal{P}_0\cap LH$, $r\in (0,p_-)$,
$\alpha \in (0,1)$, and $\eta \in (0,1)$ and, for any cube $Q$ in $\mathbb{R}^n$,
let $\lambda_Q$ be a non-negative function.
If there exists $\theta \in (1,\infty)$ such that
\begin{align}\label{Tf bound 1}
\sup_{Q} \frac{1}{\|\mathbf{1}_Q\|_{L^{\theta p(\cdot)}}} \left\|\lambda_Q \mathbf{1}_Q\right\|_{L^{\theta p(\cdot)}} < \infty,
\end{align}
where the supremum is taken over all cubes $Q$ in $\mathbb{R}^n$,
then, for any $\eta$-sparse family $\mathscr{S}$ of $\mathbb{R}^n$
and for any $f \in L^{p(\cdot)}$,
\begin{align*}
\left\|T_{\alpha p(\cdot),\lambda,\mathscr{S}}^{(r)}\left(f\right)\right\|_{L^{p(\cdot)}}
\lesssim \left\|f\right\|_{L^{p(\cdot)}},
\end{align*}
where the implicit positive constant is independent of $\mathscr{S}$ and $f$.
\end{lemma}

\begin{proof}
Observe that $\frac{p(\cdot)}{r} \in \mathcal{P} \cap LH$.
By this and Lemmas \ref{con f} and \ref{fg Lp} with $p(\cdot) := \frac{p(\cdot)}{r}$, we find that
\begin{align}\label{Tf bound 2}
\left\|T_{\alpha p(\cdot),\lambda,\mathscr{S}}^{(r)}\left(f\right)\right\|^r_{L^{p(\cdot)}}
\sim \sup_{\|g\|_{L^{(\frac{p(\cdot)}{r})'}}\leq 1} \int_{\mathbb{R}^n} \left[T_{\alpha p(\cdot),\lambda,\mathscr{S}}^{(r)}\left(f\right)(x)\right]^r |g(x)|\,dx.
\end{align}
Now, fix $g \in L^{p'(\cdot)}$ with $\|g\|_{L^{(\frac{p(\cdot)}{r})'}}\leq 1$.
By the definition of $T_{p(\cdot),Q}(f)$ and Lemma \ref{con f},
we find that, for any $r\in (0,p_-)$,
$$\left[T_{p(\cdot),Q}(f)\right]^r
= \left[\frac{1}{\|\mathbf{1}_Q\|_{L^{p(\cdot)}}} \|{f}\mathbf{1}_Q\|_{L^{p(\cdot)}} \right]^r
= \frac{1}{\|\mathbf{1}_Q\|_{L^{\frac{p(\cdot)}{r}}}} \||f|^r\mathbf{1}_Q\|_{L^{\frac{p(\cdot)}{r}}}
= T_{\frac{p(\cdot)}{r},Q}\left(|f|^r\right). $$
Then, from this, the definition of $T_{p(\cdot),\alpha,\lambda,\mathscr{S}}$,
and Lemma \ref{Holder} with $p(\cdot) := \frac{\theta p(\cdot)}{r}$,
we deduce that
\begin{align*}
&\int_{\mathbb{R}^n} \left[T_{\alpha p(\cdot),\lambda,\mathscr{S}}^{(r)}\left(f\right)(x)\right]^r |g(x)|\,dx\\
&\quad = \int_{\mathbb{R}^n} \sum_{Q\in \mathscr{S}} \left[\lambda_Q(x)\right]^r \left[T_{\alpha p(\cdot),Q} \left({f}\right)\right]^r \mathbf{1}_Q(x) |g(x)|\,dx\\
&\quad = \sum_{Q\in \mathscr{S}} \left[T_{\alpha p(\cdot),Q} \left(f\right)\right]^r \int_{Q} \left[\lambda_Q(x)\right]^r |g(x)|\,dx\\
&\quad \lesssim \sum_{Q\in \mathscr{S}} T_{\frac{\alpha p(\cdot)}{r},Q} \left(|f|^r\right) \left\|\lambda^r_Q \mathbf{1}_Q\right\|_{L^{\frac{\theta p(\cdot)}{r}}} \left\|g \mathbf{1}_Q\right\|_{L^{(\frac{\theta p(\cdot)}{r})'}}.
\end{align*}
By this and Lemma \ref{est Q} with $p(\cdot) := \frac{\theta p(\cdot)}{r}$
and by Lemmas \ref{con f} and \eqref{Tf bound 1},
we deduce that
\begin{align*}
&\int_{\mathbb{R}^n} \left[T_{\alpha p(\cdot),\lambda,\mathscr{S}}^{(r)}\left(f\right)(x)\right]^r |g(x)|\,dx\\
&\quad \sim \sum_{Q\in \mathscr{S}} T_{\frac{\alpha p(\cdot)}{r},Q} \left(|f|^r\right) \frac{1}{\|\mathbf{1}_Q\|_{L^{\frac{\theta p(\cdot)}{r}}}} \left\|\lambda^r_Q \mathbf{1}_Q\right\|_{L^{\frac{\theta p(\cdot)}{r}}}
\frac{1}{\|\mathbf{1}_Q\|_{L^{(\frac{\theta p(\cdot)}{r})'}}} \left\|g \mathbf{1}_Q\right\|_{L^{(\frac{\theta p(\cdot)}{r})'}} |Q| \\
&\quad \sim \sum_{Q\in \mathscr{S}} T_{\frac{\alpha p(\cdot)}{r},Q} \left(|f|^r\right) \left[\frac{1}{\|\mathbf{1}_Q\|_{L^{\theta p(\cdot)}}} \left\|\lambda_Q \mathbf{1}_Q\right\|_{L^{\theta p(\cdot)}}\right]^r
\frac{1}{\|\mathbf{1}_Q\|_{L^{(\frac{\theta p(\cdot)}{r})'}}} \left\|g \mathbf{1}_Q\right\|_{L^{(\frac{\theta p(\cdot)}{r})'}} |Q| \\
&\quad \lesssim \sum_{Q\in \mathscr{S}} T_{\frac{\alpha p(\cdot)}{r},Q} \left(|f|^r\right) T_{(\frac{\theta p(\cdot)}{r})',Q} \left(g\right) |Q|.
\end{align*}
Using this, the assumption that $\mathscr{S}$ is $\eta$-sparse,
and Lemma \ref{Holder} with $p(\cdot) := \frac{p(\cdot)}{r}$ and
using Lemma \ref{Mq bound},
we conclude that
\begin{align*}
\int_{\mathbb{R}^n} \left[T_{\alpha p(\cdot),\lambda,\mathscr{S}}^{(r)}\left(f\right)(x)\right]^r |g(x)|\,dx
& \lesssim \sum_{Q\in \mathscr{S}} |E_Q| T_{\frac{\alpha p(\cdot)}{r},Q} \left(|f|^r\right) T_{(\frac{\theta p(\cdot)}{r})',Q} \left(g\right) \\
& \leq \int_{\mathbb{R}^n} \mathcal{M}_{\frac{\alpha p(\cdot)}{r}}(|f|^r)(x) \mathcal{M}_{(\frac{\theta p(\cdot)}{r})'}(g)(x)\,dx\\
& \lesssim \left\|\mathcal{M}_{\frac{\alpha p(\cdot)}{r}}(|f|^r)\right\|_{L^{\frac{p(\cdot)}{r}}} \left\|\mathcal{M}_{(\frac{\theta p(\cdot)}{r})'}(g)\right\|_{L^{(\frac{p(\cdot)}{r})'}}
\lesssim \left\|f\right\|^r_{L^{p(\cdot)}} \left\|g\right\|_{L^{(\frac{p(\cdot)}{r})'}},
\end{align*}
which, together with \eqref{Tf bound 2},
further gives that
$\|T_{\alpha p(\cdot),\lambda,\mathscr{S}}^{(r)}(f)\|_{L^{p(\cdot)}}
\lesssim \|f\|_{L^{p(\cdot)}}. $
This finishes the proof of Lemma \ref{Tf bound}.
\end{proof}
The following is the reverse H\"older inequality for $\mathscr{A}_{p(\cdot),\infty}$ weights
in variable Lebesgue spaces,
which is exactly \cite[Theorem 3.15]{yyz25}.
\begin{lemma}\label{reverse Holder}
Let $p(\cdot)\in\mathcal{P}_0\cap LH$.
Then, for any $W \in \mathscr{A}_{p(\cdot),\infty}$,
there exist positive constants $C$, $C_1$, $C_2$,
$A$, and $A_1$,
depending only on $p(\cdot)$ and $n$,
such that, for any $r \in (1,r_w]$ with
$$ r_w := 1+ \frac{1}{C_1 [W]_{\mathscr{A}_{p(\cdot),\infty}}^{A_1} 2^{C_2[W]_{\mathscr{A}_{p(\cdot),\infty}}}}, $$
any cube $Q$ in $\mathbb{R}^n$, and any matrix $M \in M_{m}$,
\begin{align*}
\frac{1}{\|\mathbf{1}_{Q}\|_{L^{rp(\cdot)}}} \left\|\left\| W(\cdot)M \right\|\mathbf{1}_Q\right\|_{L^{rp(\cdot)}}
\leq C [W]_{\mathscr{A}_{p(\cdot),\infty}}^{A}
\frac{1}{\|\mathbf{1}_{Q}\|_{L^{p(\cdot)}}} \left\| \left\| W(\cdot)M \right\|\mathbf{1}_Q\right\|_{L^{p(\cdot)}}.
\end{align*}
\end{lemma}

Based on the previous results, we now give the proof of Theorem \ref{bound max}.

\begin{proof}[Proof of Theorem \ref{bound max}]
Let $r \in (0,p_-) \cap (0,1]$ and
$\alpha := \frac{u}{(\frac{p_-}{r})'}$,
where $u$ is the same as in Proposition \ref{Apinfty u}.
Without loss of generality, we may assume that $\alpha \in (0,r)$.
Indeed, by H\"older's inequality,
for any $\widetilde{u}\in (0,u)$ and any cube $Q$ in $\mathbb{R}^n$,
\begin{align*}
\fint_Q \left\|W^{-1}(y) A_Q\right\|^{\widetilde{u}}\,dy
\leq \left[\fint_Q \left\|W^{-1}(y) A_Q\right\|^{u}\,dy \right]^{\frac{\widetilde{u}}{u}}
\end{align*}
and hence $\widetilde{u}$ also satisfies Proposition \ref{Apinfty u}.
Thus, we can choose $u$ small enough such that $\alpha\in (0,r)$.

We first fix a cube $Q_0$ in $\mathbb{R}^n$.
From Lemma \ref{Apinfty spa}, it follows that
there exists a $\frac12$-sparse family $\mathscr{S}$ such that,
for almost every $x\in\mathbb{R}^n$,
\begin{align}\label{bound max 1}
\mathcal{M}_{W,Q_0}^{d,(r \alpha)} \left( \vec{f} \right)(x)
&\lesssim \left\{\sum_{R\in \mathscr{S}} \left\|W(x) A_R^{-1}\right\|^r
\left[\fint_{R} \left|A_R W^{-1}(y)\vec{f}(y)\right|^{\alpha}\,dy \right]^\frac{r}{\alpha} \mathbf{1}_R(x) \right\}^\frac1r \nonumber \\
&\leq \left\{\sum_{R\in \mathscr{S}} \left\|W(x) A_R^{-1}\right\|^r
\left[\fint_{R} \left\|A_R W^{-1}(y)\right\|^{\alpha} \left|\vec{f}(y)\right|^{\alpha}\,dy \right]^\frac{r}{\alpha} \mathbf{1}_R(x) \right\}^\frac1r.
\end{align}
Notice that $\frac{p_-}{r}\in (1,\infty)$.
Using this and H\"older's inequality with $p := \frac{p_-}{r}$
and using Lemma \ref{con f} and Proposition \ref{Apinfty u},
we find that, for any $R\in \mathscr{S}$,
\begin{align*}
&\left[\fint_{R} \left\|A_R W^{-1}(y)\right\|^{ \alpha} \left|\vec{f}(y)\right|^{ \alpha}\,dy \right]^\frac r\alpha\\
&\quad  \leq \left[\fint_{R} \left\|A_R W^{-1}(y)\right\|^{\alpha (\frac{p_-}{r})'}\,dy\right]^{\frac{r}{\alpha (\frac{p_-}{r})'}}
\left[ \fint_R \left|\vec{f}(y)\right|^{\alpha p_-}\,dy \right]^{\frac{r^2}{\alpha p_-}}\\
&\quad  \lesssim \left[\fint_{R} \left\|W^{-1}(y) A_R \right\|^{u}\,dy\right]^{\frac{r}{\alpha (\frac{p_-}{r})'}}
\left[ \fint_R \left|\vec{f}(y)\right|^{\alpha \frac{p_-}{r}}\,dy \right]^{\frac{r^2}{\alpha p_-}}
\lesssim \left[ \fint_R \left|\vec{f}(y)\right|^{\alpha \frac{p_-}{r}}\,dy \right]^{\frac{r^2}{\alpha p_-}},
\end{align*}
which, combined with Lemma \ref{est fQ} with $p(\cdot) := \frac{p(\cdot)}{r}$ and combined with Lemma \ref{con f},
further implies that
\begin{align*}
\left[\fint_{R} \left\|A_R W^{-1}(y)\right\|^{\alpha} \left|\vec{f}(y)\right|^{\alpha}\,dy \right]^\frac r\alpha
& \lesssim \left[\frac{1}{\|\mathbf{1}_R\|_{L^{\frac{p(\cdot)}{r}}}} \left\|\left|\vec{f}\right|^\alpha\mathbf{1}_R\right\|_{L^{\frac{p(\cdot)}{r}}}\right]^\frac{r}{\alpha}
 = \left[\frac{1}{\|\mathbf{1}_R\|_{L^{\frac{\alpha p(\cdot)}{r}}}} \left\|\vec{f}\mathbf{1}_R\right\|_{L^{\frac{\alpha p(\cdot)}{r}}}\right]^{r}.
\end{align*}
Applying this with \eqref{bound max 1},
we conclude that, for almost every $x \in Q_0$,
\begin{align*}
\mathcal{M}_{W,Q_0}^{d,(\alpha)} \left( \vec{f} \right)(x)
\lesssim \sum_{R\in \mathscr{S}} \left\|W(x) A_R^{-1}\right\|^r \left[T_{\frac{\alpha p(\cdot)}{r},R}(|\vec{f}|)\right]^r.
\end{align*}

Now, for any cube $Q$ in $\mathbb{R}^n$,
let $\lambda_Q := \|WA_Q^{-1}\|$
and, for any $x\in \mathbb{R}^n$, let
\begin{align*}
T_{\frac{\alpha p(\cdot)}{r},\lambda,\mathscr{S}}^{(r)}( |\vec{f}| )(x) := \left\{\sum_{R\in \mathscr{S}} \left\|W(x) A_R^{-1}\right\|^r \left[T_{\frac{\alpha p(\cdot)}{r},R}( |\vec{f}| )\right]^r \mathbf{1}_R(x)\right\}^\frac1r.
\end{align*}
Observe that, by Lemma \ref{reverse Holder},
for any $W \in \mathscr{A}_{p(\cdot),\infty}$,
there exists $\theta \in (1,\infty)$
such that, for any cube $Q$ in $\mathbb{R}^n$,
$$ \frac{1}{\|\mathbf{1}_{Q}\|_{L^{\theta p(\cdot)}}} \left\|\left\| W(\cdot)A_Q^{-1} \right\|\mathbf{1}_Q\right\|_{L^{\theta p(\cdot)}}
\lesssim \frac{1}{\|\mathbf{1}_{Q}\|_{L^{p(\cdot)}}} \left\| \left\| W(\cdot)A_Q^{-1} \right\|\mathbf{1}_Q\right\|_{L^{p(\cdot)}},
 $$
which, together with Lemma \ref{eq reduc M} with $M := A_Q^{-1}$, further implies that
$$ \frac{1}{\|\mathbf{1}_{Q}\|_{L^{\theta p(\cdot)}}} \left\|\left\| W(\cdot)A_Q^{-1} \right\|\mathbf{1}_Q\right\|_{L^{\theta p(\cdot)}}
\lesssim \frac{1}{\|\mathbf{1}_{Q}\|_{L^{p(\cdot)}}} \left\| \left\| W(\cdot)A_Q^{-1} \right\|\mathbf{1}_Q\right\|_{L^{p(\cdot)}} \lesssim 1. $$
Combining this with Lemma \ref{Tf bound} with $\alpha := \frac{\alpha}{r}$,
we conclude that
\begin{align*}
\left\|T_{\alpha p(\cdot),\lambda,\mathscr{S}}^{(r)}( |\vec{f}| )\right\|_{L^{p(\cdot)}}
\lesssim \left\|\vec{f}\right\|_{L^{p(\cdot)}}
\end{align*}
and hence
\begin{align}\label{keyest}
\left\|\mathcal{M}_{W,Q_0}^{d,(\alpha)} \left( \vec{f} \right)\right\|_{L^{p(\cdot)}}
\lesssim \left\|\vec{f}\right\|_{L^{p(\cdot)}},
\end{align}
where the implicit positive constant is independent of $Q_0$ and $\vec{f}$.

Finally, by the definitions of dyadic grids and dyadic maximal operators,
we find that, for any $t\in \{0,\frac13\}^n$,
there exists a sequence of cubes $\{Q^{(t)}_j\}_{j\in\mathbb{Z}_+}$
with $Q^{(t)}_j \subset Q^{(t)}_{j+1}$ for any $j\in\mathbb{Z}_+$
and $\lim_{j\rightarrow \infty} Q^{(t)}_j = \mathbb{R}^n$
such that, for any $j\in\mathbb{Z}_+$ and $x\in \mathbb{R}^n$,
$\mathcal{M}_{W,Q^{(t)}_j}^{d,(\alpha)} ( \vec{f} )(x) \leq \mathcal{M}_{W,Q^{(t)}_{j+1}}^{d,(\alpha)} ( \vec{f} )(x)$
and
\begin{align*}
\mathcal{M}_{W,\mathcal{Q}^t}^{(\alpha)}\left( \vec{f} \right)(x) = \lim\limits_{j\to \infty} \mathcal{M}_{W,Q^{(t)}_j}^{d,(\alpha)} \left( \vec{f} \right)(x).
\end{align*}
Thus, using this, the variable version of Fatou's lemma
(see, for instance, \cite[Theorem 2.61]{cf13}), and the proved estimate \eqref{keyest},
we conclude that, for any $t\in \{0,\frac13\}^n$,
\begin{align*}
\left\|\mathcal{M}_{W,\mathcal{Q}^t}^{(\alpha)} \left( \vec{f} \right)\right\|_{L^{p(\cdot)}}
\leq \liminf\limits_{j\to \infty} \left\|\mathcal{M}_{W,Q^{(t)}_j}^{d,(\alpha)} \left( \vec{f} \right)\right\|_{L^{p(\cdot)}}
\lesssim \left\|\vec{f}\right\|_{L^{p(\cdot)}},
\end{align*}
which, together with Lemma \ref{mm2},
further implies that
\begin{align*}
\left\|\mathcal{M}_{W}^{(\alpha)} \left( \vec{f} \right)\right\|_{L^{p(\cdot)}}
\lesssim \left\|\vec{f}\right\|_{L^{p(\cdot)}}.
\end{align*}
This finishes the proof of Theorem \ref{bound max}.
\end{proof}
Finally, we give the proof of Theorem \ref{bound eta}.
\begin{proof}[Proof of Theorem \ref{bound eta}]
Using the same argument as in the proof of \cite[Lemma 5.2]{dhr09}
with $\eta_{v,m}\ast g$ replaced by $\eta_{j,m,W}^{(\alpha)}(\vec{f})$,
we obtain, for any $x\in\mathbb{R}^n$,
\begin{align*}
\eta_{j,m,W}^{(\alpha)}(\vec{f})(x) \lesssim \left[ \sum_{k = 0}^\infty 2^{-k(\alpha m-n)} \sum_{Q\in \mathcal{Q}_{j-k}}
\fint_{Q} \left| W(x)W^{-1}(y) \vec{f}(y) \right|^\alpha\,dy\mathbf{1}_{3Q}(x) \right]^{\frac{1}{\alpha}}
\lesssim \mathcal{M}^{(\alpha)}_W(\vec{f})(x),
\end{align*}
which, combined with Theorem \ref{bound max}, further implies \eqref{bound eta 1}.
This finishes the proof of Theorem \ref{bound eta}.
\end{proof}
\begin{remark}\label{rem eta}
By the proof of Theorem \ref{bound eta},
we find that, for any $p(\cdot) \in \mathcal{P}_0 \cap LH$
and $W\in \mathscr{A}_{p(\cdot),\infty}$,
if $\alpha \in (0,1)$ ensures that $\mathcal{M}^{(\alpha)}_{W}$
is bounded on $L^{p(\cdot)}$,
then $\eta^{(\alpha)}_{j,m,W}$ with $m\in (n,\infty)$ is bounded
on $L^{p(\cdot)}$.
By this and Remark \ref{rem bound},
we conclude that, if $W\in \mathscr{A}_{p(\cdot)}$,
then $\eta^{(\alpha)}_{j,m,W}$ with $\alpha = 1$
is bounded on $L^{p(\cdot)}$.
\end{remark}

\section{Matrix-Weighted Variable Besov Spaces}\label{sec Besov}
In this section, we introduce the matrix-weighted variable Besov spaces
and the related sequence spaces, including:
\begin{itemize}
\item[$\bullet$] the (pointwise) matrix-weighted variable Besov space $B^{s(\cdot)}_{p(\cdot),q(\cdot)}(W)$
and the related sequence space $b^{s(\cdot)}_{p(\cdot),q(\cdot)}(W)$,
where $W:\ \mathbb{R}^n \to M_m$ is a matrix weight,
\item[$\bullet$] the averaging matrix-weighted space $B^{s(\cdot)}_{p(\cdot),q(\cdot)}(\mathbb{A})$
and the related averaging sequence space $b^{s(\cdot)}_{p(\cdot),q(\cdot)}(\mathbb{A})$,
where $\mathbb{A}:= \{A_Q\}_{Q\in \mathcal{Q}_+}$ are reducing operators of order $p(\cdot)$ for $W$.
\end{itemize}
We prove the equivalence between $B^{s(\cdot)}_{p(\cdot),q(\cdot)}(W)$ and $B^{s(\cdot)}_{p(\cdot),q(\cdot)}(\mathbb{A})$
in Subsection \ref{sec Besov 1}
and the equivalence between $b^{s(\cdot)}_{p(\cdot),q(\cdot)}(W)$
and $b^{s(\cdot)}_{p(\cdot),q(\cdot)}(\mathbb{A})$ in Subsection \ref{sec Besov 2}.
Finally, in Subsection \ref{sec varphi},
we establish the $\varphi$-transform characterization for matrix-weighted variable Besov spaces
and conclude that the matrix-weighted variable Besov space is independent of
the choice of $\{\varphi_j\}_{j\in\mathbb{Z}_+}$.

Now, we recall the following space  introduced by Almeida and H\"ast\"o in \cite{ah10}.
\begin{definition}\label{def seq}
Let $p(\cdot), q(\cdot) \in \mathcal{P}_0$.
The \emph{variable mixed Lebesgue-sequence space $l^{q(\cdot)}(L^{p(\cdot)})$} is defined
to be the set of all measurable function sequences $\{f_j\}_{j\in\mathbb{Z}_+}\subset \mathscr{M}$ such that
$$ \left\| \left\{ f_j \right\}_{j\in\mathbb{Z}_+} \right\|_{l^{q(\cdot)}(L^{p(\cdot)})} :=
\inf\left\{ \mu\in (0,\infty) :\ \rho_{l^{q(\cdot)}(L^{p(\cdot)})} \left( \left\{\frac{f_j}{\mu}\right\}_{j\in\mathbb{Z}_+} \right)\leq 1 \right\} < \infty, $$
where the \emph{variable mixed Lebesgue-sequence modular $\rho_{l^{q(\cdot)}(L^{p(\cdot)})}$} is defined by setting
$$ \rho_{l^{q(\cdot)}(L^{p(\cdot)})} \left( \left\{f_j\right\}_{j\in\mathbb{Z}_+} \right) :=
\sum_{j = 0}^\infty \inf\left\{ \lambda_j \in (0,\infty):\ \rho_{p(\cdot)}\left( \lambda_j^{-\frac{1}{q(\cdot)}}f_j \right) \leq 1 \right\}. $$
\end{definition}
\begin{remark}\label{rem def seq}
\begin{itemize}
\item[{\rm (i)}] From Definitions \ref{def seq} and \ref{def Leb},
we infer that, if $q_+ < \infty$, then
$$ \rho_{l^{q(\cdot)}(L^{p(\cdot)})} \left( \left\{f_j\right\}_{j\in\mathbb{Z}_+} \right)
= \sum_{j = 0}^\infty \left\| \left|f_j\right|^{q(\cdot)} \right\|_{L^{\frac{p(\cdot)}{q(\cdot)}}}. $$
\item[{\rm (ii)}] By \cite[Proposition 3.3]{ah10},
we know that, if $p(\cdot),q(\cdot)$ are both constant exponents,
then the quasi-norm $\|\cdot\|_{l^{q}(L^{p})}$ defined in Definition \ref{def seq}
is precisely the mixed Lebesgue-sequence quasi-norm.
\item[{\rm (iii)}] From \cite[Proposition 3.5]{ah10},
it follows that $\rho_{l^{q(\cdot)}(L^{p(\cdot)})}$ in Definition \ref{def seq}
is a semimodular and, if $p_+,q_+<\infty$,
then $\rho_{l^{q(\cdot)}(L^{p(\cdot)})}$ is continuous
(see \cite[Definition 2.1]{ah10} or \cite[Definition 2.1.1]{dhr17} for more details).
\item[{\rm (iv)}] Let $p(\cdot),q(\cdot)\in\mathcal{P}_0$ with $p_+,q_+<\infty$
and let $r \in (0,\infty)$.
Then, by the definition of $\left\| \cdot \right\|_{l^{q(\cdot)}(L^{p(\cdot)})}$,
it is easy to find that,
for any sequence of measurable functions $\{f_j\}_{j\in\mathbb{Z}_+}$,
$$ \left\| \left\{ f_j \right\}_{j\in\mathbb{Z}_+} \right\|_{l^{q(\cdot)}(L^{p(\cdot)})}
 = \left\| \left\{ \left| f_j \right|^r \right\}_{j\in\mathbb{Z}_+} \right\|^{\frac1r}_{l^{\frac{q(\cdot)}{r}}(L^{\frac{p(\cdot)}{r}})}. $$
\end{itemize}
\end{remark}
Next, we recall the concept of admissible pairs
(see, for instance, \cite[Definition 5.1]{ah10}).
In what follows, let $\mathcal{S}$ be the space  of all Schwartz functions on $\mathbb{R}^n$,
equipped with the well-known topology determined by a countable family of norms,
and let $\mathcal{S}'$ be the set of all linear functionals on $\mathcal{S}$,
equipped with the weak-$\ast$ topology
(see, for instance, \cite[Chapters 2.2 and 2.3]{g14} for more details).
Moreover, for any $f\in \mathcal{S}$, its \emph{Fourier transform}
$\widehat{f}$ is defined by setting, for any $\xi\in \mathbb{R}^n$,
$\widehat{f}(\xi) := \int_{\mathbb{R}^n} f(x) e^{-i x\cdot \xi}\,dx$
(see, for instance, \cite[Chapter 2.3]{g14} for more details).
\begin{definition}\label{phi pair}
A pair of measurable functions $(\varphi,\varPhi)$ is said to be \emph{admissible}
if $\varphi,\varPhi\in \mathcal{S}$ satisfy
$$
{\mathop\mathrm{\,supp\,}} \widehat{\varphi} \subset
\left\{ \xi\in\mathbb{R}^n:\ \frac12 \leq |\xi| \leq 2 \right\}
\quad\text{and}\quad\left| \widehat{\varphi}(\xi) \right| \geq c >0\
\text{when}\ \frac35\leq |\xi|\leq \frac53
$$
and
$$
{\mathop\mathrm{\,supp\,}} \widehat{\varPhi}
\subset \left\{ \xi\in\mathbb{R}^n:\ |\xi| \leq 2 \right\}
\quad\text{and}\quad\left| \widehat{\varPhi}(\xi)
\right| \geq c >0\ \text{when}\ |\xi|\leq \frac53,
$$
where $c$ is a positive constant independent of $\xi \in \mathbb{R}^n$.
\end{definition}
In what follows, we always let $\varphi_0 := \varPhi$ and
$\varphi_j := 2^{jn}\varphi(2^j\cdot)$ for any $j\in\mathbb{N}$.

\subsection{Matrix-Weighted Variable Besov Spaces}\label{sec Besov 1}

In this subsection, we first introduce the
(pointwise) matrix-weighted variable Besov space
(see \cite[Definition 3.22]{bhyy24} for the definition of
matrix $A_{p,\infty}$ weighted Besov spaces).
\begin{definition}
Let $p(\cdot), q(\cdot) \in \mathcal{P}_0$, $s(\cdot) \in L^\infty$,
and $\{\varphi_j\}_{j\in\mathbb{Z}_+}$ be as in Definition \ref{phi pair}
and let $W \in \mathscr{A}_{p(\cdot),\infty}$.
The \emph{ (pointwise) matrix-weighted variable Besov space $B^{s(\cdot)}_{p(\cdot),q(\cdot)}(W,\varphi)$}
is defined to be the set of all $\vec{f} \in (\mathcal{S}')^m$ such that
\begin{align*}
\left\|\vec{f}\right\|_{B^{s(\cdot)}_{p(\cdot),q(\cdot)}(W,\varphi)}
:= \left\| \left\{ 2^{js(\cdot)} \left|W(\cdot) \left(\varphi_j\ast \vec{f}\right)(\cdot)\right| \right\}_{j\in \mathbb{Z}_+} \right\|_{l^{q(\cdot)}(L^{p(\cdot)})} <\infty.
\end{align*}
\end{definition}
Next we introduce the  averaging matrix-weighted variable Besov space
(see \cite[Definition 3.11]{bhyy24} for the definition of averaging matrix $A_{p,\infty}$
weighted Besov spaces).
\begin{definition}
Let $p(\cdot), q(\cdot) \in \mathcal{P}_0$, $s(\cdot) \in L^\infty$,
and $\{\varphi_j\}_{j\in\mathbb{Z}_+}$ be as in Definition \ref{phi pair}
and let $W\in \mathscr{A}_{p(\cdot),\infty}$ and $\mathbb{A} := \{A_Q\}_{Q\in \mathcal{Q}_+}$
be reducing operators of order $p(\cdot)$ for $W$.
The \emph{averaging matrix-weighted variable Besov space
$B^{s(\cdot)}_{p(\cdot),q(\cdot)}(\mathbb{A},\varphi)$}
is defined to be the set of all $\vec{f} \in (\mathcal{S}')^m$ such that
$$ \left\|\vec{f}\right\|_{B^{s(\cdot)}_{p(\cdot),q(\cdot)}(\mathbb{A},\varphi)}
:= \left\| \left\{ 2^{js(\cdot)} \left|A_j \left(\varphi_j\ast \vec{f}\right)(\cdot)\right| \right\}_{j\in \mathbb{Z}_+} \right\|_{l^{q(\cdot)}(L^{p(\cdot)})} <\infty, $$
where, for any $j\in \mathbb{Z}_+$,
\begin{align}\label{def Aj}
A_j := \sum_{Q\in \mathcal{Q}_j} A_Q \mathbf{1}_Q .
\end{align}
\end{definition}
To show the equivalence of $ B^{s(\cdot)}_{p(\cdot),q(\cdot)}(W,\varphi) $
and $ B^{s(\cdot)}_{p(\cdot),q(\cdot)}(\mathbb{A},\varphi) $,
we recall the concept of the variable Besov sequence space
(see \cite[Definition 3]{d12}).
\begin{definition}\label{def b}
Let $p(\cdot), q(\cdot) \in \mathcal{P}_0$ and $s(\cdot) \in L^\infty$.
The \emph{variable Besov sequence space $ b^{s(\cdot)}_{p(\cdot),q(\cdot)} $}
is defined to be the set of all sequences
$t := \{t_Q\}_{Q\in \mathcal{Q}_+} \subset \mathbb{C}$
such that
$$ \left\| t \right\|_{b^{s(\cdot)}_{p(\cdot),q(\cdot)}}
:= \left\| \left\{ 2^{js(\cdot)} t_j \right\}_{j\in \mathbb{Z}_+} \right\|_{l^{q(\cdot)}(L^{p(\cdot)})} <\infty, $$
where, for any $j\in \mathbb{Z}_+$,
$t_j := \sum_{Q\in \mathcal{Q}_j} t_Q \widetilde{\mathbf{1}}_Q\ \ \text{and}\ \ \widetilde{\mathbf{1}}_Q := |Q|^{-\frac12}\mathbf{1}_Q.$
\end{definition}
\begin{remark}
If $p(\cdot),q(\cdot)$, and $s(\cdot)$ are all constant exponents,
then, from Remark \ref{rem def seq}{\rm (ii)},
we infer that $b^{s(\cdot)}_{p(\cdot),q(\cdot)}$ defined in Definition \ref{def b}
reduces to the classical Besov sequence space.
\end{remark}
For any reducing operators $\mathbb{A} := \{A_Q\}_{Q\in\mathcal{Q}_+}$ of order $p(\cdot)$ for $W$,
any $\varphi \in \mathcal{S}$, and any $\vec{f} \in (\mathcal{S}')^m$,
we define
\begin{align}\label{def supp f}
\sup_{\mathbb{A},\varphi}\left( \vec{f} \right) := \left\{ \sup_{\mathbb{A},\varphi,Q}\left( \vec{f} \right) \right\}_{Q\in\mathcal{Q}_+},
\end{align}
where, for any $Q\in\mathcal{Q}_+$,
\begin{align}\label{def supp f 1}
\sup_{\mathbb{A},\varphi,Q}\left( \vec{f} \right) := |Q|^{\frac12} \sup_{y\in Q} \left| A_Q\left( \varphi_{j_Q}\ast \vec{f} \right)(y) \right|.
\end{align}
The following equivalence is the main result of this subsection
(see \cite[Theorem 3.24]{bhyy24} for the matrix-weighted Besov space case).
\begin{theorem}\label{W aa supp}
Let $p(\cdot), q(\cdot)\in \mathcal{P}_0\cap LH$, $s(\cdot)\in LH$,
and $\{\varphi_j\}_{j\in\mathbb{Z}_+}$ be the same as in Definition \ref{phi pair}
and let $W \in \mathscr{A}_{p(\cdot),\infty}$ and
$\mathbb{A} := \{A_Q\}_{Q\in\mathcal{Q}}$ be reducing operators of order $p(\cdot)$ for $W$.
Then $\vec{f} \in B^{s(\cdot)}_{p(\cdot),q(\cdot)}(W,\varphi)$
if and only if $\vec{f} \in B^{s(\cdot)}_{p(\cdot),q(\cdot)}(\mathbb{A}, \varphi)$
and, moreover,
$$ \left\| \vec{f} \right\|_{B^{s(\cdot)}_{p(\cdot),q(\cdot)}(W,\varphi)}
\sim \left\| \sup_{\mathbb{A},\varphi}\left( \vec{f} \right) \right\|_{b^{s(\cdot)}_{p(\cdot),q(\cdot)}}
\sim \left\| \vec{f} \right\|_{B^{s(\cdot)}_{p(\cdot),q(\cdot)}(\mathbb{A},\varphi)}, $$
where the positive equivalence constants are independent of $\vec{f}$.
\end{theorem}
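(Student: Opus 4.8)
The plan is to prove the three displayed quantities pairwise comparable by closing a short cycle of one-sided estimates; throughout set $g_j:=\varphi_j\ast\vec{f}$. Each of the three norms is an $l^{q(\cdot)}(L^{p(\cdot)})$-norm of a sequence $\{2^{js(\cdot)}G_j\}_{j\in\zz_+}$ in which $G_j$ is assembled from the cubes $Q\in\cq_j$; since $l^{q(\cdot)}(L^{p(\cdot)})$ is a lattice, a pointwise comparison of the $G_j$'s already forces a norm comparison, and I would exploit this wherever possible. Two standing facts are used repeatedly. First, because $s(\cdot)\in LH_0$ and $l(Q)=2^{-j}$ for $Q\in\cq_j$, one has $2^{js(x)}\sim 2^{js(y)}$ uniformly for $x,y\in Q$, so $2^{js(\cdot)}$ may be treated as constant on each generation-$j$ cube, and $p(\cdot)\in LH$ makes $\|\one_Q\|_{L^{p(\cdot)}}\sim|Q|^{1/p(x_Q)}$ (recall $|Q|\le1$ here) stable along the dyadic grid. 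Second, $g_j$ is band-limited with $\supp\widehat{g_j}\subset\{|\xi|\le2^{j+1}\}$ (an annulus for $j\ge1$), hence so is $A_Qg_j$ for any constant matrix $A_Q$; consequently, on a cube of side $2^{-j}$ the supremum of $|A_Qg_j|$ is dominated, via Plancherel--P\'olya and --- to pass from $A_Q$ to the reducing operators of neighboring cubes --- the strong-doubling bound $\|A_QA_{Q'}^{-1}\|\lesssim(1+2^j|x_Q-x_{Q'}|)^{\Delta}$ of Lemma~\ref{QP5}, by a local $L^r$-average of $|A_jg_j|$ for any fixed small $r>0$, with rapidly decaying tails over neighboring cubes.

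\emph{The $\aa$-level equivalence $\|\vec{f}\|_{B^{s(\cdot)}_{p(\cdot),q(\cdot)}(\aa,\varphi)}\sim\|\sup_{\aa,\varphi}(\vec{f})\|_{b^{s(\cdot)}_{p(\cdot),q(\cdot)}}$.} The inequality ``$\lesssim$'' is immediate: for $x\in Q\in\cq_j$, $|A_jg_j(x)|=|A_Qg_j(x)|\le\sup_{y\in Q}|A_Qg_j(y)|$, and $\sup_{\aa,\varphi,Q}(\vec{f})\,\widetilde{\one}_Q=\sup_{y\in Q}|A_Qg_j(y)|\one_Q$, so the $j$-th slices are pointwise ordered and lattice monotonicity applies. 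For ``$\gtrsim$'', the band-limit reduction of the previous paragraph gives, for $x\in Q\in\cq_j$, the pointwise bound $2^{js(x)}\sup_{y\in Q}|A_Qg_j(y)|\lesssim\bigl[\mathbf{M}\bigl(|2^{js(\cdot)}A_jg_j|^r\bigr)(x)\bigr]^{1/r}$ with $\mathbf{M}$ the Hardy--Littlewood maximal operator (moving $2^{js(\cdot)}$ inside $\mathbf{M}$ is a routine $LH$-manipulation using local constancy of $s(\cdot)$); taking $r>0$ small and applying the vector-valued maximal inequality on $l^{q(\cdot)}(L^{p(\cdot)})$ (valid after passing to $|\cdot|^r$, since $p(\cdot),q(\cdot)\in LH$ with $p_-,q_->0$) closes this half.

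\emph{The $W$-versus-$\aa$ equivalence $\|\vec{f}\|_{B^{s(\cdot)}_{p(\cdot),q(\cdot)}(W,\varphi)}\sim\|\sup_{\aa,\varphi}(\vec{f})\|_{b^{s(\cdot)}_{p(\cdot),q(\cdot)}}$.} Start from the pointwise matrix bounds valid for $x\in Q\in\cq_j$: on the one hand $|W(x)g_j(x)|\le\|W(x)A_Q^{-1}\|\,|A_Qg_j(x)|\le c_Q^j\,\|W(x)A_Q^{-1}\|$ with $c_Q^j:=\sup_{y\in Q}|A_Qg_j(y)|$; on the other hand, via band-limitedness, $c_Q^j\lesssim\bigl(\fint_{3Q}|A_jg_j|^r\bigr)^{1/r}$ with $|A_{Q'}g_j(y)|\le\|A_{Q'}W^{-1}(y)\|\,|W(y)g_j(y)|$ on each $Q'\subset3Q$. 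The crucial per-cube inputs are the reducing-operator equivalences of Lemma~\ref{eq reduc M}: with $M=A_Q^{-1}$, $\|\,\|W(\cdot)A_Q^{-1}\|\one_Q\|_{L^{p(\cdot)}}\sim\|\one_Q\|_{L^{p(\cdot)}}$, and with $M=W^{-1}(y)$, $\|A_{Q'}W^{-1}(y)\|\sim\|\one_{Q'}\|_{L^{p(\cdot)}}^{-1}\|\,\|W(\cdot)W^{-1}(y)\|\one_{Q'}\|_{L^{p(\cdot)}}$. Granting the ``reassembly'' step discussed below --- which converts such per-cube $L^{p(\cdot)}$-comparisons into slice-wise (hence $l^{q(\cdot)}(L^{p(\cdot)})$-) comparisons uniformly in $j$ --- the first bound yields $\|\vec{f}\|_{B^{s(\cdot)}_{p(\cdot),q(\cdot)}(W,\varphi)}\lesssim\|\{2^{js(\cdot)}\sum_{Q\in\cq_j}c_Q^j\one_Q\}_j\|_{l^{q(\cdot)}(L^{p(\cdot)})}=\|\sup_{\aa,\varphi}(\vec{f})\|_{b^{s(\cdot)}_{p(\cdot),q(\cdot)}}$, while the second --- together with a variable-exponent H\"older inequality over $Q'$ and a maximal/convolution estimate absorbing the cube-neighbor sums --- gives the reverse inequality. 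Combining with the $\aa$-level equivalence proves the theorem.

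\emph{The main obstacle} is the reassembly just invoked: upgrading an estimate that holds cube by cube in $L^{p(\cdot)}$-norm to one holding for the whole generation-$j$ slice function (and then for the outer $l^{q(\cdot)}$-sum). With constant exponents this is automatic, since the $L^p$-modular is additive over disjoint cubes and the reducing-operator identity does the rest; with variable $p(\cdot)$ one must prevent the scalar weights $\|W(\cdot)A_Q^{-1}\|$, $\|A_QW^{-1}(\cdot)\|$, and the $L^{p(\cdot)}$-averages $\|\one_{Q'}\|_{L^{p(\cdot)}}^{-1}\|\,\|W(\cdot)W^{-1}(y)\|\one_{Q'}\|_{L^{p(\cdot)}}$ from concentrating inside a cube. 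This is precisely the $A_\infty$-type self-improvement built into $W\in\ca_{p(\cdot),\infty}$ --- in particular, that the arithmetic mean over $y\in Q$ of $\|\,\|W(\cdot)W^{-1}(y)\|\one_Q\|_{L^{p(\cdot)}}$ is controlled by its geometric mean, hence by $[W]_{\ca_{p(\cdot),\infty}}\|\one_Q\|_{L^{p(\cdot)}}$, an AM--GM/reverse-H\"older feature --- combined with the strong-doubling of the reducing operators (Lemma~\ref{QP5}); meanwhile the global log-H\"older continuity of $p(\cdot)$ and $s(\cdot)$ is what keeps $\|\one_Q\|_{L^{p(\cdot)}}$, $|Q|^{1/p(\cdot)}$, and $2^{js(\cdot)}$ uniform along the dyadic grid throughout. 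A secondary point: $\ca_{p(\cdot),\infty}$, unlike $\ca_{p(\cdot)}$, is a one-sided condition with no clean duality, so in the two directions above one must apply Lemma~\ref{eq reduc M} to the correct matrix argument --- $M=A_Q^{-1}$ versus $M=W^{-1}(y)$ --- and never invoke a dual weight.
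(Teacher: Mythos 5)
Your overall architecture matches the paper's: you prove the $\aa$-level equivalence, then the $W$-versus-$\aa$ equivalence, and you correctly identify the key inputs (band-limited sampling, strong doubling of the reducing operators from Lemma~\ref{QP5}, the reducing-operator identity Lemma~\ref{eq reduc M}, and log-H\"older stability of $2^{js(\cdot)}$ and $\|\one_Q\|_{L^{p(\cdot)}}$). But there are two genuine gaps.

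First, in closing the $\aa$-level $\gtrsim$ direction you invoke the Hardy--Littlewood maximal operator and ``the vector-valued maximal inequality on $l^{q(\cdot)}(L^{p(\cdot)})$.'' That tool is not available here: when $q(\cdot)$ is genuinely variable, the Fefferman--Stein vector-valued maximal inequality is known to fail on $l^{q(\cdot)}(L^{p(\cdot)})$, which is precisely why the paper never uses it. The correct substitute is the $\eta$-convolution estimate (Lemma~\ref{eta bound seq}, taken from \cite[Lemma~4.7]{ah10}), combined with Lemma~\ref{s eta} to absorb the factor $2^{js(\cdot)}$. Your sketch mentions ``rapidly decaying tails,'' which is the right intuition, but the argument you actually write down replaces the convolution by $\mathbf{M}$ and then appeals to an inequality that is false in this setting.

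Second, and more seriously, the ``reassembly'' step is not a secondary obstacle you can grant --- it is the content of Lemmas~\ref{W supp} and~\ref{W aa}, and the mechanism you propose does not deliver it. The per-cube bound $|W(x)g_j(x)|\le\|W(x)A_Q^{-1}\|\,c_Q^j$ is not a pointwise lattice comparison because $\|W(x)A_Q^{-1}\|$ is a nontrivial function of $x\in Q$; passing to an $L^{p(\cdot)}$ estimate on the $j$-th slice requires the modular normalization device the paper uses (reduce to $\|\sup_{\aa,\varphi}(\vec{f})\|_{b^{s(\cdot)}_{p(\cdot),q(\cdot)}}=1$, introduce the $\delta_j$ weights, work with $\rho_{l^{q(\cdot)}(L^{p(\cdot)})}$ via Lemmas~\ref{f rho seq} and~\ref{f rho seq 1}, and use the duality Lemma~\ref{fg Lp} together with the summation Lemma~\ref{g pro} to resum over $Q\in\cq_j$). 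Your AM--GM suggestion for the reverse inequality is also not what the paper does and is not obviously sufficient: the paper instead uses the weak-type $A_\infty$ measure estimate Lemma~\ref{Wpinfty} to carve out a subset $E_Q\subset Q$ with $|E_Q|\gtrsim|Q|$ on which $\|A_QW^{-1}(y)\|$ is bounded, then uses Lemma~\ref{Q EQ 1} to replace $\widetilde{\one}_Q$ by $\one_{E_Q}$ at the sequence-norm level, plus the $t^\ast$-machinery of Lemmas~\ref{a b seq} and~\ref{tast bound}. None of this is present in your sketch, so the $W$-versus-$\aa$ comparison remains unproved as written.
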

For the sake of clarity,
we break the proof of Theorem \ref{W aa supp} into the following two parts:
the first equivalence in Lemmas \ref{W supp} and \ref{W aa}
and the second equivalence in Lemma \ref{aa supaa}.
Here, we first show the latter equivalence of Theorem \ref{W aa supp},
which is exactly the following result.
\begin{lemma}\label{aa supaa}
Let $p(\cdot)$, $q(\cdot)$, $s(\cdot)$, $\{\varphi_j\}_{j\in\mathbb{Z}_+}$,
$W$, and $\mathbb{A}$ be the same as in Theorem \ref{W aa supp}.
Then $\vec{f} \in B^{s(\cdot)}_{p(\cdot),q(\cdot)}(\mathbb{A},\varphi)$
if and only if $\sup_{\mathbb{A}, \varphi}(\vec{f}) \in b^{s(\cdot)}_{p(\cdot),q(\cdot)}$
and, moreover,
\begin{align}\label{aa supaa eq 1}
\left\| \vec{f} \right\|_{B^{s(\cdot)}_{p(\cdot),q(\cdot)}(\mathbb{A},\varphi)}
\sim \left\| \sup_{\mathbb{A},\varphi}\left( \vec{f} \right) \right\|_{b^{s(\cdot)}_{p(\cdot),q(\cdot)}},
\end{align}
where the positive equivalence constants are independent of $\vec{f}$.
\end{lemma}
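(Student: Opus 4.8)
The plan is to establish the two-sided estimate \eqref{aa supaa eq 1} by comparing, term by term in $j\in\zz_+$, the quantity $2^{js(\cdot)}|A_j(\varphi_j\ast\vec f)|$ appearing in the definition of $\|\vec f\|_{B^{s(\cdot)}_{p(\cdot),q(\cdot)}(\aa,\varphi)}$ with the sequence $\{2^{js(\cdot)}[\sup_{\aa,\varphi}(\vec f)]_j\}$, where the latter, via \eqref{def tj}, \eqref{def supp f}, and \eqref{def supp f 1}, is $\sum_{Q\in\cq_j}|Q|^{\frac12}\sup_{y\in Q}|A_Q(\varphi_{j_Q}\ast\vec f)(y)|\,|Q|^{-\frac12}\one_Q$. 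Since for $x\in Q$ one trivially has $|A_Q(\varphi_j\ast\vec f)(x)|\le\sup_{y\in Q}|A_Q(\varphi_j\ast\vec f)(y)|$, the bound $2^{js(\cdot)}|A_j(\varphi_j\ast\vec f)(x)|\le 2^{js(\cdot)}[\sup_{\aa,\varphi}(\vec f)]_j(x)$ holds pointwise, and the "$\lesssim$" direction of \eqref{aa supaa eq 1} follows immediately from the monotonicity of $\|\cdot\|_{l^{q(\cdot)}(L^{p(\cdot)})}$ in each $f_j$ (Definition \ref{def seq}).

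For the reverse inequality the key is a standard maximal-function / Peetre-type estimate: for $\vec f\in(\cs')^m$ and any cube $Q\in\cq_j$, the supremum $\sup_{y\in Q}|\varphi_j\ast\vec f(y)|$ is controlled by a fixed multiple of an average of $|\varphi_j\ast\vec f|$ over a dilate of $Q$, at the price of a Peetre maximal function with arbitrarily large parameter. More precisely, using that $\widehat{\varphi_j}$ is supported in an annulus (or ball, for $j=0$) of radius $\sim 2^j$, one has for any $\lambda>0$
\begin{align*}
\sup_{y\in Q}\left|\left(\varphi_j\ast\vec f\right)(y)\right|
\lesssim \left(\fint_{3Q}\left[\frac{\left|\left(\varphi_j\ast\vec f\right)(z)\right|}{(1+2^j|y-z|)^{-\lambda}}\right]^{r}dz\right)^{1/r}
\end{align*}
for suitable $r\in(0,1]$; combined with the vector-valued Fefferman--Stein inequality in $L^{p(\cdot)}$ (valid since $p(\cdot),q(\cdot)\in LH$) this lets me absorb the pointwise supremum. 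The crucial point, and where the matrix structure enters, is to move the reducing operator $A_Q$ past the average: one writes $A_Q(\varphi_j\ast\vec f)(y)=A_QA_{Q'}^{-1}\cdot A_{Q'}(\varphi_j\ast\vec f)(z)$ for $Q'\in\cq_j$ containing $z$, and invokes the strong doubling estimate \eqref{eq strong doubling} from Lemma \ref{QP5} to bound $\|A_QA_{Q'}^{-1}\|$ by $[1+2^j|x_Q-x_{Q'}|]^{\Delta}$ (since $l(Q)=l(Q')=2^{-j}$). Choosing $\lambda$ in the Peetre estimate larger than $\Delta$ plus the dimension shift needed for Fefferman--Stein, the doubling factor is absorbed into the decay, and we obtain $[\sup_{\aa,\varphi}(\vec f)]_j\lesssim$ (Peetre maximal function of $|A_j(\varphi_j\ast\vec f)|$), uniformly in $j$.

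The main obstacle, and the step requiring care, is the interaction between the variable exponents and the dyadic scale $2^j$: after applying the Peetre estimate one has factors $2^{js(\cdot)}$ and, inside the Fefferman--Stein step, terms like $2^{j\varepsilon p(\cdot)}$ or ratios $2^{(j-k)s(\cdot)}$ that are not constant in $x$. To handle these I would use, as the paper signals in its introduction, the log-H\"older continuity of $p(\cdot),q(\cdot),s(\cdot)$ to compare $2^{js(x)}$ with $2^{js(y)}$ for $x,y$ in a common dyadic cube up to a uniform constant (the standard estimate $|x-y|^{s(x)-s(y)}\lesssim 1$ when $|x-y|\le$ const, and the analogous bound at infinity), so that the variable powers can be replaced by their values at a fixed point of each cube and then folded back into the sequence norm. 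Once this localization is in place, the remaining computation is a routine application of Remark \ref{rem def seq}(iv) (rescaling the exponents by $r$) together with the boundedness of the Hardy--Littlewood maximal operator on $L^{p(\cdot)/r}$ and on the mixed space $l^{q(\cdot)/r}(L^{p(\cdot)/r})$, which is where the hypotheses $p(\cdot),q(\cdot)\in LH$ are used; assembling the two directions gives \eqref{aa supaa eq 1} and hence the lemma.
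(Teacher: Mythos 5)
Your first direction is exactly the paper's: for $x\in Q$ the pointwise bound $2^{js(x)}|A_Q(\varphi_j\ast\vec f)(x)|\le 2^{js(x)}\sup_{\aa,\varphi,Q}(\vec f)\,\widetilde{\one}_Q(x)$ and the monotonicity of $\|\cdot\|_{l^{q(\cdot)}(L^{p(\cdot)})}$ immediately give $\|\vec f\|_{B^{s(\cdot)}_{p(\cdot),q(\cdot)}(\aa,\varphi)}\le\|\sup_{\aa,\varphi}(\vec f)\|_{b^{s(\cdot)}_{p(\cdot),q(\cdot)}}$. For the converse, you also correctly identify the three ingredients used: a Plancherel--P\'olya/Peetre-type estimate for band-limited functions, the strong doubling bound \eqref{eq strong doubling} to commute $A_Q$ past the averaging (with $l(Q)=l(Q')=2^{-j}$), and log-H\"older continuity to tame the variable powers $2^{js(\cdot)}$ (the paper packages the latter as Lemma \ref{s eta}). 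The paper implements the Peetre step via the sampling formula of Lemma \ref{varf} rather than by citing a maximal-function estimate, but these are the same mechanism for functions with $\widehat{\varphi_j\ast\vec f}$ supported in $\{|\xi|\le 2^{j+1}\}$.

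There is, however, a genuine gap in the final step. You close the argument by invoking ``the vector-valued Fefferman--Stein inequality in $L^{p(\cdot)}$'' and ``the boundedness of the Hardy--Littlewood maximal operator on \dots the mixed space $l^{q(\cdot)/r}(L^{p(\cdot)/r})$.'' In the variable-exponent setting with a \emph{variable} outer index $q(\cdot)$, this is precisely what fails: the Hardy--Littlewood maximal operator is not bounded on $l^{q(\cdot)}(L^{p(\cdot)})$ in general, and the paper says so explicitly just before Lemma \ref{eta bound seq} (``the Fefferman--Stein vector-valued inequality for the Hardy--Littlewood maximal operator may fail, and \dots the following vector-valued inequality involving $\eta$-functions serves as a substitute''). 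The correct substitute is the $\eta$-function convolution inequality of Lemma \ref{eta bound seq} (from Almeida--H\"ast\"o), which is strictly weaker than maximal-operator boundedness: one controls the single-scale convolution $\eta_{j,m}\ast f_j$ rather than $Mf_j$. This is exactly why, after the sampling step and strong doubling, the paper arrives at $|g_j|^r\lesssim\eta_{j,\widetilde M r}\ast|h_j|^r$ and then applies Lemma \ref{s eta} followed by Lemma \ref{eta bound seq}, never passing through $M$. As written, your appeal to the maximal operator on the mixed space is not available, so the proof does not close; replacing it with the $\eta$-function inequality (and keeping track of the parameter $R'$ of Lemma \ref{s eta} when peeling off $2^{js(\cdot)}$) is what is needed.
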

To prove Lemma \ref{aa supaa},
we need some basic tools.
The following lemma can be found in the proof of \cite[Theorem 2.4]{fr21}
(see also \cite[Lemma 3.15]{fr21}).
\begin{lemma}\label{varf}
Let $\gamma \in \mathcal{S}$ satisfy $\widehat{\gamma}(\xi) = 1$
for any $\xi \in\mathbb{R}^n$ with $|\xi| \leq 2$
and
$${\mathop\mathrm{\,supp\,}} \widehat{\gamma} \subset
\left\{ \xi\in\mathbb{R}^n:\ |\xi|\leq \pi \right\}.$$
Then, for any $j\in\mathbb{Z}_+$ and $f\in \mathcal{S}'$
with ${\mathop\mathrm{\,supp\,}} \widehat{f} \subset
\{\xi\in\mathbb{R}^n:\ |\xi| \leq 2^{j+1}\}$,
one has $ f\in C^{\infty}$ and,
for any $x,y\in\mathbb{R}^n$,
$$ f(x) = \sum_{R\in \mathcal{Q}_j} 2^{-jn} f\left(x_R+y\right) \gamma_j\left(x-x_R-y\right)\ \ \text{pointwise}. $$
\end{lemma}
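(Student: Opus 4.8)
The plan is to recognize the asserted identity as a Shannon-type sampling formula and to prove it by periodizing $\widehat{f}$. Since $\supp\widehat f$ is compact, the Paley--Wiener--Schwartz theorem guarantees that $f$ coincides with a $C^\infty$ function of at most polynomial growth, which already yields the first assertion and makes all pointwise values below meaningful. Throughout I write $\gamma_j := 2^{jn}\gamma(2^j\cdot)$, so that $\widehat{\gamma_j}(\xi) = \widehat{\gamma}(2^{-j}\xi)$ is supported in $\{|\xi|\le \pi 2^j\}$ and equals $1$ on $\{|\xi|\le 2^{j+1}\}\supseteq\supp\widehat f$. The arithmetic fact driving everything is $2^{j+1}\le\pi 2^j$ (that is, $2<\pi$): the closed ball of radius $2^{j+1}$ fits inside the cube $[-\pi 2^j,\pi 2^j)^n$, which is a fundamental domain of the lattice $2\pi 2^j\zz^n$ dual to the sampling lattice $2^{-j}\zz^n$.

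First I would fix $y\in\rn$ and set $g := f(\cdot+y)$; this is a tempered distribution with $\supp\widehat g = \supp\widehat f$, again a $C^\infty$ function of polynomial growth, with $g(z) = f(z+y)$. Then I would form the periodization $\Phi := \sum_{l\in\zz^n}\widehat g(\cdot - 2\pi 2^j l)$. The sum is locally finite, since each summand is supported in a translate of the radius-$2^{j+1}$ ball and these translates are pairwise disjoint precisely because $2<\pi$; hence $\Phi$ is a well-defined $2\pi 2^j\zz^n$-periodic tempered distribution, and it coincides with $\widehat g$ on the cube $(-\pi 2^j,\pi 2^j)^n$ (only the $l=0$ summand is nonzero there, and all of $\supp\widehat g$ already lies there). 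Expanding $\Phi$ into its Fourier series over this cube and identifying the coefficients via Fourier inversion gives $\Phi = \sum_{k\in\zz^n} 2^{-jn}\, g(2^{-j}k)\, e^{-i\,2^{-j}k\cdot\xi}$ in the paper's Fourier normalization (the precise sign is immaterial); because $g$ has polynomial growth, so do these coefficients, and the series converges in $\cs'$.

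Next I would multiply $\Phi$ by $\widehat{\gamma_j}$. On one hand, $\supp\widehat{\gamma_j}\subset\{|\xi|\le\pi 2^j\}$ lies inside the open cube $(-\pi 2^j,\pi 2^j)^n$ on which $\Phi=\widehat g$, and $\widehat{\gamma_j}\equiv 1$ on $\supp\widehat g$, so $\widehat{\gamma_j}\Phi = \widehat{\gamma_j}\widehat g = \widehat g$. On the other hand, multiplication by the Schwartz function $\widehat{\gamma_j}$ and the inverse Fourier transform are both continuous on $\cs'$, so, since $e^{-i\,2^{-j}k\cdot\xi}\widehat{\gamma_j}(\xi) = \mathcal F[\gamma_j(\cdot - 2^{-j}k)](\xi)$, applying $\mathcal F^{-1}$ to the Fourier series of $\widehat{\gamma_j}\Phi$ gives $g = \sum_{k\in\zz^n} 2^{-jn}\, g(2^{-j}k)\,\gamma_j(\cdot - 2^{-j}k)$ in $\cs'$. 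The right-hand series converges absolutely and locally uniformly --- polynomial growth of $g$ against the rapid decay of $\gamma_j$ --- so the $\cs'$-identity is in fact a pointwise one. Evaluating at $x-y$, using $g(x-y) = f(x)$ and $g(2^{-j}k) = f(2^{-j}k+y)$, and relabeling the lattice point $2^{-j}k$ as the corner $x_R$ of $R := Q_{k,j}\in\cq_j$ (legitimate since $y\in\rn$ is arbitrary), we arrive at exactly $f(x) = \sum_{R\in\cq_j} 2^{-jn} f(x_R+y)\,\gamma_j(x - x_R - y)$.

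The routine Fourier analysis aside, the step I expect to require the most care is the bookkeeping around distributions and periodization: verifying that $\Phi$ is a genuine tempered distribution and that its Fourier series converges in $\cs'$ when $\widehat f$ is only a distribution rather than a function (for which one uses nothing beyond the compact support of $\widehat g$ and the polynomial growth of $g$), and keeping the half-period $\pi 2^j$ aligned against the spectral radius $2^{j+1}$ so that the non-overlap condition $2<\pi$ is invoked correctly --- this is exactly why the hypotheses ``$\widehat\gamma = 1$ on $\{|\xi|\le 2\}$'' and ``$\supp\widehat\gamma\subset\{|\xi|\le\pi\}$'' are calibrated the way they are. All remaining estimates are elementary.
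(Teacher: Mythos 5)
Your proof is correct and is essentially the same argument the paper relies on for this lemma (it cites the proof of \cite[Theorem 2.4]{fr21} and \cite[Lemma 3.15]{fr21}): Paley--Wiener--Schwartz for smoothness and polynomial growth, periodization of $\widehat{f(\cdot+y)}$ over the lattice $2\pi 2^{j}\mathbb{Z}^n$, Fourier-series expansion with coefficients $2^{-jn}f(2^{-j}k+y)$, multiplication by $\widehat{\gamma_j}\equiv 1$ on the spectrum, and inversion, with the pointwise upgrade coming from absolute local uniform convergence. One small wording fix: $\supp\widehat{\gamma_j}$ (the closed ball of radius $\pi 2^{j}$) is not contained in the open cube $(-\pi2^{j},\pi2^{j})^n$, so the identity $\widehat{\gamma_j}\,\Phi=\widehat{g}$ is better justified by noting that $\widehat{\gamma_j}\,(\Phi-\widehat{g})=0$ because $\supp(\Phi-\widehat{g})$ lies at distance at least $(2\pi-2)2^{j}>\pi 2^{j}$ from the origin, i.e.\ exactly the numerics $2+\pi<2\pi$ you already invoke.
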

The following lemma is precisely \cite[Lemma 19]{kv12}.
\begin{lemma}\label{s eta}
Let $s(\cdot) \in LH$ and $j,m\in\mathbb{N}$.
If $R \in (C_{log}(s),\infty)$, where $C_{log}(s)$ is the same as in \eqref{clogp},
then, for any $x,y \in\mathbb{R}^n$,
\begin{align*}
2^{js(x)} \eta_{j,m+R}(x-y) \lesssim 2^{js(y)} \eta_{j,m} (x-y),
\end{align*}
where the implicit positive constant is independent of $x$ and $j$.
Moreover, for any $f\in L^1_{\rm loc}$,
\begin{align*}
2^{js(x)} \left(\eta_{j,m+R}\ast f\right)(x) \lesssim \eta_{j,m} \ast \left[ 2^{js(\cdot)} f\right] (x),
\end{align*}
where the implicit positive constant is independent of $x$, $j$, and $f$.
\end{lemma}

The following vector-valued inequality is exactly \cite[Lemma 4.7]{ah10}.
\begin{lemma}\label{eta bound seq 2}
Let $p(\cdot), q(\cdot)\in \mathcal{P}\cap LH$.
For any $m\in (n,\infty)$ and any sequence of measurable functions $\{f_j\}_{j\in\mathbb{Z}_+}$,
$$ \left\| \left\{ \eta_{j,m}\ast f_j \right\}_{j\in\mathbb{Z}_+} \right\|_{l^{q(\cdot)}(L^{p(\cdot)})}
\lesssim \left\| \left\{ f_j \right\}_{j\in\mathbb{Z}_+} \right\|_{l^{q(\cdot)}(L^{p(\cdot)})}, $$
where the implicit positive constant is independent of $\{f_j\}_{j\in\mathbb{Z}_+}$.
\end{lemma}

The following lemma is precisely \cite[Lemma 2.31]{bhyy23}.
\begin{lemma}\label{QP4}
For any cubes $Q,R\subset \mathbb{R}^n$,
any $x,x' \in Q$, and any $y,y' \in R$,
$$ 1+\frac{|x-y|}{l(Q)\vee l(R)} \sim 1+\frac{|x'-y'|}{l(Q)\vee l(R)}, $$
where the positive equivalence constants depend only on $n$.
\end{lemma}
Now, we give the proof of Lemma \ref{aa supaa}.
\begin{proof}[Proof of Lemma \ref{aa supaa}]
From \eqref{def supp f 1} and the definition of $\widetilde{\mathbf{1}}_Q$,
it follows that, for any $j\in\mathbb{Z}_+$, any cube $Q\in \mathcal{Q}_j$, and any $x\in Q$,
$$ 2^{js(x)}\left| A_Q \left( \varphi_{j}\ast \vec{f}\right)(x) \right|
\leq 2^{js(x)} |Q|^{-\frac12} \sup_{\mathbb{A}, \varphi, Q}\left( \vec{f} \right) \mathbf{1}_Q(x)
= 2^{js(x)}\sup_{\mathbb{A}, \varphi, Q}\left( \vec{f} \right) \widetilde{\mathbf{1}}_Q(x), $$
which, together with the definition of $A_j$,
further implies that, for any $j\in\mathbb{Z}_+$ and $x \in \mathbb{R}^n$,
$$ 2^{js(x)}\left| A_j \left( \varphi_{j}\ast \vec{f}\right)(x) \right| \leq 2^{js(x)} \sum_{Q\in\mathcal{Q}_j} \sup_{\mathbb{A}, \varphi, Q}\left( \vec{f} \right) \widetilde{\mathbf{1}}_Q(x). $$
Using this and the definition of $\| \cdot \|_{B^{s(\cdot)}_{p(\cdot),q(\cdot)}(\mathbb{A},\varphi)}$, we conclude immediately that
\begin{align*}
\left\| \vec{f} \right\|_{B^{s(\cdot)}_{p(\cdot),q(\cdot)}(\mathbb{A},\varphi)}
&\leq \left\| \left\{ 2^{js(\cdot)} \sum_{Q\in\mathcal{Q}_j} \sup_{\mathbb{A}, \varphi, Q}\left( \vec{f} \right) \widetilde{\mathbf{1}}_Q  \right\}_{j\in\mathbb{Z}_+} \right\|_{l^{q(\cdot)}(L^{p(\cdot)})}
= \left\| \sup_{\mathbb{A},\varphi}\left( \vec{f} \right) \right\|_{b^{s(\cdot)}_{p(\cdot),q(\cdot)}},
\end{align*}
which gives the proof that the left-hand side of \eqref{aa supaa eq 1}
is not more than the right-hand side.

Next, we prove the converse inequality.
Let $\vec{f} := (f_1,\dots,f_m) \in (\mathcal{S}')^m$.
Then, for any $j\in\mathbb{Z}_+$ and $k\in \{1,\dots,m\}$,
since ${\mathop\mathrm{\,supp\,}} \widehat{\varphi_j} \subset \{\xi\in\mathbb{R}^n:\ |\xi| \leq 2^{j+1}\}$,
we infer that
${\mathop\mathrm{\,supp\,}} \widehat{\varphi_j\ast f_k} \subset \{\xi\in\mathbb{R}^n:\ |\xi| \leq 2^{j+1}\}. $
By this and Lemma \ref{varf} with $f := \varphi_j\ast f_k$,
we find that, for any $j\in \mathbb{Z}_+$, $k\in \{1,\dots,m\}$, and $x,y\in\mathbb{R}^n$,
\begin{align*}
\left( \varphi_j \ast f_k \right)(x) = \sum_{R\in\mathcal{Q}_j} 2^{-jn}\left( \varphi_j \ast \vec{f} \right)(x_R+y)
\gamma_j(x-x_R-y)
\end{align*}
and hence
\begin{align}\label{Qsupf 2}
\left( \varphi_j \ast \vec{f} \right)(x) = \sum_{R\in\mathcal{Q}_j} 2^{-jn}\left( \varphi_j \ast \vec{f} \right)(x_R+y)
\gamma_j(x-x_R-y),
\end{align}
where $\gamma_j \in \mathcal{S}$ is the same as in Lemma \ref{varf}.
Fix constants $r \in (0, \min\{p_-,q_-, 1\})$ and $M\in (\frac{n}{r} + C_{\rm log}(s) + \Delta, \infty)$,
where $C_{\rm log}(s)$ is the same as in \eqref{clogp}
and $\Delta$ the same as in Lemma \ref{QP5}.
Using the fact that $\gamma \in \mathcal{S}$ and the definition of $\gamma_j$,
we obtain, for any $j\in \mathbb{Z}_+$, $N\in (0,\infty)$, and $x\in \mathbb{R}^n$,
$|\gamma_j (x)| \lesssim \frac{2^{jn}}{(1 + 2^j|x|)^N}$.
From this, \eqref{Qsupf 2}, and Lemma \ref{QP4},
we deduce that, for any $j\in\mathbb{Z}_+$, $Q\in\mathcal{Q}_j$,
$x\in Q$, $y\in\mathbb{R}^n$, and $x'\in Q$,
\begin{align*}
\left| A_Q\left( \varphi_j \ast \vec{f} \right)(x') \right|^r
&\leq \sum_{R\in \mathcal{Q}_j} \left| 2^{-jn} \gamma_j\left( x'-x_R-y \right) \right|^r \left| A_Q\left( \varphi_j \ast\vec{f} \right)(x_R +y) \right|^r\\
&\lesssim \sum_{R\in \mathcal{Q}_j} \frac{1}{(1+2^j|x'-x_R-y|)^{Mr}} \left| A_Q\left( \varphi_j \ast\vec{f} \right)(x_R +y) \right|^r\\
&\lesssim \sum_{R\in \mathcal{Q}_j} \frac{1}{(1+2^j|x-x_R-y|)^{Mr}} \left| A_Q\left( \varphi_j \ast\vec{f} \right)(x_R +y) \right|^r,
\end{align*}
which, combined with \eqref{def supp f 1}, further implies that,
for any $x \in Q$ and $y\in \mathbb{R}^n$,
\begin{align}\label{Qsupf}
\left[ |Q|^{-\frac12} \sup_{\mathbb{A},\varphi,Q}\left( \vec{f} \right) \right]^r
&= \sup_{x'\in Q} \left| A_Q\left( \varphi_j \ast \vec{f} \right)(x') \right|^r\nonumber \\
&\lesssim \sum_{R\in \mathcal{Q}_j} \frac{1}{(1+2^j|x-x_R-y|)^{Mr}} \left| A_Q\left( \varphi_j \ast\vec{f} \right)(x_R +y) \right|^r.
\end{align}
Observe that this holds for any $y \in \mathbb{R}^n$.
Applying this with integrating \eqref{Qsupf} over all $y$ in the cube $(0,2^{-j}]^n$, we obtain
\begin{align}\label{Qsupf 3}
\left[ |Q|^{-\frac12} \sup_{\mathbb{A},\varphi,Q}\left( \vec{f} \right) \right]^r
&\lesssim 2^{jn}\sum_{R\in \mathcal{Q}_j} \int_{(0,2^{-j}]^n} \frac{1}{(1+2^j|x-x_R-z|)^{Mr}} \left| A_Q\left( \varphi_j \ast\vec{f} \right)(x_R +z) \right|^r\,dz\nonumber \\
& = 2^{jn}\sum_{R\in \mathcal{Q}_j} \int_R \frac{1}{(1+2^j|x-z|)^{Mr}} \left| A_Q\left( \varphi_j \ast\vec{f} \right)(z) \right|^r\,dz.
\end{align}
Observe that, by Lemmas \ref{QP5} and \ref{QP4},
for any $Q,R\in \mathcal{Q}_j$, $x\in Q$, and $z\in R$,
we obtain $\|A_Q A_R^{-1}\|\sim (1 + 2^{j} |x - z|)^\Delta$.
Applying this with \eqref{Qsupf 3}, Tonelli's theorem,
and the definition of $A_j$,
we conclude that, for any $j\in\mathbb{Z}_+$, $Q\in\mathcal{Q}_j$, and $x\in\mathbb{R}^n$,
\begin{align*}
\left[ \sup_{\mathbb{A},\varphi,Q}\left( \vec{f} \right) \widetilde{\mathbf{1}}_Q(x) \right]^r
& = \left[ |Q|^{-\frac12} \sup_{\mathbb{A},\varphi,Q}\left( \vec{f} \right) \right]^r \mathbf{1}_Q(x)\\
&\lesssim 2^{jn} \sum_{R\in \mathcal{Q}_j} \int_R \frac{1}{(1+2^j|x-z|)^{Mr}} \left| A_Q\left( \varphi_j \ast\vec{f} \right)(z) \right|^r\,dz \mathbf{1}_Q(x)\nonumber\\
&\leq 2^{jn}\sum_{R\in \mathcal{Q}_j} \int_R \frac{\|A_QA_R^{-1}\|^r}{(1+2^j|x-z|)^{Mr}} \left| A_R\left( \varphi_j \ast\vec{f} \right)(z) \right|^r\,dz \mathbf{1}_Q(x)\nonumber\\
&\lesssim 2^{jn}\sum_{R\in \mathcal{Q}_j} \int_{\mathbb{R}^n} \frac{1}{(1+2^j|x-z|)^{\widetilde{M}r}} \left| A_R\left( \varphi_j \ast\vec{f} \right)(z) \right|^r \mathbf{1}_{R}(z) \,dz \mathbf{1}_Q(x)\nonumber\\
& = 2^{jn} \int_{\mathbb{R}^n} \frac{1}{(1+2^j|x-z|)^{\widetilde{M}r}} \left| A_j\left( \varphi_j \ast\vec{f} \right)(z) \right|^r\,dz \mathbf{1}_Q(x),
\end{align*}
where $\widetilde{M} := M - \Delta$.
For any $j\in\mathbb{Z}_+$, using this and the disjointness of the dyadic cubes in $\mathcal{Q}_j$
and letting
$g_j := \sum_{Q\in \mathcal{Q}_j} \sup_{\mathbb{A},\varphi,Q} ( \vec{f} ) \widetilde{\mathbf{1}}_Q $
and $ h_j := | A_j ( \varphi_j\ast \vec{f} ) | $,
we find that, for any $x\in\mathbb{R}^n$,
\begin{align}\label{Qsupf 1}
\left|g_j(x)\right|^r
 &= \sum_{Q\in \mathcal{Q}_j} \left[ \sup_{\mathbb{A},\varphi,Q}\left( \vec{f} \right) \widetilde{\mathbf{1}}_Q(x) \right]^r
\lesssim \sum_{Q\in \mathcal{Q}_j} \int_{\mathbb{R}^n} \frac{2^{jn}}{(1+2^j|x-z|)^{\widetilde{M}r}} \left| h_j(z) \right|^r\,dz \mathbf{1}_{Q}(x) \nonumber\\
& = \int_{\mathbb{R}^n} \frac{2^{jn}}{(1+2^j|x-z|)^{\widetilde{M}r}} \left| h_j(z) \right|^r\,dz
= \left(\eta_{j,\widetilde{M}r}\ast \left|h_j\right|^r\right)(x).
\end{align}
Now, let $R' \in (rC_{\rm log}(s), \infty)$ satisfy $\widetilde{M}r-R' > n$.
Using this, \eqref{Qsupf 1}, and Lemma \ref{s eta} with $f := h_j$,
we find that, for any $j\in\mathbb{Z}_+$ and $x\in\mathbb{R}^n$,
$$ 2^{js(x)}\left|g_j (x)\right|
\lesssim 2^{js(x)} \left[ \eta_{j,\widetilde{M}r}\ast \left|h_j\right|^r(x) \right]^\frac1r
\lesssim \left[\eta_{j,\widetilde{M}r-R'}\ast \left(2^{jrs(\cdot)}\left|h_j\right|^r\right)(x)\right]^\frac1r. $$
From this, Remark \ref{rem def seq}(iv), and Lemma \ref{eta bound seq 2}
with $p(\cdot) := \frac{p(\cdot)}{r}$ and $q(\cdot) := \frac{q(\cdot)}{r}$,
we infer that
\begin{align*}
\left\| \left\{2^{js(\cdot)} \left|g_j\right|\right\}_{j\in\mathbb{Z}_+} \right\|_{l^{q(\cdot)}(L^{p(\cdot)})}
&\lesssim  \left\| \left\{ \left[\eta_{j,\widetilde{M}r-R'}\ast \left(2^{jrs(\cdot)}\left|h_j\right|^r\right)\right]^\frac1r\right\}_{j\in\mathbb{Z}_+} \right\|_{l^{q(\cdot)}(L^{p(\cdot)})}\\
& =  \left\| \left\{ \eta_{j,\widetilde{M}r-R}\ast \left(2^{jrs(\cdot)}\left|h_j\right|^r\right) \right\}_{j\in\mathbb{Z}_+} \right\|^\frac1r_{l^{\frac{q(\cdot)}{r}}(L^{\frac{p(\cdot)}{r}})}\\
&\lesssim \left\| \left\{ 2^{jrs(\cdot)}\left|h_j\right|^r \right\}_{j\in\mathbb{Z}_+} \right\|^{\frac1r}_{l^{\frac{q(\cdot)}{r}}(L^{\frac{p(\cdot)}{r}})}
 = \left\| \left\{ 2^{js(\cdot)}\left|h_j\right| \right\}_{j\in\mathbb{Z}_+} \right\|_{l^{q(\cdot)}(L^{p(\cdot)})},
\end{align*}
which, combined with the definitions of $\| \cdot \|_{b^{s(\cdot)}_{p(\cdot),q(\cdot)}} $,
$g_j$, $h_j$, and $\|\cdot\|_{B^{s(\cdot)}_{p(\cdot),q(\cdot)}(\mathbb{A},\varphi)}$,
further implies that
\begin{align*}
\left\| \sup_{\mathbb{A},\varphi}\left( \vec{f} \right) \right\|_{b^{s(\cdot)}_{p(\cdot),q(\cdot)}}
= \left\| \left\{2^{js(\cdot)} \left|g_j\right|\right\}_{j\in\mathbb{Z}_+} \right\|_{l^{q(\cdot)}(L^{p(\cdot)})}
\lesssim \left\| \left\{ 2^{js(\cdot)}\left|h_j\right| \right\}_{j\in\mathbb{Z}_+} \right\|_{l^{q(\cdot)}(L^{p(\cdot)})}
= \|\vec{f}\|_{B^{s(\cdot)}_{p(\cdot),q(\cdot)}(\mathbb{A}, \varphi)}.
\end{align*}
This finishes the proof of Lemma \ref{aa supaa}.
\end{proof}
Next, we show the first equivalence of Theorem \ref{W aa supp}.
To this end, we first prove the following lemma.
\begin{lemma}\label{W supp}
Let $p(\cdot)$, $q(\cdot)$, $s(\cdot)$, $\{\varphi_j\}_{j\in\mathbb{Z}_+}$,
$W$, and $\mathbb{A}$ be the same as in Theorem \ref{W aa supp}.
Then, for any $\vec{f} \in (\mathcal{S}')^m$,
\begin{align}\label{Ajtj 10}
\left\| \vec{f} \right\|_{B^{s(\cdot)}_{p(\cdot),q(\cdot)}(W,\varphi)}
\lesssim \left\| \sup_{\mathbb{A},\varphi}\left( \vec{f} \right) \right\|_{b^{s(\cdot)}_{p(\cdot),q(\cdot)}},
\end{align}
where the implicit positive constant is independent of $\vec{f}$.
\end{lemma}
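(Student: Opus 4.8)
The plan is to control the pointwise quantity $|W(x)(\varphi_j\ast\vec f)(x)|$ by a sum over dyadic cubes $R\in\cq_j$ of the reducing-operator averages $\sup_{\aa,\varphi,R}(\vec f)$, weighted by the norms $\|W(x)A_R^{-1}\|$, and then to use the strong doubling estimate of Lemma \ref{QP5} together with the $\eta$-function machinery of Lemmas \ref{s eta} and \ref{eta bound seq} to sum these contributions in $l^{q(\cdot)}(L^{p(\cdot)})$. First I would fix $r\in(0,\min\{p_-,q_-,1\})$ and a large exponent $M$ (to be chosen below depending on $n$, $r$, $C_{\mathrm{log}}(s)$, and $\Delta$), apply Lemma \ref{varf} to $\varphi_j\ast\vec f$ (whose Fourier support lies in $\{|\xi|\le 2^{j+1}\}$) to obtain the reproducing formula $(\varphi_j\ast\vec f)(x)=\sum_{R\in\cq_j}2^{-jn}(\varphi_j\ast\vec f)(x_R+y)\gamma_j(x-x_R-y)$, and then, exactly as in the proof of Lemma \ref{aa supaa}, integrate the $r$-th power over $y\in(0,2^{-j}]^n$ and use the rapid decay of $\gamma\in\cs$ to get, for any $x\in\rn$,
\begin{align*}
\left|W(x)(\varphi_j\ast\vec f)(x)\right|^r
\lesssim 2^{jn}\sum_{R\in\cq_j}\int_R\frac{\|W(x)A_R^{-1}\|^r}{(1+2^j|x-z|)^{Mr}}\left|A_R(\varphi_j\ast\vec f)(z)\right|^r\,dz.
\end{align*}

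The next step is to replace $|A_R(\varphi_j\ast\vec f)(z)|$ by the supremum $|R|^{-1/2}\sup_{\aa,\varphi,R}(\vec f)$, which is legitimate since $z\in R$ and by \eqref{def supp f 1}. To handle the matrix factor $\|W(x)A_R^{-1}\|$ I would split $\|W(x)A_R^{-1}\|\le\|W(x)A_{Q}^{-1}\|\,\|A_{Q}A_R^{-1}\|$ where $Q\in\cq_j$ is the unique cube containing $x$; the second factor is controlled by Lemma \ref{QP5} as $\lesssim(1+2^j|x-z|)^{\Delta}$ (using $l(Q)=l(R)=2^{-j}$ and $|x_Q-x_R|\lesssim|x-z|+2^{-j}$), which is absorbed into the decay $(1+2^j|x-z|)^{-Mr}$ at the cost of replacing $M$ by $\widetilde M:=M-\Delta/r$. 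The first factor $\|W(x)A_Q^{-1}\|$ stays, and after reassembling and using the disjointness of $\cq_j$ I expect to arrive at
\begin{align*}
\left|W(x)(\varphi_j\ast\vec f)(x)\right|^r
\lesssim \left\|W(x)A_j^{-1}\right\|^r\left(\eta_{j,\widetilde M r}\ast\left|\,\textstyle\sum_{R\in\cq_j}\sup_{\aa,\varphi,R}(\vec f)\,\widetilde{\one}_R\right|^r\right)(x),
\end{align*}
where $A_j^{-1}:=\sum_{Q\in\cq_j}A_Q^{-1}\one_Q$. At this point the crucial observation is that $\|W(x)A_Q^{-1}\|$ is, by Lemma \ref{eq reduc M} applied with $M=A_Q^{-1}$, comparable \emph{in $L^{p(\cdot)}$-average over $Q$} to a constant $\sim1$; more precisely $\frac{1}{\|\one_Q\|_{L^{p(\cdot)}}}\|\|W(\cdot)A_Q^{-1}\|\one_Q\|_{L^{p(\cdot)}}\sim1$, so multiplication by $\|W(\cdot)A_j^{-1}\|$ is harmless on each dyadic level in the variable-Lebesgue quasinorm (this is the standard mechanism by which the reducing operators ``undo'' the weight, and I would invoke the corresponding averaging lemma from the $\varphi$-transform section, or prove the one-level estimate directly from Lemma \ref{eq reduc M}).

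The final step is to put this into the $l^{q(\cdot)}(L^{p(\cdot)})$ quasinorm: apply Remark \ref{rem def seq}(iv) to pass to exponents $p(\cdot)/r$, $q(\cdot)/r$ (which lie in $LH$), use Lemma \ref{s eta} to move the weight $2^{js(\cdot)}$ inside the $\eta$-convolution (choosing $M$ large enough that $\widetilde M r-R'>n$ for the relevant $R'>rC_{\mathrm{log}}(s)$), then apply Lemma \ref{eta bound seq} to discard the $\eta$-convolutions, and finally undo the $r$-th power and the weight multiplication to land on $\|\sup_{\aa,\varphi}(\vec f)\|_{b^{s(\cdot)}_{p(\cdot),q(\cdot)}}$. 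The main obstacle I anticipate is precisely the interplay between the matrix factor $\|W(x)A_j^{-1}\|$ and the variable exponent: unlike the constant-exponent case one cannot simply bound $\|W(x)A_Q^{-1}\|$ pointwise, so the argument must keep this factor attached until the $L^{p(\cdot)}$-norm is taken, and one must verify carefully — using $p(\cdot)\in LH$ and the uniform equivalence constants in \eqref{eq redu} and Lemma \ref{eq reduc M} — that on each fixed scale $j$ the weighted quasinorm $\|2^{js(\cdot)}\|W(\cdot)A_j^{-1}\|\,F_j\|_{L^{p(\cdot)}}$ is comparable to $\|2^{js(\cdot)}F_j\|_{L^{p(\cdot)}}$ for nonnegative $F_j$ supported scale-wise on $\cq_j$; this is the heart of why the matrix case is more delicate than the scalar one and is the step where I would spend the most care.
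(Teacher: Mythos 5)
Your high-level idea — bound $|W(x)(\varphi_j\ast\vec f)(x)|$ by $\|W(x)A_j^{-1}\|$ times a piecewise-constant function built from $\sup_{\aa,\varphi,Q}(\vec f)$ and then use the reducing-operator averaging \eqref{WAQ-1} to peel off the matrix factor — is the right one, and it is the backbone of the paper's proof. However, routing the argument through the reproducing formula (Lemma \ref{varf}) and the $\eta$-convolution machinery is not only unnecessary here (the supremum in \eqref{def supp f 1} already absorbs the local variation, so for $x\in Q\in\cq_j$ one has the direct bound $|W(x)(\varphi_j\ast\vec f)(x)|\le\|W(x)A_Q^{-1}\|\,|Q|^{-1/2}\sup_{\aa,\varphi,Q}(\vec f)$), it actively breaks the argument. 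After you smear $|G_j|^r$ with $\eta_{j,\widetilde Mr}$, the resulting function is no longer constant on each $Q\in\cq_j$, and at that point the claim that multiplication by $\|W(\cdot)A_j^{-1}\|$ is ``harmless'' in $L^{p(\cdot)}$ is false in general: $\|W(\cdot)A_Q^{-1}\|$ only has $L^{p(\cdot)}$-average $\sim 1$ on $Q$, and a function like that can multiply a generic nonnegative $F$ and blow its norm up. The averaging fact \eqref{WAQ-1} only transfers to a norm estimate when $F$ is constant on each $Q$, because then $\|\,\|W(\cdot)A_Q^{-1}\|\,F\one_Q\,\|_{L^{p(\cdot)}}= F(x_Q)\|\,\|W(\cdot)A_Q^{-1}\|\one_Q\,\|_{L^{p(\cdot)}}\sim F(x_Q)\|\one_Q\|_{L^{p(\cdot)}}$. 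Moreover, Lemma \ref{eta bound seq} only controls $\|\{\eta_{v,m}\ast f_v\}\|$, with nothing multiplying the convolution on the outside; it cannot digest the extra factor $\|W(\cdot)A_j^{-1}\|^r$, so the proposed order of operations (apply Lemma \ref{eta bound seq}, then ``undo the weight multiplication'') does not close.

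What the paper actually does is (i) avoid the reproducing formula entirely and work with the direct pointwise bound above, and (ii) make the ``harmless multiplication'' rigorous by a duality argument: after the $\delta_j$-normalization (which reduces the $l^{q(\cdot)}(L^{p(\cdot)})$ estimate to a single-scale $L^{p(\cdot)}$ estimate, \eqref{Ajtj 2}, and which your sketch omits but is essential for handling the variable $q(\cdot)$), it passes to $L^{p(\cdot)/r}$ via Lemma \ref{fg Lp}, freezes $\delta_j^{-1/q(\cdot)}2^{js(\cdot)}$ on each $Q$ with Lemma \ref{2js eq}, uses H\"older (Lemma \ref{Holder}) to split $\int_Q\|W(x)A_Q^{-1}\|^r g(x)\,dx\le\|\,\|W(\cdot)A_Q^{-1}\|^r\one_Q\|_{L^{p(\cdot)/r}}\|g\one_Q\|_{L^{(p(\cdot)/r)'}}$, invokes \eqref{WAQ-1}, and then sums over $Q$ with Lemma \ref{g pro}. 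That duality-plus-\ref{g pro} step is precisely the ``harmless multiplication'' your sketch hand-waves: it only works because $G_j$ is constant on each $Q$, which is why the $\eta$-smearing in your version cannot be salvaged. Dropping the reproducing formula, keeping the piecewise-constant bound, and introducing the $\delta_j$-normalization and duality scheme is what closes the proof.
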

Before giving the proof of Lemma \ref{W supp}, we recall some basic tools.
The following lemma is precisely \cite[Lemma 2.4]{cp23}.
In what follows, for any $p(\cdot) \in \mathcal{P}$,
we use $p'(\cdot)$ to denote its conjecture,
that is, $p'(\cdot)$ satisfies $ \frac{1}{p(x)} + \frac{1}{p'(x)} = 1 $
for almost every $x\in\mathbb{R}^n$.
\begin{lemma}\label{g pro}
Let $p(\cdot) \in \mathcal{P}\cap LH$.
Then, for any $f\in L^{p(\cdot)}$ and $g\in L^{p'(\cdot)}$
and for any pairwise disjoint collection $\mathcal{K}$ of cubes,
$$ \sum_{Q\in \mathcal{K}} \left\| f\mathbf{1}_Q \right\|_{L^{p(\cdot)}} \left\| g\mathbf{1}_Q \right\|_{L^{p'(\cdot)}}
\lesssim \left\| f \right\|_{L^{p(\cdot)}} \left\| g \right\|_{L^{p'(\cdot)}},$$
where the implicit positive constant depends only on $n$ and $p(\cdot)$.
\end{lemma}
The following lemma indicates the relationship between the modular and the norm of variable Lebesgue spaces,
which is a special case of \cite[Lemma 2.1.14]{dhr17} with the modular $\rho := \rho_{L^{p(\cdot)}}$.
\begin{lemma}\label{f rho}
Let $p(\cdot) \in \mathcal{P}_0$ with $p_+<\infty$.
Then, for any $f\in \mathscr{M}$,
$ \|f\|_{L^{p(\cdot)}} \leq 1 $
if and only if
$ \rho_{L^{p(\cdot)}}(f) \leq 1 $
and, moreover, $ \|f\|_{L^{p(\cdot)}} = 1 $
if and only if $\rho_{L^{p(\cdot)}}(f) = 1$.
\end{lemma}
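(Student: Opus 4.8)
The plan is to establish this unit-ball property of the semimodular $\rho_{L^{p(\cdot)}}$ directly, by elementary pointwise estimates; it also follows from the general theory of semimodular spaces (cf. \cite[Lemma 2.1.14]{dhr17}), but I give a self-contained argument that, crucially, does not use convexity, so that it applies even when $p_-<1$. First I would record the reductions. Since $p_+<\infty$, the set $\Omega_\infty:=\{x\in\rn:\ p(x)=\infty\}$ is null, hence $\rho_{L^{p(\cdot)}}(f)=\int_{\rn}|f(x)|^{p(x)}\,dx$, and $0<p_-\leq p(x)\leq p_+<\infty$ for almost every $x\in\rn$. If $\|f\|_{L^{p(\cdot)}}=0$, then $\rho_{L^{p(\cdot)}}(f/\lambda)\leq1$ for all $\lambda\in(0,\infty)$, so letting $\lambda\downarrow0$ and using the monotone convergence theorem forces $f=0$ almost everywhere and $\rho_{L^{p(\cdot)}}(f)=0$; if $\|f\|_{L^{p(\cdot)}}=\infty$, then no $\lambda$ satisfies $\rho_{L^{p(\cdot)}}(f/\lambda)\leq1$ and in particular $\rho_{L^{p(\cdot)}}(f)>1$. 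In either case all of $\|f\|_{L^{p(\cdot)}}\leq1$, $\rho_{L^{p(\cdot)}}(f)\leq1$, $\|f\|_{L^{p(\cdot)}}=1$, $\rho_{L^{p(\cdot)}}(f)=1$ are decided consistently, so both equivalences hold. Hence I may assume $0<\|f\|_{L^{p(\cdot)}}<\infty$; then $f\neq0$ on a set of positive measure, so $\rho_{L^{p(\cdot)}}(f)>0$.

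Next I would isolate two facts. (F1) For each $x$ with $p(x)<\infty$, the map $t\mapsto t^{p(x)}$ is nondecreasing on $[0,\infty)$, so $\lambda\mapsto\rho_{L^{p(\cdot)}}(f/\lambda)$ is nonincreasing on $(0,\infty)$; consequently $\{\lambda\in(0,\infty):\ \rho_{L^{p(\cdot)}}(f/\lambda)\leq1\}$ is an interval with left endpoint $\|f\|_{L^{p(\cdot)}}$, and therefore $\rho_{L^{p(\cdot)}}(f/\lambda)\leq1$ for every $\lambda>\|f\|_{L^{p(\cdot)}}$. (F2) Letting $\lambda\downarrow\|f\|_{L^{p(\cdot)}}$ in (F1), the integrands $(|f(x)|/\lambda)^{p(x)}$ increase to $(|f(x)|/\|f\|_{L^{p(\cdot)}})^{p(x)}$, so the monotone convergence theorem yields $\rho_{L^{p(\cdot)}}(f/\|f\|_{L^{p(\cdot)}})\leq1$. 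These two facts replace the usual left-continuity property of $\rho_{L^{p(\cdot)}}$.

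With (F1) and (F2), the equivalence $\|f\|_{L^{p(\cdot)}}\leq1\Longleftrightarrow\rho_{L^{p(\cdot)}}(f)\leq1$ is immediate: if $\rho_{L^{p(\cdot)}}(f)\leq1$, then $1$ belongs to the defining set, so $\|f\|_{L^{p(\cdot)}}\leq1$; conversely, if $\|f\|_{L^{p(\cdot)}}<1$, then (F1) with $\lambda=1$ gives $\rho_{L^{p(\cdot)}}(f)\leq1$, while if $\|f\|_{L^{p(\cdot)}}=1$ this is precisely (F2). For the equality statement I would argue by contradiction, using the elementary bounds $\lambda^{p(x)}\leq\lambda^{p_-}$ and $\lambda^{-p(x)}\leq\lambda^{-p_+}$, valid for $\lambda\in(0,1)$ (where $\lambda^{\pm p(x)}$ is monotone in the exponent). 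If $\rho_{L^{p(\cdot)}}(f)=1$ but $\|f\|_{L^{p(\cdot)}}<1$, pick $\lambda$ with $\|f\|_{L^{p(\cdot)}}<\lambda<1$; then (F1) gives $\rho_{L^{p(\cdot)}}(f/\lambda)\leq1$, so
\[
1=\rho_{L^{p(\cdot)}}(f)=\int_{\rn}\lambda^{p(x)}\left(\frac{|f(x)|}{\lambda}\right)^{p(x)}\,dx\leq\lambda^{p_-}\rho_{L^{p(\cdot)}}(f/\lambda)\leq\lambda^{p_-}<1,
\]
a contradiction; together with $\|f\|_{L^{p(\cdot)}}\leq1$ this gives $\|f\|_{L^{p(\cdot)}}=1$. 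Conversely, if $\|f\|_{L^{p(\cdot)}}=1$ but $\rho_{L^{p(\cdot)}}(f)<1$, set $\lambda:=\rho_{L^{p(\cdot)}}(f)^{1/p_+}\in(0,1)$; then
\[
\rho_{L^{p(\cdot)}}(f/\lambda)=\int_{\rn}\lambda^{-p(x)}|f(x)|^{p(x)}\,dx\leq\lambda^{-p_+}\rho_{L^{p(\cdot)}}(f)=1,
\]
so $\|f\|_{L^{p(\cdot)}}\leq\lambda<1$, again a contradiction; hence $\rho_{L^{p(\cdot)}}(f)=1$.

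I do not expect a genuinely hard step; the delicate point is (F2) — passing $\rho_{L^{p(\cdot)}}$ through the limit $\lambda\downarrow\|f\|_{L^{p(\cdot)}}$ by monotone convergence — since it is exactly (F2) that makes the boundary case $\|f\|_{L^{p(\cdot)}}=1$ work in both equivalences. The one structural caveat is that $p_-$ may be less than $1$, so $\rho_{L^{p(\cdot)}}$ need not be convex; I therefore rely throughout on the pointwise monotonicity of $t\mapsto t^{p(x)}$ and on the power inequalities $\lambda^{p(x)}\leq\lambda^{p_-}$ and $\lambda^{-p(x)}\leq\lambda^{-p_+}$ for $\lambda\in(0,1)$, rather than on any convexity-based estimate.
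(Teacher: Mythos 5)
Your argument is correct: the degenerate cases $\|f\|_{L^{p(\cdot)}}\in\{0,\infty\}$ are handled consistently, (F1) is just the monotonicity of $t\mapsto t^{p(x)}$, (F2) is the left-continuity of the modular obtained from monotone convergence (legitimate since $p_+<\infty$ makes $\Omega_\infty$ null), and the two contradiction arguments for the boundary case use only the valid power estimates $\lambda^{p(x)}\le\lambda^{p_-}$ and $\lambda^{-p(x)}\le\lambda^{-p_+}$ for $\lambda\in(0,1)$, with $p_->0$ and $p_+<\infty$ entering exactly where they should. The route differs from the paper's in that the paper gives no proof at all: it records the lemma as a special case of the general unit-ball property for semimodulars, \cite[Lemma 2.1.14]{dhr17}, applied to $\rho:=\rho_{L^{p(\cdot)}}$ (whose continuity, needed for the equality statement, follows from $p_+<\infty$). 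Your self-contained proof buys independence from that abstract framework and makes visible precisely which structural facts are used — pointwise monotonicity, left-continuity via monotone convergence, and the two power inequalities — and in particular it makes explicit that no convexity of $\rho_{L^{p(\cdot)}}$ is required, which is a genuine point here since $p_-<1$ is allowed; the paper's citation, on the other hand, is shorter and places the lemma inside the standard semimodular theory, where the same left-continuity is built into the definition. Either way the statement stands; if you wanted to align with the paper you could simply verify that $\rho_{L^{p(\cdot)}}$ is a continuous semimodular and quote \cite[Lemma 2.1.14]{dhr17}, but your direct argument is complete as written.
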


The following lemma is a combination of the convexification for $L^{p(\cdot)}$
and Lemma \ref{f rho} and it has already been used in \cite{ah10}.
We omit the details here.
\begin{lemma}\label{f pq}
Let $p(\cdot),q(\cdot)\in \mathcal{P}_0\cap LH$.
Then, for any $f\in\mathscr{M}$,
$\||f|^{q(\cdot)}\|_{L^\frac{p(\cdot)}{q(\cdot)}} \leq 1$
if and only if
$ \|f\|_{L^{p(\cdot)}} \leq 1. $
\end{lemma}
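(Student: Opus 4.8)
The plan is to reduce both inequalities to statements about the variable exponent modular $\rho_{L^{p(\cdot)}}$ via Lemma \ref{f rho}, and then to observe that the two relevant modulars coincide. First I would unpack the hypotheses: since $p(\cdot),q(\cdot)\in LH$, both exponents are real-valued and, by the log-H\"older condition at infinity, bounded, so $p_+<\infty$, $q_+<\infty$, and $\Omega_\infty=\emptyset$ for each of $p(\cdot)$ and $q(\cdot)$; moreover $p(\cdot),q(\cdot)\in\ccp_0$ gives $p_->0$ and $q_->0$, hence, after modifying $q(\cdot)$ on a null set if necessary, $0<\frac{p_-}{q_+}\le\frac{p(x)}{q(x)}\le\frac{p_+}{q_-}<\infty$ for every $x\in\rn$. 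Consequently $\frac{p(\cdot)}{q(\cdot)}\in\ccp_0$ with $(\frac{p(\cdot)}{q(\cdot)})_+<\infty$ and with empty $\Omega_\infty$-set, so that $L^{p(\cdot)/q(\cdot)}$ is well defined and Lemma \ref{f rho} applies to it as well.

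Next I would observe that, for any $f\in\ccm$, the function $|f|^{q(\cdot)}$ again lies in $\ccm$, and a pointwise use of the identity $(a^b)^c=a^{bc}$ (valid for $a\in[0,\infty)$ and $b,c\in(0,\infty)$) gives
$$ \rho_{L^{p(\cdot)/q(\cdot)}}\left(|f|^{q(\cdot)}\right)=\int_{\rn}\left(|f(x)|^{q(x)}\right)^{p(x)/q(x)}\,dx=\int_{\rn}|f(x)|^{p(x)}\,dx=\rho_{L^{p(\cdot)}}(f), $$
where neither side carries an $\Omega_\infty$-contribution because of the previous step.

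Finally I would apply Lemma \ref{f rho} twice, once with the exponent $\frac{p(\cdot)}{q(\cdot)}$ and the function $|f|^{q(\cdot)}$ and once with the exponent $p(\cdot)$ and the function $f$, and combine this with the modular identity above to conclude
$$ \||f|^{q(\cdot)}\|_{L^{p(\cdot)/q(\cdot)}}\le1\iff\rho_{L^{p(\cdot)/q(\cdot)}}\left(|f|^{q(\cdot)}\right)\le1\iff\rho_{L^{p(\cdot)}}(f)\le1\iff\|f\|_{L^{p(\cdot)}}\le1, $$
which is exactly the asserted equivalence. I do not expect a genuine obstacle: the proof is essentially an unwinding of the definitions, and the only point requiring attention is the verification that $\frac{p(\cdot)}{q(\cdot)}$ is an admissible exponent with finite upper bound, since that is precisely what licenses the application of Lemma \ref{f rho} to $L^{p(\cdot)/q(\cdot)}$.
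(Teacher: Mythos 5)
Your proposal is correct and is essentially the argument the paper has in mind (the paper omits the details, attributing the lemma to the convexification/unit-ball property already used in \cite{ah10}): reduce both norms to their modulars via Lemma \ref{f rho} and observe that $\rho_{L^{p(\cdot)/q(\cdot)}}(|f|^{q(\cdot)})=\rho_{L^{p(\cdot)}}(f)$ pointwise under the integral. Your preliminary check that $p(\cdot),q(\cdot)\in LH$ forces $p_+,q_+<\infty$ and $\Omega_\infty=\emptyset$, so that $\frac{p(\cdot)}{q(\cdot)}\in\ccp_0$ with finite upper bound and Lemma \ref{f rho} is applicable to both exponents, is exactly the point that needs care, and you handle it correctly.
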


The following lemma is a direct application of \cite[Lemma 2.1.14]{dhr17}
with the fact that $ \rho_{l^{q(\cdot)}(L^{p(\cdot)})} $ is a semimodular.
We omit the details here.
\begin{lemma}\label{f rho seq}
Let $p(\cdot),q(\cdot) \in \mathcal{P}_0\cap LH$.
Then, for any sequence of measurable functions $\{f_j\}_{j\in\mathbb{Z}_+}$,
$ \|\{f_j\}_{j\in\mathbb{Z}_+}\|_{l^{q(\cdot)}(L^{p(\cdot)})} \leq 1 $
if and only if
$ \rho_{l^{q(\cdot)}(L^{p(\cdot)})}(\{f_j\}_{j\in\mathbb{Z}_+}) \leq 1 $
and, moreover, $ \|\{f_j\}_{j\in\mathbb{Z}_+}\|_{l^{q(\cdot)}(L^{p(\cdot)})} = 1 $
if and only if $\rho_{l^{q(\cdot)}(L^{p(\cdot)})}(\{f_j\}_{j\in\mathbb{Z}_+}) = 1$.
\end{lemma}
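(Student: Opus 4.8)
The plan is to recognize Lemma \ref{f rho seq} as a direct instance of the abstract correspondence between a continuous semimodular and the Luxemburg-type seminorm it induces, exactly as in the proof of its one-function counterpart Lemma \ref{f rho}, but now with the semimodular $\rho := \rho_{l^{q(\cdot)}(L^{p(\cdot)})}$ in place of $\rho_{L^{p(\cdot)}}$. The first step is to note that the hypothesis $p(\cdot),q(\cdot)\in LH$ already forces $p_+<\infty$ and $q_+<\infty$: membership in $LH_\infty$ gives $|r(x)-r_\infty|\leq C_\infty/\log(e+|x|)\leq C_\infty$ for every $x\in\rn$, so any exponent in $LH_\infty$, and a fortiori any exponent in $LH$, is bounded above. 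Consequently, by Remark \ref{rem def seq}(iii) (which rests on \cite[Proposition 3.5]{ah10}), $\rho$ is a \emph{continuous} semimodular on the vector space of all sequences of measurable functions, and, straight from Definition \ref{def seq}, the quantity $\|\{f_j\}_{j\in\zz_+}\|_{l^{q(\cdot)}(L^{p(\cdot)})}$ coincides with the Luxemburg norm $\inf\{\mu\in(0,\infty):\ \rho(\{\mu^{-1}f_j\}_{j\in\zz_+})\leq1\}$ attached to $\rho$.

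Fixing $x:=\{f_j\}_{j\in\zz_+}$, I would then exploit the elementary features of $t\mapsto\rho(tx)$ on $[0,\infty)$: it vanishes at $0$, is non-decreasing (from convexity of $\rho$ together with $\rho(0)=0$, which yields $\rho(sx)\leq(s/t)\rho(tx)$ whenever $0<s\leq t$), and, $\rho$ being continuous, is itself continuous. From the non-decreasingness and the definition of the infimum, the defining set $\{\mu>0:\ \rho(\mu^{-1}x)\leq1\}$ is an interval with left endpoint $\|x\|_{l^{q(\cdot)}(L^{p(\cdot)})}$; hence $\rho(x)\leq1$ places $\mu=1$ in this set, giving $\|x\|_{l^{q(\cdot)}(L^{p(\cdot)})}\leq1$, while $\|x\|_{l^{q(\cdot)}(L^{p(\cdot)})}\leq1$ forces $\rho(\mu^{-1}x)\leq1$ for all $\mu>1$ and, letting $\mu\downarrow1$ and using left-continuity, $\rho(x)\leq1$. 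This settles the unit-ball equivalence $\|x\|_{l^{q(\cdot)}(L^{p(\cdot)})}\leq1\iff\rho(x)\leq1$.

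For the sharper assertion $\|x\|_{l^{q(\cdot)}(L^{p(\cdot)})}=1\iff\rho(x)=1$, I would argue by contradiction in each direction. If $\rho(x)=1$ but $\|x\|_{l^{q(\cdot)}(L^{p(\cdot)})}<1$, pick $\mu\in(\|x\|_{l^{q(\cdot)}(L^{p(\cdot)})},1)$, so that $\rho(\mu^{-1}x)\leq1$; then convexity gives $1=\rho(x)=\rho(\mu\cdot\mu^{-1}x)\leq\mu\rho(\mu^{-1}x)\leq\mu<1$, a contradiction. If $\|x\|_{l^{q(\cdot)}(L^{p(\cdot)})}=1$ but $\rho(x)<1$, the continuity of $t\mapsto\rho(tx)$ at $t=1$ produces some $t>1$ with $\rho(tx)<1$, whence $\|x\|_{l^{q(\cdot)}(L^{p(\cdot)})}\leq t^{-1}<1$, again a contradiction. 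Combining these two with the unit-ball equivalence of the previous paragraph then yields the full claim.

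The only step that is not purely formal is the verification that $\rho$ is a \emph{continuous} semimodular, not merely a left-continuous one; this is precisely where the finiteness of $p_+$ and $q_+$ — a free consequence of $p(\cdot),q(\cdot)\in LH$ — is used, and continuity is genuinely needed only for the implication ``$\|x\|_{l^{q(\cdot)}(L^{p(\cdot)})}=1\Rightarrow\rho(x)=1$''. Once this is recorded, the whole statement is a formal application of \cite[Lemma 2.1.14]{dhr17}, so no further computation is required, which would complete the proof of Lemma \ref{f rho seq}.
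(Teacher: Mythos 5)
Your proposal is correct and takes essentially the same route as the paper: both reduce the statement to the abstract semimodular correspondence in \cite[Lemma 2.1.14]{dhr17}, using Remark \ref{rem def seq}(iii) to see that $\rho_{l^{q(\cdot)}(L^{p(\cdot)})}$ is a (continuous) semimodular, with the $LH$ hypothesis supplying $p_+,q_+<\infty$. If anything, you are slightly more explicit than the paper's one-line remark in pinpointing that full continuity of $\rho$ (not just left-continuity) is what the ``$\|\cdot\|=1\iff\rho(\cdot)=1$'' half actually needs.
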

The following result can be obtained directly by Definition \ref{def seq};
we omit the details here.
\begin{lemma}\label{f rho seq 1}
Let $p(\cdot),q(\cdot) \in \mathcal{P}_0\cap LH$.
For any sequence of measurable functions $\{f_j\}_{j\in\mathbb{Z}_+}$,
if there exists a positive constant $C_0$ such that
$\rho_{l^{q(\cdot)}(L^{p(\cdot)})}(\{f_j\}_{j\in\mathbb{Z}_+}) \leq C_0$,
then
$$ \|\{f_j\}_{j\in\mathbb{Z}_+}\|_{l^{q(\cdot)}(L^{p(\cdot)})} \leq \max\left\{C_0^{\frac{1}{q_+}}, C_0^{\frac{1}{q_-}}\right\}. $$
\end{lemma}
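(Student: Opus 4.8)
The plan is to isolate a single dilation (quasi-homogeneity) inequality for the semimodular $\rho_{l^{q(\cdot)}(L^{p(\cdot)})}$ and then to choose the scaling parameter so that the resulting modular bound is exactly $1$, which by the very definition of $\|\cdot\|_{l^{q(\cdot)}(L^{p(\cdot)})}$ gives the claimed norm bound. First I would record the structural simplifications coming from the hypotheses: since $p(\cdot),q(\cdot)\in LH$, Definition \ref{def LH} makes both real-valued, and membership in $LH_\infty$ makes them bounded, so $0<p_-\le p_+<\infty$, $0<q_-\le q_+<\infty$, and $\Omega_\infty$ is null. In particular $q_+<\infty$, so Remark \ref{rem def seq}{\rm (i)} applies and, for every sequence $\{g_j\}_{j\in\zz_+}$ of measurable functions,
\[
\rho_{l^{q(\cdot)}(L^{p(\cdot)})}\left(\{g_j\}_{j\in\zz_+}\right)=\sum_{j}\left\|\,|g_j|^{q(\cdot)}\,\right\|_{L^{p(\cdot)/q(\cdot)}}.
\]

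Next I would prove the dilation inequality: for every $t\in(0,\infty)$ and every sequence $\{f_j\}_{j\in\zz_+}$,
\[
\rho_{l^{q(\cdot)}(L^{p(\cdot)})}\left(\{t^{-1}f_j\}_{j\in\zz_+}\right)\le\max\left\{t^{-q_-},\,t^{-q_+}\right\}\,\rho_{l^{q(\cdot)}(L^{p(\cdot)})}\left(\{f_j\}_{j\in\zz_+}\right).
\]
Indeed, by the displayed identity the $j$-th summand on the left equals $\|\,t^{-q(\cdot)}|f_j|^{q(\cdot)}\,\|_{L^{p(\cdot)/q(\cdot)}}$, and since $0\le t^{-q(x)}\le\max\{t^{-q_-},t^{-q_+}\}$ for almost every $x\in\rn$, the monotonicity and the positive homogeneity of the quasi-norm $\|\cdot\|_{L^{p(\cdot)/q(\cdot)}}$ bound it by $\max\{t^{-q_-},t^{-q_+}\}\,\|\,|f_j|^{q(\cdot)}\,\|_{L^{p(\cdot)/q(\cdot)}}$; summing over $j$ gives the inequality. (Alternatively, one can argue straight from the nested-infimum form in Definition \ref{def seq}, replacing the inner variable $\lambda_j$ by $t^{\mp q_\mp}\lambda_j$ and using the monotonicity of $\rho_{L^{p(\cdot)}}$, which avoids Remark \ref{rem def seq}{\rm (i)} altogether.)

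Finally I would optimize the scaling. Given $\rho_{l^{q(\cdot)}(L^{p(\cdot)})}(\{f_j\}_{j\in\zz_+})\le C$: if $C\ge1$, set $\mu:=C^{1/q_-}\ge1$, so that $\max\{\mu^{-q_-},\mu^{-q_+}\}=\mu^{-q_-}=C^{-1}$, and the dilation inequality gives $\rho_{l^{q(\cdot)}(L^{p(\cdot)})}(\{\mu^{-1}f_j\}_{j\in\zz_+})\le C^{-1}C=1$, whence $\|\{f_j\}_{j\in\zz_+}\|_{l^{q(\cdot)}(L^{p(\cdot)})}\le\mu=\max\{C^{1/q_+},C^{1/q_-}\}$ by the definition of $\|\cdot\|_{l^{q(\cdot)}(L^{p(\cdot)})}$ (equivalently, by Lemma \ref{f rho seq}); if $0<C<1$, set $\mu:=C^{1/q_+}\le1$, so that $\max\{\mu^{-q_-},\mu^{-q_+}\}=\mu^{-q_+}=C^{-1}$, and the same reasoning yields $\|\{f_j\}_{j\in\zz_+}\|_{l^{q(\cdot)}(L^{p(\cdot)})}\le\mu=\max\{C^{1/q_+},C^{1/q_-}\}$. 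I do not expect a genuine obstacle here — as the paper notes, the statement is essentially a reorganization of Definition \ref{def seq}. The only points deserving attention are that the maximum is truly necessary because $C$ may lie on either side of $1$, which switches the effective endpoint of the range of $q(\cdot)$, and that the argument uses only the monotonicity and homogeneity of $\|\cdot\|_{L^{p(\cdot)/q(\cdot)}}$, so the possible failure of the triangle inequality when $p_-/q_+<1$ is immaterial.
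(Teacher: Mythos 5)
Your proof is correct, and it fills in exactly the details the paper omits: the paper offers no argument at all, remarking only that the lemma "can be obtained directly by Definition \ref{def seq}," which is what you do via the dilation inequality $\rho_{l^{q(\cdot)}(L^{p(\cdot)})}(\{t^{-1}f_j\}) \le \max\{t^{-q_-},t^{-q_+}\}\,\rho_{l^{q(\cdot)}(L^{p(\cdot)})}(\{f_j\})$ and the optimal choice of $t$ on each side of $C=1$. All the supporting observations (that $LH$ forces $q_+<\infty$, that only monotonicity and homogeneity of $\|\cdot\|_{L^{p(\cdot)/q(\cdot)}}$ are needed, and that the max cannot be dropped because $C$ may be on either side of $1$) are accurate.
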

The following result can be found in the proof of \cite[Theorem 1]{d12}.
\begin{lemma}\label{2js eq}
Let $p(\cdot) \in LH$.
Then, for any $j\in\mathbb{Z}_+$, any cube $Q\in \mathcal{Q}_j$, and any $x,y\in Q$,
$2^{jp(x)} \sim 2^{jp(y)},$
where the positive equivalence constants depend only on $p(\cdot)$ and $n$.
Moreover, for any $j\in\mathbb{Z}_+$, any $\delta \in [1+2^{-j}, 1+2^{-j+1}]$,
any cube $Q\in \mathcal{Q}_j$, and any $x,y\in Q$,
$\delta^{jp(x)} \sim \delta^{jp(y)},$
where the positive equivalence constants depend only on $p(\cdot)$ and $n$.
\end{lemma}
Now, we give the proof of Lemma \ref{W supp}.
\begin{proof}[Proof of Lemma \ref{W supp}]
We first consider the case $ \|\sup_{\mathbb{A},\varphi}(\vec{f})\|_{b^{s(\cdot)}_{p(\cdot),q(\cdot)}} = 0$.
In this case, by the fact that $ \|\cdot\|_{b^{s(\cdot)}_{p(\cdot),q(\cdot)}} $ is a quasi-norm,
we obtain $\sup_{\mathbb{A},\varphi}(\vec{f}) = 0$ and hence,
using the definition of $\sup_{\mathbb{A},\varphi}(\vec{f})$, we find that,
for any $j\in \mathbb{Z}_+$, $\varphi_j \ast \vec{f} = 0$,
which, combined with the definition of $\|\cdot\|_{B^{s(\cdot)}_{p(\cdot),q(\cdot)}(W,\varphi)}$,
further implies that $\|\vec{f}\|_{B^{s(\cdot)}_{p(\cdot),q(\cdot)}(W,\varphi)} = 0$.
Thus, \eqref{Ajtj 10} holds in this case.

Next, we assume $ \|\sup_{\mathbb{A},\varphi}(\vec{f})\|_{b^{s(\cdot)}_{p(\cdot),q(\cdot)}} \neq 0$.
From the fact that $\|\cdot\|_{b^{s(\cdot)}_{p(\cdot),q(\cdot)}}$
and $\|\cdot\|_{B^{s(\cdot)}_{p(\cdot),q(\cdot)}(W,\varphi)}$ are both quasi-norms,
it follows that, to prove the present lemma in this case,
it is sufficient to show that, for any measurable function $\vec{f}$ satisfying
\begin{align}\label{Ajtj 1}
\left\| \sup_{\mathbb{A},\varphi}\left( \vec{f} \right) \right\|_{b^{s(\cdot)}_{p(\cdot),q(\cdot)}} =
\left\| \left\{ 2^{js(\cdot)}\sum_{Q\in\mathcal{Q}_j} \sup_{\mathbb{A},\varphi}\left(\vec{f}\right) \widetilde{\mathbf{1}_Q} \right\}_{j\in\mathbb{Z}_+} \right\|_{l^{q(\cdot)}(L^{p(\cdot)})} = 1,
\end{align}
we have
\begin{align}\label{Ajtj 12}
\left\| \vec{f} \right\|_{B^{s(\cdot)}_{p(\cdot),q(\cdot)}(W,\varphi)}
= \left\| \left\{ 2^{js(\cdot)} \left| W(\cdot) \left(\varphi_j\ast \vec{f}\right)(\cdot)\right| \right\}_{j\in\mathbb{Z}_+} \right\|_{l^{q(\cdot)}(L^{p(\cdot)})} \lesssim 1
\end{align}
with the implicit positive constant independent of $\vec{f}$.

For any $j\in \mathbb{Z}_+$,
let $t_j := \sum_{Q\in\mathcal{Q}_j} \sup_{\mathbb{A},\varphi,Q}( \vec{f} ) \widetilde{\mathbf{1}}_Q$.
We claim that, to prove \eqref{Ajtj 12},
we only need to show that
there exists a positive constant $C$ such that,
for any $j \in \mathbb{Z}_+$,
\begin{align}\label{Ajtj 2}
\left\| \delta_j^{-\frac{1}{q(\cdot)}}2^{js(\cdot)} \left| W(\cdot)\left(\varphi_j\ast \vec{f}\right)(\cdot) \right| \right\|_{L^{p(\cdot)}} \leq C,
\end{align}
where, for any $j\in\mathbb{Z}_+$,
\begin{align}\label{def deltaj}
\delta_j := \left\| 2^{js(\cdot)q(\cdot)}  t_j^{q(\cdot)} \right\|_{L^{\frac{p(\cdot)}{q(\cdot)}}} + 2^{-j}
\end{align}
and $ \delta_j \in [2^{-j}, 1+ 2^{-j}]$.
Indeed, if \eqref{Ajtj 2} holds,
then, for any $j\in\mathbb{Z}_+$,
$$\left\| C^{-1} \delta_j^{-\frac{1}{q(\cdot)}}2^{js(\cdot)} \left| W(\cdot)\left(\varphi_j\ast \vec{f}\right)(\cdot) \right| \right\|_{L^{p(\cdot)}} \leq 1.$$
Applying this with Lemma \ref{f pq},
we find that
$$\left\| C^{-q(\cdot)} \delta_j^{-1} 2^{js(\cdot)q(\cdot)} \left| W(\cdot)\left(\varphi_j\ast \vec{f}\right)(\cdot) \right|^{q(\cdot)} \right\|_{L^{\frac{p(\cdot)}{q(\cdot)}}} \leq 1,$$
which further implies that
\begin{align*}
\left\| C^{-q(\cdot)}  2^{js(\cdot)q(\cdot)} \left| W(\cdot)\left(\varphi_j\ast \vec{f}\right)(\cdot) \right|^{q(\cdot)} \right\|_{L^{\frac{p(\cdot)}{q(\cdot)}}} \leq \delta_j.
\end{align*}
Observe that, by Remark \ref{rem def seq}{\rm (i)} and the definition of $\delta_j$,
\begin{align*}
\rho_{l^{q(\cdot)}(L^{p(\cdot)})} \left( \left\{C^{-1} 2^{js(\cdot)} \left| W(\cdot)\left(\varphi_j\ast \vec{f}\right)(\cdot) \right|\right\}_{j\in\mathbb{Z}_+} \right)
&= \sum_{j=0}^\infty \left\| C^{-q(\cdot)}  2^{js(\cdot)q(\cdot)} \left| W(\cdot)\left(\varphi_j\ast \vec{f}\right)(\cdot) \right|^{q(\cdot)} \right\|_{L^{\frac{p(\cdot)}{q(\cdot)}}}\\
&\leq \sum_{j=0}^\infty \delta_j
 = \sum_{j=0}^\infty \left\| 2^{js(\cdot)q(\cdot)}  t_j^{q(\cdot)} \right\|_{L^{\frac{p(\cdot)}{q(\cdot)}}} + \sum_{j=0}^\infty 2^{-j}.
\end{align*}
This, together with Remark \ref{rem def seq}{\rm (i)},
Lemma \ref{f rho seq}, and \eqref{Ajtj 1},
further implies that
\begin{align*}
\rho_{l^{q(\cdot)}(L^{p(\cdot)})} \left( \left\{C^{-1} 2^{js(\cdot)} \left| W(\cdot)\left(\varphi_j\ast \vec{f}\right)(\cdot) \right|\right\}_{j\in\mathbb{Z}_+} \right)
&= \rho_{l^{q(\cdot)}(L^{p(\cdot)})}\left( \left\{ 2^{js(\cdot)} t_j \right\}_{j\in\mathbb{Z}_+} \right) +2\\
& = \left\| \sup_{\mathbb{A},\varphi}\left( \vec{f} \right) \right\|_{b^{s(\cdot)}_{p(\cdot),q(\cdot)}} + 2= 3. \nonumber
\end{align*}
From this and Lemma \ref{f rho seq 1},
we deduce that
$\| \{C^{-1} 2^{js(\cdot)} | W(\cdot)(\varphi_j\ast \vec{f})(\cdot) |\}_{j\in\mathbb{Z}_+} \|_{l^{q(\cdot)}(L^{p(\cdot)})} \lesssim 1$,
which further implies that
$\| \{2^{js(\cdot)} | W(\cdot)(\varphi_j\ast \vec{f})(\cdot) |\}_{j\in\mathbb{Z}_+} \|_{l^{q(\cdot)}(L^{p(\cdot)})} \lesssim 1. $
This finishes the proof of the above claim.

Now, we turn to prove \eqref{Ajtj 2}.
Let $r := \min\{1, p_-\}$ and hence $\frac{p(\cdot)}{r} \in \mathcal{P} \cap LH$.
Then, by Lemmas \ref{con f} and \ref{fg Lp} with $p(\cdot) := \frac{p(\cdot)}{r}$,
we find that, for any $j\in\mathbb{Z}_+$,
\begin{align}\label{Ajtj 3}
&\left\| \delta_j^{-\frac{1}{q(\cdot)}}2^{js(\cdot)} \left| W(\cdot)\left(\varphi_j\ast \vec{f}\right)(\cdot) \right| \right\|^r_{L^{p(\cdot)}}\nonumber\\
&\quad = \left\| \delta_j^{-\frac{r}{q(\cdot)}}2^{jrs(\cdot)} \left| W(\cdot)\left(\varphi_j\ast \vec{f}\right)(\cdot) \right|^r \right\|_{L^{\frac{p(\cdot)}{r}}}\nonumber\\
&\quad \sim \sup_{\|g\|_{L^{(\frac{p(\cdot)}{r})'}}\leq 1} \int_{\mathbb{R}^n} \delta_j^{-\frac{r}{q(x)}}2^{jrs(x)} \left| W(x)\left(\varphi_j\ast \vec{f}\right)(x) \right|^r |g(x)|\,dx
\end{align}
Now, let $g\in L^{(\frac{p(\cdot)}{r})'}$ be any given function with $\|g\|_{L^{(\frac{p(\cdot)}{r})'}}\leq 1$.
Then, from Lemma \ref{2js eq} with $p(\cdot) := -\frac{1}{q(\cdot)}$,
it follows that,
for any $j\in\mathbb{Z}_+$, $Q \in \mathcal{Q}_j$, and $x\in Q$,
\begin{align}\label{Ajtj 15}
\delta_j^{-\frac{1}{q(x)}} 2^{js(x)} \sim \delta_j^{-\frac{1}{q(x_Q)}} 2^{js(x_Q)}.
\end{align}
By this, the disjointness of the dyadic cubes in $\mathcal{Q}_j$,
\eqref{def supp f 1}, and Lemma \ref{Holder} with $p(\cdot) := \frac{p(\cdot)}{r}$,
we obtain
\begin{align}\label{Ajtj 16}
&\int_{\mathbb{R}^n} \delta_j^{-\frac{r}{q(x)}}2^{jrs(x)} \left| W(x)\left(\varphi_j\ast \vec{f}\right)(x) \right|^r |g(x)|\,dx\nonumber \\
&\quad  = \sum_{Q\in \mathcal{Q}_j} \int_{Q} \delta_j^{-\frac{r}{q(x)}}2^{jrs(x)} \left| W(x)\left(\varphi_j\ast \vec{f}\right)(x) \right|^r |g(x)|\,dx\nonumber\\
&\quad \sim \sum_{Q\in \mathcal{Q}_j} \delta_j^{-\frac{r}{q(x_Q)}}2^{jrs(x_Q)}\int_{Q}  \left| W(x)\left(\varphi_j\ast \vec{f}\right)(x) \right|^r g(x)\,dx\nonumber \\
&\quad \leq \sum_{Q\in \mathcal{Q}_j} \delta_j^{-\frac{r}{q(x_Q)}}2^{jrs(x_Q)} \int_{Q} \left\| W(x)A_Q^{-1}\right\|^r \left| A_Q \left(\varphi_j\ast \vec{f}\right)(x) \right|^r g(x)\,dx\nonumber \\
&\quad \leq \sum_{Q\in \mathcal{Q}_j} \delta_j^{-\frac{r}{q(x_Q)}}2^{jrs(x_Q)} \int_{Q} \left\| W(x)A_Q^{-1}\right\|^r |Q|^{-\frac r2} \left[\sup_{\mathbb{A},\varphi,Q}\left( \vec{f} \right)\right]^r g(x)\,dx\nonumber \\
&\quad \lesssim \sum_{Q\in \mathcal{Q}_j} \delta_j^{-\frac{r}{q(x_Q)}}2^{jrs(x_Q)} |Q|^{-\frac r2} \left[\sup_{\mathbb{A},\varphi,Q}\left( \vec{f} \right)\right]^r \left\|\, \left\| W(\cdot)A_Q^{-1}\right\|^r\mathbf{1}_Q \right\|_{L^{\frac{p(\cdot)}{r}}}  \left\|g\mathbf{1}_Q\right\|_{L^{(\frac{p(\cdot)}{r})'}}.
\end{align}
From Lemmas \ref{con f} and \ref{eq reduc M} with $M := A_Q^{-1}$,
we deduce that, for any cube $Q$ in $\mathbb{R}^n$,
\begin{align}\label{WAQ-1}
\left\|\, \left\| W(\cdot)A_Q^{-1}\right\|^r\mathbf{1}_Q \right\|_{L^{\frac{p(\cdot)}{r}}}
= \left\|\, \left\| W(\cdot)A_Q^{-1}\right\|\mathbf{1}_Q \right\|_{L^{p(\cdot)}}^{r}
\sim \left\|\mathbf{1}_Q\right\|_{L^{p(\cdot)}}^r \left\| A_Q A_Q^{-1} \right\| =  \left\|\mathbf{1}_Q\right\|_{L^{\frac{p(\cdot)}{r}}}.
\end{align}
Using this, \eqref{Ajtj 15}, \eqref{Ajtj 16},
and Lemma \ref{g pro} with $\mathcal{K} := \mathcal{Q}_j$,
we conclude that
\begin{align*}
&\int_{\mathbb{R}^n} \delta_j^{-\frac{r}{q(x)}}2^{jrs(x)} \left| W(x)\left(\varphi_j\ast \vec{f}\right)(x) \right|^r |g(x)|\,dx\nonumber\\
&\quad \lesssim \sum_{Q\in \mathcal{Q}_j} \delta_j^{-\frac{r}{q(x_Q)}}2^{jrs(x_Q)} |Q|^{-\frac r2} \left[\sup_{\mathbb{A},\varphi,Q}\left( \vec{f} \right)\right]^r \left\| \mathbf{1}_Q \right\|_{L^{\frac{p(\cdot)}{r}}}  \left\|g\mathbf{1}_Q\right\|_{L^{(\frac{p(\cdot)}{r})'}}\nonumber \\
&\quad \lesssim \sum_{Q\in \mathcal{Q}_j}  \left\| \left[\delta_j^{-\frac{1}{q(\cdot)}}2^{js(\cdot)} \sup_{\mathbb{A},\varphi,Q}\left( \vec{f} \right) \widetilde{\mathbf{1}}_Q\right]^r \right\|_{L^{\frac{p(\cdot)}{r}}}  \left\|g\mathbf{1}_Q\right\|_{L^{(\frac{p(\cdot)}{r})'}}\nonumber\\
&\quad \lesssim \left\| \left[\delta_j^{-\frac{1}{q(\cdot)}}2^{js(\cdot)} \sum_{Q\in \mathcal{Q}_j}  \sup_{\mathbb{A},\varphi,Q}\left( \vec{f} \right) \widetilde{\mathbf{1}}_Q\right]^r \right\|_{L^{\frac{p(\cdot)}{r}}}  \left\|g\right\|_{L^{(\frac{p(\cdot)}{r})'}},
\end{align*}
which, together with \eqref{Ajtj 1} and Lemma \ref{con f},
further implies that
\begin{align}\label{Ajtj 17}
\left\| \delta_j^{-\frac{1}{q(\cdot)}}2^{js(\cdot)} \left| W(\cdot)\left(\varphi_j\ast \vec{f}\right)(\cdot) \right| \right\|^r_{L^{p(\cdot)}}
&\leq \sup_{\|g\|_{L^{(\frac{p(\cdot)}{r})'}}\leq 1} \left\| \left[\delta_j^{-\frac{1}{q(\cdot)}}2^{js(\cdot)} \sum_{Q\in \mathcal{Q}_j}  \sup_{\mathbb{A},\varphi,Q}\left( \vec{f} \right) \widetilde{\mathbf{1}}_Q\right]^r \right\|_{L^{\frac{p(\cdot)}{r}}}  \left\|g\right\|_{L^{(\frac{p(\cdot)}{r})'}}\nonumber \\
&\leq \sup_{\|g\|_{L^{(\frac{p(\cdot)}{r})'}}\leq 1} \left\| \delta_j^{-\frac{r}{q(\cdot)}}2^{jrs(\cdot)} t_j^r \right\|_{L^{\frac{p(\cdot)}{r}}}
 = \left\| \delta_j^{-\frac{1}{q(\cdot)}}2^{js(\cdot)} t_j \right\|^r_{L^{p(\cdot)}}.
\end{align}
By \eqref{def deltaj}, we are easy to find that
$ \| \delta_j^{-1} 2^{js(\cdot)q(\cdot)} t_j^{q(\cdot)} \|_{L^{\frac{p(\cdot)}{q(\cdot)}}} \leq 1, $
which, combined with Lemma \ref{f pq},
further implies that
$ \| \delta_j^{-\frac{1}{q(\cdot)}}2^{js(\cdot)} t_j \|_{L^{p(\cdot)}} \leq 1. $
From this and \eqref{Ajtj 17},
we infer that
$$ \left\| \delta_j^{-\frac{1}{q(\cdot)}}2^{js(\cdot)} \left| W(\cdot)\left(\varphi_j\ast \vec{f}\right)(\cdot)\right| \right\|_{L^{p(\cdot)}} \lesssim 1. $$
This finishes the proof of \eqref{Ajtj 2} and hence Lemma \ref{W supp}.
\end{proof}
Finally, we show the last part of Theorem \ref{W aa supp}.
\begin{lemma}\label{W aa}
Let $p(\cdot)$, $q(\cdot)$, $s(\cdot)$, $\{\varphi_j\}_{j\in\mathbb{Z}_+}$,
$W$, and $\mathbb{A}$ be the same as in Theorem \ref{W aa supp}.
Then, for any $\vec{f} \in (\mathcal{S}')^m$,
$$\left\| \sup_{\mathbb{A},\varphi}\left( \vec{f} \right) \right\|_{b^{s(\cdot)}_{p(\cdot),q(\cdot)}}
\lesssim \left\| \vec{f} \right\|_{B^{s(\cdot)}_{p(\cdot),q(\cdot)}(W,\varphi)}, $$
where the implicit positive constant is independent of $\vec{f}$.
\end{lemma}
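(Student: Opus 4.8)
The plan is to run the proof of Lemma~\ref{W supp} ``dually'', now bounding the supremum sequence $\sup_{\aa,\varphi}(\vec f)$ by $W(\cdot)(\varphi_j\ast\vec f)$ rather than the other way around. Since $\|\cdot\|_{b^{s(\cdot)}_{p(\cdot),q(\cdot)}}$ and $\|\cdot\|_{B^{s(\cdot)}_{p(\cdot),q(\cdot)}(W,\varphi)}$ are quasi-norms, it suffices, exactly as there, to assume $\|\vec f\|_{B^{s(\cdot)}_{p(\cdot),q(\cdot)}(W,\varphi)}=1$ and to prove $\|\sup_{\aa,\varphi}(\vec f)\|_{b^{s(\cdot)}_{p(\cdot),q(\cdot)}}\ls 1$. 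Put $\delta_j:=\|2^{js(\cdot)q(\cdot)}|W(\cdot)(\varphi_j\ast\vec f)(\cdot)|^{q(\cdot)}\|_{L^{\frac{p(\cdot)}{q(\cdot)}}}+2^{-j}$, so that $\delta_j\in[2^{-j},1+2^{-j}]$ and, by Remark~\ref{rem def seq}(i) and Lemma~\ref{f rho seq}, $\sum_{j\in\zz_+}\delta_j\le 3$. Repeating the modular computation in the proof of Lemma~\ref{W supp} line by line, with the reducing operators there replaced by $W$ and invoking Lemmas~\ref{f pq}, \ref{f rho seq} and \ref{f rho seq 1}, the desired bound follows once we show the single-scale estimate
\[
\left\|\delta_j^{-\frac{1}{q(\cdot)}}2^{js(\cdot)}t_j\right\|_{L^{p(\cdot)}}\ls 1\qquad\text{uniformly in }j\in\zz_+,
\]
where $t_j:=\sum_{Q\in\cq_j}\sup_{\aa,\varphi,Q}(\vec f)\,\widetilde{\one}_Q$; note by \eqref{def supp f 1} that $t_j=\sup_{y\in Q}|A_Q(\varphi_j\ast\vec f)(y)|$ on each $Q\in\cq_j$.

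Fix $r\in(0,\min\{1,p_-\})$. By Lemma~\ref{con f}, the duality Lemma~\ref{fg Lp}, the disjointness of $\cq_j$, and Lemma~\ref{2js eq} (used to freeze $\delta_j^{-1/q(\cdot)}2^{js(\cdot)}$ at the center $x_Q$ of each $Q$, as in \eqref{Ajtj 15}), the $r$-th power of the left-hand side above is comparable to
\[
\sup_{\|g\|_{L^{(\frac{p(\cdot)}{r})'}}\le 1}\ \sum_{Q\in\cq_j}\delta_j^{-\frac{r}{q(x_Q)}}2^{jrs(x_Q)}\left[\sup_{y\in Q}\left|A_Q(\varphi_j\ast\vec f)(y)\right|\right]^r\int_Q g(x)\,dx.
\]
The heart of the matter is a bound for $\sup_{y\in Q}|A_Q(\varphi_j\ast\vec f)(y)|$ by $|W(\cdot)(\varphi_j\ast\vec f)(\cdot)|$. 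Since $\widehat{\varphi_j\ast\vec f}$ is supported in $\{|\xi|\le 2^{j+1}\}$, Lemma~\ref{varf} yields the reproducing identity $A_Q(\varphi_j\ast\vec f)(y)=\sum_{R\in\cq_j}\int_R\gamma_j(y-u)A_Q(\varphi_j\ast\vec f)(u)\,du$; writing $A_Q(\varphi_j\ast\vec f)(u)=(A_QA_R^{-1})A_R(\varphi_j\ast\vec f)(u)$ for $u\in R$, bounding $\|A_QA_R^{-1}\|\ls(1+2^j|x_Q-x_R|)^{\Delta}$ by Lemma~\ref{QP5} (both cubes at scale $2^{-j}$), and then, for fixed $u$, converting $|A_R(\varphi_j\ast\vec f)(u)|=|A_R\vec v|$ with the \emph{frozen} vector $\vec v=(\varphi_j\ast\vec f)(u)$ into the $L^{p(\cdot)}(R)$-average $\|\one_R\|_{L^{p(\cdot)}}^{-1}\|\,|W(\cdot)\vec v|\,\one_R\|_{L^{p(\cdot)}}$ via \eqref{eq redu}, one reaches an expression entirely in $|W(\cdot)(\varphi_j\ast\vec f)(\cdot)|$. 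The remaining supremum over $y\in Q$ and the mismatch between the evaluation point of $\varphi_j\ast\vec f$ and the integration variable are absorbed by using that, for each fixed $v$, the map $u\mapsto W(v)(\varphi_j\ast\vec f)(u)$ is again band-limited at scale $2^j$, hence dominated by $[\eta_{j,Nr}\ast|W(v)(\varphi_j\ast\vec f)(\cdot)|^r(v)]^{1/r}$ for $N$ large. Combining this with Lemma~\ref{s eta} (to pull $2^{jrs(\cdot)}$ inside the $\eta$-convolution), Remark~\ref{rem def seq}(iv), the $\eta$-boundedness Lemma~\ref{eta bound seq} (legitimate since $\tfrac{p(\cdot)}{r},\tfrac{q(\cdot)}{r}\in LH$), the variable-Lebesgue H\"older and packing inequalities Lemmas~\ref{Holder} and~\ref{g pro}, Lemma~\ref{con f}, and the reducing-operator identities of the type \eqref{WAQ-1}, one collapses the sum over $Q$, sums the resulting geometric series over the dyadic annuli $\{2^j|x_Q-x_R|\sim 2^k\}$ (convergent once $N>\tfrac{n}{r}+C_{\rm log}(s)+\Delta$), and arrives at $\ls\|\delta_j^{-\frac{1}{q(\cdot)}}2^{js(\cdot)}|W(\cdot)(\varphi_j\ast\vec f)(\cdot)|\|_{L^{p(\cdot)}}^r$, which is $\ls 1$ by the definition of $\delta_j$ and Lemma~\ref{f pq}. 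This yields the single-scale estimate, hence Lemma~\ref{W aa}, and, together with Lemma~\ref{W supp}, the first equivalence in Theorem~\ref{W aa supp}.

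The main obstacle is precisely this passage from the averaging description (the reducing operators $A_Q$) back to the pointwise matrix weight $W$. In contrast with the matrix $A_p$ setting, the $\ca_{p(\cdot),\infty}$ condition controls only the \emph{geometric} mean (the log-average) of $\|W(\cdot)W^{-1}(y)\|$ over a cube, so one may not naively estimate $\fint_Q\|A_QW^{-1}(z)\|^r\,dz$ nor compare $W$ at two distinct points through an $L^1$-average. The resolution is never to compare $W$ pointwise at two points: one keeps the vector $(\varphi_j\ast\vec f)(u)$ frozen and uses the reducing-operator relation \eqref{eq redu}, which is an honest $L^{p(\cdot)}$-average of $W$ acting on that fixed vector rather than a log-average, and one exploits the band-limitedness of $u\mapsto W(v)(\varphi_j\ast\vec f)(u)$ to fold the supremum and the point mismatch into an $\eta$-convolution; the self-improvement from $L^{p(\cdot)}$-averages to slightly wider integrability that keeps the relevant weighted integrals finite is part of the quantitative $\ca_{p(\cdot),\infty}$ theory (such as the reverse H\"older inequality for $\ca_{p(\cdot),\infty}$ weights and the reducing-operator estimate Lemma~\ref{QP5}) developed in \cite{yyz25}. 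A secondary, purely variable-exponent nuisance, as throughout this article, is that quantities such as $2^{jp(\cdot)}$ and $\delta_j^{1/q(\cdot)}$ must be frozen cube by cube at $x_Q$ via the log-H\"older estimate Lemma~\ref{2js eq} before summing over $Q$, and the exponents $\tfrac{p(\cdot)}{r},\tfrac{q(\cdot)}{r},s(\cdot)$ must remain log-H\"older continuous so that Lemmas~\ref{s eta} and~\ref{eta bound seq} apply.
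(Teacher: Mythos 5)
You are right that this is the hard direction --- passing from the averaging operators $A_Q$ back to the pointwise weight $W$ --- and you correctly identify the two-variable mismatch $W(z)(\varphi_j\ast\vec f)(u)$, $z\ne u$, as the main obstacle. But the mechanism you propose never actually closes that mismatch. After the reproducing identity (Lemma~\ref{varf}) and the reducing-operator relation \eqref{eq redu}, one does \emph{not} ``reach an expression entirely in $|W(\cdot)(\varphi_j\ast\vec f)(\cdot)|$''; one reaches an average over $z,u\in R$ of the off-diagonal quantity $|W(z)(\varphi_j\ast\vec f)(u)|$. Band-limitedness of $u\mapsto W(v)(\varphi_j\ast\vec f)(u)$ for a frozen $v$ merely yields the $\eta$-convolution bound $|W(v)(\varphi_j\ast\vec f)(u)|^r\lesssim\int_{\rn}\eta_{j,Nr}(v-w)\,|W(v)(\varphi_j\ast\vec f)(w)|^r\,dw$, which still pairs $W$ at the frozen point $v$ with $(\varphi_j\ast\vec f)$ at a moving point $w$. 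Nothing in your argument contracts $W(v)(\varphi_j\ast\vec f)(w)$ to the diagonal $W(\cdot)(\varphi_j\ast\vec f)(\cdot)$, and for an arbitrary matrix weight such a contraction is false; it must come from the $\ca_{p(\cdot),\infty}$ hypothesis. Your appeal to the reverse-H\"older theory and to Lemma~\ref{QP5} does not supply it: Lemma~\ref{QP5} compares reducing operators \emph{across} cubes, not $A_Q$ against $W(y)$ \emph{inside} a cube, which is what the off-diagonal contraction would require.

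The paper's proof avoids the mismatch entirely. It first replaces the supremum sequence $a=\sup_{\aa,\varphi}(\vec f)$ by an \emph{infimum} sequence $b$ over sub-cubes $\widetilde Q\in\cq_{j_Q+N}$, using Lemma~\ref{a b seq} together with the Peetre $\ast$-sequence equivalence Lemma~\ref{tast bound} to get $\|a\|_{b^{s(\cdot)}_{p(\cdot),q(\cdot)}}\sim\|b\|_{b^{s(\cdot)}_{p(\cdot),q(\cdot)}}$. For the infimum one has the pointwise bound \eqref{bQN est}, valid for \emph{every} $y\in\widetilde Q$: $b_{Q,N}\le\|A_{\widetilde Q}W^{-1}(y)\|\,|W(y)(\varphi_{j_Q}\ast\vec f)(y)|$. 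This is a single-variable estimate, so there is no off-diagonal term to collapse. The weight hypothesis then enters precisely through the weak-type estimate Lemma~\ref{Wpinfty}, which furnishes a good set $E_Q\subset\widetilde Q$ with $|E_Q|\ge\tfrac12|\widetilde Q|$ on which $\|A_{\widetilde Q}W^{-1}(y)\|\lesssim 1$; Lemma~\ref{Q EQ 1} then lets one replace $\widetilde\one_Q$ by $\one_{E_Q}$ at the cost of a constant. This infimum/good-set device --- Lemmas~\ref{a b seq}, \ref{tast bound}, \ref{Wpinfty}, and \ref{Q EQ 1}, none of which appear in your proposal --- is the missing ingredient; your duality reduction and $\eta$-convolution machinery are sound in isolation but have nothing to land on without it.
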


Before giving the proof of Lemma \ref{W aa},
we recall some necessary tools.
For any $N \in\mathbb{Z}_+$ and $\vec{f} \in (\mathcal{S}')^m$,
let
$$ \inf_{\mathbb{A}, \varphi, N}\left( \vec{f} \right) := \left\{ \inf_{\mathbb{A}, \varphi, Q, N}\left( \vec{f} \right) \right\}_{Q\in\mathcal{Q}_+}, $$
where, for any $Q \in \mathcal{Q}_+$,
\begin{align}\label{def inf}
\inf_{\mathbb{A}, \varphi, Q, N}\left( \vec{f} \right) := |Q|^\frac12
\max\left\{ \inf_{y\in\widetilde{Q}} \left| A_{\widetilde{Q}} \left( \varphi_{j_Q} \ast \vec{f} \right)(y) \right| :\
\widetilde{Q} \in \mathcal{Q}_{j_Q + N},\ \widetilde{Q} \subset Q \right\}.
\end{align}

For any sequence
$t := \{t_Q\}_{Q\in\mathcal{Q}_+} \subset \mathbb{C}$, $r\in(0,\infty]$,
and $\lambda \in (0,\infty)$,
let $t^\ast_{r,\lambda} := \{ (t^\ast_{r,\lambda})_Q \}_{Q\in\mathcal{Q}_+}$,
where, for any $Q\in\mathcal{Q}_+$,
\begin{align}\label{def tast}
\left(t^\ast_{r,\lambda}\right)_Q :=
\left[ \sum_{R\in\mathcal{Q}_+, l(R) = l(Q)} \frac{|t_R|^r}{\{1 + [l(R)]^{-1}|x_R - x_Q|\}^\lambda} \right]^{\frac1r}.
\end{align}
The following lemma is exactly \cite[Lemma 3.13]{d15}.
\begin{lemma}\label{tast bound}
Let $p(\cdot), q(\cdot)\in \mathcal{P}_0\cap LH$ and let $s(\cdot)\in LH$,
$ r\in (0,p_-) $,
$$\widetilde{R} := r \min\left\{2C_{\rm log}(q) + C_{\rm log}(s), 2\left(\frac{1}{q_-}
- \frac{1}{q_+}\right) + s_+-s_- \right\}, $$
and $\lambda \in (n + \widetilde{R},\infty) $.
Then, for any $t := \{t_Q\}_{Q\in \mathcal{Q}_+}$,
$ \| t^\ast_{r,\lambda} \|_{b^{s(\cdot)}_{p(\cdot), q(\cdot)}} \sim \| t \|_{b^{s(\cdot)}_{p(\cdot), q(\cdot)}}, $
where the positive equivalence constants are independent of $t$.
\end{lemma}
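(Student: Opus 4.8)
The statement is \cite[Lemma 3.13]{d15}, so the plan is to record the standard two-sided argument. One direction is free: keeping only the summand $R=Q$ in \eqref{def tast} gives $|t_Q|\le(t^\ast_{r,\lambda})_Q$ for every $Q\in\cq_+$, hence, by the disjointness of $\cq_j$, $|t_j|\le(t^\ast_{r,\lambda})_j:=\sum_{Q\in\cq_j}(t^\ast_{r,\lambda})_Q\wz{\one}_Q$ pointwise for each $j\in\zz_+$, and the monotonicity of $\|\cdot\|_{l^{q(\cdot)}(L^{p(\cdot)})}$ yields $\|t\|_{b^{s(\cdot)}_{p(\cdot),q(\cdot)}}\lesssim\|t^\ast_{r,\lambda}\|_{b^{s(\cdot)}_{p(\cdot),q(\cdot)}}$. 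The whole content is therefore the reverse inequality, and for this the plan is to dominate $(t^\ast_{r,\lambda})_j$ pointwise by an $\eta$-convolution of $|t_j|^r$ and then feed this into the variable-exponent vector-valued machinery already collected above.

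First I would establish the pointwise estimate
\begin{align*}
\left[(t^\ast_{r,\lambda})_j(x)\right]^r\lesssim\left(\eta_{j,\lambda}\ast|t_j|^r\right)(x),\qquad x\in\rn,\ j\in\zz_+,
\end{align*}
with implicit constant independent of $j$ and of $t$. Its proof rests on the elementary geometric fact that, for $j\in\zz_+$, $Q,R\in\cq_j$, $x\in Q$, and $y\in R$, one has $1+2^j|x-y|\sim1+2^j|x_R-x_Q|$ with constants depending only on $n$, since $|x-x_Q|$ and $|y-x_R|$ are both $\lesssim2^{-j}$; substituting this comparison into \eqref{def tast}, writing $|t_R|=|R|^{1/2}|t_j(y)|$ for a.e.\ $y\in R$, averaging the resulting sum over $y\in R$ in each cube, and then using the disjointness of $\cq_j$ gives the display.

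Then I would fix $r\in(0,p_-)$, so that $\frac{p(\cdot)}{r},\frac{q(\cdot)}{r}\in LH$, and assemble the estimate via the $r$-convexification of Remark \ref{rem def seq}{\rm (iv)}, the pointwise bound above, Lemma \ref{s eta} applied with the exponent $rs(\cdot)$ to transport the factor $2^{jrs(\cdot)}$ across the $\eta$-convolution, and Lemma \ref{eta bound seq} to remove the convolution:
\begin{align*}
\left\|t^\ast_{r,\lambda}\right\|_{b^{s(\cdot)}_{p(\cdot),q(\cdot)}}
&=\left\|\left\{2^{jrs(\cdot)}\left[(t^\ast_{r,\lambda})_j\right]^r\right\}_{j\in\zz_+}\right\|^{1/r}_{l^{\frac{q(\cdot)}{r}}(L^{\frac{p(\cdot)}{r}})}
\lesssim\left\|\left\{2^{jrs(\cdot)}\left(\eta_{j,\lambda}\ast|t_j|^r\right)\right\}_{j\in\zz_+}\right\|^{1/r}_{l^{\frac{q(\cdot)}{r}}(L^{\frac{p(\cdot)}{r}})}\\
&\lesssim\left\|\left\{2^{jrs(\cdot)}|t_j|^r\right\}_{j\in\zz_+}\right\|^{1/r}_{l^{\frac{q(\cdot)}{r}}(L^{\frac{p(\cdot)}{r}})}
=\left\|\left\{2^{js(\cdot)}|t_j|\right\}_{j\in\zz_+}\right\|_{l^{q(\cdot)}(L^{p(\cdot)})}=\left\|t\right\|_{b^{s(\cdot)}_{p(\cdot),q(\cdot)}},
\end{align*}
the last line being Remark \ref{rem def seq}{\rm (iv)} again. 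The main obstacle here is purely quantitative rather than structural: for Lemma \ref{s eta} (and the ensuing Lemma \ref{eta bound seq}) to apply, $\lambda$ must exceed $n$ plus the $\log$-H\"older constants that are picked up when one moves $2^{jrs(\cdot)}$ across $\eta_{j,\lambda}$ \emph{inside} the mixed-norm space $l^{q(\cdot)/r}(L^{p(\cdot)/r})$; verifying that the sharp admissible threshold can be taken to be $\wz{R}=r\min\{2c_{\log}(q)+c_{\log}(s),\ 2(\frac{1}{q_-}-\frac{1}{q_+})+s_+-s_-\}$ — a minimum of a ``local'' (log-H\"older) bound and a ``global'' (range) bound on the same comparisons — is exactly the delicate exponent bookkeeping carried out in \cite[Lemma 3.13]{d15}, which I would simply quote.
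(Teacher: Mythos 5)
The paper offers no proof of this lemma at all: it is imported verbatim as \cite[Lemma 3.13]{d15} (``The following lemma is exactly \dots''), so your closing move --- quoting Drihem for the exponent bookkeeping --- is in effect the same ``approach'' as the paper's. Your easy direction (keep only the summand $R=Q$) and the pointwise comparison $[(t^\ast_{r,\lambda})_j(x)]^r\sim(\eta_{j,\lambda}\ast|t_j|^r)(x)$, which follows from Lemma \ref{QP4} and $|t_R|=|R|^{1/2}|t_j(y)|$ on $R$, are both correct.

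Read as a self-contained proof of the hard direction, however, the sketch has a genuine gap. After $r$-convexification the outer exponent becomes $q(\cdot)/r$, and Lemma \ref{eta bound seq} is stated, and is only valid, for exponents in $\ccp$; your argument therefore needs $r\le q_-$, whereas the lemma allows any $r\in(0,p_-)$ with no relation to $q_-$ (note that in the paper's own analogous arguments, e.g.\ in the proof of Lemma \ref{aa supaa}, $r$ is always chosen in $(0,\min\{p_-,q_-,1\})$ precisely for this reason). Moreover, the route Lemma \ref{s eta} $+$ Lemma \ref{eta bound seq} produces a sufficient condition of the shape $\lambda>n+rC_{\rm log}(s)$ (plus whatever the $\eta$-inequality itself requires for variable $q$), which does not recover the stated threshold $\lambda>n+\widetilde R$: when $q$ is constant, $\widetilde R=r\min\{c_{\rm log}(s),\,s_+-s_-\}$ can be strictly smaller than $rC_{\rm log}(s)$, so your argument proves a weaker statement, and the $c_{\rm log}(q)$ and $\frac{1}{q_-}-\frac{1}{q_+}$ terms in $\widetilde R$ arise from treating the variable outer exponent directly (via modular/$\delta_j^{1/q(\cdot)}$-normalization and Lemma \ref{2js eq}-type estimates, in the spirit of the proofs of Lemmas \ref{W supp} and \ref{Q EQ 1}), not from the vector-valued $\eta$-inequality. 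So what you actually establish is a variant under an extra restriction on $r$ and with a different $\lambda$-threshold; the lemma as stated is, in the end, obtained only through the citation --- which, again, is all the paper itself does.
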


The following lemma is precisely \cite[Lemma 3.15]{bhyy24}.
\begin{lemma}\label{a b seq}
Let $j\in\mathbb{Z}_+$, $\vec{f} \in (\mathcal{S}')^m$ satisfy
${\mathop\mathrm{\,supp\,}} \widehat{\vec{f}} \subset \left\{ \xi\in\mathbb{R}^n:\ |\xi|\leq 2^{j+1} \right\}$,
$\mathbb{A} := \{A_Q\}_{Q\in\mathcal{Q}_+}$ be strongly doubling of order $(d_1, d_2)$
for some $d_1,d_2\in[0,\infty)$,
and $N\in\mathbb{Z}_+$ sufficiently large.
For any $Q\in\mathcal{Q}_+$, let $a_Q := |Q|^{\frac12}{\mathop\mathrm{\,supp\,}}_{y\in Q} |A_Q\vec{f}(y)|$ and
\begin{align}\label{def bQN}
b_{Q,N} := |Q|^{\frac12} \max\left\{ \inf_{y\in \widetilde{Q}} \left\{\left| A_{\widetilde{Q}} \vec{f}(y)\right|:\
\widetilde{Q} \in \mathcal{Q}_{j_Q + N},\ \widetilde{Q} \subset Q \right\} \right\}.
\end{align}
Let $a := \{ a_Q \}_{Q\in\mathcal{Q}_+}$, $b := \{b_{Q,N}\}_{Q\in\mathcal{Q}_+}$,
$r \in (0,\infty)$, and $\lambda \in (n,\infty)$.
Then, for any $Q\in\mathcal{Q}_j$, $(a^\ast_{r,\lambda})_Q \sim (b^\ast_{r,\lambda})_Q$,
where the positive equivalence constants are independent of $\vec{f}$, $j$, and $Q$.
\end{lemma}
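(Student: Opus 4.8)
The plan is to follow the scheme of \cite[Lemma 3.15]{bhyy24}, establishing separately the two inequalities $(b^\ast_{r,\lambda})_Q\ls(a^\ast_{r,\lambda})_Q$ and $(a^\ast_{r,\lambda})_Q\ls(b^\ast_{r,\lambda})_Q$ for each $Q\in\cq_j$. The only structural fact about $\aa$ used throughout is its strong $(d_1,d_2)$-doubling \eqref{eq strong doubling}; in \cite{bhyy24} this comes from an $A_{p,\infty}$ weight, while here it is supplied by Lemma \ref{QP5} for reducing operators of order $p(\cdot)$, and beyond that the argument is insensitive to the origin of $\aa$. Since $N\in\zz_+$ is fixed and large, all implicit constants may depend on $N$.

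For the first inequality it suffices to prove the termwise bound $b_{R,N}\ls a_R$ for every $R\in\cq_j$. If $\wz R\in\cq_{j+N}$ with $\wz R\subset R$, then $l(R)/l(\wz R)=2^N$ and $|x_R-x_{\wz R}|\le\sqrt n\,l(R)$, so \eqref{eq strong doubling} gives $\|A_{\wz R}A_R^{-1}\|\ls 2^{Nd_1}$; hence, for any $y\in\wz R$, one has $|A_{\wz R}\vec{f}(y)|\le\|A_{\wz R}A_R^{-1}\|\,|A_R\vec{f}(y)|\ls\sup_{z\in R}|A_R\vec{f}(z)|$, and taking the infimum over $y\in\wz R$ followed by the maximum over admissible $\wz R$ yields $b_{R,N}\ls|R|^{1/2}\sup_{z\in R}|A_R\vec{f}(z)|=a_R$. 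Substituting this into \eqref{def tast} finishes this direction; note that here $N$ enters only the constant and nothing subtler happens.

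The reverse inequality is the main point, and I would argue pointwise in $R$ before summing. Fix $R\in\cq_j$, let $y_R\in R$ realize $\sup_{z\in R}|A_R\vec{f}(z)|$, and let $\wz R\in\cq_{j+N}$ be the subcube of $R$ containing $y_R$. On $\wz R$ the vector $A_R\vec{f}$ is close to $A_R\vec{f}(y_R)$: since $\supp\widehat{\vec{f}}\subset\{\xi:\ |\xi|\le2^{j+1}\}$, the Bernstein estimate for band-limited functions (in its Peetre-maximal-function form) gives $|\nabla(A_R\vec{f})(w)|\ls2^j(A_R\vec{f})^\ast(w)$, where $(A_R\vec{f})^\ast$ is the Peetre-type maximal function of $A_R\vec{f}$ at scale $2^{-j}$, and this maximal function is slowly varying over $R$; integrating along segments inside the convex cube $\wz R$ (whose diameter is $\sqrt n\,2^{-(j+N)}$) yields $|A_R\vec{f}(z)-A_R\vec{f}(y_R)|\ls2^{-N}(A_R\vec{f})^\ast(y_R)$ for all $z\in\wz R$. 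Combining this with the triangle inequality $|A_R\vec{f}(z)|\ge|A_R\vec{f}(y_R)|-|A_R\vec{f}(z)-A_R\vec{f}(y_R)|$, with $|A_R\vec{f}(z)|\le\|A_RA_{\wz R}^{-1}\|\,|A_{\wz R}\vec{f}(z)|$, and with $\|A_RA_{\wz R}^{-1}\|\ls2^{Nd_2}$ from \eqref{eq strong doubling}, I get, after taking the infimum over $z\in\wz R$ and using $\inf_{z\in\wz R}|A_{\wz R}\vec{f}(z)|\le|R|^{-1/2}b_{R,N}$,
$$\sup_{z\in R}|A_R\vec{f}(z)|\ls2^{Nd_2}|R|^{-1/2}b_{R,N}+2^{-N}(A_R\vec{f})^\ast(y_R).$$
Unwinding the Peetre maximal function by splitting $\rn$ into the cubes $R'\in\cq_j$, bounding $\sup_{R'}|A_R\vec{f}|\le\|A_RA_{R'}^{-1}\|\,|R'|^{-1/2}a_{R'}$, and using the \emph{same-scale} estimate $\|A_RA_{R'}^{-1}\|\ls(1+2^j|x_R-x_{R'}|)^{\Delta}$ (which carries no factor $2^N$), one arrives at $a_R\ls2^{Nd_2}b_{R,N}+2^{-N}\sup_{R'\in\cq_j}(1+2^j|x_R-x_{R'}|)^{-(M-\Delta)}a_{R'}$ with $M$ as large as one wishes; the crucial point is that the implicit constant in front of $2^{-N}$ is independent of $N$.

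To conclude, I would raise this last estimate to the power $r$, dominate the supremum by the corresponding $\ell^r$-sum, plug into \eqref{def tast}, interchange the order of summation, and invoke the standard lattice convolution inequality $\sum_{R\in\cq_j}(1+2^j|x_R-x_Q|)^{-\lambda}(1+2^j|x_R-x_{R'}|)^{-\mu}\ls(1+2^j|x_Q-x_{R'}|)^{-\min\{\lambda,\mu\}}$ (valid for $\min\{\lambda,\mu\}>n$, which holds once $M$ is chosen large), arriving at $(a^\ast_{r,\lambda})_Q^r\le C2^{Nd_2r}(b^\ast_{r,\lambda})_Q^r+C2^{-Nr}(a^\ast_{r,\lambda})_Q^r$ with $C$ independent of $N$; choosing $N$ so large that $C2^{-Nr}<\tfrac12$ and absorbing the last term (legitimate since $(a^\ast_{r,\lambda})_Q<\infty$, a point reduced by first treating $\vec{f}$ with $\widehat{\vec{f}}\in C_c^\infty$ and then passing to the general band-limited case) gives $(a^\ast_{r,\lambda})_Q\ls(b^\ast_{r,\lambda})_Q$, as desired; Lemma \ref{tast bound} may be used to reconcile different choices of the exponent in $(\cdot)^\ast_{r,\lambda}$ if convenient. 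I expect the delicate part to be exactly this balance in the reverse inequality: the matrix distortion $\|A_RA_{\wz R}^{-1}\|\ls2^{Nd_2}$ between a cube and its subcube must be kept on the $b$-side, where an $N$-dependent constant is harmless, whereas the $2^{-N}$ gain from the band-limited oscillation must be prevented from being cancelled by any $2^N$-power coming from the weight, which is why one unwinds the Peetre maximal function only through same-scale reducing operators; as in \cite{bhyy24}, strong doubling (Lemma \ref{QP5}) is the sole property of $\aa$ that this requires.
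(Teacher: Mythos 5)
Your proposal is correct and takes essentially the same route as the paper, which omits the details by citing \cite[Lemma 3.15]{bhyy24} and observing that only the strong doubling property of $\aa$ (Lemma \ref{QP5}) is needed: the easy direction is the termwise bound $b_{R,N}\lesssim 2^{Nd_1}a_R$ via doubling, and the hard direction uses band-limitedness (mean value plus the Bernstein/Peetre maximal estimate), same-scale doubling $\|A_RA_{R'}^{-1}\|\lesssim(1+2^j|x_R-x_{R'}|)^{\Delta}$, the $r$-power lattice summation, and absorption for $N$ large, exactly as you outline. The one delicate point, finiteness of $(a^\ast_{r,\lambda})_Q$ needed for the absorption, is adequately handled by the reduction you flag (e.g.\ $\vec{f}_\varepsilon:=\vec{f}\,\psi(\varepsilon\cdot)$ with $\widehat{\psi}\in C_c^\infty$, $\psi(0)=1$, for which $b_{R,N}(\vec{f}_\varepsilon)\le\|\psi\|_{L^\infty}b_{R,N}(\vec{f})$ termwise while Fatou gives $\liminf_{\varepsilon\to0}(a^\ast_{r,\lambda}(\vec{f}_\varepsilon))_Q\ge(a^\ast_{r,\lambda}(\vec{f}))_Q$), so no genuine gap remains.
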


The following lemma is exactly \cite[Lemma 2.8]{cp24}.
\begin{lemma}
Let $p(\cdot) \in \mathcal{P}_0\cap LH$.
Then, for any cube $Q$ in $\mathbb{R}^n$ and any $x,y \in Q$,
$|Q|^{-|p(x) - p(y)|} \lesssim 1, $
where the implicit positive constant depends only on $p(\cdot)$ and $n$.
\end{lemma}
The following result, in the case where $q$ is a constant, is covered
by \cite[Lemma 2.6]{mns13} and, in the case where $q(\cdot)$ is variable,
is precisely \cite[Lemma 11]{n16} with $3Q_{v,m}$ replaced by $dQ$
and hence we omit the details of its proof.
\begin{lemma}\label{Q EQ 1}
Let $p(\cdot), q(\cdot)\in \mathcal{P}_0\cap LH$ and $s(\cdot)\in LH$
and let $d \in [1,\infty)$ and $\delta\in (0,1)$.
If $\{E_Q\}_{Q\in \mathcal{Q}_+}$ is a sequence of measurable sets in $\mathbb{R}^n$
satisfying $E_Q \subset dQ$ and $|E_Q| \geq \delta |dQ|$ for any $Q \in \mathcal{Q}_+$,
then, for any sequence $t := \{t_Q\}_{Q\in \mathcal{Q}_+} \subset \mathbb{C}$,
\begin{align*}
\left\| t \right\|_{b^{s(\cdot)}_{p(\cdot),q(\cdot)}}
\sim \left\| \left\{ 2^{j[s(\cdot)+\frac n2]} \sum_{Q\in \mathcal{Q}_j} \left|t_Q\right|\mathbf{1}_{E_Q} \right\}_{j\in\mathbb{Z}_+} \right\|_{l^{q(\cdot)}(L^{p(\cdot)})},
\end{align*}
where the positive equivalence constants are independent of $t$.
\end{lemma}

The following lemma is precisely \cite[Lemma 3.25]{yyz25}
(see \cite[Corollary 3.9]{bhyy23} for the related result for matrix $A_{p,\infty}$ weights).
\begin{lemma}\label{Wpinfty}
Let $p(\cdot) \in \mathcal{P}_0\cap LH$ and
$W\in \mathscr{A}_{p(\cdot), \infty}$.
Then there exists a positive constant $C$, depending on $p(\cdot)$ and $n$,
such that, for any cube $Q$ in $\mathbb{R}^n$ and any $M\in (0,\infty)$,
\begin{align*}
\left|\left\{ y\in Q:\ \left\|A_QW^{-1}(y)\right\| \geq e^M \right\}\right| \leq \frac{\log(C[W]_{\mathscr{A}_{p(\cdot),\infty}})}{M}|Q|.
\end{align*}
\end{lemma}
Now, we give the proof of Lemma \ref{W aa}.
\begin{proof}[Proof of Lemma \ref{W aa}]
From Lemma \ref{a b seq} and
the fact that ${\mathop\mathrm{\,supp\,}} \widehat{\varphi_j\ast \vec{f}} \subset \{ \xi\in\mathbb{R}^n:\ |\xi| \leq 2^{j+1} \}$
for any $j\in\mathbb{Z}_+$,
we deduce that, for any $r\in (0,\infty)$, $\lambda \in (n,\infty)$,
$j\in\mathbb{Z}_+$, and $Q\in\mathcal{Q}_j$,
$ (a^\ast_{r,\lambda})_Q \sim (b^\ast_{r,\lambda})_Q $,
where $a := \{a_Q\}_{Q\in\mathcal{Q}_+}$ and $b := \{b_{Q,N}\}_{Q\in\mathcal{Q}_+}$
are the same as in Lemma \ref{a b seq}.
By this and Lemma \ref{tast bound},
we conclude that
\begin{align}\label{a b seq eq 1}
\left\| a \right\|_{b^{s(\cdot)}_{p(\cdot),q(\cdot)}}
\sim \left\| a^\ast_{r,\lambda} \right\|_{b^{s(\cdot)}_{p(\cdot),q(\cdot)}}
\sim  \left\| b^\ast_{r,\lambda} \right\|_{b^{s(\cdot)}_{p(\cdot),q(\cdot)}}
\sim \left\| b \right\|_{b^{s(\cdot)}_{p(\cdot),q(\cdot)}}.
\end{align}
Notice that, by \eqref{def inf} and \eqref{def bQN},
we find that, for any $\widetilde{Q} \in \mathcal{Q}_{j_Q+N}$ with $\widetilde{Q} \subset Q$
and for any $y\in \widetilde{Q}$,
\begin{align}\label{bQN est}
b_{Q,N} =  |Q|^{-\frac12} \inf_{\mathbb{A}, \varphi, Q, N}\left( \vec{f} \right)
= \inf_{y\in \widetilde{Q}} \left| A_{\widetilde{Q}} \left( \varphi_{j_Q} \ast \vec{f} \right)(y) \right|
\leq \inf_{y\in \widetilde{Q}} \left\| A_{\widetilde{Q}} W^{-1}(y)\right\|  \left| W(y) \left( \varphi_{j_Q} \ast \vec{f} \right)(y) \right|.
\end{align}
Let $E_Q := \{ y\in \widetilde{Q}:\ \|A_{\widetilde{Q}} W^{-1}(y)\| < (C[W]_{\mathscr{A}_{p(\cdot),\infty}})^2 \}$,
where $C$ is the same as in Lemma \ref{Wpinfty}.
Then it follows from \eqref{bQN est} and the assumption $E_Q\subset \widetilde{Q}$ that,
for any $Q\in\mathcal{Q}_+$,
\begin{align}\label{bQN est 1}
b_{Q,N} \lesssim \inf_{y\in E_Q} \left\| A_{\widetilde{Q}} W^{-1}(y)\right\|  \left| W(y) \left( \varphi_{j_Q} \ast \vec{f} \right)(y) \right|
\lesssim \inf_{y\in E_Q} \left| W(y) \left( \varphi_{j_Q} \ast \vec{f} \right)(y) \right|.
\end{align}
Observe that, by the definition of $E_Q$ and Lemma \ref{Wpinfty},
we obtain $$|E_Q| = |\widetilde{Q}| - |\widetilde{Q} \setminus E_Q| \geq \frac12|\widetilde{Q}| = 2^{-Nn-1}|Q|.$$
Using this, \eqref{a b seq eq 1}, and Lemma \ref{Q EQ 1} with $d := 1$ and $\delta := 2^{-Nn-1}$
and using \eqref{bQN est 1} and the definition of $\| \cdot\|_{B^{s(\cdot)}_{p(\cdot),q(\cdot)}(W,\varphi)}$,
we conclude that
\begin{align*}
\left\|a\right\|_{b^{s(\cdot)}_{p(\cdot),q(\cdot)}}
&\sim \left\|b\right\|_{b^{s(\cdot)}_{p(\cdot),q(\cdot)}}
\lesssim \left\| \left\{ 2^{j[s(\cdot)+\frac n2]} \sum_{Q\in \mathcal{Q}_j} \left|b_{Q,N}\right|\mathbf{1}_{E_Q} \right\}_{j\in\mathbb{Z}_+} \right\|_{l^{q(\cdot)}(L^{p(\cdot)})}\\
&\lesssim \left\| \left\{ 2^{j[s(\cdot)+\frac n2]} \sum_{Q\in \mathcal{Q}_j} \left|W(\cdot) \left( \varphi_j\ast \vec{f}\right)(y)\right|\mathbf{1}_{E_Q} \right\}_{j\in\mathbb{Z}_+} \right\|_{l^{q(\cdot)}(L^{p(\cdot)})}\\
&\leq \left\| \left\{ 2^{j[s(\cdot)+\frac n2]} \sum_{Q\in \mathcal{Q}_j} \left|W(\cdot) \left( \varphi_j\ast \vec{f}\right)(y)\right|\mathbf{1}_{Q} \right\}_{j\in\mathbb{Z}_+} \right\|_{l^{q(\cdot)}(L^{p(\cdot)})}
 = \left\| \vec{f} \right\|_{B^{s(\cdot)}_{p(\cdot),q(\cdot)}(W,\varphi)},
\end{align*}
which completes the proof of Lemma \ref{W aa}.
\end{proof}

Finally, we give the proof of Theorem \ref{W aa supp}.
\begin{proof}[Proof of Thorem \ref{W aa supp}]
By Lemmas \ref{W aa} and \ref{W supp},
we obtain
$$ \| \vec{f} \|_{B^{s(\cdot)}_{p(\cdot),q(\cdot)}(\mathbb{A},\varphi)}
\lesssim \| \vec{f} \|_{B^{s(\cdot)}_{p(\cdot),q(\cdot)}(\mathbb{A},\varphi)}
\lesssim \| \sup_{\mathbb{A},\varphi}( \vec{f} ) \|_{b^{s(\cdot)}_{p(\cdot),q(\cdot)}}, $$
which, together with Lemma \ref{aa supaa}, gives the equivalence of all above norms
and hence completes the proof of Theorem \ref{W aa supp}.
\end{proof}

\subsection{Matrix-Weighted Variable Besov Sequence Spaces}\label{sec Besov 2}

In this subsection,
we introduce two matrix-weighted variable Besov sequence spaces,
$$b^{s(\cdot)}_{p(\cdot),q(\cdot)}(W)\ \mathrm{and}\
b^{s(\cdot)}_{p(\cdot),q(\cdot)}(\mathbb{A}),$$
and establish their equivalences.
We begin with the following sequence spaces.
\begin{definition}
Let $p(\cdot), q(\cdot) \in \mathcal{P}_0$, $s(\cdot) \in L^\infty$,
and $W\in \mathscr{A}_{p(\cdot),\infty}$.
The \emph{(pointwise) matrix-weighted variable Besov sequence space $ b^{s(\cdot)}_{p(\cdot),q(\cdot)}(W) $}
is defined to be the set of all sequences
$\vec{t} := \{\vec{t}_Q\}_{Q\in \mathcal{Q}_+} \subset \mathbb{C}^m$
such that
$$ \left\| \vec{t} \right\|_{b^{s(\cdot)}_{p(\cdot),q(\cdot)}(W)}
:= \left\| \left\{ 2^{js(\cdot)}\left| W(\cdot) \vec{t}_j \right|\right\}_{j\in \mathbb{Z}_+} \right\|_{l^{q(\cdot)}(L^{p(\cdot)})} <\infty, $$
where, for any $j\in \mathbb{Z}_+$,
\begin{align}\label{def tj}
\vec{t}_j := \sum_{Q\in \mathcal{Q}_j} \vec{t}_Q \widetilde{\mathbf{1}}_Q.
\end{align}
\end{definition}
Next, we introduce the concept of averaging matrix-weighted variable Besov sequence spaces.
\begin{definition}
Let $p(\cdot), q(\cdot) \in \mathcal{P}_0$ and $s(\cdot) \in L^\infty$
and let $W\in \mathscr{A}_{p(\cdot),\infty}$ and
$\mathbb{A} := \{A_Q\}_{Q\in\mathcal{Q}_+}$ be reducing operators of order $p(\cdot)$ for $W$.
The \emph{averaging matrix-weighted variable Besov sequence space
$ b^{s(\cdot)}_{p(\cdot),q(\cdot)}(\mathbb{A}) $}
is defined to be the set of all sequences
$\vec{t} := \{\vec{t}_Q\}_{Q\in \mathcal{Q}_+} \subset \mathbb{C}^m$
such that
$$ \left\| \vec{t} \right\|_{b^{s(\cdot)}_{p(\cdot),q(\cdot)}(\mathbb{A})}
:= \left\| \left\{ 2^{js(\cdot)}\left| A_j \vec{t}_j \right|\right\}_{j\in \mathbb{Z}_+} \right\|_{l^{q(\cdot)}(L^{p(\cdot)})} <\infty, $$
where $A_j$ for any $j\in\mathbb{Z}_+$ is the same as in \eqref{def Aj}.
\end{definition}
Similarly to the equivalence between the (pointwise) matrix-weighted Besov space
and the averaging matrix-weighted one,
the above two types of  matrix-weighted variable Besov sequence spaces
are also equivalent,
which is exactly the following result.
\begin{theorem}\label{W aa 3}
Let $p(\cdot), q(\cdot)\in \mathcal{P}_0\cap LH$
and $s(\cdot)\in LH$ and let $W\in \mathscr{A}_{p(\cdot),\infty}$
and $\mathbb{A} := \{A_Q\}_{Q\in\mathcal{Q}_+}$ be reducing operators of order $p(\cdot)$ for $W$.
Then, for any sequence $\vec{t} := \{\vec{t}_Q\}_{Q\in \mathcal{Q}_+} \subset \mathbb{C}^m$,
\begin{align*}
\left\| \vec{t} \right\|_{b^{s(\cdot)}_{p(\cdot),q(\cdot)}(W)}
\sim \left\| \vec{t} \right\|_{b^{s(\cdot)}_{p(\cdot),q(\cdot)}(\mathbb{A})},
\end{align*}
where the positive equivalence constants are independent of $\vec{t}$.
\end{theorem}
\begin{proof}
We first prove
\begin{align}\label{W aa 1}
\left\| \vec{t} \right\|_{b^{s(\cdot)}_{p(\cdot),q(\cdot)}(W)}
\lesssim \left\| \vec{t} \right\|_{b^{s(\cdot)}_{p(\cdot),q(\cdot)}(\mathbb{A})}.
\end{align}
Similarly to the proof of \eqref{Ajtj 12},
to show \eqref{W aa 1},
it is sufficient to prove that,
for any $\vec{t}:= \{t_Q\}_{Q\in\mathcal{Q}_+} \subset \mathbb{C}$ satisfying
$\sum_{j\in\mathbb{Z}_+} \| 2^{js(\cdot)q(\cdot)} | A_j\vec{t}_j |^{q(\cdot)} \|_{L^{\frac{p(\cdot)}{q(\cdot)}}} = 1  $
and for any $j\in\mathbb{Z}_+$,
$$ \left\| \delta_j^{-\frac{1}{q(\cdot)}} 2^{js(\cdot)} \left| W(\cdot)\vec{t}_j \right| \right\|_{L^{p(\cdot)}} \lesssim 1, $$
where the implicit positive constant is independent of $\vec{t}$ and $j$ and
$$\delta_j := \left\| 2^{js(\cdot)q(\cdot)} \left| A_j\vec{t}_j \right|^{q(\cdot)} \right\|_{L^{\frac{p(\cdot)}{q(\cdot)}}} + 2^{-j}. $$
Let $r := \min\{1,p_-\}$ and hence $\frac{p(\cdot)}{r}\in \mathcal{P} \cap LH$.
By this and Lemmas \ref{con f} and \ref{fg Lp} with $p(\cdot) := \frac{p(\cdot)}{r}$,
we find that
\begin{align}\label{Ajtj 7}
\left\| \delta_j^{-\frac{1}{q(\cdot)}}2^{js(\cdot)} \left| W(\cdot)\vec{t}_j \right| \right\|_{L^{p(\cdot)}}^r
& = \left\| \delta_j^{-\frac{r}{q(\cdot)}}2^{jrs(\cdot)} \left| W(\cdot)\vec{t}_j \right|^r \right\|_{L^{\frac{p(\cdot)}{r}}}\nonumber\\
& \sim \sup_{\|g\|_{L^{(\frac{p(\cdot)}{r})'}}\leq 1} \int_{\mathbb{R}^n} \delta_j^{-\frac{r}{q(x)}}2^{jrs(x)} \left| W(x)\vec{t}_j \right|^r |g(x)|\,dx.
\end{align}
Now, let $g\in L^{(\frac{p(\cdot)}{r})'}$ be any given function.
Then, using the definition of $\vec{t}_j$, the disjointness of the dyadic cubes in $\mathcal{Q}_j$,
and \eqref{Ajtj 15}, we obtain
\begin{align*}
\int_{\mathbb{R}^n} \delta_j^{-\frac{r}{q(x)}}2^{jrs(x)} \left| W(x)\vec{t}_j \right|^r |g(x)|\,dx
& = \sum_{Q\in \mathcal{Q}_j} \int_{Q} \delta_j^{-\frac{r}{q(x)}}2^{jrs(x)} |Q|^{-\frac r2} \left| W(x)\vec{t}_Q \right|^r |g(x)|\,dx \nonumber\\
& \lesssim \sum_{Q\in \mathcal{Q}_j} \delta_j^{-\frac{r}{q(x_Q)}}2^{jrs(x_Q)} |Q|^{-\frac r2} \int_{Q} \left\| W(x)A_Q^{-1} \right\|^r \left| A_Q\vec{t}_Q \right|^r |g(x)|\,dx.
\end{align*}
From this and Lemma \ref{Holder} with $p(\cdot) := \frac{p(\cdot)}{r}$
and from \eqref{WAQ-1} and Lemma \ref{g pro} with $\mathcal{K} := \mathcal{Q}_j$
and the disjointness of the dyadic cubes in $\mathcal{Q}_j$,
and the definitions of $A_j$ and $\vec{t}_j$,
we infer that
\begin{align*}
&\int_{\mathbb{R}^n} \delta_j^{-\frac{r}{q(x)}}2^{jrs(x)} \left| W(x)\vec{t}_j \right|^r |g(x)|\,dx \nonumber\\
&\quad \lesssim  \sum_{Q\in \mathcal{Q}_j} \delta_j^{-\frac{r}{q(x_Q)}}2^{jrs(x_Q)} |Q|^{-\frac r2} \left| A_Q\vec{t}_Q \right|^r \left\| \left\|W(\cdot) A_Q^{-1} \right\| \mathbf{1}_Q \right\|_{L^{\frac{p(\cdot)}{r}}} \|g\mathbf{1}_Q\|_{L^{(\frac{p(\cdot)}{r})'}}\nonumber\\
&\quad \lesssim  \sum_{Q\in \mathcal{Q}_j} \delta_j^{-\frac{r}{q(x_Q)}}2^{jrs(x_Q)} |Q|^{-\frac r2} \left| A_Q\vec{t}_Q \right|^r \left\| \mathbf{1}_Q \right\|_{L^{\frac{p(\cdot)}{r}}} \|g\mathbf{1}_Q\|_{L^{(\frac{p(\cdot)}{r})'}}\nonumber\\
&\quad \sim  \sum_{Q\in \mathcal{Q}_j}  \left\| \delta_j^{-\frac{r}{q(\cdot)}}2^{jrs(\cdot)} |Q|^{-\frac r2} \left| A_Q\vec{t}_Q \right|^r \mathbf{1}_Q \right\|_{L^{\frac{p(\cdot)}{r}}} \|g\mathbf{1}_Q\|_{L^{(\frac{p(\cdot)}{r})'}}\nonumber\\
&\quad \lesssim \left\| \delta_j^{-\frac{r}{q(\cdot)}}2^{jrs(\cdot)} \left| A_j \vec{t}_j \right|^r  \right\|_{L^{\frac{p(\cdot)}{r}}} \|g\|_{L^{(\frac{p(\cdot)}{r})'}},
\end{align*}
which, combined with \eqref{Ajtj 7} and Lemma \ref{con f},
further implies that
\begin{align}\label{Ajtj 18}
\left\| \delta_j^{-\frac{1}{q(\cdot)}}2^{js(\cdot)} \left| W(\cdot)\vec{t}_j \right| \right\|_{L^{p(\cdot)}}^r
& \lesssim \sup_{\|g\|_{L^{(\frac{p(\cdot)}{r})'}}\leq 1} \left\| \delta_j^{-\frac{r}{q(\cdot)}}2^{jrs(\cdot)}  \left| A_j\vec{t}_j \right|^r \mathbf{1}_Q \right\|_{L^{\frac{p(\cdot)}{r}}} \|g\|_{L^{(\frac{p(\cdot)}{r})'}}\nonumber \\
&= \left\| \delta_j^{-\frac{1}{q(\cdot)}}2^{js(\cdot)} |A_j \vec{t}_j| \right\|^r_{L^{p(\cdot)}}
\end{align}
Using the definition of $\delta_j$,
we immediately find that
$ \|\delta_j^{-1} 2^{js(\cdot)q(\cdot)} | A_j\vec{t}_j |^{q(\cdot)} \|_{L^{\frac{p(\cdot)}{q(\cdot)}}}\leq 1, $
which, together with Lemma \ref{f pq}, further implies that
$ \|\delta_j^{-\frac{1}{q(\cdot)}} 2^{js(\cdot)} | A_j\vec{t}_j |\|_{L^{p(\cdot)}}\leq 1. $
By this and \eqref{Ajtj 18},
we conclude that
$  \| \delta_j^{-\frac{1}{q(\cdot)}}2^{js(\cdot)} | W(\cdot)\vec{t}_j |
\|_{L^{p(\cdot)}} \lesssim 1$,
which completes the proof of \eqref{W aa 1}.

Next, we prove the converse inequality of \eqref{W aa 1}, that is,
\begin{align}\label{W aa 2}
\left\| \vec{t} \right\|_{b^{s(\cdot)}_{p(\cdot),q(\cdot)}(\mathbb{A})}
\lesssim \left\| \vec{t} \right\|_{b^{s(\cdot)}_{p(\cdot),q(\cdot)}(W)}.
\end{align}
Similarly to the proof of \eqref{Ajtj 12},
to show \eqref{W aa 2},
it is sufficient to prove that,
for any $\vec{t}:= \{t_Q\}_{Q\in\mathcal{Q}_+} \subset \mathbb{C}$ satisfying
$\sum_{j\in\mathbb{Z}_+}^\infty \| 2^{js(\cdot)q(\cdot)} | W(\cdot)\vec{t}_j |^{q(\cdot)} \|_{L^{\frac{p(\cdot)}{q(\cdot)}}} = 1 $
and for any $j\in\mathbb{Z}_+$,
$$\left\| \delta_j^{-\frac{1}{q(\cdot)}}2^{js(\cdot)} \left| A_j \vec{t}_j \right| \right\|_{L^{p(\cdot)}} \lesssim 1, $$
where the implicit positive constant is independent of $\vec{t}$ and $j$
and where
$$ \delta_j := \| 2^{js(\cdot)q(\cdot)} | W(\cdot)\vec{t}_j |^{q(\cdot)} \|_{L^{\frac{p(\cdot)}{q(\cdot)}}} + 2^{-j}. $$

Using Lemmas \ref{con f} and \ref{fg Lp} with $p(\cdot) := \frac{p(\cdot)}{r}$,
we find that
\begin{align}\label{Ajtj 9}
\left\| \delta_j^{-\frac{1}{q(\cdot)}}2^{js(\cdot)} \left| A_j \vec{t}_j \right| \right\|^r_{L^{p(\cdot)}}
& = \left\| \delta_j^{-\frac{r}{q(\cdot)}}2^{jrs(\cdot)} \left| A_j \vec{t}_j \right|^r \right\|_{L^{\frac{p(\cdot)}{r}}}\nonumber\\
& \sim \sup_{\|g\|_{L^{(\frac{p(\cdot)}{r})'}}\leq 1} \int_{\mathbb{R}^n} \delta_j^{-\frac{r}{q(x)}}2^{jrs(x)} \left| A_j \vec{t}_j \right|^r |g(x)|\,dx.
\end{align}
Let $g\in L^{(\frac{p(\cdot)}{r})'}$ be any given function.
Then, by the disjointness of the dyadic cubes in $\mathcal{Q}_j$
and Lemma \ref{Holder} with $p(\cdot) := \frac{p(\cdot)}{r}$,
we obtain
\begin{align}\label{Ajtj 8}
\int_{\mathbb{R}^n} \delta_j^{-\frac{r}{q(x)}}2^{jrs(x)} \left| A_j \vec{t}_j \right|^r |g(x)|\,dx
& =  \sum_{Q\in\mathcal{Q}_j} \int_{Q} \delta_j^{-\frac{r}{q(x)}}2^{jrs(x)} |Q|^{-\frac r2} \left| A_Q \vec{t}_Q \right|^r g(x)\,dx \nonumber\\
& \lesssim  \sum_{Q\in\mathcal{Q}_j}  \left\| \delta_j^{-\frac{r}{q(\cdot)}}2^{jrs(\cdot)} |Q|^{-\frac r2} \left| A_Q \vec{t}_Q \right|^r \mathbf{1}_Q \right\|_{L^{\frac{p(\cdot)}{r}}}  \|g\mathbf{1}_Q\|_{L^{(\frac{p(\cdot)}{r})'}}.
\end{align}
Observe that, using the definition of the reducing operators and \eqref{Ajtj 15},
we have, for any $Q\in \mathcal{Q}_j$ and $x\in Q$,
\begin{align*}
\delta_j^{-\frac{r}{q(x)}}2^{jrs(x)} |Q|^{-\frac r2} \left| A_Q \vec{t}_Q \right|^r
&\sim \delta_j^{-\frac{r}{q(x_Q)}}2^{jrs(x_Q)} |Q|^{-\frac r2} \frac{1}{\|\mathbf{1}_{Q}\|^r_{L^{p(\cdot)}}}
 \left\| \left| W(\cdot)\vec{t}_Q \right| \mathbf{1}_Q \right\|^r_{L^{p(\cdot)}}\\
&\sim \frac{1}{\|\mathbf{1}_{Q}\|^r_{L^{p(\cdot)}}}
 \left\| \delta_j^{-\frac{1}{q(\cdot)}}2^{js(\cdot)} |Q|^{-\frac 12} \left| W(\cdot)\vec{t}_Q \right| \mathbf{1}_Q \right\|^r_{L^{p(\cdot)}}
\end{align*}
Combining this with \eqref{Ajtj 8} and Lemma \ref{con f},
we find that
\begin{align*}
&\int_{\mathbb{R}^n} \delta_j^{-\frac{r}{q(x)}}2^{jrs(x)} \left| A_j \vec{t}_j \right|^r |g(x)|\,dx\nonumber\\
&\quad \sim  \sum_{Q\in\mathcal{Q}_j}
\left\| \frac{1}{\|\mathbf{1}_{Q}\|^r_{L^{p(\cdot)}}}
 \left\| \delta_j^{-\frac{1}{q(\cdot)}}2^{js(\cdot)} |Q|^{-\frac 12} \left| W(\cdot)\vec{t}_Q \right| \mathbf{1}_Q \right\|^r_{L^{p(\cdot)}} \mathbf{1}_Q \right\|_{L^{\frac{p(\cdot)}{r}}}  \|g\mathbf{1}_Q\|_{L^{(\frac{p(\cdot)}{r})'}}\nonumber\\
&\quad =  \sum_{Q\in\mathcal{Q}_j}
\left\| \delta_j^{-\frac{r}{q(\cdot)}}2^{jrs(\cdot)} |Q|^{-\frac r2}\left| W(\cdot)\vec{t}_Q \right|^r \mathbf{1}_Q \right\|_{L^{\frac{p(\cdot)}{r}}}  \|g\mathbf{1}_Q\|_{L^{(\frac{p(\cdot)}{r})'}}.
\end{align*}
Applying this with the definition of $\vec{t}_j$ and Lemma \ref{g pro} with $\mathcal{K} := \mathcal{Q}_j$
and the disjointness of the dyadic cubes in $\mathcal{Q}_j$ yields that
\begin{align*}
\int_{\mathbb{R}^n} \delta_j^{-\frac{r}{q(x)}}2^{jrs(x)} \left| A_j \vec{t}_j \right|^r |g(x)|\,dx
 \lesssim \left\| \delta_j^{-\frac{r}{q(\cdot)}}2^{jrs(\cdot)} \left| W(\cdot) \vec{t}_j \right|^r \right\|_{L^{\frac{p(\cdot)}{r}}} \|g\|_{L^{(\frac{p(\cdot)}{r})'}},
\end{align*}
which, combined with \eqref{Ajtj 9} and Lemma \ref{con f},
further implies that
\begin{align}\label{Ajtj 6}
\left\| \delta_j^{-\frac{1}{q(\cdot)}}2^{js(\cdot)} \left| A_j \vec{t}_j \right| \right\|^r_{L^{p(\cdot)}}
&\lesssim \sup_{\|g\|_{L^{(\frac{p(\cdot)}{r})'}}\leq 1} \left\| \delta_j^{-\frac{r}{q(\cdot)}}2^{jrs(\cdot)} \left| W(\cdot) \vec{t}_j \right|^r \right\|_{L^{\frac{p(\cdot)}{r}}} \|g\|_{L^{(\frac{p(\cdot)}{r})'}} \nonumber\\
& = \left\| \delta_j^{-\frac{1}{q(\cdot)}}2^{js(\cdot)} \left| W(\cdot) \vec{t}_j \right| \right\|^r_{L^{p(\cdot)}}.
\end{align}
Notice that, by the definition of $\delta_j$,
we obtain
$\|\delta_j^{-1} 2^{js(\cdot)q(\cdot)} | W(\cdot)\vec{t}_j |^{q(\cdot)}\|_{L^{\frac{p(\cdot)}{q(\cdot)}}}\leq 1, $
which, together with Lemma \ref{f pq},
further implies that
$ \|\delta_j^{-\frac{1}{q(\cdot)}} 2^{js(\cdot)} | W(\cdot)\vec{t}_j |\|_{L^{p(\cdot)}}\leq 1. $
Using this and \eqref{Ajtj 6},
we obtain
$ \| \delta_j^{-\frac{1}{q(\cdot)}}2^{js(\cdot)} | A_j\vec{t}_j | \|_{L^{p(\cdot)}} \lesssim 1.$
This finishes the proof of \eqref{W aa 2}
and hence Theorem \ref{W aa 3}.
\end{proof}

\subsection{The $\varphi$-Transform Characterization}\label{sec varphi}
In this subsection, we establish
the $\varphi$-transform characterization of matrix-weighted variable Besov spaces.
We first recall some basic notions on the $\varphi$-transform and its properties.
Let $\{\varphi_j\}_{j\in\mathbb{Z}_+}$
be as in Definition \ref{phi pair}.
Then there exists $\{\psi_j\}_{j\in\mathbb{Z}_+}$
satisfying the same conditions as in Definition \ref{phi pair} such that,
for any $\xi\in\mathbb{R}^n$,
\begin{align}\label{varphi psi}
\sum_{j = 0}^\infty \widehat{\varphi_j}(\xi) \widehat{\psi_j}(\xi) = 1.
\end{align}
The \emph{$\varphi$-transform $S_\varphi$} is defined to be the map taking each $\vec{f} \in (\mathcal{S}')^m$
to the sequence $S_\varphi \vec{f} := \{ (S_\varphi \vec{f})_Q \}_{Q\in\mathcal{Q}_+}$,
where, for any $Q\in\mathcal{Q}_+$,
$(S_\varphi \vec{f})_Q := \langle \vec{f},\varphi_Q \rangle$
and
\begin{align}\label{def varphiQ}
\varphi_Q := |Q|^{\frac12}\varphi_j \left(\cdot-x_Q\right)
\end{align}
with $x_Q$ being the center of $Q$.
The \emph{inverse $\varphi$-transform $T_\psi$} is defined to be the map taking each sequence
$\vec{t} := \{\vec{t}_Q\}_{Q\in\mathcal{Q}_+}\subset \mathbb{C}^m$ to
$ T_\psi \vec{t} := \sum_{Q\in\mathcal{Q}_+} \vec{t}_Q \psi_Q $ in $ (\mathcal{S}')^m $.

The following theorem is the main result of this subsection.
In what follows, for any $x\in\mathbb{R}^n$, let $\widetilde{\varphi}(x) := \varphi(-x)$.
\begin{theorem}\label{phi bound}
Let $p(\cdot), q(\cdot)\in \mathcal{P}_0\cap LH$, $s(\cdot)\in LH$,
and $W \in \mathscr{A}_{p(\cdot),\infty}$
and let $\{\varphi_j\}_{j\in\mathbb{Z}_+}$ and $\{\psi_j\}_{j\in\mathbb{Z}_+}$
be as in Definition \ref{phi pair} satisfying \eqref{varphi psi}.
Then the operators $S_\varphi :\  B^{s(\cdot)}_{p(\cdot),q(\cdot)}(W,\widetilde{\varphi}) \rightarrow b^{s(\cdot)}_{p(\cdot),q(\cdot)}(W)$
and $T_\psi :\  b^{s(\cdot)}_{p(\cdot),q(\cdot)}(W) \rightarrow B^{s(\cdot)}_{p(\cdot),q(\cdot)}(W,\varphi)$
are bounded.
Furthermore, $T_\psi \circ S_\varphi $ is the identity on $ B^{s(\cdot)}_{p(\cdot),q(\cdot)}(W, \widetilde{\varphi}) $.
\end{theorem}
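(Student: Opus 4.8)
The plan is to follow the classical Frazier--Jawerth scheme, but to carry out all of the analysis on the averaging spaces, where the matrix weight enters only through the strong doubling estimate \eqref{eq strong doubling}. Fix reducing operators $\aa := \{A_Q\}_{Q\in\cq_+}$ of order $p(\cdot)$ for $W$ (Lemma \ref{ext redu}). By Theorems \ref{W aa supp} and \ref{W aa 3} we have $\|\cdot\|_{B^{s(\cdot)}_{p(\cdot),q(\cdot)}(W,\varphi)}\sim\|\cdot\|_{B^{s(\cdot)}_{p(\cdot),q(\cdot)}(\aa,\varphi)}$ (and the same with $\widetilde{\varphi}$ in place of $\varphi$, which is again admissible in the sense of Definition \ref{phi pair}) together with $\|\cdot\|_{b^{s(\cdot)}_{p(\cdot),q(\cdot)}(W)}\sim\|\cdot\|_{b^{s(\cdot)}_{p(\cdot),q(\cdot)}(\aa)}$, so it suffices to prove the boundedness of $S_\varphi :\ B^{s(\cdot)}_{p(\cdot),q(\cdot)}(\aa,\widetilde{\varphi})\to b^{s(\cdot)}_{p(\cdot),q(\cdot)}(\aa)$ and of $T_\psi :\ b^{s(\cdot)}_{p(\cdot),q(\cdot)}(\aa)\to B^{s(\cdot)}_{p(\cdot),q(\cdot)}(\aa,\varphi)$, together with the identity $T_\psi\circ S_\varphi=\mathrm{id}$. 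Passing to $\aa$ is essential: $A_j:=\sum_{Q\in\cq_j}A_Q\one_Q$ is constant on each dyadic cube, so the quantities $\|A_{Q'}A_Q^{-1}\|$ are genuine numbers governed by Lemma \ref{QP5}, whereas $\|W(\cdot)A_Q^{-1}\|$ is controlled only in an $L^{p(\cdot)}$-averaged sense.

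The boundedness of $S_\varphi$ is the easy half. For $Q\in\cq_j$ one has $(S_\varphi\vec{f})_Q=\laz\vec{f},\varphi_Q\raz=|Q|^{\frac12}(\widetilde{\varphi}_j\ast\vec{f})(x_Q)$, so by \eqref{def supp f 1},
\[
\left|A_Q(S_\varphi\vec{f})_Q\right|\le|Q|^{\frac12}\sup_{y\in Q}\left|A_Q(\widetilde{\varphi}_j\ast\vec{f})(y)\right|=\sup_{\aa,\widetilde{\varphi},Q}\left(\vec{f}\right);
\]
since the cubes in $\cq_j$ are disjoint, this gives $\sum_{Q\in\cq_j}|A_Q(S_\varphi\vec{f})_Q|\widetilde{\one}_Q\le\sum_{Q\in\cq_j}\sup_{\aa,\widetilde{\varphi},Q}(\vec{f})\widetilde{\one}_Q$ pointwise, and multiplying by $2^{js(\cdot)}$, taking the $l^{q(\cdot)}(L^{p(\cdot)})$-norm, and using the middle equivalence in Theorem \ref{W aa supp} (with $\widetilde{\varphi}$ in place of $\varphi$) yields $\|S_\varphi\vec{f}\|_{b^{s(\cdot)}_{p(\cdot),q(\cdot)}(\aa)}\le\|\sup_{\aa,\widetilde{\varphi}}(\vec{f})\|_{b^{s(\cdot)}_{p(\cdot),q(\cdot)}}\sim\|\vec{f}\|_{B^{s(\cdot)}_{p(\cdot),q(\cdot)}(W,\widetilde{\varphi})}$.

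For $T_\psi$ I would first check, by a routine pairing estimate, that $T_\psi\vec{t}=\sum_{Q\in\cq_+}\vec{t}_Q\psi_Q$ converges in $(\cs')^m$ whenever $\|\vec{t}\|_{b^{s(\cdot)}_{p(\cdot),q(\cdot)}(W)}<\infty$. By the support conditions of Definition \ref{phi pair}, $\varphi_\nu\ast\psi_j\equiv0$ unless $|j-\nu|\le1$, so for $Q'\in\cq_\nu$ and $x\in Q'$,
\[
A_\nu\left(\varphi_\nu\ast T_\psi\vec{t}\right)(x)=\sum_{|j-\nu|\le 1}\ \sum_{Q\in\cq_j}\left(\varphi_\nu\ast\psi_Q\right)(x)\,A_{Q'}A_Q^{-1}\left(A_Q\vec{t}_Q\right).
\]
Next I would fix $r\in(0,\min\{p_-,q_-,1\})$, use the Schwartz kernel bound $|(\varphi_\nu\ast\psi_Q)(x)|\lesssim|Q|^{\frac12}2^{\nu n}(1+2^\nu|x-x_Q|)^{-M}$ for arbitrarily large $M$ (the implicit constant depending on $M$), and bound $\|A_{Q'}A_Q^{-1}\|$ by Lemma \ref{QP5}, which gives $\|A_{Q'}A_Q^{-1}\|\lesssim(1+2^\nu|x-x_Q|)^{\Delta}$ since $l(Q)\sim l(Q')$. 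Raising to the $r$-th power, replacing the centers $x_Q$ by points of $Q$, and summing over the disjoint $Q\in\cq_j$ exactly as in the proof of Lemma \ref{aa supaa} should yield, with $u_j:=|A_j\vec{t}_j|=\sum_{Q\in\cq_j}|A_Q\vec{t}_Q|\widetilde{\one}_Q$,
\[
\left|A_\nu\left(\varphi_\nu\ast T_\psi\vec{t}\right)(x)\right|^r\lesssim\sum_{|j-\nu|\le 1}\left(\eta_{\nu,(M-\Delta)r}\ast u_j^r\right)(x).
\]
From here Remark \ref{rem def seq}(iv) recasts $\|T_\psi\vec{t}\|_{B^{s(\cdot)}_{p(\cdot),q(\cdot)}(\aa,\varphi)}$ as an $l^{\frac{q(\cdot)}{r}}(L^{\frac{p(\cdot)}{r}})$-quantity; Lemma \ref{s eta} absorbs the factor $2^{\nu rs(\cdot)}$ at the cost of lowering the $\eta$-index by $R$ (legitimate because $2^{\nu rs(\cdot)}\sim2^{jrs(\cdot)}$ and $\eta_{\nu,\cdot}\sim\eta_{j,\cdot}$ when $|j-\nu|\le1$, and $M$ may be taken so large that $(M-\Delta)r-R>n$); the quasi-triangle inequality together with a standard index shift handles the three-term sum over $j$; and Lemma \ref{eta bound seq} on $l^{\frac{q(\cdot)}{r}}(L^{\frac{p(\cdot)}{r}})$ closes the estimate, giving $\|T_\psi\vec{t}\|_{B^{s(\cdot)}_{p(\cdot),q(\cdot)}(\aa,\varphi)}\lesssim\|\{2^{js(\cdot)}u_j\}_{j}\|_{l^{q(\cdot)}(L^{p(\cdot)})}=\|\vec{t}\|_{b^{s(\cdot)}_{p(\cdot),q(\cdot)}(\aa)}$.

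For the reproducing identity I would use the classical, weight-free argument: for $\vec{f}\in B^{s(\cdot)}_{p(\cdot),q(\cdot)}(W,\widetilde{\varphi})\subset(\cs')^m$, \eqref{varphi psi} and the local finiteness of $\{\widehat{\varphi_j}\widehat{\psi_j}\}_{j}$ give $\vec{f}=\sum_{j\in\zz_+}\psi_j\ast\widetilde{\varphi}_j\ast\vec{f}$ in $(\cs')^m$; for each $j$, since $\widetilde{\varphi}_j\ast\vec{f}$ is band-limited to $\{\xi\in\rn:\ |\xi|\le2^{j+1}\}$, Lemma \ref{varf} (with $y=0$, using $\psi_j\ast\gamma_j=\psi_j$ because $\widehat{\gamma}\equiv1$ on $\supp\widehat{\psi_j}$) gives $\psi_j\ast\widetilde{\varphi}_j\ast\vec{f}=\sum_{Q\in\cq_j}|Q|(\widetilde{\varphi}_j\ast\vec{f})(x_Q)\psi_j(\cdot-x_Q)=\sum_{Q\in\cq_j}(S_\varphi\vec{f})_Q\,\psi_Q$, the last equality because $(S_\varphi\vec{f})_Q=|Q|^{\frac12}(\widetilde{\varphi}_j\ast\vec{f})(x_Q)$; summing over $j$ gives $T_\psi\circ S_\varphi\vec{f}=\vec{f}$ (this reflection is precisely why $S_\varphi$ is taken on $B^{s(\cdot)}_{p(\cdot),q(\cdot)}(W,\widetilde{\varphi})$). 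I expect the main obstacle to be the boundedness of $T_\psi$: one has to chain together the band-limited kernel estimate, the matrix doubling bound of Lemma \ref{QP5}, and the $\eta$-function substitute (Lemmas \ref{s eta} and \ref{eta bound seq}) for the failing vector-valued Fefferman--Stein inequality, all while tracking the variable exponents through the $r$-th power convexification and making sure the kernel decay $M$ can be chosen large enough that $(M-\Delta)r-R>n$ survives Lemma \ref{s eta}; by contrast, once the reduction to the reducing operators is in place, the other two parts are essentially bookkeeping.
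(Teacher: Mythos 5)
Your proposal is correct and follows essentially the same strategy as the paper: reduce to the averaging spaces via Theorems \ref{W aa supp} and \ref{W aa 3}, bound $S_\varphi$ by comparison with $\sup_{\aa,\widetilde\varphi}(\vec f)$, bound $T_\psi$ using the band-limit fact $\varphi_\nu\ast\psi_j\equiv 0$ for $|\nu-j|>1$ plus the strong doubling of $\aa$ (Lemma \ref{QP5}), and obtain the identity from the Frazier--Jawerth Calder\'on reproducing formula (your Lemma \ref{varf} computation is precisely the content of Lemma \ref{CZ com}, which the paper cites directly). The one place you depart from the paper is the closing step of the $T_\psi$ bound: the paper compresses the discrete sum into the majorant $u^\ast_{1,n+M-\Delta}$ and cites Lemma \ref{tast bound}, whereas you inline the $\eta$-function argument from the proof of Lemma \ref{aa supaa} (convexify by $r$, absorb $2^{\nu r s(\cdot)}$ via Lemma \ref{s eta}, then apply Lemma \ref{eta bound seq}); these are the same estimate organized differently, so the choice is purely cosmetic.
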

Before giving the proof of Theorem \ref{phi bound},
we first point out that Theorem \ref{phi bound} implies that
$B^{s(\cdot)}_{p(\cdot),q(\cdot)}(W, {\varphi})$
is independent of the choice of $(\varPhi, \varphi)$.
\begin{proposition}\label{ind choice}
Let $p(\cdot), q(\cdot)\in \mathcal{P}_0\cap LH$, $s(\cdot)\in LH$,
and $W \in \mathscr{A}_{p(\cdot),\infty}$
and let $\{\varphi_j\}_{j\in\mathbb{Z}_+}$ be as in Definition \ref{phi pair}.
Then $B^{s(\cdot)}_{p(\cdot),q(\cdot)}(W, \varphi)$
is independent of the choice of $\varphi$.
\end{proposition}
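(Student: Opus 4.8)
The plan is to derive Proposition \ref{ind choice} from the $\varphi$-transform theorem (Theorem \ref{phi bound}), exploiting the fact that the sequence space into which the $\varphi$-transform maps carries no reference to $\varphi$. Throughout, \emph{admissible} means ``as in Definition \ref{phi pair}'', and, for brevity, I abbreviate $B^{s(\cdot)}_{p(\cdot),q(\cdot)}(W,\eta)$ to $B(W,\eta)$ and $b^{s(\cdot)}_{p(\cdot),q(\cdot)}(W)$ to $b(W)$. The statement to be proved is that, for any two admissible families $\{\varphi_j\}_{j\in\zz_+}$ and $\{\theta_j\}_{j\in\zz_+}$, the spaces $B(W,\varphi)$ and $B(W,\theta)$ coincide with equivalent quasi-norms.

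The first step is to remove the discrepancy between $\varphi$ and $\widetilde\varphi$ that appears in Theorem \ref{phi bound}. Since $\widehat{\widetilde\varphi}(\xi)=\widehat\varphi(-\xi)$ for any $\xi\in\rn$ and the support condition and lower bound in Definition \ref{phi pair} constrain only $|\xi|$, the family $\{\widetilde\varphi_j\}_{j\in\zz_+}$ is again admissible (with companion $\widetilde\varPhi$), and $\widetilde{\widetilde\varphi}=\varphi$. Applying Theorem \ref{phi bound} once to $\{\varphi_j\}$ (with an associated $\{\psi_j\}_{j\in\zz_+}$ as in \eqref{varphi psi}) and once to $\{\widetilde\varphi_j\}$ (with its own associated family), and combining the two resulting two-sided estimates via the reproducing identities $T_\psi\circ S_\varphi=\mathrm{id}$, I obtain, for every $\vec f\in(\cs')^m$,
$$
\left\|\vec f\right\|_{B(W,\varphi)}\sim\left\|S_\varphi\vec f\right\|_{b(W)}\sim\left\|\vec f\right\|_{B(W,\widetilde\varphi)}
$$
with constants independent of $\vec f$; in particular $B(W,\varphi)=B(W,\widetilde\varphi)$ with equivalent quasi-norms. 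Hence it suffices to prove the one-sided bound $\|\vec f\|_{B(W,\theta)}\ls\|\vec f\|_{B(W,\varphi)}$ for all admissible $\{\varphi_j\}$ and $\{\theta_j\}$, since exchanging the roles of the two families then yields the two-sided equivalence.

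The heart of the argument is the reproducing-formula step. Fix $\vec f\in B(W,\varphi)=B(W,\widetilde\varphi)$ and let $\{\psi_j\}$ be associated to $\{\varphi_j\}$ via \eqref{varphi psi}. By Theorem \ref{phi bound}, $S_\varphi\vec f\in b(W)$ with $\|S_\varphi\vec f\|_{b(W)}\ls\|\vec f\|_{B(W,\widetilde\varphi)}$, and $\vec f=T_\psi(S_\varphi\vec f)$ in $(\cs')^m$. Therefore
$$
\left\|\vec f\right\|_{B(W,\theta)}=\left\|T_\psi\left(S_\varphi\vec f\right)\right\|_{B(W,\theta)}\ls\left\|S_\varphi\vec f\right\|_{b(W)}\ls\left\|\vec f\right\|_{B(W,\widetilde\varphi)}\sim\left\|\vec f\right\|_{B(W,\varphi)},
$$
which is exactly the required one-sided bound. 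The middle inequality is the point I expect to be the \textbf{main obstacle}: Theorem \ref{phi bound} gives boundedness of $T_\psi$ with target $B(W,\varphi)$, i.e.\ with the family \emph{dual} to $\psi$, whereas here the target is $B(W,\theta)$ for an arbitrary admissible $\theta$. I would settle this by inspecting the proof of the $T_\psi$ half of Theorem \ref{phi bound}: the quantity $\theta_k\ast(T_\psi\vec t)=\sum_{Q\in\cq_+}\vec t_Q\,(\theta_k\ast\psi_Q)$ is controlled using only that $\psi$ and $\theta$ are admissible --- $\theta_k\ast\psi_Q$ vanishes unless $|k-j_Q|\le1$, and for such $k$ one has $|(\theta_k\ast\psi_Q)(x)|\ls_N 2^{j_Qn}(1+2^{j_Q}|x-x_Q|)^{-N}$ for every $N\in\nn$ --- and never uses the relation \eqref{varphi psi}; so the very same proof yields $T_\psi:b(W)\to B(W,\theta)$ for every admissible $\{\theta_j\}$. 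Equivalently, one may note that $S_\theta\circ T_\psi$ is represented on $b(W)$ by the almost diagonal matrix $(\langle\psi_R,\theta_Q\rangle\,I_m)_{Q,R\in\cq_+}$, hence is bounded on $b(W)$, so that $\|\vec f\|_{B(W,\theta)}\sim\|S_\theta\vec f\|_{b(W)}=\|S_\theta T_\psi S_\varphi\vec f\|_{b(W)}\ls\|S_\varphi\vec f\|_{b(W)}$.

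Finally, exchanging the roles of $\{\varphi_j\}$ and $\{\theta_j\}$ in the one-sided bound just established gives $\|\vec f\|_{B(W,\varphi)}\ls\|\vec f\|_{B(W,\theta)}$ as well. Thus $B(W,\varphi)$ and $B(W,\theta)$ agree with equivalent quasi-norms, and since $\{\varphi_j\}$ and $\{\theta_j\}$ were arbitrary admissible families, this is precisely Proposition \ref{ind choice}. (I expect the remaining work to be entirely bookkeeping: verifying that the reproducing identity $\vec f=T_\psi S_\varphi\vec f$ and the convergence in $(\cs')^m$ are exactly as provided by Theorem \ref{phi bound}, and checking the elementary band-limited kernel estimate on $\theta_k\ast\psi_Q$ quoted above.)
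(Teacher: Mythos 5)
Your proposal is correct and takes essentially the same route as the paper's short proof: write $\vec f = T_{\widetilde{\psi}}\circ S_{\widetilde\varphi}\vec f$, bound $\|S_{\widetilde\varphi}\vec f\|_{b^{s(\cdot)}_{p(\cdot),q(\cdot)}(W)}$ by $\|\vec f\|_{B^{s(\cdot)}_{p(\cdot),q(\cdot)}(W,\varphi)}$ via the $S_\varphi$ half of Theorem \ref{phi bound}, then apply the $T_\psi$ half with the second admissible family as target. You are in fact more explicit than the paper about the one genuinely delicate step, namely that $T_\psi$ maps $b^{s(\cdot)}_{p(\cdot),q(\cdot)}(W)$ boundedly into $B^{s(\cdot)}_{p(\cdot),q(\cdot)}(W,\theta)$ for an arbitrary admissible family $\{\theta_j\}$ and not only for the companion of $\psi$; the paper invokes this tacitly when it estimates $\bigl\|T_{\widetilde{\psi^{(2)}}}(\cdot)\bigr\|_{B^{s(\cdot)}_{p(\cdot),q(\cdot)}(W,\varphi^{(1)})}$, and both of your justifications (that the $T_\psi$ proof uses only admissibility and never \eqref{varphi psi}, or that $S_\theta\circ T_\psi$ is represented by an almost diagonal matrix) are valid.
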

\begin{proof}
Let $\{\varphi_j^{(1)}\}_{j\in\mathbb{Z}_+}$ and $\{\varphi_j^{(2)}\}_{j\in\mathbb{Z}_+}$ be as in Definition \ref{phi pair}
and let $\{\psi_j^{(2)}\}_{j\in\mathbb{Z}_+}$ be as in \eqref{varphi psi}
such that \eqref{varphi psi} holds for
$\{\varphi_j^{(2)}\}_{j\in\mathbb{Z}_+}$ and $\{\psi_j^{(2)}\}_{j\in\mathbb{Z}_+}$.
Then, using Theorem \ref{phi bound},
we conclude that, for any $\vec{f}\in B^{s(\cdot)}_{p(\cdot),q(\cdot)}(W, \varphi^{(2)})$,
\begin{align*}
\left\| \vec{f} \right\|_{B^{s(\cdot)}_{p(\cdot),q(\cdot)}(W, \varphi^{(1)})}
 = \left\| T_{\widetilde{\psi^{(2)}}} \circ S_{\widetilde{\varphi^{(2)}}}  \vec{f} \right\|_{B^{s(\cdot)}_{p(\cdot),q(\cdot)}(W, \varphi^{(1)})}
\lesssim \left\| S_{\widetilde{\varphi^{(2)}}} \vec{f} \right\|_{b^{s(\cdot)}_{p(\cdot),q(\cdot)}(W)}
\lesssim \left\| \vec{f} \right\|_{B^{s(\cdot)}_{p(\cdot),q(\cdot)}(W, \varphi^{(2)})}.
\end{align*}
By symmetry,
we also obtain the reverse inequality.
This finishes the proof of Proposition \ref{ind choice}.
\end{proof}

Now, to prove Theorem \ref{phi bound},
we first recall several basic lemmas.
The following lemma is precisely \cite[(12.4)]{fj90}.
\begin{lemma}\label{CZ com}
Let $\{\varphi_j\}_{j\in\mathbb{Z}_+}$ and $\{\psi_j\}_{j\in\mathbb{Z}_+}$
be as in Definition \ref{phi pair} satisfying \eqref{varphi psi}.
Then, for any $f \in \mathcal{S}'$,
$$ f = \sum_{j} \sum_{Q\in\mathcal{Q}_j} \langle f, \varphi_Q \rangle \psi_Q
= \sum_{j} 2^{-jn} \sum_{k\in\mathbb{Z}^n} \left( \widetilde{\varphi}_j\ast f \right)(2^{-j}k) \psi_j(\cdot - 2^{-j}k)$$
in $\mathcal{S}'.$
\end{lemma}
The following lemma is exactly \cite[Lemma 2.4]{ysy10}.
\begin{lemma}\label{psiphi 1}
Let $M\in\mathbb{Z}_+$ and $\psi, \varphi \in \mathcal{S}$ satisfy
$\int_{\mathbb{R}^n} x^\gamma \psi(x)\,dx = 0$
for all multi-indices $\gamma \in \mathbb{Z}_+^n$ with $|\gamma| \leq M$.
Then, for any $j\in\mathbb{Z}_+$ and $x\in\mathbb{R}^n$,
$$ \left| \varphi \ast \psi_j(x) \right| \lesssim \|\psi\|_{\mathcal{S}_{M+1}} \|\varphi\|_{\mathcal{S}_{M+1}}
2^{-jM} \frac{1}{(1+|x|)^{n+M}},  $$
where the implicit positive constant depends only on $n$ and $M$.
\end{lemma}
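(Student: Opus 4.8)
The plan is to treat this as a classical almost-orthogonality (or ``molecular'') estimate in the spirit of Frazier--Jawerth, which is a purely pointwise statement about two fixed Schwartz functions; no weight, no exponent, and nothing from the variable setting enters. First I would reduce the convolution to a clean form by the change of variables $z=2^j y$: writing $\psi_j:=2^{jn}\psi(2^j\cdot)$, one has
$$ \left(\varphi\ast\psi_j\right)(x)=\int_{\rn}\varphi\left(x-2^{-j}z\right)\psi(z)\,dz. $$
Since $\psi$ annihilates every polynomial of degree $\le M$, for each fixed $x$ and $j$ I may subtract from $\varphi(x-2^{-j}z)$, inside the integral, the degree-$M$ Taylor polynomial $P(z)$ of the map $w\mapsto\varphi(x-w)$ evaluated at $w=2^{-j}z$, without changing the value of the integral. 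Taylor's theorem with remainder then gives the pointwise bound
$$ \left|\varphi\left(x-2^{-j}z\right)-P(z)\right|\ls 2^{-j(M+1)}|z|^{M+1}\ \sup_{|\gamma|=M+1}\ \sup_{|\theta|\le1}\left|\left(\partial^\gamma\varphi\right)\left(x-\theta 2^{-j}z\right)\right|, $$
where the implicit constant depends only on $n$ and $M$.

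Next I would split the $z$-integral according to whether $|z|\le 2^{j-1}|x|$ or $|z|>2^{j-1}|x|$; this split is needed only when $|x|\gtrsim1$, since for $|x|\le2$ the factor $(1+|x|)^{-(n+M)}$ is harmless and integrating the remainder bound above against $|\psi|$ over all of $\rn$ already yields $|\varphi\ast\psi_j(x)|\ls 2^{-j(M+1)}\|\varphi\|_{\cs_{M+1}}\|\psi\|_{\cs_{M+1}}\le 2^{-jM}\|\varphi\|_{\cs_{M+1}}\|\psi\|_{\cs_{M+1}}$. On the near region $|z|\le 2^{j-1}|x|$ the point $x-\theta 2^{-j}z$ stays at distance $\ge|x|/2$ from the origin, so the rapid decay of $\partial^\gamma\varphi$ (quantified by $\|\varphi\|_{\cs_{M+1}}$) supplies the factor $(1+|x|)^{-(n+M)}$, while the remainder bound supplies $2^{-j(M+1)}|z|^{M+1}$, which is integrable against $|\psi(z)|$ with integral controlled by $\|\psi\|_{\cs_{M+1}}$. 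On the far region $|z|>2^{j-1}|x|$ I would instead estimate $|\varphi(x-2^{-j}z)-P(z)|$ crudely, by $\|\varphi\|_\infty$ plus the obvious bound $|P(z)|\ls\|\varphi\|_{\cs_{M+1}}(2^{-j}|z|)^{M}$ (here $2^{-j}|z|>|x|/2\ge1$ absorbs the polynomial growth of $P$), and then exploit the rapid decay of $\psi$: the integral $\int_{|z|>2^{j-1}|x|}(1+|z|)^M|\psi(z)|\,dz$ decays fast enough in $2^{j-1}|x|\gtrsim2^j$ to produce simultaneously the gain $2^{-jM}$ and the spatial decay $(1+|x|)^{-(n+M)}$. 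Adding the two regional contributions gives the asserted estimate.

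I expect the only genuine obstacle to be bookkeeping: one must fix the convention for the Schwartz seminorm $\|\cdot\|_{\cs_{M+1}}$ so that it carries enough polynomial decay on $\varphi$, $\psi$, and their derivatives up to order $M+1$ for every $z$-integral above to converge and for the decay in $x$ to emerge exactly as $(1+|x|)^{-(n+M)}$, with one extra dyadic factor to convert $2^{-j(M+1)}$ into the claimed $2^{-jM}$. Since this calibration is precisely the one made in \cite[Lemma~2.4]{ysy10}, I would adopt that convention and quote the standard computation rather than re-derive the constants; the argument above is exactly its structure.
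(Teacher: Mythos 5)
The paper itself offers no argument for this lemma---it is quoted verbatim from \cite[Lemma 2.4]{ysy10}---so the comparison is with the standard proof you are reconstructing, and your overall architecture (rescaling, subtracting the degree-$M$ Taylor polynomial via the vanishing moments of $\psi$, splitting at $|z|\sim 2^{j-1}|x|$) is indeed the right one. The near region and the regime $|x|\lesssim 1$ work as you say: the one-power slack between $2^{-j(M+1)}$ and the claimed $2^{-jM}$ comfortably absorbs the logarithm coming from $\int_{|z|\le 2^{j-1}|x|}|z|^{M+1}(1+|z|)^{-(n+M+1)}\,dz$.

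The far region, however, does not close as you wrote it. There you discard all decay of $\varphi$, bounding the integrand by $\bigl[\|\varphi\|_\infty+\|\varphi\|_{\cs_{M+1}}(2^{-j}|z|)^M\bigr]|\psi(z)|$, and ask the tail $\int_{|z|>2^{j-1}|x|}(1+|z|)^M|\psi(z)|\,dz$ to deliver both $2^{-jM}$ and $(1+|x|)^{-(n+M)}$. Under the natural calibration in which $\|\psi\|_{\cs_{M+1}}$ encodes decay of order $n+M+1$ (as in \cite{ysy10,yy08}), this tail is only $O\bigl((2^{j}|x|)^{-1}\bigr)$, and your far-region bound comes out as $O\bigl(2^{-jM}|x|^{-(M+1)}\bigr)$, short of the claimed $|x|^{-(n+M)}$ by $|x|^{-(n-1)}$; making your computation work would require $\psi$-decay of order roughly $2(n+M)$, i.e.\ a genuinely heavier seminorm, so this is not mere bookkeeping. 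The missing ingredient is to retain the decay of $\varphi$ in the far region as well: the Taylor coefficients obey $|\partial^\gamma\varphi(x)|\le\|\varphi\|_{\cs_{M+1}}(1+|x|)^{-(n+M+1+|\gamma|)}$, which makes the polynomial part contribute $\lesssim 2^{-j(M+1)}|x|^{-(n+2M+2)}$; and for the term $|\varphi(x-2^{-j}z)|$ split once more according to whether $|x-2^{-j}z|\ge|x|/2$ (use the decay of $\varphi$ together with the tail of $\psi$) or $|x-2^{-j}z|<|x|/2$ (then $|z|\sim 2^{j}|x|$, so $|\psi(z)|\lesssim(2^{j}|x|)^{-(n+M+1)}$, while the $z$-integral of $|\varphi(x-2^{-j}z)|$ is at most $2^{jn}\|\varphi\|_{L^1}$). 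With this amendment every piece is $\lesssim 2^{-j(M+1)}(1+|x|)^{-(n+M+1)}$ up to a harmless logarithm, which yields the stated estimate with constants controlled by $\|\varphi\|_{\cs_{M+1}}\|\psi\|_{\cs_{M+1}}$.
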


The following lemma guarantees the convergence of $T_\psi \vec{t}$
for any $\vec{t}\in b^{s(\cdot)}_{p(\cdot),q(\cdot)}(W)$.
\begin{lemma}\label{Tpsi 1}
Let $p(\cdot), q(\cdot)\in \mathcal{P}_0\cap LH$, $s(\cdot)\in LH$,
and $W\in\mathscr{A}_{p(\cdot),\infty}$
and let $\{\psi_j\}_{j\in\mathbb{Z}_+}$ be as in Definition \ref{phi pair}.
Then, for any $\vec{t} := \{\vec{t}_Q\}_{Q\in\mathcal{Q}_+} \in b^{s(\cdot)}_{p(\cdot),q(\cdot)}(W)$,
$\sum_{Q\in \mathcal{Q}_+} \vec{t}_Q \psi_Q$ converges in $(\mathcal{S}')^m$.
Moreover, if $M\in\mathbb{Z}_+$ also satisfies
\begin{align}\label{M con}
M >\max\left\{d^{\rm upper}_{p(\cdot),\infty}(W) + \frac{n}{p_-} - s_- , \Delta \right\},
\end{align}
where $\Delta$ is the same as in Lemma \ref{QP5},
then, for any $\vec{t} \in b^{s(\cdot)}_{p(\cdot),q(\cdot)}(W)$ and $\phi \in \mathcal{S}$,
$$ \sum_{Q\in\mathcal{Q}_+} \left| \vec{t}_Q \right| \left|\left\langle \psi_Q,\phi \right\rangle \right|
\lesssim \left\| \vec{t} \right\|_{b^{s(\cdot)}_{p(\cdot),q(\cdot)}(W)}
\|\psi\|_{\mathcal{S}_{M+1}} \|\phi\|_{\mathcal{S}_{M+1}}, $$
where the implicit positive constant is independent of $\vec{t}$.
\end{lemma}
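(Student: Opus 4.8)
The plan is to establish the quantitative estimate first and read off the $(\cs')^m$-convergence from it. Recall that for a dyadic cube $Q=Q_{k,j}\in\cq_j$ one has $\psi_Q=|Q|^{1/2}\psi_j(\cdot-x_Q)$, whence $\langle\psi_Q,\phi\rangle=|Q|^{1/2}\big(\phi\ast\widetilde{\psi_j}\big)(x_Q)$ for every $\phi\in\cs$, where $\widetilde{\psi_j}(\cdot):=\psi_j(-\cdot)$. For $j\in\nn$ the mother function $\psi$ (and hence $\widetilde\psi$) has vanishing moments of every order, since $\widehat\psi$ is supported away from the origin, so Lemma \ref{psiphi 1} gives, for all such $j$ and all $x\in\rn$, $|(\phi\ast\widetilde{\psi_j})(x)|\ls\|\psi\|_{\cs_{M+1}}\|\phi\|_{\cs_{M+1}}2^{-jM}(1+|x|)^{-n-M}$; for $j=0$ the same bound holds (with $2^{-jM}=1$) simply because $\phi\ast\widetilde{\psi_0}$ is Schwartz. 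Therefore
\[
\big|\langle\psi_Q,\phi\rangle\big|\ls\|\psi\|_{\cs_{M+1}}\|\phi\|_{\cs_{M+1}}\,|Q|^{1/2}\,2^{-j_QM}\,(1+|x_Q|)^{-n-M}.
\]

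The key step is to control $|\vec t_Q|$ by $\|\vec t\|_{b^{s(\cdot)}_{p(\cdot),q(\cdot)}(W)}$. Fix $Q\in\cq_j$. Since the cubes in $\cq_j$ are disjoint, on $Q$ one has $\vec t_j=|Q|^{-1/2}\vec t_Q$, and since $s(\cdot)\ge s_-$ and the modular of $l^{q(\cdot)}(L^{p(\cdot)})$ is a sum over $j$ (so that the single-layer norm $\|2^{js(\cdot)}|W(\cdot)\vec t_j|\|_{L^{p(\cdot)}}$ is dominated by the full mixed norm), I would obtain $\big\||W(\cdot)\vec t_Q|\one_Q\big\|_{L^{p(\cdot)}}\ls|Q|^{1/2}2^{-js_-}\|\vec t\|_{b^{s(\cdot)}_{p(\cdot),q(\cdot)}(W)}$. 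Combining this with \eqref{eq redu} and with $\|\one_Q\|_{L^{p(\cdot)}}^{-1}\ls|Q|^{-1/p_-}=2^{jn/p_-}$ (from Lemma \ref{est BQ} together with $p_Q\ge p_-$ and $|Q|\le1$) gives $|A_Q\vec t_Q|\ls2^{j(n/p_--s_-)}|Q|^{1/2}\|\vec t\|_{b^{s(\cdot)}_{p(\cdot),q(\cdot)}(W)}$. Finally, since $|\vec t_Q|\le\|A_Q^{-1}\|\,|A_Q\vec t_Q|$, applying Lemma \ref{QP5} with $R$ the unit cube $Q_{0,0}$ (and $d_1,d_2$ chosen close to $d^{\rm lower}_{p(\cdot),\infty}(W)$ and $d^{\rm upper}_{p(\cdot),\infty}(W)$, so that $M>\Delta$ and $M>d_2+n/p_--s_-$ persist) yields $\|A_Q^{-1}\|\ls2^{jd_2}(1+|x_Q|)^{\Delta}$, hence
\[
|\vec t_Q|\ls2^{j(d_2+n/p_--s_-)}\,|Q|^{1/2}\,(1+|x_Q|)^{\Delta}\,\|\vec t\|_{b^{s(\cdot)}_{p(\cdot),q(\cdot)}(W)}.
\]

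Inserting the two displays into $\sum_{Q\in\cq_+}|\vec t_Q|\,|\langle\psi_Q,\phi\rangle|$ and using $|Q|=2^{-jn}$ for $Q\in\cq_j$, the inner sum over $\cq_j$ reduces to $\sum_{Q\in\cq_j}(1+|x_Q|)^{\Delta-n-M}$, which is $\ls2^{jn}$ whenever $M>\Delta$ (by comparison with $\int_{\rn}(1+|z|)^{\Delta-n-M}\,dz$ after rescaling by $2^{-j}$). What remains is then $\ls\|\vec t\|_{b^{s(\cdot)}_{p(\cdot),q(\cdot)}(W)}\|\psi\|_{\cs_{M+1}}\|\phi\|_{\cs_{M+1}}\sum_{j\in\zz_+}2^{j(d_2+n/p_--s_--M)}$, a convergent geometric series precisely because $M>d^{\rm upper}_{p(\cdot),\infty}(W)+n/p_--s_-$ permits the choice of $d_2$ above; this is the asserted bound. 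For the convergence assertion one notes that this holds for every sufficiently large $M\in\zz_+$; fixing one such $M$, the absolute convergence of $\sum_Q\vec t_Q\langle\psi_Q,\phi\rangle$ for each $\phi\in\cs$ together with the uniform estimate shows that $\phi\mapsto\sum_{Q\in\cq_+}\vec t_Q\langle\psi_Q,\phi\rangle$ defines a continuous linear functional, i.e.\ $\sum_{Q\in\cq_+}\vec t_Q\psi_Q$ converges in $(\cs')^m$.

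The main obstacle is the transfer in the second paragraph from the weighted quantity $\||W(\cdot)\vec t_Q|\one_Q\|_{L^{p(\cdot)}}$, which the Besov norm controls directly, to the bare Euclidean length $|\vec t_Q|$; this is exactly where the reducing operators, and crucially the strong-doubling estimate of Lemma \ref{QP5}, are needed, and the polynomial growth factors $2^{jd_2}$ and $(1+|x_Q|)^{\Delta}$ they introduce must be absorbed by the decay of $\langle\psi_Q,\phi\rangle$ — the two conditions packaged in \eqref{M con} ($M>\Delta$ for the spatial sum, $M>d^{\rm upper}_{p(\cdot),\infty}(W)+n/p_--s_-$ for the $j$-sum) are precisely what make this absorption work. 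A secondary technical point is the careful use of the variable exponents, namely the estimates $2^{js(\cdot)}\gtrsim2^{js_-}$ and $\|\one_Q\|_{L^{p(\cdot)}}^{-1}\ls2^{jn/p_-}$ and the single-layer comparison for $l^{q(\cdot)}(L^{p(\cdot)})$, for which the log-Hölder hypotheses and Lemma \ref{est BQ} are invoked.
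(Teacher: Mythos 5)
Your proposal is correct and follows essentially the same route as the paper's proof.  You use the same three ingredients in the same way: (a) Lemma \ref{psiphi 1} together with the fact that $\widehat\psi$ is supported away from the origin (hence $\psi$ has vanishing moments of every order) to obtain $|\langle\psi_Q,\phi\rangle|\lesssim\|\psi\|_{\cs_{M+1}}\|\phi\|_{\cs_{M+1}}|Q|^{M/n+1/2}(1+|x_Q|)^{-n-M}$; (b) the single-layer domination $\|2^{js(\cdot)}|\cdot|\|_{L^{p(\cdot)}}\lesssim\|\cdot\|_{l^{q(\cdot)}(L^{p(\cdot)})}$ combined with Lemma \ref{est BQ} and $2^{js(\cdot)}\geq2^{js_-}$ to pass from $\|\vec t\|_{b^{s(\cdot)}_{p(\cdot),q(\cdot)}(W)}$ to a pointwise bound on $|A_Q\vec t_Q|$; and (c) Lemma \ref{QP5} against the unit cube $Q_{0,\mathbf 0}$ to bound $\|A_Q^{-1}\|\lesssim 2^{jd_2}(1+|x_Q|)^{\Delta}$.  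The paper phrases step (b) via the averaging norm $\|\cdot\|_{b^{s(\cdot)}_{p(\cdot),q(\cdot)}(\aa)}$ and then invokes Theorem \ref{W aa 3}, while you go directly through $W$ and \eqref{eq redu}; these are equivalent.  Your summation over $\cq_j$ using $M>\Delta$ and then over $j$ using $M>d_2+n/p_--s_-$ reproduces the paper's final estimate, including the separate (but routine) treatment of the non-cancellative $j=0$ layer.
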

\begin{proof}
Let $\{A_Q\}_{Q\in\mathcal{Q}_+}$ be reducing operators of order $p(\cdot)$ for $W$.
Observe that, by the facts that $ \frac{1}{p_Q} \leq \frac{1}{p_-} $
and $|Q| \leq 1$ for any $Q\in \mathcal{Q}_+$,
we obtain $|Q|^{-\frac{1}{p_Q}} \leq |Q|^{-\frac{1}{p_-}}$.
From this, Lemma \ref{est Q}, and the definition of $\widetilde{\mathbf{1}}_Q$,
it follows that, for any $j\in\mathbb{Z}_+$ and $Q\in\mathcal{Q}_j$,
\begin{align*}
\left| \vec{t}_Q \right| &\leq \left\|A_Q^{-1}\right\| \left| A_Q \vec{t}_Q \right|
 = \left\|A_Q^{-1}\right\| \left\| \widetilde{\mathbf{1}}_Q \right\|^{-1}_{L^{p(\cdot)}} \left\| \left| A_Q \vec{t}_Q \right| \widetilde{\mathbf{1}}_Q \right\|_{L^{p(\cdot)}}
 \sim |Q|^{\frac12-\frac{1}{p_Q}} \left\|A_Q^{-1}\right\|  \left\| \left| A_Q \vec{t}_Q \right| \widetilde{\mathbf{1}}_Q \right\|_{L^{p(\cdot)}}\nonumber \\
& \lesssim |Q|^{\frac12-\frac{1}{p_Q}} 2^{-js_-} \left\|A_Q^{-1}\right\| \left\| 2^{js(\cdot)} \left| A_j \vec{t}_j \right| \right\|_{L^{p(\cdot)}}
 \leq |Q|^{\frac{s_-}{n} + \frac12-\frac{1}{p_-}} \left\|A_Q^{-1}\right\| \left\| 2^{js(\cdot)} \left| A_j \vec{t}_j \right| \right\|_{L^{p(\cdot)}},
\end{align*}
where $\vec{t}_j$ is the same as in \eqref{def tj} and $A_j$ the same as in \eqref{def Aj}.
Using this and Theorem \ref{W aa 3},
we conclude that, for any $j\in\mathbb{Z}_+$ and $Q\in\mathcal{Q}_j$,
\begin{align*}
\left| \vec{t}_Q \right|
 &\lesssim |Q|^{\frac{s_-}{n} + \frac12-\frac{1}{p_-}} \left\|A_Q^{-1}\right\| \left\| \vec{t} \right\|_{b^{s(\cdot)}_{p(\cdot),q(\cdot)}(\mathbb{A})}
 \lesssim |Q|^{\frac{s_-}{n} + \frac12-\frac{1}{p_-}} \left\|A_Q^{-1}\right\| \left\| \vec{t} \right\|_{b^{s(\cdot)}_{p(\cdot),q(\cdot)}(W)},
\end{align*}
which further implies that, for any $\phi \in\mathcal{S}$,
\begin{align}\label{tQ 2}
\sum_{Q\in \mathcal{Q}_+} \left|\vec{t}_Q\right| \left| \left\langle \psi_Q, \phi \right\rangle \right|
\lesssim \left\| \vec{t} \right\|_{b^{s(\cdot)}_{p(\cdot),q(\cdot)}(W)} \sum_{Q\in\mathcal{Q}_+} |Q|^{\frac{s_-}{n} + \frac12-\frac{1}{p_-}} \left\|A_Q^{-1}\right\|\left| \left\langle \psi_Q, \phi \right\rangle \right|.
\end{align}
By Lemmas \ref{pA norm} and \ref{QP5} with $Q := Q_{0,\mathbf{0}} := (0,1]^n$ and $R := Q$
and by the fact that $l(Q)\leq 1$ for any $Q\in\mathcal{Q}_+$,
we have
\begin{align}\label{tQ 4}
\left\|A_Q^{-1}\right\| \leq \left\| A_Q^{-1} A_{Q_{0,\mathbf{0}}}\right\| \left\| A_{Q_{0,\mathbf{0}}}^{-1} \right\|
\lesssim \left\| A_{Q_{0,\mathbf{0}}} A_Q^{-1}\right\|
\lesssim |Q|^{-\frac{d_2}{n}} \left( 1+ |x_Q| \right)^{\Delta},
\end{align}
where $d_2 \in [\![ d^{\rm upper}_{p(\cdot),\infty}(W),\infty )$
is a fixed parameter.
Let $M\in\mathbb{N}$ satisfy
$M > \max\{d_2 + \frac{n}{p_-} - s_- , \Delta \}$.
Then, if $j \geq 1$,
by Lemma \ref{psiphi 1} and the fact $\psi_j \in \mathcal{S}_\infty$,
we obtain, for any $\phi \in \mathcal{S}$ and $Q\in\mathcal{Q}_j$,
\begin{align*}
\left| \left\langle \psi_Q, \phi \right\rangle \right| =  \psi_Q \ast \phi(x_Q)
\lesssim \|\psi\|_{\mathcal{S}_{M+1}} \|\phi\|_{\mathcal{S}_{M+1}}
|Q|^{\frac{M}{n} + \frac{1}{2}} \left(1+|x_Q|\right)^{-n-M}.
\end{align*}
From this, \eqref{tQ 2}, and \eqref{tQ 4},
we deduce that
\begin{align}\label{tQ 5}
&\sum_{j = 1}^\infty \sum_{Q \in \mathcal{Q}_j} \left|\vec{t}_Q\right| \left| \left\langle \psi_Q, \phi \right\rangle \right|\nonumber\\
&\quad \lesssim \|\psi\|_{\mathcal{S}_{M+1}} \|\phi\|_{\mathcal{S}_{M+1}} \left\| \vec{t} \right\|_{b^{s(\cdot)}_{p(\cdot),q(\cdot)}(W)}
\sum_{j = 1}^\infty \sum_{Q \in \mathcal{Q}_j} |Q|^{\frac{M}{n} + \frac{s_-}{n} + 1 - \frac{1}{p_-} - \frac{d_2}{n}}
\left( 1+ |x_Q| \right)^{\Delta-n-M}\nonumber\\
&\quad \lesssim \|\psi\|_{\mathcal{S}_{M+1}} \|\phi\|_{\mathcal{S}_{M+1}} \left\| \vec{t} \right\|_{b^{s(\cdot)}_{p(\cdot),q(\cdot)}(W)}
\sum_{j = 1}^\infty 2^{-j(M + s_- + n - \frac{n}{p_-} -d_2)} \sum_{k \in\mathbb{Z}^n}
\left( 1+ 2^{-j}|k| \right)^{\Delta-n-M}\nonumber\\
&\quad \lesssim \|\psi\|_{\mathcal{S}_{M+1}} \|\phi\|_{\mathcal{S}_{M+1}} \left\| \vec{t} \right\|_{b^{s(\cdot)}_{p(\cdot),q(\cdot)}(W)}
\sum_{j = 1}^\infty
2^{-j(M + s_- - \frac{n}{p_-} - d_2)}
 \lesssim \|\psi\|_{\mathcal{S}_{M+1}} \|\phi\|_{\mathcal{S}_{M+1}} \left\| \vec{t} \right\|_{b^{s(\cdot)}_{p(\cdot),q(\cdot)}(W)}.
\end{align}
Now, if $j = 0$,
then, using the definition of $\|\cdot\|_{\mathcal{S}_{M+1}}$ and the fact $\Psi \in \mathcal{S}$,
we find that
\begin{align*}
\left| \left\langle \Psi_Q, \phi \right\rangle \right|
&= \left| \int_{\mathbb{R}^n} \Psi\left( x - x_Q \right) \phi(x)\,dx \right|
\lesssim \|\Psi\|_{\mathcal{S}_{M+1}} \|\phi\|_{\mathcal{S}_{M+1}}
\int_{\mathbb{R}^n} \frac{1}{\left(1 + |x - x_Q|\right)^{n+M+1}} \frac{1}{\left(1 + |x|\right)^{n+M+1}}\,dx  \\
&\lesssim \|\Psi\|_{\mathcal{S}_{M+1}} \|\phi\|_{\mathcal{S}_{M+1}} \frac{1}{\left(1 + |x_Q|\right)^{n+M+1}},
\end{align*}
which, combined with \eqref{tQ 2} and \eqref{tQ 4}, further implies that
\begin{align*}
\sum_{Q \in \mathcal{Q}_0} \left|\vec{t}_Q\right| \left| \left\langle \psi_Q, \phi \right\rangle \right|
& \lesssim \left\| \vec{t} \right\|_{b^{s(\cdot)}_{p(\cdot),q(\cdot)}(W)}\|\psi\|_{\mathcal{S}_{M+1}} \|\phi\|_{\mathcal{S}_{M+1}}
\sum_{k\in\mathbb{Z}^n }  \left( 1+|k| \right)^{-(M+n+1)+\Delta}\\
& \lesssim \left\| \vec{t} \right\|_{b^{s(\cdot)}_{p(\cdot),q(\cdot)}(W)}\|\psi\|_{\mathcal{S}_{M+1}} \|\phi\|_{\mathcal{S}_{M+1}}.
\end{align*}
From this and \eqref{tQ 5},
we infer that \eqref{tQ 2} converges absolutely.
Thus, $\sum_{Q\in\mathcal{Q}_+} \vec{t}_Q \psi_Q$ converges in $\mathcal{S}'$,
which completes the proof of Lemma \ref{Tpsi 1}.
\end{proof}

The following lemma is precisely \cite[Lemma 2.2]{yy08}.
\begin{lemma}\label{psiphi 2}
Let $M\in\mathbb{Z}_+$ and $\psi, \varphi \in \mathcal{S}_\infty$.
Then, for any $j,i\in\mathbb{Z}_+$ and $x\in\mathbb{R}^n$,
$$ \left| \varphi_i \ast \psi_j  (x) \right| \lesssim \|\psi\|_{\mathcal{S}_{M+1}} \|\varphi\|_{\mathcal{S}_{M+1}}
2^{-|i-j|M} \frac{2^{-(i\wedge j)M}}{(2^{-(i\wedge j)}+|x|)^{n+M}},  $$
where the implicit positive constant depends only on $n$ and $M$.
\end{lemma}

The following lemma gives a sufficient condition ensuring that
$\|\cdot\|_{l^{q(\cdot)}(L^{p(\cdot)})}$ is a norm,
which is precisely \cite[Theorems 3.6 and 3.8]{ah10}.
\begin{lemma}\label{seq norm}
Let $p(\cdot),q(\cdot)\in \mathcal{P}_0$.
Then $\|\cdot\|_{l^{q(\cdot)}(L^{p(\cdot)})}$ is a quasi-norm.
Moreover, if $p(\cdot), q(\cdot) \in \mathcal{P}$
satisfy either $\frac{1}{p(\cdot)}+ \frac{1}{q(\cdot)}\leq 1$ pointwise
or $q$ is a constant, then $\|\cdot\|_{l^{q(\cdot)}(L^{p(\cdot)})}$ is a norm.
\end{lemma}
Finally, we give the proof of Theorem \ref{phi bound}.
\begin{proof}[Proof of Theorem \ref{phi bound}]
We first prove the boundedness of $S_\varphi$.
For any $\vec{f} \in B^{s(\cdot)}_{p(\cdot),q(\cdot)}(W, \widetilde{\varphi})$,
letting $ \sup_{\mathbb{A},\widetilde{\varphi}} (\vec{f}) $ be as in \eqref{def supp f},
then, by the definition of $S_\varphi$,
we obtain, for any $j\in \mathbb{Z}_+$ and $Q\in\mathcal{Q}_j$,
$$ \left| A_Q \left( S_\varphi \vec{f} \right)_Q \right|
= \left| A_Q \left\langle \vec{f},\varphi_Q \right\rangle \right|
= |Q|^\frac12 \left| A_Q \left( \widetilde{\varphi_{j}} \ast \vec{f} \right)(x_Q) \right|
\leq \sup_{\mathbb{A},\widetilde{\varphi}, Q}\left( \vec{f} \right) $$
and hence, for any $j\in \mathbb{Z}_+$ and $x\in \mathbb{R}^n$,
\begin{align*}
2^{js(x)} \left| A_j \left( S_\varphi \vec{f} \right)_j (x) \right| \leq
2^{js(x)} \sum_{Q\in \mathcal{Q}_j} \sup_{\mathbb{A},\widetilde{\varphi}, Q}\left( \vec{f} \right) \widetilde{\mathbf{1}}_Q(x).
\end{align*}
This, together with the definitions of $\|\cdot\|_{b^{s(\cdot)}_{p(\cdot),q(\cdot)}(\mathbb{A})}$ and $\|\cdot\|_{b^{s(\cdot)}_{p(\cdot),q(\cdot)}}$
and Theorems \ref{W aa 3} and \ref{W aa supp},
further implies that
$$ \left\| S_\varphi \vec{f} \right\|_{b^{s(\cdot)}_{p(\cdot),q(\cdot)}(W)}
\sim \left\| S_\varphi \vec{f} \right\|_{b^{s(\cdot)}_{p(\cdot),q(\cdot)}(\mathbb{A})}
\leq \left\| \sup_{\mathbb{A},\widetilde{\varphi}} \left(\vec{f} \right) \right\|_{b^{s(\cdot)}_{p(\cdot),q(\cdot)}}
\sim \left\|  \vec{f} \right\|_{B^{s(\cdot)}_{p(\cdot),q(\cdot)}(W, \widetilde{\varphi})}, $$
which completes the proof of the boundedness of $S_\varphi$.

Next, we show the boundedness of $T_\psi$.
By Lemma \ref{Tpsi 1},
we find that $T_\psi$ is well-defined
for any $\vec{t} \in b^{s(\cdot)}_{p(\cdot),q(\cdot)}$
and hence, by the definition of $T_\psi$, we obtain,
for any $j\in\mathbb{Z}_+$, $Q\in \mathcal{Q}_j$, and $x\in Q$,
\begin{align}\label{AQ Ii 5}
\left|A_Q \left[ \varphi_j \ast T_\psi\vec{t} \right](x)\right|
& = \left| \sum_{i \in \mathbb{Z}_+} \sum_{R\in \mathcal{Q}_i} A_Q \vec{t}_R \left( \varphi_j\ast \psi_R \right)(x) \right|.
\end{align}
Notice that, for any $\{\psi_i\}_{i\in\mathbb{Z}_+}$ and $\{\varphi_j\}_{j\in\mathbb{Z}_+}$
as in Definition \ref{phi pair}
and any $i,j\in\mathbb{Z}_+$, if $|i-j| > 1$,
then $ \psi_i \ast \varphi_j = 0. $
Using this and \eqref{AQ Ii 5}, we conclude that,
for any $j\in\mathbb{Z}_+$, $Q\in \mathcal{Q}_j$, and $x\in Q$,
\begin{align}\label{AQ Ii 1}
\left|A_Q \left[ \varphi_j \ast T_\psi\vec{t} \right](x)\right|
& \leq \sum_{i\in\mathbb{Z}_+} \sum_{R\in \mathcal{Q}_i} \left\|A_Q \vec{t}_R\right\| \left|\left( \varphi_j\ast \psi_R \right)(x) \right|
 = \sum_{\substack{i\in\mathbb{Z}_+\\ |i-j| \leq 1}} \sum_{R\in \mathcal{Q}_i} \left|A_Q\vec{t}_R \right| \left| \left( \varphi_j\ast \psi_R \right)(x) \right|\nonumber\\
& \leq \sum_{\substack{i\in\mathbb{Z}_+\\ |i-j| \leq 1}} \sum_{R\in \mathcal{Q}_i} \left\| A_Q A_R^{-1}\right\| \left|A_R\vec{t}_R \right| \left| \left( \varphi_j\ast \psi_R \right)(x) \right|,
\end{align}
By Lemma \ref{QP5}, we find that, for any $j,i\in\mathbb{Z}_+$ with $|i - j| \leq 1$,
any $Q\in\mathcal{Q}_j$, and $R\in\mathcal{Q}_i$,
\begin{align}\label{AQ Ii 2}
\left\| A_QA_R^{-1} \right\| \
&\lesssim \max\left\{ \left[ \frac{l(R)}{l(Q)} \right]^{d_1},
\left[ \frac{l(Q)}{l(R)} \right]^{d_2} \right\}\left[ 1+ \frac{|x_Q - x_R|}{l(Q)\vee l(R)} \right]^{\Delta}
\sim \left\{ 1+ \left[ l(R) \right]^{-1} \left|x_Q - x_R\right| \right\}^{\Delta},
\end{align}
where $d_1,d_2$, and $\Delta$ are the same as in Lemma \ref{QP5}.
Let $M$ satisfy \eqref{M con}.
Observe that, for any $i,j\in\mathbb{Z}_+$,
$|i-j| + (i\wedge j) = i\vee j$.
Then, from this and Lemmas \ref{psiphi 1} and \ref{psiphi 2}
or, when both $j,i= 0$, from the fact that, for any $M \in(0,\infty)$ and $x\in\mathbb{R}^n$,
$|\varphi_0 \ast \psi_0(x)| \lesssim ( 1 + |x| )^{-(n+M)}$,
it follows that, for any $j,i\in\mathbb{Z}_+$ with $|i-j| \leq 1$,
any $R\in\mathcal{Q}_i$, and any $x\in\mathbb{R}^n$,
\begin{align}\label{AQ Ii 3}
\left| \left( \varphi_j\ast \psi_R \right)(x) \right|
& = |R|^\frac12 \left| \left( \varphi_j\ast \psi_i \right)(x-x_R) \right|
\lesssim |R|^\frac12 2^{-|i-j|M}\frac{2^{-(i\wedge j)M}}{[2^{-(i\wedge j)}+|x-x_R|]^{n+M}}\nonumber\\
& = |R|^\frac12 2^{(i \vee j)n} \frac{2^{-(i \vee j)(M + n)}}{[2^{-(i\wedge j)}+|x-x_R|]^{n+M}}
\sim |R|^{-\frac12} \left\{1+ [l(R)]^{-1} |x-x_R|\right\}^{-(n+M)}.
\end{align}
Let $u := \{u_Q\}_{Q\in\mathcal{Q}_+}$,
where $u_Q := |A_Q \vec{t}_Q|$ for any $Q\in\mathcal{Q}_+$.
Then, by \eqref{AQ Ii 1}, \eqref{AQ Ii 2}, and \eqref{AQ Ii 3},
we conclude that, for any $j\in\mathbb{Z}_+$, $Q\in\mathcal{Q}_j$, and $x\in Q$,
\begin{align}\label{AQ var 1}
\left| A_Q \left[ \varphi_j \ast T_\psi\vec{t} \right](x) \right|
\lesssim \sum_{\substack{i\in\mathbb{Z}_+\\ |i-j| \leq 1}} \sum_{R\in\mathcal{Q}_i}  |R|^{-\frac12} \frac{u_R}{ \{1+ [l(R)]^{-1} |x-x_R|\}^{n+M - \Delta} }
\sim |Q|^{-\frac12} \sum_{\substack{i\in\mathbb{Z}_+\\ |i-j| \leq 1}} {\rm I}_i(x),
\end{align}
where, for any $i\in\mathbb{Z}_+$,
\begin{align}\label{def Ii}
{\rm I}_i(x) := \sum_{R\in\mathcal{Q}_i} \frac{u_R}{ \{1+ [l(R)]^{-1} |x-x_R|\}^{n+M - \Delta} }.
\end{align}

Notice that, by the definition of dyadic cubes,
for any $x\in \mathbb{R}^n$ and $j\in\mathbb{Z}_+$,
there exists a unique cube $Q\in\mathcal{Q}_j$ such that $ x\in Q$.
Combining this with \eqref{def Ii} and Lemma \ref{QP4}, we obtain
\begin{align}\label{AQ Ii 4}
{\rm I}_i (x) \lesssim \sum_{R\in\mathcal{Q}_i} \frac{u_R}{ \{1+ [l(R)]^{-1} |x_Q-x_R|\}^{n+M - \Delta} } = \left(u_{1,n+M - \Delta}^\ast\right)_Q,
\end{align}
where $(u_{1,n+M - \Delta}^\ast)_Q$ is the same as in \eqref{def tast}.
In what follows, for simplicity of presentation,
let $ (u_{1,n+M - \Delta}^\ast)_{-1} := 0 $.
Applying this with \eqref{AQ var 1} and \eqref{AQ Ii 4},
we conclude that, for any $ j\in \mathbb{Z}_+$,
$$ \left| A_j \left[ \varphi_j \ast T_\psi\vec{t} \right] \right| \lesssim \sum_{i = -1}^{ 1} \left(u_{1,n+M - \Delta}^\ast\right)_{j + i}. $$
By this, Lemma \ref{W aa supp}, the definition of $\|\cdot\|_{{B^{s(\cdot)}_{p(\cdot),q(\cdot)}(\mathbb{A}, \varphi)}}$,
and Lemma \ref{seq norm},
\begin{align*}
\left\| T_\psi(\vec{t}) \right\|_{B^{s(\cdot)}_{p(\cdot),q(\cdot)}(W, \varphi)}
& \sim \left\| T_\psi(\vec{t}) \right\|_{B^{s(\cdot)}_{p(\cdot),q(\cdot)}(\mathbb{A}, \varphi)}
 = \left\| \left\{ 2^{js(\cdot)} \left| A_j \left[\varphi_j \ast T_\psi(\vec{t}) \right] \right|  \right\}_{j\in\mathbb{Z}_+} \right\|_{l^{q(\cdot)}(L^{p(\cdot)})}\\
& \lesssim \sum_{i = -1}^{1} \left\| \left\{ 2^{(j+i) s(\cdot)} \left(u_{1,n+M - \Delta}^\ast\right)_{j+i} \right\}_{j\in\mathbb{Z}_+} \right\|_{l^{q(\cdot)}(L^{p(\cdot)})}.
\end{align*}
Applying this with Lemma \ref{tast bound}
with $t := u$, $r := 1$, and $\lambda := n + M - \Delta$,
we conclude that
\begin{align*}
\left\| T_\psi(\vec{t}) \right\|_{B^{s(\cdot)}_{p(\cdot),q(\cdot)}(W, \varphi)}
& \lesssim \left\| \left\{ 2^{j s(\cdot)} \left(u_{1,n+M - \Delta}^\ast\right)_{j} \right\}_{j\in\mathbb{Z}_+} \right\|_{l^{q(\cdot)}(L^{p(\cdot)})}
 = \left\| u_{1,n+M - \Delta}^\ast \right\|_{b^{s(\cdot)}_{p(\cdot),q(\cdot)}}
 \lesssim \left\| u \right\|_{b^{s(\cdot)}_{p(\cdot),q(\cdot)}},
\end{align*}
which, together with the definition of $u$ and Theorem \ref{W aa 3},
further implies that
\begin{align*}
\left\| T_\psi(\vec{t}) \right\|_{B^{s(\cdot)}_{p(\cdot),q(\cdot)}(W, \varphi)}
\lesssim \left\| \vec{t} \right\|_{b^{s(\cdot)}_{p(\cdot),q(\cdot)}(\mathbb{A})}
\sim \left\| \vec{t} \right\|_{b^{s(\cdot)}_{p(\cdot),q(\cdot)}(W)}.
\end{align*}
This finishes the proof of the boundedness of $T_\psi$.

Finally, if $\{\varphi_j\}_{j\in\mathbb{Z}_+}$ and $\{\psi_j\}_{j\in\mathbb{Z}_+}$ satisfy \eqref{varphi psi},
then it follows immediately from Lemma \ref{CZ com}
that $T_\psi\circ S_\varphi $ is the identity on $B^{s(\cdot)}_{p(\cdot),q(\cdot)}(W, \widetilde{\varphi})$,
which completes the proof of Theorem \ref{phi bound}.
\end{proof}

\section{Almost Diagonal Operators}\label{sec abo}
In this section, we focus on the boundedness of almost diagonal operators
on $b^{s(\cdot)}_{p(\cdot),q(\cdot)}(W)$,
which is a key tool for establishing various real-variable characterizations of Besov spaces
and the boundedness of operators on them (see, for example, \cite{fj90,fr21,bhyy23 2}).
Let $B := \{b_{Q,R}\}_{Q,R\in\mathcal{Q}_+} \subset \mathbb{C}$.
For any sequence $\vec{t} := \{\vec{t}_R\}_{R\in\mathcal{Q}_+} \subset \mathbb{C}^m$,
we define $B\vec{t} := \{(B\vec{t})_Q\}_{Q\in\mathcal{Q}_+}$ by setting,
for any $Q\in\mathcal{Q}_+$,
\begin{align}\label{def abo seq}
\left(B\vec{t}\right)_Q := \sum_{R\in\mathcal{Q}_+} b_{Q,R} \vec{t}_R
\end{align}
if this given summation is absolutely convergent.
Then we recall the concept of almost diagonal operators,
which was first introduced by Frazier and Jawerth in \cite{fj90}.

\begin{definition}\label{def ado}
Let $D, E, F \in \mathbb{R}$.
Define the special infinite matrix
$B^{DEF} := \{b^{DEF}_{Q,R}\}_{Q,R \in \mathcal{Q}_+} \subset \mathbb{C}$
by setting, for any $Q,R \in\mathcal{Q}_+$,
\begin{align}\label{def bDEF}
b^{DEF}_{Q,R} := \left[ 1 + \frac{|x_Q - x_R|}{l(Q)\vee l(R)} \right]^{-D}
\begin{cases}
\displaystyle \left[ \frac{l(Q)}{l(R)} \right]^E & \text{if}\ l(Q)\leq l(R), \\
\displaystyle \left[ \frac{l(R)}{l(Q)} \right]^F & \text{if}\ l(Q)> l(R).
\end{cases}
\end{align}
An infinite matrix $B := \{b_{Q,R}\}_{Q,R\in\mathcal{Q}_+} \subset \mathbb{C}$ is said to be
\emph{$( D,E,F )$-almost diagonal} if there exists a positive constant $C$
such that, for any $Q,R\in\mathcal{Q}_+$, $|b_{Q,R}| \leq C b_{Q,R}^{DEF}$.
\end{definition}

\begin{remark}
\begin{itemize}
\item[{\rm (i)}] If $E+F >0$, which is usually the only case interested to us,
then the second factor on the right-hand side of \eqref{def bDEF}
is exactly
$$ \min\left\{ \left[ \frac{l(Q)}{l(R)} \right]^E, \left[ \frac{l(R)}{l(Q)} \right]^F \right\}. $$
\item[{\rm (ii)}] Clearly, the special infinite matrix
$B^{DEF}$ itself is $(D,E,F)$-almost diagonal.
\end{itemize}
\end{remark}

Recall that, in the setting of variable function spaces, the Fefferman--Stein type
vector-valued inequality involving the Hardy--Littlewood maximal operator
is known to be non-existent. As a suitable substitute, a vector-valued inequality
involving $\eta_{j,m}$ functions fits very well into this scheme; see, for example, \cite{ah10}.
The following is the matrix-weighted version of the vector-valued inequality
involving $\eta$ functions.
The proof of this result is similar to that of \cite[Lemma 4.7]{ah10}
with the boundedness of the convolution of $\eta_{j,m}$
replaced by Theorem \ref{bound eta}. We omit the details here.

\begin{proposition}\label{eta bound seq}
Let $p(\cdot), q(\cdot)\in \mathcal{P}_0\cap LH$ and $W\in\mathscr{A}_{p(\cdot),\infty}$.
Then there exists $\alpha \in (0,1]$, depending on $[W]_{\mathscr{A}_{p(\cdot),\infty}}$,
such that, for any $m\in (\frac{n}{\alpha} + C_{\rm log}(\frac1q),\infty)$,
where $C_{\rm log}(\frac1q)$ is the same as in \eqref{clogp},
and for any sequence of measurable functions $\{\vec{f}_j\}_{j\in\mathbb{N}}$,
\begin{align}\label{eq eta bound seq}
\left\| \left\{ \eta_{j,m,W}^{(\alpha)}(\vec{f}_j) \right\}_{j\in\mathbb{Z}_+} \right\|_{l^{q(\cdot)}(L^{p(\cdot)})}
\lesssim \left\| \left\{ \left| \vec{f}_j \right| \right\}_{j\in\mathbb{Z}_+} \right\|_{l^{q(\cdot)}(L^{p(\cdot)})},
\end{align}
where $\eta_{j,m,W}^{(\alpha)}$ is as in \eqref{def etaf} and
the implicit positive constant is independent of $\{f_j\}_{j\in\mathbb{Z}_+}$.
\end{proposition}
Let $p(\cdot) \in \mathcal{P}_0\cap LH$.
For any $W\in \mathscr{A}_{p(\cdot),\infty}$, let
\begin{align}\label{def alphaW}
\alpha_W := \sup\left\{\alpha \in (0,1]:\ \eta^{(\alpha)}_{j,m,W} \ \text{is bounded on}\ L^{p(\cdot)}\right\}.
\end{align}
\begin{remark}\label{rem eta seq}
\begin{itemize}
\item[{\rm (i)}] Let $p(\cdot),q(\cdot) \in \mathcal{P}_0\cap LH$
and $W\in \mathscr{A}_{p(\cdot),\infty}$.
Then, by both the proof of \cite[Lemma 4.7]{ah10} and
\cite[Example 3.4]{ah10}, we conclude that,
for any $\alpha\in (0,1]$, $m\in (\frac{n}{\alpha}+C_{\rm log}(\frac1q),\infty)$,
and $j\in \mathbb{Z}_+$,
$\eta_{j,m,W}^{(\alpha)}$ is bounded on $L^{p(\cdot)}$
if and only if $\{\eta_{j,m,W}^{(\alpha)}\}_{j\in\mathbb{Z}_+}$ is bounded on
$l^{q(\cdot)}(L^{p(\cdot)})$.
\item[{\rm (ii)}] Let $p(\cdot) \in \mathcal{P}_0\cap LH$ with $p_- > 1$,
$q(\cdot) \in \mathcal{P}_0\cap LH$, and $m\in (n+C_{\rm log}(\frac1q),\infty)$.
From Remarks \ref{rem eta} and \ref{rem eta seq}{\rm (i)},
we infer that, for any $W \in \mathscr{p(\cdot)}$,
$\{\eta_{j,m,W}^{(1)}\}_{j\in\mathbb{Z}_+}$ is bounded on $l^{q(\cdot)}(L^{p(\cdot)})$
and hence $\alpha_W = 1$.
\end{itemize}
\end{remark}

The following is the boundedness of almost diagonal operators
on matrix-weighted variable Besov sequence spaces.
We refer to \cite{bhyy23 2} for the known best results about almost diagonal operators on
matrix $A_{p}$ weighted Besov sequence spaces
and to \cite{bhyy24} on matrix $ A_{p,\infty} $ weighted Besov sequence spaces.

\begin{theorem}\label{abo bound 1}
Let $p(\cdot), q(\cdot)\in \mathcal{P}_0\cap LH$, $s(\cdot)\in LH$,
and $W\in \mathscr{A}_{p(\cdot),\infty}$.
If $B$ is $(D,E,F)$-almost diagonal,
then $B$ is bounded on $b^{s(\cdot)}_{p(\cdot),q(\cdot)}(W)$
whenever
\begin{align}\label{abo bound con}
 D > \frac{n}{\alpha_W} + C(s,q), \quad E > \frac{n}{2} + s_+, \quad \text{and}\quad F> \frac{n}{\alpha_W} - \frac{n}{2} - s_-,
\end{align}
where $C(s,q) := C_{\rm{log}}(s) + C_{\rm log}(q^{-1})$
with $C_{\rm{log}}(s)$ and $C_{\rm log}(q^{-1})$ being the same as in \eqref{clogp}.
\end{theorem}

\begin{remark}
\begin{itemize}
\item[{\rm (i)}] Let $p(\cdot)$, $q(\cdot)$, and $s(\cdot)$ all be constant exponents.
Then, if $W$ is an $\mathscr{A}_{p}$ matrix weight,
then \eqref{abo bound con} coincides with the sharp result obtained in
\cite[Theorem 4.1]{bhyy23 2} in the case $\tau := 0$.
Hence, in this sense, when $p(\cdot),q(\cdot) \in \mathcal{P}\cap LH$,
$s(\cdot) \in LH$, and $W\in \mathscr{A}_{p(\cdot)}$,
the ranges of $D$, $E$, and $F$ obtained in \eqref{abo bound con} are sharp.
Moreover, when we reduce to the scalar-valued case,
by \cite[Lemma 4.32]{bhyy24} and Remark \ref{rem eta seq},
there exists $w\in A_\infty\setminus A_p$ such that the ranges of $D$, $E$, and $F$ in \eqref{abo bound con}
are wider than the corresponding ones in \cite[Theorem 4.6]{bhyy24}
in the case $\tau := 0$.
\item[{\rm (ii)}] The ranges of $D$, $E$, and $F$ of Theorem \ref{abo bound 1}
in the unweighted scalar-valued variable Besov spaces case
are also wider than the corresponding ones in \cite[Theorem 2]{hsz24}.
\end{itemize}
\end{remark}

Using Theorem \ref{abo bound 1},
we introduce the concept of $b^{s(\cdot)}_{p(\cdot),q(\cdot)}(W)$-almost diagonal operators.

\begin{definition}
Let $p(\cdot), q(\cdot)\in \mathcal{P}_0\cap LH$,
$s(\cdot)\in LH$, and $W\in \mathscr{A}_{p(\cdot),\infty}$
and let $B$ be $(D,E,F)$-almost diagonal.
Then $B$ is called a \emph{$b^{s(\cdot)}_{p(\cdot),q(\cdot)}(W)$-almost diagonal operator}
if $D,E$, and $F$ satisfy \eqref{abo bound con}.
\end{definition}

The following is the property of $ \eta^{(\alpha)}_{j,m,W}$.
\begin{lemma}\label{eta alpha}
Let $s(\cdot) \in LH$, $\alpha\in (0,1]$, and $j,m\in\mathbb{N}$.
If $R \in (C_{log}(s),\infty)$, where $C_{log}(s)$ is the same as in \eqref{clogp},
then, for any $f\in L^1_{\rm loc}$,
\begin{align*}
2^{js(x)} \left[\eta^{(\alpha)}_{j,m+R,W}\ast f\right](x)
\lesssim \eta^{(\alpha)}_{j,m,W} \ast \left[ 2^{js(\cdot)} f\right] (x),
\end{align*}
where the implicit positive constant is independent of $x$, $j$, $W$, and $f$.
\end{lemma}
\begin{proof}
By the definition of $\eta^{(\alpha)}_{j,m+R,W}$ and Lemma \ref{s eta},
we find that
\begin{align*}
2^{js(x)} \left[\eta^{(\alpha)}_{j,m+R,W}\ast f\right](x)
& =  \left[\int_{\mathbb{R}^n} 2^{j\alpha s(x)} \frac{2^{jn}|W(x)W^{-1}(y)\vec{f}(y)|^{\alpha}}{(1 + 2^j|x-y|)^{\alpha (m+R)}}\,dy\right]^\frac{1}{\alpha}\\
&\lesssim \left[\int_{\mathbb{R}^n} 2^{j\alpha s(y)} \frac{2^{jn}|W(x)W^{-1}(y)\vec{f}(y)|^{\alpha}}{(1 + 2^j|x-y|)^{\alpha m}}\,dy\right]^\frac{1}{\alpha}
 = \eta^{(\alpha)}_{j,m,W} \ast \left[ 2^{js(\cdot)} f\right] (x),
\end{align*}
which completes the proof of Lemma \ref{eta alpha}.
\end{proof}
In what follows, for any $r\in\mathbb{R}$,
let $r^{(+)} := \max\{0,r\}$ and $r^{(-)} := \max\{0,-r\}$.
Now, we prove Theorem \ref{abo bound 1}.
\begin{proof}[Proof of Theorem \ref{abo bound 1}]
By Proposition \ref{eta bound seq},
there exists $\alpha \in (0,1]$ such that \eqref{eq eta bound seq} holds.
Observe that, for any $j\in\mathbb{Z}_+$, $Q\in\mathcal{Q}_j$, and $x\in Q$,
if $ (B\vec{t})_Q $ converges absolutely,
then $W(x)(B\vec{t})_Q = (B[W(x)\vec{t}])_Q$.
From this, \eqref{def abo seq}, Definition \ref{def ado}, and Lemma \ref{QP4},
we deduce that, for any $j\in\mathbb{Z}_+$, $Q\in\mathcal{Q}_j$, and $x\in Q$,
\begin{align*}
\left|W(x)\left( B \vec{t} \right)_Q\right| &= \left|\left( B \left(W(x)\vec{t}\right) \right)_Q\right|
 \leq \sum_{R\in\mathcal{Q}_+} \left|b_{Q,R}\right| \left| W(x)\vec{t}_R \right|\\
& \lesssim \sum_{i = 0}^\infty \sum_{R\in \mathcal{Q}_i} \min\left\{ \left[ \frac{l(Q)}{l(R)} \right]^E, \left[ \frac{l(R)}{l(Q)} \right]^F \right\}
\left[ 1 + \frac{|x_Q - x_R|}{l(Q)\vee l(R)} \right]^{-D} \left| W(x)\vec{t}_R \right|\\
& \sim \sum_{i = 0}^\infty 2^{-(j-i)^{(+)} E} 2^{-(i-j)^{(+)} F} \sum_{R\in \mathcal{Q}_i} \left( 1 + 2^{i\wedge j} \left|x - x_R\right| \right)^{-D} \left|W(x) \vec{t}_R \right|.
\end{align*}
This, together with the well-known inequality that, for any $\alpha\in (0,1]$
and any sequence $\{ a_k \}_{k\in\mathbb{Z}_+} \subset \mathbb{C}$,
$\sum_{k = 0}^\infty |a_k| \leq (\sum_{k = 0}^\infty |a_k|^\alpha)^\frac{1}{\alpha}$
and Lemma \ref{QP4}, further implies that
\begin{align*}
\left|W(x)\left( B \vec{t} \right)_Q\right|
& \lesssim \sum_{i = 0}^\infty 2^{-(j-i)^{(+)} E} 2^{-(i-j)^{(+)} F} \left[\sum_{R\in \mathcal{Q}_i} \left( 1 + 2^{i\wedge j} \left|x - x_R\right| \right)^{-\alpha D} \left|W(x) \vec{t}_R \right|^{\alpha}\right]^{\frac{1}{\alpha}}\nonumber\\
& = \sum_{i = 0}^\infty 2^{-(j-i)^{(+)} E} 2^{-(i-j)^{(+)} F} \left[\sum_{R\in \mathcal{Q}_i} \fint_{R} \frac{ | W(x) \vec{t}_R |^{\alpha}}{ (1 + 2^{i\wedge j} |x - x_R| )^{\alpha D}}\,dy\right]^{\frac{1}{\alpha}}.
\end{align*}
From this, the disjointness of the dyadic cubes in $\mathcal{Q}_i$,
and the definitions of $\vec{t}_j$ and $\eta^{(\alpha)}_{j,m,W}$,
it follows that
\begin{align*}
\left|W(x)\left( B \vec{t} \right)_Q\right|
& \lesssim \sum_{i = 0}^\infty 2^{-(j-i)^{(+)} E} 2^{-(i-j)^{(+)} F} \left[\sum_{R\in \mathcal{Q}_i} |R|^{\frac{\alpha}2 - 1} \int_{R} \frac{ |R|^{-\frac{\alpha}2} | W(x) \vec{t}_R |^{\alpha}}{ (1 + 2^{i\wedge j} |x - x_R| )^{\alpha D}}\,dy\right]^{\frac{1}{\alpha}} \nonumber\\
& = \sum_{i = 0}^\infty 2^{-(j-i)^{(+)} E} 2^{-(i-j)^{(+)} F} 2^{n(\frac{1}{\alpha} - \frac{1}{2}) i } 2^{-\frac{n}{\alpha}(i\wedge j) } \eta_{i\wedge j,m,W}^{(\alpha)}(\vec{t}_i)(x).
\end{align*}
Using this, the definition of $(  B (W(x)\vec{t}) )_j$,
and the disjointness of the dyadic cubes in $\mathcal{Q}_j$,
we conclude that, for any $j\in\mathbb{Z}_+$ and $x\in\mathbb{R}^n$,
\begin{align*}
\left|\left(  B \left(W(\cdot)\vec{t}\right) \right)_j(x)\right|
&\lesssim \sum_{i = 0}^\infty 2^{-(j-i)^{(+)} E} 2^{-(i-j)^{(+)} F} 2^{n(\frac{1}{\alpha} - \frac{1}{2}) i } 2^{\frac{n}{2} j} 2^{-\frac{n}{\alpha}(i\wedge j) } \eta_{i\wedge j,m,W}^{(\alpha)}(\vec{t}_i)(x).
\end{align*}
Thus, combining this with Lemma \ref{eta alpha}
and the facts that $i - (i\wedge i) = (i-j)^{(+)}$ and $ j- (i\wedge j) = (j-i)^{(+)} $,
we conclude that
\begin{align}\label{alm eq 2}
2^{js(x)} \left|\left( B \left(W(\cdot)\vec{t}\right) \right)_j(x)\right|
&\lesssim 2^{js(x)} \sum_{i = 0}^\infty 2^{-(j-i)^{(+)} E} 2^{-(i-j)^{(+)} F} 2^{n(\frac{1}{\alpha} - \frac{1}{2}) (i - i\wedge j) } 2^{\frac{n}{2} (j - i\wedge j)} \eta_{i\wedge j,m,W}^{(\alpha)}(\vec{t}_i)(x)\nonumber\\
&= \sum_{i = 0}^\infty 2^{-(j-i)^{(+)}(E - \frac{n}{2})} 2^{(j - i)^{+}s(x)} 2^{-(i-j)^{(+)} (F - \frac{n}{\alpha} + \frac{n}{2})} 2^{(j\wedge i)s(x)} \eta_{i\wedge j,m,W}^{(\alpha)}(\vec{t}_i)(x)\nonumber\\
&\lesssim \sum_{i = 0}^\infty 2^{-(j-i)^{(+)}(E - \frac{n}{2} - s_+)} 2^{-(i-j)^{(+)} (F - \frac{n}{\alpha} + \frac{n}{2})} \eta_{i\wedge j,m-R',W}^{(\alpha)}\left(2^{(j\wedge i)s(\cdot)}\vec{t}_i\right)(x),
\end{align}
where $R' \in (C_{\rm log}(s),\infty)$ is a fixed constant.

From Lemma \ref{seq norm}, Remark \ref{rem def seq}{\rm (iv)},
and the fact $p(\cdot),q(\cdot)\in\mathcal{P}_0$,
we deduce that there exists a positive constant $a\in (0,1]$
such that $\frac{a}{p(x)}+\frac{a}{q(x)} \leq 1$
and hence $\|\cdot\|_{l^{\frac{q(\cdot)}{a}}(L^{\frac{p(\cdot)}{a}})}$ is a norm.
Then, by \eqref{alm eq 2} with $ k := i-j $,
we find that, for any $j\in\mathbb{Z}_+$ and $x\in \mathbb{R}^n$,
\begin{align}\label{alm eq 4}
&2^{ajs(x)}\left|W(x)\left(B\vec{t}\right)_j\right|^a\nonumber\\
&\quad \lesssim  \sum_{i = 0}^\infty 2^{-a(j-i)^{(+)} (E-\frac n2-s_+)} 2^{-a(i-j)^{(+)} (F-\frac{n}{\alpha} + \frac n2)}
\left[\eta_{i\wedge j,m-R',W}^{(\alpha)}\left(2^{(j\wedge i)s(\cdot)}\vec{t}_i\right)(x)\right]^a \nonumber\\
&\quad = \sum_{k = -j}^\infty  2^{-ak^{(-)} (E-\frac n2-s_+)} 2^{-ak^{(+)} (F-\frac{n}{\alpha} + \frac n2)}
\left[\eta_{(k + j)\wedge j,m-R',W}^{(\alpha)}\left(2^{[(k + j)\wedge j]s(\cdot)}\vec{t}_{j+k}\right)(x)\right]^a .
\end{align}
Now, for any $j\in\mathbb{Z}_+$, $k\in\mathbb{Z}$,
and $x\in\mathbb{R}^n$,
let
\begin{align*}
r_{j,k}(x) :=
\begin{cases}
\displaystyle \left[\eta_{(k + j)\wedge j,m-R',W}^{(\alpha)}(2^{[(k + j)\wedge j]s(\cdot)}\vec{t}_{j+k})(x)\right]^a &\ \text{if}\ j\geq k^{(-)}, \\
\displaystyle 0 &\ \text{otherwise}.
\end{cases}
\end{align*}
From this and \eqref{alm eq 4},
we infer that, for any $j\in\mathbb{Z}_+$ and $x\in\mathbb{R}^n$,
$$2^{ajs(x)}\left|W(x)\left(B\vec{t}\right)_j\right|^a
\lesssim \sum_{k\in\mathbb{Z}}  2^{-ak^{(-)} (E-\frac n2-s_+)} 2^{-ak^{(+)} (F-\frac{n}{\alpha} + \frac n2)}  r_{j,k}(x).$$
Using this, Remark \ref{rem def seq}{\rm (iv)},
and the proved fact that
$\|\cdot\|_{l^{\frac{q(\cdot)}{a}}(L^{\frac{p(\cdot)}{a}})}$ is a norm,
we conclude that
\begin{align}\label{alm eq 13}
\left\| \left\{ 2^{js(\cdot)} \left| W(\cdot) \left(B \vec{t} \right)_j\right| \right\}_{j\in\mathbb{Z}_+} \right\|^a_{l^{q(\cdot)}(L^{p(\cdot)})}
& = \left\| \left\{ 2^{ajs(\cdot)} \left| W(\cdot) \left(B \vec{t} \right)_j\right|^a \right\}_{j\in\mathbb{Z}_+} \right\|_{l^{\frac{q(\cdot)}{a}}(L^{\frac{p(\cdot)}{a}})}\nonumber\\
& \leq \sum_{k\in\mathbb{Z}} 2^{-ak^{(-)} (E-\frac n2-s_+)}2^{-ak^{(+)} (F-\frac{n}{\alpha} + \frac n2)} \left\| \left\{  r_{j,k} \right\}_{j\in\mathbb{Z}_+} \right\|_{l^{\frac{q(\cdot)}{a}}(L^{\frac{p(\cdot)}{a}})}.
\end{align}
Observe that, for any $k\in\mathbb{Z}$ and $j\in\mathbb{Z}_+$ with $j\in [0,-k^{(-)})$,
$r_{j,k} \equiv 0$ and hence, combining this with the definition of $\|\cdot\|_{l^{\frac{q(\cdot)}{a}}(L^{\frac{p(\cdot)}{a}})}$,
we find that, for any $k\in\mathbb{Z}$,
$$ \left\| \left\{  r_{j,k} \right\}_{j\in\mathbb{Z}_+} \right\|_{l^{\frac{q(\cdot)}{a}}(L^{\frac{p(\cdot)}{a}})}
 = \left\| \left\{  r_{j + k^{(-)},k} \right\}_{j\in\mathbb{Z}_+} \right\|_{l^{\frac{q(\cdot)}{a}}(L^{\frac{p(\cdot)}{a}})}. $$
Applying this with the definition of $r_{j,k}$ yields that,
for any $k\in\mathbb{Z}$ with $k\leq -1$,
$$ \left\| \left\{  r_{j,k} \right\}_{j\in\mathbb{Z}_+} \right\|_{l^{\frac{q(\cdot)}{a}}(L^{\frac{p(\cdot)}{a}})}
= \left\| \left\{ \eta_{j,m-R',W}^{(\alpha)}\left(2^{js(\cdot)}\vec{t}_{j}\right) \right\}_{j\in\mathbb{Z}_+} \right\|_{l^{\frac{q(\cdot)}{a}}(L^{\frac{p(\cdot)}{a}})} $$
and, moreover, for any $k\in\mathbb{Z}_+$,
$$ \left\| \left\{  r_{j,k} \right\}_{j\in\mathbb{Z}_+} \right\|_{l^{\frac{q(\cdot)}{a}}(L^{\frac{p(\cdot)}{a}})}
= \left\| \left\{ \eta_{j,m-R',W}^{(\alpha)}\left(2^{js(\cdot)}\vec{t}_{j+k}\right) \right\}_{j\in\mathbb{Z}_+} \right\|_{l^{\frac{q(\cdot)}{a}}(L^{\frac{p(\cdot)}{a}})}.  $$
From these and \eqref{alm eq 13}, we infer that
\begin{align*}
&\left\| \left\{ 2^{js(\cdot)} \left| W(\cdot) \left(B \vec{t} \right)_j\right| \right\}_{j\in\mathbb{Z}_+} \right\|^a_{l^{q(\cdot)}(L^{p(\cdot)})}\nonumber\\
&\quad \lesssim \sum_{k\in\mathbb{Z}} 2^{-ak^{(-)} (E-\frac n2-s_+)} 2^{-ak^{(+)} (F-\frac{n}{\alpha} + \frac n2)}
\left\| \left\{ \eta_{j,m-R',W}^{(\alpha)}\left(2^{js(\cdot)}\vec{t}_{j+k^{(+)}}\right) \right\}_{j\in\mathbb{Z}_+} \right\|^{a}_{l^{q(\cdot)}(L^{p(\cdot)})}\nonumber\\
&\quad \lesssim \sum_{k\in\mathbb{Z}} 2^{-ak^{(-)} (E-\frac n2-s_+)} 2^{-ak^{(+)} (F-\frac{n}{\alpha} + \frac n2 + s_-)}
\left\| \left\{ \eta_{j,m-R',W}^{(\alpha)}\left(2^{[j+k^{(+)}]s(\cdot)}\vec{t}_{j+k^{(+)}}\right) \right\}_{j\in\mathbb{Z}_+} \right\|^{a}_{l^{q(\cdot)}(L^{p(\cdot)})}.
\end{align*}
Using this and Proposition \ref{eta bound seq},
we conclude that, if $ E\in (\frac n2 + s_+,\infty) $, $F\in (\frac{n}{\alpha} - \frac n2 -s_-, \infty)$,
and $D\in (\frac{n}{\alpha} + C_{\rm log}(\frac1q) + R', \infty)$,
then
\begin{align*}
&\left\| \left\{ 2^{js(\cdot)} \left| W(\cdot) \left(B \vec{t} \right)_j\right| \right\}_{j\in\mathbb{Z}_+} \right\|^a_{l^{q(\cdot)}(L^{p(\cdot)})} \\
&\quad \lesssim \sum_{k\in\mathbb{Z}} 2^{-ak^{(-)} (E-\frac n2-s_+)} 2^{-ak^{(+)} (F-\frac{n}{\alpha} + \frac n2 + s_-)}
\left\| \left\{ 2^{js(\cdot)}\left|W(\cdot)\vec{t}_{j}\right| \right\}_{j\in\mathbb{Z}_+} \right\|^{a}_{l^{q(\cdot)}(L^{p(\cdot)})}
\lesssim \left\|\vec{t}\right\|_{b^{s(\cdot)}_{p(\cdot),q(\cdot)}}.
\end{align*}
This finishes the proof of Theorem \ref{abo bound 1}.
\end{proof}

\begin{remark}\label{abo of abo}
\begin{itemize}
\item[{\rm (i)}]
Inspired by the proof of Theorem \ref{abo bound 1},
as a slight stronger result than Theorem \ref{abo bound 1},
with  $ |B(W(\cdot)\vec{t})|(x) $ replaced by
$ \widetilde{\mathbf{1}}_{Q} \sum_{R\in\mathcal{Q}_+} |W(x)b_{Q,R} \vec{t}_{R}| $,
we find that, for any $\vec{t} \in b^{s(\cdot)}_{p(\cdot),q(\cdot)}(W)$,
\begin{align*}
\left\| \left\{ \sum_{Q\in\mathcal{Q}_j} \widetilde{\mathbf{1}}_{Q} \sum_{R\in\mathcal{Q}_+} \left|W(\cdot) b_{Q,R}\vec{t}_{R}\right| \right\} \right\|_{B^{s(\cdot)}_{p(\cdot),q(\cdot)}(W)} < \infty,
\end{align*}
which further implies that,
for any $Q\in\mathcal{Q}_+$ and almost every $x\in Q$,
$\sum_{R\in\mathcal{Q}_+} |W(x)b_{Q,R} \vec{t}_{R}|$ is finite
and hence
$$\sum_{R\in\mathcal{Q}_+} \left|b_{Q,R} \vec{t}_{R}\right| \leq \left\| W^{-1}(x) \right\| \sum_{R\in\mathcal{Q}_+}  \left|W(x)b_{Q,R} \vec{t}_{R}\right| < \infty. $$
That is, $\sum_{R\in\mathcal{Q}_+} |b_{Q,R} \vec{t}_{R}|$ converges absolutely
and hence, for any $\vec{t} \in b^{s(\cdot)}_{p(\cdot),q(\cdot)}(W)$
and any bounded almost diagonal operator $B$,
$B\vec{t}$ is well-defined.
\item[{\rm (ii)}] Let $B^{(1)} := \{b^{(1)}_{Q,R}\}_{Q,R\in\mathcal{Q}_+}$
and $B^{(2)} := \{b^{(2)}_{Q,R}\}_{Q,R\in\mathcal{Q}_+}$ be
bounded almost diagonal operators on $b^{s(\cdot)}_{p(\cdot),q(\cdot)}(W)$.
Then, by the definition of the boundedness of almost diagonal operators,
it is easy to find that
the operator $B := B^{(1)} \circ B^{(2)}$ is also a bounded almost diagonal operator on $b^{s(\cdot)}_{p(\cdot),q(\cdot)}(W)$.
Moreover, if let $B := \{b_{Q,R}\}_{Q,R\in\mathcal{Q}_+}$,
then $b_{Q,R} = \sum_{P\in\mathcal{Q}_+} b^{(1)}_{Q,P}b^{(2)}_{P,R}$.
Indeed, from Remark \ref{abo of abo}(i),
it follows that, for any $\vec{t} \in b^{s(\cdot)}_{p(\cdot),q(\cdot)}(W)$,
$B^{(1)}\vec{t}$ and $B^{(2)}\vec{t}$ are well-defined.
Hence, for any $Q \in \mathcal{Q}_+$,
$$\left(B\vec{t}\right)_Q = \sum_{P\in\mathcal{Q}_+} b^{(1)}_{Q,P} \left( B^{(2)} \vec{t} \right)_P
= \sum_{P\in\mathcal{Q}_+} b^{(1)}_{Q,P} \sum_{R\in\mathcal{Q}_+} b^{(2)}_{P,R} \vec{t}_R
= \sum_{R\in\mathcal{Q}_+} \sum_{P\in\mathcal{Q}_+} b^{(1)}_{Q,P} b^{(2)}_{P,R}\vec{t}_R,$$
which further implies that
$ b_{Q,R} = \sum_{P\in\mathcal{Q}_+} b^{(1)}_{Q,P}b^{(2)}_{P,R} $.
\end{itemize}
\end{remark}

\section{Decomposition Characterizations and Their Applications}\label{sec mole wave}

In this section, we focus on various decomposition characterizations of
the  matrix-weighted variable Besov space.
In Subsection \ref{sec mole},
we establish the molecular characterization of $B^{s(\cdot)}_{p(\cdot),q(\cdot)}(W)$
and, applying this, in Subsection \ref{sec wave}
we show the wavelet and the atomic characterizations
of $B^{s(\cdot)}_{p(\cdot),q(\cdot)}(W)$.
\subsection{Molecular Characterization}\label{sec mole}
In this subsection,
we establish the molecular characterization of the matrix-weighted variable Besov space.
First, we give some symbols.
For any $r\in\mathbb{R}$, let
\begin{align}\label{def int}
&\lfloor r \rfloor := \max\{k\in\mathbb{Z}:\ k\leq r\},\ \
\lfloor\!\lfloor r\rfloor\!\rfloor := \max\{k\in\mathbb{Z}:\ k<r\}, \nonumber \\
&\lceil r \rceil := \min\{k\in\mathbb{Z}:\ k\geq r\},\ \
\lceil\!\lceil r\rceil\!\rceil := \min\{k\in\mathbb{Z}:\ k > r\},
\end{align}
$r^\ast := r-\lfloor r\rfloor$, and $r^{\ast\ast} := r-\lfloor\!\lfloor r\rfloor\!\rfloor$.

Next, we recall the concept of molecules.
\begin{definition}\label{def mole}
Let $K,M\in [0,\infty)$ and $L,N\in\mathbb{R}$.
For any $K\in [0,\infty)$, $Q\in\mathcal{Q}_+$ with $l(Q) \leq 1$, and $x\in\mathbb{R}^n$,
let $ u_K(x) := ( 1+ |x| )^{-K}$ and
$$\left(u_K\right)_Q(x) := |Q|^{-\frac12} u_K\left( \frac{x-x_Q}{l(Q)} \right). $$
A function $m_Q\in \mathscr{M}$ is called a
\emph{ (smooth) $(K,L,M,N)$-molecule on a cube $Q$} if,
for any $x,y\in\mathbb{R}^n$
and any multi-index $\gamma\in\mathbb{Z}_+^n$,
\begin{itemize}
\item[{\rm (i)}]$ | m_Q(x) | \leq (u_K)_Q(x),$
\item[{\rm (ii)}]$\int_{\mathbb{R}^n} x^\gamma m_Q(x)\,dx = 0 \ \text{when}\ |\gamma|\leq L\ \text{and}\ l(Q) < 1,$
\item[{\rm (iii)}] $ | \partial^\gamma m_Q(x) | \leq [ l(Q) ]^{-|\gamma|} (u_M)_Q(x) \ \text{when}\ |\gamma| < N, $
\item[{\rm (iv)}]$ | \partial^\gamma m_Q(x) - \partial^\gamma m_Q(y) |
\leq [ l(Q) ]^{-|\gamma|} [ \frac{|x-y|}{l(Q)} ]^{N^{\ast\ast}}
\sup\limits_{|z|\leq |x-y|} (u_M)_Q(x+z)\ \
\text{when}\ \  |\gamma| = \lfloor\!\lfloor N \rfloor\!\rfloor$.
\end{itemize}
\end{definition}

The following is the relationship between molecules and almost diagonal operators.
\begin{theorem}\label{mole abo}
Let $p(\cdot), q(\cdot)\in \mathcal{P}_0\cap LH$,
$s(\cdot)\in LH$, and $W\in \mathscr{A}_{p(\cdot),\infty}$.
Let $\{m_Q\}_{Q\in\mathcal{Q}_+}$ be a family of $(K_m,L_m,M_m,N_m)$-molecules
and let $\{b_Q\}_{Q\in\mathcal{Q}_+}$ be a family of $(K_b,L_b,M_b,N_b)$-molecules
with $K_m$, $L_m$, $M_m$, $N_m$, $K_b$, $L_b$, $M_b$, and $N_b$ being the same as in Definition \ref{def mole}.
Then the infinite matrix $\{\langle m_Q,b_Q \rangle\}_{Q\in\mathcal{Q}_+}$ is
a $b^{s(\cdot)}_{p(\cdot),q(\cdot)}(W)$-almost diagonal operator if
\begin{align}\label{ana index}
K_m > \left( n + s_+ \right)\vee \left[ \frac{n}{\alpha_W} + C(s,q) \right],
\  L_m \geq s_+,\  M_m > \frac{n}{\alpha_W} + C(s,q),\  N_m >\frac{n}{\alpha_W} - n -s_-
\end{align}
and
\begin{align}\label{syn index}
K_b > \left( \frac{n}{\alpha_W} - s_- \right)\vee \left[ \frac{n}{\alpha_W} + C(s,q) \right],\
L_b \geq \frac{n}{\alpha_W} - n -s_-,\ M_b > \frac{n}{\alpha_W} + C(s,q),\
N_b > s_+,
\end{align}
where $\alpha_W$ is as in \eqref{def alphaW} and $C(s,q)$ as in \eqref{abo bound con}.
\end{theorem}
\begin{remark}
If $p(\cdot)$, $q(\cdot)$, and $s(\cdot)$ are constant exponents
and $W$ is an $\mathscr{A}_{p}$ matrix weight,
then the ranges of $K_m$, $L_m$, $M_m$, $N_m$, $K_b$, $L_b$, $M_b$, and $N_b$
of Theorem \ref{mole abo} coincide with the corresponding ranges of
\cite[Theorem 3.8]{bhyy23 3} in the case $\tau := 0$.
\end{remark}

To prove this theorem,
we need the following property of molecules, which is from \cite[Lemma 5.2]{bhyy24}.

\begin{lemma}\label{mole abo 1}
Let $m_Q$ be a $(K_m, L_m, M_m, N_m)$-molecule on cube $Q$
and let $b_p$ be a $(K_b, L_b, M_b, N_b)$-molecule on cube $P$,
where $K_m,M_m,K_b,M_b \in (n,\infty)$ and $L_m$, $N_m$, $L_b$, and $N_b$ are real numbers.
Then, for any $\alpha\in (0,\infty)$, there exists a positive constant $C$
such that $|\langle m_Q,b_P\rangle| \leq C b^{MGH}_{Q,P},$
where $b^{MGH}_{Q,P}$ is the same as in \eqref{def bDEF}
with $M := K_m \wedge M_m \wedge K_b \wedge M_b \in (n,\infty)$,
$$ G := \frac n2 + \left[N_b \wedge \lceil\!\lceil L_m \rceil\!\rceil \wedge \left( K_m-n-\alpha \right) \right]^{(+)},
\ \ \text{and}\ \  H := \frac n2 + \left[N_m \wedge \lceil\!\lceil L_b \rceil\!\rceil \wedge \left( K_b-n-\alpha \right) \right]^{(+)}. $$
\end{lemma}
Now, we give the proof of Theorem \ref{mole abo}.

\begin{proof}[Proof of Theorem \ref{mole abo}]
It follows from Lemma \ref{mole abo 1} and Theorem \ref{abo bound 1}
that, to show the desired estimates of $\{\langle m_Q,b_P\rangle\}_{Q,P\in\mathcal{Q}_+}$,
it is sufficient to prove that
\begin{align*}
 M > \frac{n}{\alpha_W} + C(s,q),
\quad G> \frac{n}{2} + s_+,
\quad \text{and}\quad H >\frac{n}{\alpha_W} - \frac{n}{2} - s_- ,
\end{align*}
where $M,G$, and $H$ are the same as in Lemma \ref{mole abo 1}.
Applying this with Lemma \ref{mole abo 1},
we find that these conditions are equivalent to the following ones:
\begin{itemize}
\item [{\rm (i)}] $K_m \wedge M_m \wedge K_b \wedge M_b > \frac{n}{\alpha_W} + C(s,q)$,
\item [{\rm (ii)}] $[N_b \wedge \lceil\!\lceil L_m \rceil\!\rceil \wedge ( K_m-n-\alpha ) ]^{(+)}> s_+$,
\item [{\rm (iii)}] $[N_m \wedge \lceil\!\lceil L_b \rceil\!\rceil \wedge ( K_b-n-\alpha ) ]^{(+)} >\frac{n}{\alpha_W} - n- s_-$.
\end{itemize}
Thus, we only need to show that {\rm (i)}, {\rm (ii)}, and {\rm (iii)} hold
for any $K_m$, $L_m$, $M_m$, $N_m$, $K_b$, $L_b$, $M_b$, and $N_b$ satisfying \eqref{ana index} and \eqref{syn index}.

First, by \eqref{ana index} and \eqref{syn index},
we obtain $K_m,M_m,K_b,M_b \in (\frac{n}{\alpha_W} + C(s,q),\infty)$,
which further implies that {\rm (i)} holds.
Next, we prove {\rm (ii)} holds.
It follows immediately from \eqref{ana index} that
$K_m > n+s_+$ and $L_m \geq s_+$.
Since the arbitrariness of $\alpha$,
we infer that, via choosing a small enough $\alpha$,
we obtain $K_m - n - \alpha > s_+$.
Then, by the definition of $\lceil\!\lceil \cdot \rceil\!\rceil$,
we find that, for any $y\in\mathbb{R}$,
$\lceil\!\lceil y \rceil\!\rceil > y$,
which, together with condition $L_m \geq s_+$,
further implies that $\lceil\!\lceil L_m \rceil\!\rceil > s_+$.
Finally, it follows immediately from \eqref{syn index} that $N_b >s_+$.
Thus, summarizing above estimates about $K_m$, $L_m$, and $N_b$,
we conclude that $ N_b \wedge \lceil\!\lceil L_m \rceil\!\rceil \wedge ( K_m-n-\alpha ) > s_+ $
and hence {\rm (ii)} holds.

Finally, similarly to the above estimation about {\rm (ii)},
with $N_b$, $L_m$, and $K_m$ replaced, respectively, by $N_m$, $L_b$, and $K_b$,
we find that
$N_m \wedge \lceil\!\lceil L_b \rceil\!\rceil \wedge ( K_b-n-\alpha ) > \frac{n}{\alpha_W} - n- s_-$
and hence {\rm (ii)} holds.
This finishes the proof of Theorem \ref{mole abo}.
\end{proof}
Next, by using Theorem \ref{mole abo},
we introduce the concepts of synthesis molecules and analysis molecules
of $B^{s(\cdot)}_{p(\cdot),q(\cdot)}(W)$
(see \cite{bhyy23 3} for those molecules of matrix $A_{p}$ weighted Besov spaces
and \cite{bhyy24} for molecules of matrix $A_{p,\infty}$ weighted Besov spaces).
\begin{definition}
Let $p(\cdot), q(\cdot)\in \mathcal{P}_0\cap LH$, $s(\cdot)\in LH$,
and $W\in \mathscr{A}_{p(\cdot),\infty}$.
A $(K,L,M,N)$-molecule $m_Q$ is called a
\emph{$B^{s(\cdot)}_{p(\cdot),q(\cdot)}(W)$-analysis molecule} on $Q$
if $K$, $L$, $M$, and $N$ satisfy \eqref{ana index}.
Moreover, a $(K,L,M,N)$-molecule $m_Q$ is called a
\emph{$B^{s(\cdot)}_{p(\cdot),q(\cdot)}(W)$-synthesis molecule} on $Q$
if $K,L,M$, and $N$ satisfy \eqref{syn index}.
\end{definition}
Using Theorems \ref{mole abo} and \ref{abo bound 1},
we obtain the following results.
\begin{lemma}\label{mbpsi abo}
Let $p(\cdot), q(\cdot)\in \mathcal{P}_0\cap LH$, $s(\cdot)\in LH$,
and $W\in \mathscr{A}_{p(\cdot),\infty}$
and let $\{\varphi_j\}_{j\in\mathbb{Z}_+}$ and $\{\psi_j\}_{j\in\mathbb{Z}_+}$
be as in Definition \ref{phi pair} satisfying \eqref{varphi psi}.
Suppose that $\{\{m^{(i)}_Q\}_{Q\in\mathcal{Q}_+}\}_{i = 1}^2$
are families of $B^{s(\cdot)}_{p(\cdot),q(\cdot)}(W)$-analysis molecules
and $\{\{b^{(i)}_Q\}_{Q\in\mathcal{Q}_+}\}_{i = 1}^2$ are
families of $B^{s(\cdot)}_{p(\cdot),q(\cdot)}(W)$-synthesis molecules.
Then
\begin{itemize}
\item[{\rm (i)}] for any $i \in \{1,2\}$, the infinity matrices
$$\left\{ \left\langle m^{(i)}_P,b^{(i)}_Q \right\rangle\right\}_{P,Q\in\mathcal{Q}_+},
\ \ \left\{ \left\langle m^{(i)}_P,\psi_Q \right\rangle \right\}_{P,Q\in\mathcal{Q}_+},
\ \ \text{and}\ \  \left\{ \left\langle \varphi_P,b^{(i)}_Q \right\rangle \right\}_{P,Q\in\mathcal{Q}_+} $$
are $b^{s(\cdot)}_{p(\cdot),q(\cdot)}(W)$-almost diagonal operators,
where $\psi_Q$ for any $Q\in \mathcal{Q}_+$ and $\varphi_P$ for any $P\in \mathcal{Q}_+$ are the same as in \eqref{def varphiQ}.
\item[{\rm (ii)}] if $\vec{t} := \{\vec{t}_Q\}_{Q\in\mathcal{Q}_+} \in b^{s(\cdot)}_{p(\cdot),q(\cdot)}(W)$,
then
$\vec{s}_P := \sum_{Q,R\in\mathcal{Q}_+} \langle m^{(1)}_P, b^{(1)}_Q \rangle
\langle m^{(2)}_Q, b^{(2)}_R \rangle \vec{t}_R $
converges unconditionally for any $P\in\mathcal{Q}_+$
and, moreover, $\vec{s} := \{\vec{s}_P\}_{P\in\mathcal{Q}_+}$ satisfies
$\| \vec{s} \|_{b^{s(\cdot)}_{p(\cdot),q(\cdot)}(W)} \lesssim \| \vec{t} \|_{b^{s(\cdot)}_{p(\cdot),q(\cdot)}(W)}, $
where the implicit positive constant is independent of $\vec{t}$, $\{m^{(i)}_Q\}_{Q\in\mathcal{Q}_+}$,
and $\{b^{(i)}_Q\}_{Q\in\mathcal{Q}_+}$.
\end{itemize}
\end{lemma}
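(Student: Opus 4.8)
The statement combines two pieces: an almost-diagonality assertion for three families of infinite matrices (part (i)), and a convergence and boundedness assertion for a composition of such matrices applied to a sequence in $b^{s(\cdot)}_{p(\cdot),q(\cdot)}(W)$ (part (ii)). Both parts will be essentially bookkeeping on top of Theorem \ref{mole abo}, Theorem \ref{abo bound 1}, and Remark \ref{abo of abo}. The first thing I would do is verify that all the objects appearing in the three matrices are covered by Theorem \ref{mole abo}. For $\{\langle m_P^{(i)}, b_Q^{(i)}\rangle\}$ this is immediate, since by Definition \ref{def syn and ana} the analysis molecules $m_P^{(i)}$ satisfy \eqref{ana index} and the synthesis molecules $b_Q^{(i)}$ satisfy \eqref{syn index}, which is exactly the hypothesis of Theorem \ref{mole abo}. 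For the other two matrices I would check that $\psi_Q$ is itself a $(K,L,M,N)$-molecule satisfying the synthesis conditions \eqref{syn index} for every $K,L,M,N$ (it is a Schwartz function with all moments vanishing, being a Littlewood--Paley piece with $j\ge 1$; the $j=0$ term is handled by the observation already used, e.g., in the proof of Theorem \ref{phi bound}, that $\Psi\in\cs$ gives the needed decay), and similarly that $\varphi_P$ is an analysis molecule satisfying \eqref{ana index} for every $K,L,M,N$. Once these identifications are made, Theorem \ref{mole abo} gives that each of the three matrices is $(D,E,F)$-almost diagonal for parameters satisfying \eqref{abo bound con}, and Theorem \ref{abo bound 1} then yields boundedness on $b^{s(\cdot)}_{p(\cdot),q(\cdot)}(W)$. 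This proves (i).

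For part (ii), I would first invoke Remark \ref{abo of abo}(i): since each of the matrices $B^{(1)} := \{\langle m_P^{(1)}, b_Q^{(1)}\rangle\}$ and $B^{(2)} := \{\langle m_Q^{(2)}, b_R^{(2)}\rangle\}$ is a bounded almost diagonal operator on $b^{s(\cdot)}_{p(\cdot),q(\cdot)}(W)$, for any $\vec{t}\in b^{s(\cdot)}_{p(\cdot),q(\cdot)}(W)$ the sequence $B^{(2)}\vec{t}$ is well defined (each coordinate sum converges absolutely) and lies in $b^{s(\cdot)}_{p(\cdot),q(\cdot)}(W)$ with norm controlled by $\|\vec{t}\|_{b^{s(\cdot)}_{p(\cdot),q(\cdot)}(W)}$. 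Applying Remark \ref{abo of abo}(i) again to $B^{(1)}$ and the sequence $B^{(2)}\vec{t}$, we get that $\vec{s}_P = (B^{(1)}(B^{(2)}\vec{t}))_P = \sum_{Q} \langle m_P^{(1)}, b_Q^{(1)}\rangle \sum_R \langle m_Q^{(2)}, b_R^{(2)}\rangle \vec{t}_R$ is well defined, and by Remark \ref{abo of abo}(ii) the iterated sum equals the double sum displayed in the statement; unconditional convergence follows from the absolute convergence established coordinatewise. The norm bound $\|\vec{s}\|_{b^{s(\cdot)}_{p(\cdot),q(\cdot)}(W)} \lesssim \|\vec{t}\|_{b^{s(\cdot)}_{p(\cdot),q(\cdot)}(W)}$ is then just two applications of Theorem \ref{abo bound 1} (boundedness of $B := B^{(1)}\circ B^{(2)}$, which is again almost diagonal by Remark \ref{abo of abo}(ii)), with implicit constants depending only on the almost-diagonality parameters of the molecules, hence independent of the particular molecules.

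The one genuinely technical point — and the step I expect to require the most care — is the verification that the Littlewood--Paley pieces $\varphi_P$ and $\psi_Q$ qualify as molecules with the required parameters, in particular the treatment of the coarse scale $P, Q \in \cq_0$, where the vanishing-moment condition (ii) of Definition \ref{def mole} is vacuous (it is only imposed when $l(Q)<1$) but one still needs the decay and regularity estimates (i), (iii), (iv). For $j\ge 1$ this is standard since $\varphi_j, \psi_j \in \cs_\infty$ and Lemmas \ref{psiphi 1} and \ref{psiphi 2} already encode the relevant decay; the $j=0$ case is handled exactly as in the last step of the proof of Theorem \ref{phi bound}, using $\varphi_0 = \varPhi \in \cs$ directly. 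Once this identification is in place, everything else is an invocation of the already-proven Theorem \ref{mole abo}, Theorem \ref{abo bound 1}, and the two parts of Remark \ref{abo of abo}, and no new estimates are needed.
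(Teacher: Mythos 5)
Your proposal is correct and follows essentially the same route as the paper: verify that the Littlewood--Paley pieces $\varphi_P$ and $\psi_Q$ are themselves molecules (with harmless constants, vanishing moments automatic for $j\ge 1$, and the moment condition vacuous for $j=0$ since $l(Q)=1$), then invoke Theorem \ref{mole abo} for part (i), and compose almost diagonal operators via Remark \ref{abo of abo} for part (ii). One small remark: you assign $\varphi_P$ to the analysis slot and $\psi_Q$ to the synthesis slot of Theorem \ref{mole abo}, which is the assignment actually required by the matrices $\{\langle m^{(i)}_P,\psi_Q\rangle\}$ and $\{\langle\varphi_P,b^{(i)}_Q\rangle\}$; the paper states the labels the other way around, but this is immaterial because $\varphi$ and $\psi$ satisfy both sets of molecule conditions.
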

\begin{proof}
Notice that, for any pair $\{\varphi_j\}_{j\in\mathbb{Z}_+}$
and $\{\psi_j\}_{j\in\mathbb{Z}_+}$ satisfying \eqref{varphi psi},
$\{\varphi_R\}_{R\in\mathcal{Q}_+}$ (resp. $\{\psi_R\}_{R\in\mathcal{Q}_+}$)
is a family of $B^{s(\cdot)}_{p(\cdot),q(\cdot)}(W)$-synthesis
(resp. $B^{s(\cdot)}_{p(\cdot),q(\cdot)}(W)$-analysis) molecules
with harmless constant multiples.
Combining this with Theorem \ref{mole abo},
we conclude that matrices $\{ \langle m^{(i)}_P,b^{(i)}_Q \rangle\}_{P,Q\in\mathcal{Q}_+}$,
$\{ \langle m^{(i)}_P,\psi_Q \rangle \}_{P,Q\in\mathcal{Q}_+}, $
and $\{ \langle \varphi_P,b^{(i)}_Q \rangle \}_{P,Q\in\mathcal{Q}_+}$
with $i\in \{1,2\}$ are bounded almost diagonal operators on $b^{s(\cdot)}_{p(\cdot),q(\cdot)}(W)$,
which completes the proof of {\rm (i)}.

Next, we give the proof of {\rm(ii)}.
By Theorem \ref{mole abo}, we find that
$\{ \langle m^{(i)}_P,b^{(i)}_Q \rangle \}_{P,Q\in\mathcal{Q}_+}$ with $i \in \{1,2\}$
are $b^{s(\cdot)}_{p(\cdot),q(\cdot)}(W)$-almost diagonal.
Using this and Remark \ref{abo of abo}(ii),
we conclude that $B:= \{b_{P,R}\}_{P,R\in\mathcal{Q}_+}$ with
$$b_{P,Q} := \sum_{Q\in\mathcal{Q}_+} \left|\left\langle m^{(1)}_P,b^{(1)}_Q \right\rangle\right| \left|\left\langle m^{(2)}_Q,b^{(2)}_R \right\rangle\right| $$
is also a $b^{s(\cdot)}_{p(\cdot),q(\cdot)}(W)$-almost diagonal operator.
From this, the assumption $\vec{t} \in b^{s(\cdot)}_{p(\cdot),q(\cdot)}(W)$,
and Remark \ref{abo of abo}(i),
we infer that, for any $P\in \mathcal{Q}_+$,
$$ \left| \vec{s}_P \right| \leq \sum_{Q,R\in\mathcal{Q}_+} \left|\left\langle m^{(1)}_P, b^{(1)}_Q \right\rangle \right|
\left|\left\langle m^{(2)}_Q, b^{(2)}_R \right\rangle\right| \left|\vec{t}_R \right|
= \sum_{R\in\mathcal{Q}_+} b_{P,R} \left| \vec{t}_R \right| < \infty. $$
This finishes the proof of {\rm (ii)} and hence Lemma \ref{mbpsi abo}.
\end{proof}

The following lemma gives the definition of $\langle \vec{f},m_Q \rangle$
and guarantees that it is well-defined.
Its proof is similar to that of \cite[Lemma 3.16]{bhyy23 3}
with \cite[Corollary 3.15]{bhyy23 3} replaced by Lemma \ref{mbpsi abo};
we omit the details here.
\begin{lemma}\label{def fm}
Let $p(\cdot), q(\cdot)\in \mathcal{P}_0\cap LH$, $s(\cdot)\in LH$,
and $W\in \mathscr{A}_{p(\cdot),\infty}$.
If $\vec{f} \in B^{s(\cdot)}_{p(\cdot),q(\cdot)}(W)$
and $m_Q$ is a $B^{s(\cdot)}_{p(\cdot),q(\cdot)}(W)$-analysis molecule on cube $Q$,
then, for any pair of $\{\varphi_j\}_{j\in\mathbb{Z}_+}$ and $\{\psi_j\}_{j\in\mathbb{Z}_+}$ as in \eqref{varphi psi},
the pairing
\begin{align}\label{fm pair}
\left\langle \vec{f}, m_Q \right\rangle := \sum_{R\in\mathcal{Q}_+} \left\langle \vec{f}, \varphi_R \right\rangle \left\langle \psi_R, m_Q \right\rangle
\end{align}
is well-defined;
moreover, the series above converges absolutely and its value is independent of the choices of
$\{\varphi_R\}_{R\in\mathcal{Q}_+}$ and $\{\psi_R\}_{R\in\mathcal{Q}_+}$.
\end{lemma}

The following result is the molecular characterization of matrix-weighted variable Besov spaces
(see \cite[Theorem 4.7]{wgx24} for the corresponding characterization of
scalar-valued  weighted variable Besov spaces).

\begin{theorem}\label{mole com}
Let $p(\cdot), q(\cdot)\in \mathcal{P}_0\cap LH$, $s(\cdot)\in LH$,
and $W\in \mathscr{A}_{p(\cdot),\infty}$.
\begin{itemize}
\item[{\rm (i)}] If $\{m_Q\}_{Q\in\mathcal{Q}_+}$ is a
family of $B^{s(\cdot)}_{p(\cdot),q(\cdot)}(W)$-analysis molecules,
then, for any $\vec{f}\in B^{s(\cdot)}_{p(\cdot),q(\cdot)}(W)$,
$$ \left\| \left\{ \left\langle \vec{f},m_Q \right\rangle \right\}_{Q\in\mathcal{Q}_+} \right\|_{b^{s(\cdot)}_{p(\cdot),q(\cdot)}(W)}
\lesssim \| \vec{f} \|_{B^{s(\cdot)}_{p(\cdot),q(\cdot)}(W)}, $$
where the implicit positive constant is independent of $\vec{f}$.
\item[{\rm (ii)}] If $\{b_Q\}_{Q\in\mathcal{Q}_+}$ is a
family of $B^{s(\cdot)}_{p(\cdot),q(\cdot)}(W)$-synthesis molecules,
then, for any $\vec{t} \in b^{s(\cdot)}_{p(\cdot),q(\cdot)}(W)$,
$\vec{f} := \sum_{R\in\mathcal{Q}_+} b_R \vec{t}_R$ converges in $(\mathcal{S}')^m$ and
$$ \left\| \vec{f} \right\|_{B^{s(\cdot)}_{p(\cdot),q(\cdot)}(W)}
\lesssim \left\| \vec{t} \right\|_{b^{s(\cdot)}_{p(\cdot),q(\cdot)}(W)},  $$
where the implicit positive constant is independent of $\vec{t}$.
\end{itemize}
\end{theorem}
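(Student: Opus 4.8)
The plan is to obtain both parts of Theorem~\ref{mole com} as essentially immediate consequences of the $\varphi$-transform theorem (Theorem~\ref{phi bound}), the boundedness of almost diagonal operators (Theorem~\ref{abo bound 1}), and the molecule-to-almost-diagonal passage (Theorem~\ref{mole abo}, Lemma~\ref{mbpsi abo}), together with the well-definedness of $\langle\vec{f},m_Q\rangle$ from Lemma~\ref{mbpsi abo1}. Throughout I would fix an admissible family $\{\varphi_j\}_{j\in\zz_+}$ as in Definition~\ref{phi pair} and a family $\{\psi_j\}_{j\in\zz_+}$ satisfying \eqref{varphi psi}; by Proposition~\ref{ind choice} the space $B^{s(\cdot)}_{p(\cdot),q(\cdot)}(W)$ and its norm (up to equivalence) are independent of this choice, so $\widetilde{\varphi}$ may be used in place of $\varphi$ wherever it is convenient. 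I would also recall, as in the proof of Lemma~\ref{mbpsi abo}, that each of $\{\varphi_Q\}_{Q\in\cq_+}$, $\{\widetilde{\varphi}_Q\}_{Q\in\cq_+}$, and $\{\psi_Q\}_{Q\in\cq_+}$ is, up to a harmless constant multiple, a family of both $B^{s(\cdot)}_{p(\cdot),q(\cdot)}(W)$-analysis and $B^{s(\cdot)}_{p(\cdot),q(\cdot)}(W)$-synthesis molecules.

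For part~(i), let $\vec{f}\in B^{s(\cdot)}_{p(\cdot),q(\cdot)}(W)=B^{s(\cdot)}_{p(\cdot),q(\cdot)}(W,\widetilde{\varphi})$ and set $S_\varphi\vec{f}=\{\langle\vec{f},\varphi_R\rangle\}_{R\in\cq_+}$. By Lemma~\ref{mbpsi abo1}, $\langle\vec{f},m_Q\rangle=\sum_{R\in\cq_+}\langle\vec{f},\varphi_R\rangle\langle\psi_R,m_Q\rangle=(B(S_\varphi\vec{f}))_Q$, where $B:=\{\langle\psi_R,m_Q\rangle\}_{Q,R\in\cq_+}$. Since $\{m_Q\}_{Q\in\cq_+}$ is a family of analysis molecules and $\{\psi_R\}_{R\in\cq_+}$ of synthesis molecules, Lemma~\ref{mbpsi abo}(i) (via Theorems~\ref{mole abo} and \ref{abo bound 1}) shows that $B$ is $b^{s(\cdot)}_{p(\cdot),q(\cdot)}(W)$-almost diagonal, hence bounded on $b^{s(\cdot)}_{p(\cdot),q(\cdot)}(W)$. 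Combining this with the boundedness of $S_\varphi\colon B^{s(\cdot)}_{p(\cdot),q(\cdot)}(W,\widetilde{\varphi})\to b^{s(\cdot)}_{p(\cdot),q(\cdot)}(W)$ from Theorem~\ref{phi bound} gives
\begin{align*}
\left\|\left\{\langle\vec{f},m_Q\rangle\right\}_{Q\in\cq_+}\right\|_{b^{s(\cdot)}_{p(\cdot),q(\cdot)}(W)}
=\left\|B(S_\varphi\vec{f})\right\|_{b^{s(\cdot)}_{p(\cdot),q(\cdot)}(W)}
\lesssim\left\|S_\varphi\vec{f}\right\|_{b^{s(\cdot)}_{p(\cdot),q(\cdot)}(W)}
\lesssim\left\|\vec{f}\right\|_{B^{s(\cdot)}_{p(\cdot),q(\cdot)}(W,\widetilde{\varphi})}
\sim\left\|\vec{f}\right\|_{B^{s(\cdot)}_{p(\cdot),q(\cdot)}(W)}.
\end{align*}

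For part~(ii), let $\vec{t}\in b^{s(\cdot)}_{p(\cdot),q(\cdot)}(W)$. First I would verify that $\vec{g}:=\sum_{Q\in\cq_+}\vec{t}_Q b_Q$ converges in $(\cs')^m$; this follows by an argument parallel to Lemma~\ref{Tpsi 1}, using the pointwise bound $|\vec{t}_Q|\lesssim|Q|^{s_-/n+1/2-1/p_-}\|A_Q^{-1}\|\,\|\vec{t}\|_{b^{s(\cdot)}_{p(\cdot),q(\cdot)}(W)}$ (from Lemma~\ref{est BQ} and Theorem~\ref{W aa 3}), the decay estimate for $\|A_Q^{-1}\|$ from Lemma~\ref{QP5}, and the synthesis-molecule estimate $|\langle b_Q,\phi\rangle|\lesssim b^{MGH}_{Q,Q_{0,\mathbf{0}}}$ for $\phi\in\cs$ supplied by Lemma~\ref{mole abo 1}, the index constraints \eqref{syn index} being exactly what is needed for summability. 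Granting this and arguing componentwise, the Calder\'on reproducing formula (Lemma~\ref{CZ com}) yields $\vec{g}=\sum_{Q\in\cq_+}\langle\vec{g},\varphi_Q\rangle\psi_Q=T_\psi(B\vec{t})$, where $B:=\{\langle\varphi_Q,b_R\rangle\}_{Q,R\in\cq_+}$ and the interchange $\langle\vec{g},\varphi_Q\rangle=\sum_{R\in\cq_+}\vec{t}_R\langle b_R,\varphi_Q\rangle$ is justified by the same absolute convergence. Since $\{\varphi_Q\}_{Q\in\cq_+}$ is a family of analysis molecules and $\{b_R\}_{R\in\cq_+}$ of synthesis molecules, Lemma~\ref{mbpsi abo}(i) again makes $B$ bounded on $b^{s(\cdot)}_{p(\cdot),q(\cdot)}(W)$, and Theorem~\ref{phi bound} then gives
\begin{align*}
\left\|\sum_{Q\in\cq_+}\vec{t}_Q b_Q\right\|_{B^{s(\cdot)}_{p(\cdot),q(\cdot)}(W)}
\sim\left\|T_\psi(B\vec{t})\right\|_{B^{s(\cdot)}_{p(\cdot),q(\cdot)}(W,\varphi)}
\lesssim\left\|B\vec{t}\right\|_{b^{s(\cdot)}_{p(\cdot),q(\cdot)}(W)}
\lesssim\left\|\vec{t}\right\|_{b^{s(\cdot)}_{p(\cdot),q(\cdot)}(W)}.
\end{align*}

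I expect the only nonroutine step to be the $(\cs')^m$-convergence of $\sum_{Q\in\cq_+}\vec{t}_Q b_Q$ in part~(ii) and the attendant bookkeeping that legitimizes the application of the reproducing formula and the interchange of summations; once that is settled, both estimates are immediate from Theorems~\ref{phi bound} and \ref{abo bound 1} and Lemma~\ref{mbpsi abo}. The remaining ingredients — identifying the relevant infinite matrices $\{\langle\psi_R,m_Q\rangle\}$ and $\{\langle\varphi_Q,b_R\rangle\}$, and knowing that $\{\varphi_Q\}$, $\{\psi_Q\}$ are admissible analysis/synthesis molecules — are already recorded in the preceding lemmas, so no further technical work should be required.
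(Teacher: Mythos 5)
Your part~(i) coincides with the paper's proof verbatim: $\langle\vec{f},m_Q\rangle=(B(S_\varphi\vec{f}))_Q$ with $B=\{\langle\psi_R,m_Q\rangle\}$ almost diagonal via Lemma~\ref{mbpsi abo}(i), then apply Theorem~\ref{phi bound}. For part~(ii), however, there is a genuine gap in the step you yourself flag as ``the only nonroutine step'': the $(\cs')^m$-convergence of $\sum_Q\vec{t}_Qb_Q$ does \emph{not} follow ``by an argument parallel to Lemma~\ref{Tpsi 1}'' from the pointwise bound on $|\vec{t}_Q|$, the decay of $\|A_Q^{-1}\|$, and the molecule estimate of Lemma~\ref{mole abo 1} under the hypotheses~\eqref{syn index}. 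Tracing the exponents: for $\phi\in\cs$ treated as a molecule on $Q_{0,\mathbf{0}}$ and $b_Q$ a synthesis molecule on $Q\in\cq_j$, Lemma~\ref{mole abo 1} gives $|\langle\phi,b_Q\rangle|\lesssim(1+|x_Q|)^{-M}[l(Q)]^{H}$ where $H=\frac n2+[\lceil\!\lceil L_b\rceil\!\rceil\wedge(K_b-n-\alpha)]_+$, hence from~\eqref{syn index} only $H>\frac n2+(J(W)-n-s_-)=J(W)-\frac n2-s_-$ is guaranteed. Combining this with $|\vec{t}_Q|\lesssim|Q|^{s_-/n+1/2-1/p_-}\|A_Q^{-1}\|\,\|\vec{t}\|_{b(W)}$ and $\|A_Q^{-1}\|\lesssim|Q|^{-d_2/n}(1+|x_Q|)^{\Delta}$ and summing over $Q\in\cq_j$ yields $\sum_j2^{-j(H+s_-+\frac n2-\frac{n}{p_-}-d_2-n)}$, which converges only if $H>\frac n2+\frac{n}{p_-}+d_2-s_-\approx J(W)+\frac n2-s_-$. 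The available $H$ is short by exactly $n$. Lemma~\ref{Tpsi 1} works because the wavelets $\psi_j$, $j\ge1$, have \emph{all} moments vanishing, so the corresponding exponent can be made as large as~\eqref{M con} requires; a synthesis molecule has only $L_b\ge J(W)-n-s_-$ vanishing moments and this caps $H$.

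The paper circumvents this precisely by \emph{not} pairing $b_R$ with $\phi$ directly. It first expands $\langle b_R,\phi\rangle=\sum_Q\langle b_R,\varphi_Q\rangle\langle\psi_Q,\phi\rangle$ by the Calder\'on reproducing formula, and then controls the resulting double sum $\sum_{Q,R}\langle\psi_Q,\phi\rangle\langle b_R,\varphi_Q\rangle\vec{t}_R$ via Lemma~\ref{mbpsi abo}(ii) together with Remark~\ref{abo of abo}: the inner contraction $\sum_R\langle\varphi_Q,b_R\rangle\vec{t}_R$ is handled by the almost-diagonal boundedness of $\{\langle\varphi_Q,b_R\rangle\}$ on the full sequence space $b^{s(\cdot)}_{p(\cdot),q(\cdot)}(W)$ (not the lossy pointwise bound on $|\vec{t}_R|$), and the outer pairing against $\psi_Q$ profits from the wavelets' unlimited vanishing moments. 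Your final identification $\vec{g}=T_\psi(B\vec{t})$ and the interchange $\langle\vec{g},\varphi_Q\rangle=\sum_R\vec{t}_R\langle b_R,\varphi_Q\rangle$ both require exactly this absolute convergence, so your proposal cannot sidestep it; you should replace the ``parallel to Lemma~\ref{Tpsi 1}'' step by the paper's route through Lemma~\ref{mbpsi abo}(ii). Once that is done, your subsequent norm estimate via $T_\psi$ and $B$ is equivalent to the paper's, which computes $S_\varphi\vec{g}=B'\vec{t}$ directly and invokes $\|\vec{g}\|_{B}\sim\|S_\varphi\vec{g}\|_{b}$. (A minor additional slip: the molecule estimate should read $|\langle\phi,b_Q\rangle|\lesssim b^{MGH}_{Q_{0,\mathbf{0}},Q}$, not $b^{MGH}_{Q,Q_{0,\mathbf{0}}}$; the two are not equal since $G\neq H$ in general, and only the former uses the synthesis exponent~$H$.)
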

\begin{proof}
Let $\{\varphi_j\}_{j\in\mathbb{Z}_+}$ be as in Definition \ref{phi pair}
and $\{\psi_R\}_{R\in\mathcal{Q}_+}$ satisfy \eqref{varphi psi}.
By Lemma \eqref{def fm}, we obtain, for any cube $Q \in \mathcal{Q}_+$,
\begin{align}\label{fmQ}
\left\langle \vec{f}, m_Q \right\rangle = \sum_{R\in\mathcal{Q}_+} \left\langle \vec{f},\varphi_R \right\rangle \left\langle \psi_R, m_Q \right\rangle
= \sum_{R\in\mathcal{Q}_+} \left\langle \psi_R, m_Q \right\rangle \left( S_\varphi \vec{f} \right)_R.
\end{align}
Let $b_{R,Q} := \langle \psi_R, m_Q \rangle$ and $B := \{ b_{R,Q} \}_{Q,R\in\mathcal{Q}_+}$.
Then, from Lemma \ref{mbpsi abo}(i) with $m_P^{(1)}$ and $\psi_Q$ replaced by $m_Q$ and $\psi_R$,
it follows that $B$ is bounded on $b^{s(\cdot)}_{p(\cdot),q(\cdot)}(W)$.
Using this, \eqref{fmQ}, and Theorem \ref{phi bound},
we conclude that
$$ \left\| \left\{ \left\langle f,m_Q \right\rangle \right\}_{Q\in\mathcal{Q}_+} \right\|_{b^{s(\cdot)}_{p(\cdot),q(\cdot)}(W)}
= \left\| B\left( S_\varphi \vec{f} \right) \right\|_{b^{s(\cdot)}_{p(\cdot),q(\cdot)}(W)}
\lesssim \left\| S_\varphi \vec{f}  \right\|_{b^{s(\cdot)}_{p(\cdot),q(\cdot)}(W)}
\lesssim \left\| \vec{f} \right\|_{B^{s(\cdot)}_{p(\cdot),q(\cdot)}(W)}. $$
This finishes the proof of (i).

Now, we prove {\rm (ii)}. Observe that, for any $\phi\in\mathcal{S}$,
$\phi$ is a $B^{s(\cdot)}_{p(\cdot),q(\cdot)}(W)$-synthesis molecule
with harmless constant multiple.
Let $\{\varphi_j\}_{j\in\mathbb{Z}_+}$ be as in Definition \ref{phi pair}
and $\{\psi_R\}_{R\in\mathcal{Q}_+}$ satisfy \eqref{varphi psi}.
Then, to prove that $\sum_{R\in\mathcal{Q}_+} b_R \vec{t}_R$ converges in $(\mathcal{S}')^m$,
it is sufficient to show that \eqref{fm eq 1} converges absolutely.
Using Lemma \ref{def fm} with $\vec{f}$ and $m_Q$ replaced by $b_R$ and $\phi$, we obtain
\begin{align}\label{fm eq 1}
 \sum_{R\in\mathcal{Q}_+} \left\langle b_R, \phi \right\rangle \vec{t}_R
= \sum_{Q,R\in\mathcal{Q}_+} \left\langle \psi_Q, \phi \right\rangle
\left\langle b_R, \varphi_Q \right\rangle \vec{t}_R.
\end{align}
Observe that, by Remark \ref{abo of abo}{\rm (i)}, and Lemma \ref{mbpsi abo}{\rm (i)}
with $m_P^{(1)}$, $b^{(1)}_Q$, and $m^{(2)}_Q$
replaced, respectively, by $\phi\in \mathcal{S}$, $\varphi_Q$, and $\psi_Q$,
we find that $\{\langle \psi_Q, \phi \rangle\langle b_R, \varphi_Q \rangle\}_{R\in \mathcal{Q}_+}$
is a bounded almost diagonal operator on $b^{s(\cdot)}_{p(\cdot),q(\cdot)}(W)$.
Applying this with the assumption that $\vec{t}\in b^{s(\cdot)}_{p(\cdot),q(\cdot)}(W)$,
we conclude that \eqref{fm eq 1} converges absolutely
and hence $\vec{f} := \sum_{R\in\mathcal{Q}_+} b_R \vec{t}_R$ converges in $(\mathcal{S}')^m$.

For any $P,R\in \mathcal{Q}_+$, let
$b_{P,R} := \langle \psi_Q, \varphi_P \rangle \langle b_R, \varphi_Q \rangle$
and $B:= \{b_{P,R}\}_{P,R\in\mathcal{Q}_+}$.
By Lemma \ref{mbpsi abo}(i) with $m_P^{(1)}$, $\psi_Q$, $\varphi_P$, and $b^{(1)}_Q$
replaced, respectively, by $\varphi_P$, $\psi_Q$, $\varphi_Q$, and $b_R$
and by Remark \ref{abo of abo}(ii),
we find that $B$ is a $b^{s(\cdot)}_{p(\cdot),q(\cdot)}(W)$-almost diagonal operator
and
$$ \left( S_\varphi \vec{f} \right)_P = \left\langle \vec{f} , \varphi_P \right\rangle = \sum_{Q,R\in\mathcal{Q}_+} \left\langle \psi_Q, \varphi_P  \right\rangle
\left\langle b_R, \varphi_Q \right\rangle \vec{t}_R = \left(B\vec{t}\right)_P. $$
From this and Theorems \ref{phi bound} and \ref{abo bound 1},
we deduce that
\begin{align*}
\left\| \vec{f} \right\|_{B^{s(\cdot)}_{p(\cdot),q(\cdot)}(W)}
\lesssim \left\| S_\varphi \vec{f}  \right\|_{b^{s(\cdot)}_{p(\cdot),q(\cdot)}(W)}
= \left\| B\vec{t} \right\|_{b^{s(\cdot)}_{p(\cdot),q(\cdot)}(W)}
\lesssim \left\|\vec{t} \right\|_{b^{s(\cdot)}_{p(\cdot),q(\cdot)}(W)},
\end{align*}
which completes the proof of (ii) and hence Theorem \ref{mole com}.
\end{proof}

\subsection{Wavelet  and Atomic Characterizations}\label{sec wave}

We   begin with the concept of Daubechies wavelets
(see, for example, \cite{d88}).
In what follows, for any $\mathcal{N}\in \mathbb{N}$,
we use the \emph{symbol $C^N$} to denote the set of all $\mathcal{N}$ times continuously
differentiable functions on $\mathbb{R}^n$.
\begin{definition}
Let $\mathcal{N}\in \mathbb{N}$ and $\Lambda := \{0,1\}^n\setminus \{\mathbf{0}\}$
and let $\theta^{(\lambda)} \in C^{\mathcal{N}}$ for any $\lambda \in \{0,1\}^n$.
Then $\{\theta^{(\lambda)}\}_{\lambda\in\{0,1\}^n}$
are called the \emph{Daubechies wavelets of class $C^{\mathcal{N}}$}
if all $\{\theta^{(\lambda)}\}_{\lambda \in \{0,1\}^n}$
are real-valued functions with compact support and
$$\left\{\theta^{(\mathbf{0})}_{P}:\ P\in\mathcal{Q}_0\right\} \cup \left\{\theta^{(\lambda)}_{Q}:\ Q\in \mathcal{Q}_+,\ \text{and}\ \lambda\in \Lambda\right\}$$
is an orthonormal basis of $L^2$.
\end{definition}
The following wavelet basis was constructed by Daubechies
(see, for instance, \cite{d88} and \cite[Chapter 3.9]{m93}).
\begin{lemma}\label{ext wavelet}
Let $\Lambda := \{0,1\}^n\setminus \{\mathbf{0}\}$.
For any $\mathcal{N}\in\mathbb{N}$,
there exist functions $\{\theta^{(\lambda)}\}_{\lambda\in\{0,1\}^n} \subset C^{\mathcal{N}}$
having the following properties:
\begin{itemize}
\item[{\rm (i)}] there exists a positive constant $\gamma \in (1,\infty)$
such that, for any $\lambda \in \{0,1\}^n$, $\theta^{(\lambda)}$ supports in $\gamma Q(\mathbf{0},1)$;
\item[{\rm (ii)}] for any $\alpha\in\mathbb{Z}_+^n$ with $|\alpha|\leq\mathcal{N}$
and for any $\lambda \in \Lambda$,
$ \int_{\mathbb{R}^n} x^\alpha \theta^{(\lambda)} (x)\,dx = 0$;
\item[{\rm (iii)}] the system generated by $\{\theta^{(\lambda)}\}_{\lambda\in\{0,1\}^n}$,
namely $\{\theta^{(\mathbf{0})}_{P}:\ P\in\mathcal{Q}_0 \} \cup \{ \theta^{(\lambda)}_{Q}:\ Q\in \mathcal{Q}_+\ \text{and}\ \lambda\in \Lambda\}$,
is an orthonormal basis of $L^2$.
\end{itemize}
\end{lemma}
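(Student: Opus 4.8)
The plan is to obtain the $n$-dimensional wavelet system as a tensor product of a single one-dimensional Daubechies system and then to verify the three listed properties one by one, the real work being hidden in the one-dimensional construction, which I only cite. First I would recall the classical result of Daubechies \cite{d88}: for every $N_0\in\nn$ there exist a real-valued compactly supported scaling function $\phi=\phi_{N_0}$ and its companion wavelet $\psi=\psi_{N_0}$ on $\rr$ such that $\supp\phi$ and $\supp\psi$ lie in an interval whose length is comparable to $N_0$, both $\phi,\psi\in C^{\mu N_0}$ for some absolute constant $\mu\in(0,1)$, $\psi$ has $N_0$ vanishing moments (that is, $\int_{\rr}t^k\psi(t)\,dt=0$ for any $k\in\{0,\dots,N_0-1\}$), and $\{\phi(\cdot-k):\ k\in\zz\}\cup\{2^{j/2}\psi(2^j\cdot-k):\ j\in\zz_+,\ k\in\zz\}$ is an orthonormal basis of $L^2(\rr)$. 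Given $\cn\in\nn$, I would then fix $N_0$ so large that $\mu N_0\geq\cn$ and $N_0-1\geq\cn$, and (after a harmless translation) arrange $\supp\phi,\supp\psi\subset[-c,c]$ with $c=c(N_0)\in(1/2,\infty)$.

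Next I would set $\psi^{(0)}:=\phi$, $\psi^{(1)}:=\psi$, and, for any $\lambda=(\lambda_1,\dots,\lambda_n)\in\{0,1\}^n$ and $x=(x_1,\dots,x_n)\in\rn$, define $\theta^{(\lambda)}(x):=\prod_{i=1}^n\psi^{(\lambda_i)}(x_i)$; thus $\theta^{(\zeron)}=\phi\otimes\cdots\otimes\phi$ and, for $\lambda\in\Lambda=\{0,1\}^n\setminus\{\zeron\}$, the $\theta^{(\lambda)}$ are the genuine wavelets. Property (i) is then immediate: since a finite product of $C^{\cn}$ functions of separate variables is $C^{\cn}$, each $\theta^{(\lambda)}\in C^{\cn}$ is real-valued, and $\supp\theta^{(\lambda)}\subset[-c,c]^n\subset\gamma Q(\zeron,1)$ with $\gamma:=2c\in(1,\infty)$. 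For property (ii), fix $\lambda\in\Lambda$ and $\alpha\in\zz_+^n$ with $|\alpha|\leq\cn$; choosing $i_0$ with $\lambda_{i_0}=1$ and using Fubini's theorem, one has
\begin{align*}
\int_{\rn}x^\alpha\theta^{(\lambda)}(x)\,dx
=\left(\prod_{i:\ \lambda_i=0}\int_{\rr}t^{\alpha_i}\phi(t)\,dt\right)\left(\prod_{i:\ \lambda_i=1}\int_{\rr}t^{\alpha_i}\psi(t)\,dt\right),
\end{align*}
and the factor indexed by $i_0$ vanishes because $\alpha_{i_0}\leq|\alpha|\leq\cn\leq N_0-1$ while $\psi$ has $N_0$ vanishing moments, so the whole integral is $0$.

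For property (iii), I would run the standard separable multiresolution analysis argument: tensorizing the one-dimensional MRA generated by $\phi$ produces an MRA $\{V_j\}_{j\in\zz}$ of $L^2(\rn)$ with scaling function $\theta^{(\zeron)}$, and the classical theory of separable wavelets (see, for instance, \cite[Chapter 3.9]{m93}) shows that the dyadic dilates and translates of $\{\theta^{(\lambda)}:\ \lambda\in\Lambda\}$ form an orthonormal basis of each detail space $V_{j+1}\ominus V_j$; assembling the orthonormal basis $\{\theta^{(\zeron)}_P:\ P\in\cq_0\}$ of $V_0$ with these bases for $j\in\zz_+$ yields the orthonormal basis $\{\theta^{(\zeron)}_P:\ P\in\cq_0\}\cup\{\theta^{(\lambda)}_Q:\ Q\in\cq_+,\ \lambda\in\Lambda\}$ of $L^2(\rn)$, where $\theta^{(\lambda)}_Q$ is the $L^2$-normalized dilate $2^{jn/2}\theta^{(\lambda)}(2^j\cdot-k)$ associated with $Q=2^{-j}([0,1)^n+k)$. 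The only genuinely hard ingredient is the one-dimensional Daubechies construction itself --- solving the refinement equation, proving the orthonormality of the translates of $\phi$, and establishing that the H\"older regularity grows linearly in $N_0$ --- which I would not reprove but cite from \cite{d88} (and \cite{m93}); everything else above is routine bookkeeping, the only mild care needed being to match support sizes, $L^2$-normalizations, and centering conventions with those used elsewhere in this article.
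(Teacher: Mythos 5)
Your proposal is correct and is precisely the standard construction that the paper itself does not reprove: Lemma \ref{ext wavelet} is stated as a recollection of the classical result and simply cited to Daubechies \cite{d88} and Meyer \cite[Chapter 3.9]{m93}. Your tensor-product argument — take a one-dimensional compactly supported Daubechies pair $(\phi,\psi)$ of sufficiently high regularity and vanishing-moment order (so that $\mu N_0\geq\cn$ and $N_0-1\geq\cn$), form the $2^n$ tensor combinations, and verify support via compactness, the moment condition via Fubini and the $\psi$-factor, and completeness via the separable multiresolution analysis — is exactly the content of those references, so the two approaches coincide.
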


The following theorem is the Daubechies wavelet characterization of
matrix-weighted variable Besov spaces
(see \cite[Theorem 5.12]{wgx24} for the wavelet characterization of
scalar-valued weighted variable Besov spaces).

\begin{theorem}\label{dep wave 1}
Let $p(\cdot), q(\cdot)\in \mathcal{P}_0\cap LH$, $s(\cdot)\in LH$,
and $W\in \mathscr{A}_{p(\cdot),\infty}$
and let $\{\theta^{(\lambda)}\}_{\lambda\in\{0,1\}^n}$
be a class of $C^N$ Daubechies wavelets with
\begin{align}\label{con cn}
\mathcal{N} > \max\left\{ s_+ , \frac{n}{\alpha_W} - n - s_- \right\},
\end{align}
and let $\{\varphi_j\}_{j\in\mathbb{Z}_+}$ and $\{\psi_j\}_{j\in\mathbb{Z}_+}$
be the same as in \eqref{varphi psi}.
Then the following statements hold.
\begin{itemize}
\item[{\rm (i)}] For any $\vec{f} \in B^{s(\cdot)}_{p(\cdot),q(\cdot)}(W)$,
\begin{align}\label{dep wave}
\vec{f} = \sum_{P\in\mathcal{Q}_0} \left\langle \vec{f}, \theta^{(\mathbf{0})}_P \right\rangle \theta^{(\mathbf{0})}_P +
\sum_{\lambda \in \Lambda} \sum_{Q\in \mathcal{Q}_+} \left\langle \vec{f}, \theta^{(\lambda)}_Q \right\rangle \theta^{(\lambda)}_Q
\end{align}
in $(\mathcal{S}')^m$,
where $\langle \vec{f}, \theta^{(\mathbf{0})}_P \rangle$
and $\langle \vec{f}, \theta^{(\lambda)}_Q \rangle$ are defined as in
\eqref{fm pair} with $m_Q$ replaced, respectively, by $\theta^{(\mathbf{0})}_P$
and $\theta^{(\lambda)}_Q$, satisfying
$$
\left\| \vec{f} \right\|_{B^{s(\cdot)}_{p(\cdot),q(\cdot)}(W)}
\gtrsim \left\| \vec{f} \right\|_{B^{s(\cdot)}_{p(\cdot),q(\cdot)}(W)_w}
:= \left\| \left\{\left\langle \vec{f}, \theta^{(\mathbf{0})}_P\right\rangle \right\}_{P\in \mathcal{Q}_0} \right\|_{b^{s(\cdot)}_{p(\cdot),q(\cdot)}(W)}
 + \sum_{\lambda\in \Lambda}\left\| \left\{\left\langle \vec{f}, \theta^{(\lambda)}_Q\right\rangle \right\}_{Q\in \mathcal{Q}_+} \right\|_{b^{s(\cdot)}_{p(\cdot),q(\cdot)}(W)}
$$
with the implicit positive constant independent of $\vec{f}$.

\item[{\rm (ii)}] Conversely, for any $\vec{t} := \{\vec{t}^{(\mathbf{0})}_P\}_{P\in \mathcal{Q}_0} \cup (\bigcup_{\lambda \in \Lambda} \{\vec{t}^{(\lambda)}_Q\}_{Q\in \mathcal{Q}_+})$
satisfying
$$ \left\| \left\{\vec{t}^{(\mathbf{0})}_P \right\}_{P\in \mathcal{Q}_0} \right\|_{b^{s(\cdot)}_{p(\cdot),q(\cdot)}(W)}
 + \sum_{\lambda\in \Lambda}\left\| \left\{\vec{t}^{(\lambda)}_Q \right\}_{Q\in \mathcal{Q}_+} \right\|_{b^{s(\cdot)}_{p(\cdot),q(\cdot)}(W)} < \infty, $$
$\vec{f} := \sum_{P\in \mathcal{Q}_0} \vec{t}^{(\mathbf{0})}_P \theta^{(\mathbf{0})}_P
+ \sum_{\lambda \in \Lambda} \sum_{Q\in \mathcal{Q}_+} \vec{t}^{(\lambda)}_Q \theta^{(\mathbf{\lambda})}_Q$
converges in $(\mathcal{S}')^m$ and $\vec{f} \in B^{s(\cdot)}_{p(\cdot),q(\cdot)}(W)$
with
\begin{align*}
\left\| \vec{f} \right\|_{B^{s(\cdot)}_{p(\cdot),q(\cdot)}(W)}
\lesssim \left\| \left\{\vec{t}^{(\mathbf{0})}_P \right\}_{P\in \mathcal{Q}_0} \right\|_{b^{s(\cdot)}_{p(\cdot),q(\cdot)}(W)}
 + \sum_{\lambda\in \Lambda}\left\| \left\{\vec{t}^{(\lambda)}_Q \right\}_{Q\in \mathcal{Q}_+} \right\|_{b^{s(\cdot)}_{p(\cdot),q(\cdot)}(W)},
\end{align*}
where the implicit positive constant is independent of $\vec{t}$.
\end{itemize}
\end{theorem}
\begin{remark}
When $p(\cdot)$, $q(\cdot)$, and $s(\cdot)$ all are constant exponents
and $W\in \mathscr{A}_{p}$, the range of $\mathcal N$ in
Theorem \ref{dep wave 1} coincides with the range in \cite[Theorem 4.10]{bhyy23 3}
in the case $\tau =0$.
This result about the wavelet characterization is new
even when $W$ is a scalar variable weight.
\end{remark}

The following is the relationship between molecules and wavelets.

\begin{lemma}\label{wave mole}
Let $p(\cdot), q(\cdot)\in \mathcal{P}_0\cap LH$, $s(\cdot)\in LH$,
and $W\in \mathscr{A}_{p(\cdot),\infty}$
and let $\mathcal{N}\in\mathbb{N}$ and
$\{\theta^{(\lambda)}\}_{\lambda\in\{0,1\}^n}$
be a class of $C^{\mathcal{N}}$ Daubechies wavelets.
If $\mathcal{N}$ satisfies \eqref{con cn},
then both $\{\theta^{(\mathbf{0})}_{P}\}_{P\in\mathcal{Q}_0}$
and $\{ \theta^{(\lambda)}_{Q}\}_{Q\in \mathcal{Q}_+}$ for any $\lambda \in \Lambda$
are both a family of $B^{s(\cdot)}_{p(\cdot),q(\cdot)}(W)$-analysis molecules
and a family of $B^{s(\cdot)}_{p(\cdot),q(\cdot)}(W)$-synthesis molecules
with harmless constant multiples.
\end{lemma}
\begin{proof}
We first show that, for any $\lambda \in \{0,1\}^n$,
$\theta^{(\lambda)}$ is a $(K,L,M,N)$-molecule on $[0,1]^n$
with the index $K$, $L$, $M$, and $N$ satisfying both \eqref{syn index} and \eqref{ana index}.
Observe that, for any $\lambda \in \{0,1\}^n$, $\theta^{(\lambda)}$ has compact support.
Using this and the definition of molecules, we find that, for any $\lambda \in \{0,1\}^n$,
$\theta^{(\lambda)}$ satisfies (i) and (iii) of Definition \ref{def mole}
with any $K\in [0,\infty)$ and $M\in \mathbb{R}$.
Then, by Lemma \ref{ext wavelet}(ii),
we find that, if $L\in\mathbb{Z}_+$ with $L < \mathcal{N}$,
then, for any $\lambda\in \Lambda$ and $\alpha\in\mathbb{Z}^n$ with $|\alpha| \leq L$,
\begin{align}\label{wave mole eq 1}
\int_{\mathbb{R}^n} x^\alpha \theta^{(\lambda)}(x)\,dx = 0.
\end{align}
Moreover, using Lemma \ref{ext wavelet},
we obtain, for any $\lambda\in \{0,1\}^n$, $\theta^{(\lambda)} \in C^{\mathcal{N}}$
and hence $\theta^{(\lambda)}$ satisfies Definition \ref{def mole}(iv)
with any $N\in (0,\mathcal{N})$.
Thus, summarizing all the above discussions,
we conclude that, for any $K\in [0,\infty)$, $L\in \mathbb{Z}$, and $M,N\in\mathbb{R}$
with $\max\{L,N\} < \mathcal{N}$,
$\{\theta^{(\lambda)}\}_{\lambda \in \{0,1\}^n}$ is a family of $(K,L,M,N)$-molecules on $[0,1]^n$
with harmless constant multiples
and, moreover, for any $\lambda\in \Lambda$ and
$\alpha\in\mathbb{Z}^n$ with $|\alpha| \leq L$,
\eqref{wave mole eq 1} holds.
These, together with the definition of $\theta^{(\lambda)}_Q$,
further implies that, for any $Q\in \mathcal{Q}_+$ (resp. $P\in \mathcal{Q}_0$),
$\theta^{(\lambda)}_Q$ with $\lambda \in \Lambda$ (resp. $\theta^{(\mathbf{0})}_P$)
is a $(K,L,M,N)$-molecule on $Q$ (resp. $P$) and satisfies
both \eqref{ana index} and \eqref{syn index}.
This finishes the proof of Lemma \ref{wave mole}.
\end{proof}

Now, we give the proof of Theorem \ref{dep wave 1}.
\begin{proof}[Proof of Theorem \ref{dep wave 1}]
We first prove that {\rm (i)}.
By \eqref{varphi psi}, to show that \eqref{dep wave} converges in $(\mathcal{S}')^m$,
it is sufficient to prove that, for any $\phi\in\mathcal{S}$,
\begin{align*}
{\rm D} := \sum_{P\in\mathcal{Q}_0} \sum_{R\in \mathcal{Q}_+} \left\langle \vec{f}, \varphi_R \right\rangle
\left\langle \psi_R, \theta^{(\mathbf{0})}_P \right\rangle
\left\langle \theta^{(\mathbf{0})}_P, \phi \right\rangle +
\sum_{\lambda \in \Lambda} \sum_{Q\in \mathcal{Q}_+} \sum_{R\in \mathcal{Q}_+}
\left\langle \vec{f}, \varphi_R \right\rangle \left\langle \psi_R, \theta^{(\lambda)}_Q \right\rangle
\left\langle \theta^{(\lambda)}_Q, \phi \right\rangle
\end{align*}
converges absolutely.
Let $\vec{t} := \{\vec{t}_R\}_{R\in \mathcal{Q}_+}$
with $\vec{t}_R := \langle \vec{f}, \varphi_R \rangle$ for any $R\in \mathcal{Q}_+$.
From Theorem \ref{phi bound} and the assumption $\vec{f} \in B^{s(\cdot)}_{p(\cdot),q(\cdot)}(W)$,
we infer that $ \vec{t} \in b^{s(\cdot)}_{p(\cdot),q(\cdot)}(W)$.
Since $\phi\in \mathcal{S}$,
it is easy to see that $\phi$ is both analysis and synthesis molecules
with harmless constant multiples supported in some cube $Q_0 \in \mathcal{Q}_0$.
By this and Lemma \ref{mbpsi abo} with $m_P^{(1)}$, $b_Q^{(1)}$, and $\psi_Q$
replaced, respectively, by $\theta^{(\mathbf{0})}$ (or $\theta^{(\lambda)}$),
$\psi_R$, and $\phi$ and by Remark \ref{abo of abo}{\rm (ii)},
we conclude that $ \{ \langle \psi_R, \theta^{(\mathbf{0})}_P \rangle
\langle \theta^{(\mathbf{0})}_P, \phi \rangle \}_{R\in \mathcal{Q}_0}$
and $\{ \langle \psi_R, \theta^{(\lambda)}_Q \rangle
\langle \theta^{(\lambda)}_Q, \phi \rangle \}_{R\in \mathcal{Q}_+} $
with $\lambda \in \Lambda$
are $b^{s(\cdot)}_{p(\cdot),q(\cdot)}(W)$-almost diagonal operators.
This, together with Remark \ref{abo of abo}{\rm (i)},
further implies that {\rm D} converges absolutely.
Applying this with Lemma \ref{ext wavelet}(iii),
we find that
\begin{align*}
{\rm D} &= \sum_{R\in \mathcal{Q}_+}\left\langle \vec{f}, \varphi_R \right\rangle \left[ \sum_{P\in\mathcal{Q}_0}  \left\langle \psi_R, \theta^{(\mathbf{0})}_P \right\rangle \left\langle \theta^{(\mathbf{0})}_P, \phi \right\rangle +
\sum_{\lambda \in \Lambda} \sum_{Q\in \mathcal{Q}_+} \left\langle \psi_R, \theta^{(\lambda)}_Q \right\rangle \left\langle \theta^{(\lambda)}_Q, \phi \right\rangle \right]\\
& = \sum_{R\in \mathcal{Q}_+}\left\langle \vec{f}, \varphi_R \right\rangle \left\langle \psi_R, \phi \right\rangle
 = \left\langle \vec{f}, \phi \right\rangle.
\end{align*}
This finishes the proof of \eqref{dep wave}.

Next, by Lemma \ref{wave mole}, we find that
both $\{ \theta^{(\mathbf{0})}_{P}\}_{P\in \mathcal{Q}_0}$ and
$\{ \theta^{(\lambda)}_{Q}\}_{Q\in \mathcal{Q}_+}$ with any $\lambda \in \Lambda$
are both  families of $B^{s(\cdot)}_{p(\cdot),q(\cdot)}(W)$-analysis
and $B^{s(\cdot)}_{p(\cdot),q(\cdot)}(W)$-synthesis molecules
with harmless constants multiples.
Hence, using this and Theorem \ref{mole com},
we conclude that, for any $\lambda \in \Lambda$,
\begin{align*}
\left\| \left\{\left\langle \vec{f}, \theta^{(\lambda)}_Q\right\rangle \right\}_{Q\in \mathcal{Q}_+} \right\|_{b^{s(\cdot)}_{p(\cdot),q(\cdot)}(W)}
\lesssim \left\| \vec{f} \right\|_{B^{s(\cdot)}_{p(\cdot),q(\cdot)}(W)}
\ \ {\rm and}\ \
 \left\| \left\{\left\langle \vec{f}, \theta^{(\mathbf{0})}_P\right\rangle \right\}_{P\in \mathcal{Q}_0} \right\|_{b^{s(\cdot)}_{p(\cdot),q(\cdot)}(W)}
\lesssim \left\| \vec{f} \right\|_{B^{s(\cdot)}_{p(\cdot),q(\cdot)}(W)},
\end{align*}
which further implies that
$\| \vec{f} \|_{B^{s(\cdot)}_{p(\cdot),q(\cdot)}(W)_w}
\lesssim \| \vec{f} \|_{B^{s(\cdot)}_{p(\cdot),q(\cdot)}(W)}$.
This finishes the proof of {\rm (i)}.

Now, we show {\rm (ii)}.
Let $\vec{f}^{(0)} := \sum_{P\in \mathcal{Q}_0} \vec{t}^{(\mathbf{0})}_P \theta^{(\mathbf{0})}_P$
and, for any $\lambda \in \Lambda$,
$\vec{f}^{(\lambda)} := \sum_{Q\in \mathcal{Q}_+} \vec{t}^{(\lambda)}_Q \theta^{(\lambda)}_Q$
and hence $\vec{f} = \vec{f}^{(0)} + \sum_{\lambda\in \Lambda} \vec{f}^{(\lambda)}$.
Observe that $\{\theta^{(\mathbf{0})}_{P}\}_{P\in\mathcal{Q}_0}$ is a family of
$B^{s(\cdot)}_{p(\cdot),q(\cdot)}(W)$-synthesis molecules and, for any $\lambda \in \Lambda$,
$\{ \theta^{(\lambda)}_{Q}\}_{Q\in \mathcal{Q}_+}$ is also a family of
$B^{s(\cdot)}_{p(\cdot),q(\cdot)}(W)$-synthesis molecules.
Using these and Theorem \ref{mole com}(ii),
we conclude that $\vec{f}^{(\mathbf{0})}$ and $\vec{f}^{(\lambda)}$ with $\lambda \in \Lambda$
all converge in $(\mathcal{S}')^m$ and, moreover,
$\| \vec{f}^{(0)} \|_{B^{s(\cdot)}_{p(\cdot),q(\cdot)}(W)} \lesssim
\| \{\vec{t}^{(\mathbf{0})}_P\}_{P\in \mathcal{Q}_0} \|_{b^{s(\cdot)}_{p(\cdot),q(\cdot)}(W)}$
and, for any $\lambda \in \Lambda$,
$\| \vec{f}^{(\lambda)} \|_{B^{s(\cdot)}_{p(\cdot),q(\cdot)}(W)}
\lesssim \| \{ \vec{t}^{(\lambda)}_Q \}_{Q\in \mathcal{Q}_+} \|_{b^{s(\cdot)}_{p(\cdot),q(\cdot)}(W)}.$
From these, we infer that $\vec{f}$ converges in $(\mathcal{S}')^m$ and
\begin{align*}
\left\| \vec{f} \right\|_{B^{s(\cdot)}_{p(\cdot),q(\cdot)}(W)}
&\lesssim \left\| \vec{f}^{(0)} \right\|_{B^{s(\cdot)}_{p(\cdot),q(\cdot)}(W)}
+ \sum_{\lambda\in \Lambda}  \left\| \vec{f}^{(\lambda)} \right\|_{B^{s(\cdot)}_{p(\cdot),q(\cdot)}(W)}\\
&\lesssim \left\| \left\{\vec{t}^{(\mathbf{0})}_P \right\}_{P\in \mathcal{Q}_0} \right\|_{b^{s(\cdot)}_{p(\cdot),q(\cdot)}(W)}
+ \sum_{\lambda \in \Lambda} \left\| \left\{\vec{t}^{(\lambda)}_Q \right\}_{Q\in \mathcal{Q}_+} \right\|_{b^{s(\cdot)}_{p(\cdot),q(\cdot)}(W)},
\end{align*}
which completes the proof of {\rm (ii)} and hence Theorem \ref{dep wave 1}.
\end{proof}

Now, using the above obtained wavelet characterization,
we establish the atomic characterization of matrix-weighted variable Besov spaces.
We first recall the concept of $(r,L,N)$-atoms.
\begin{definition}\label{def atom}
Let $r\in (0,\infty)$ and $L, N\in \mathbb{R}$.
A function $a_Q$ is called an \emph{$(r, L, N)$-atom} on a cube $Q$
if
\begin{itemize}
\item[{\rm (i)}] ${\mathop\mathrm{\,supp\,}} a_Q \subset rQ$,
\item[{\rm (ii)}] $\int_{\mathbb{R}^n} x^\gamma a_Q(x)\,dx = 0$ if $l(Q) < 1$ and
$\gamma \in \mathbb{Z}_+^n$ with $|\gamma| \leq L$,
\item[{\rm (iii)}] $|D^\gamma a_Q(x)| \leq |Q|^{-\frac12 - \frac{|\gamma|}{n}}$ if
$\gamma \in \mathbb{Z}_+^n$ with $|\gamma|\leq N$.
\end{itemize}
\end{definition}
The following theorem is the atomic characterization of matrix-weighted variable Besov spaces
(see \cite[Corollary 4.8]{wgx24} for the atomic characterization of scalar weighted variable Besov spaces).
\begin{theorem}\label{dep atom}
Let $p(\cdot), q(\cdot)\in \mathcal{P}_0\cap LH$, $s(\cdot)\in LH$,
$W\in \mathscr{A}_{p(\cdot),\infty}$,
and $L, N\in\mathbb{R}$ with $ L \in (\frac{n}{\alpha_W} - n -s_-,\infty)$ and $N \in (s_+,\infty). $
Then there exists $r\in (0,\infty)$,
depending only on $L$ and $N$,
such that the following statements hold.
\begin{itemize}
\item[{\rm (i)}] For any $\vec{f}\in B^{s(\cdot)}_{p(\cdot),q(\cdot)}(W)$,
there exist $\vec{t} := \{\vec{t}_R\}_{R\in\mathcal{Q}_+} \in b^{s(\cdot)}_{p(\cdot),q(\cdot)}(W)$
and a family of $(r, L,N)$-atoms $\{a_Q\}_{Q\in\mathcal{Q}_+}$,
each on the cube indicated by its subscript,
such that $\vec{f} = \sum_{Q\in\mathcal{Q}_+} \vec{t}_Q a_Q$ in $(\mathcal{S}')^m$ and,
moreover, $ \| \vec{t} \|_{b^{s(\cdot)}_{p(\cdot),q(\cdot)}(W)} \lesssim \| \vec{f} \|_{B^{s(\cdot)}_{p(\cdot),q(\cdot)}(W)}, $
where the implicit positive constant is independent of $\vec{f}$.
\item[{\rm (ii)}] If $\{a_Q\}_{Q\in\mathcal{Q}_+}$ is a family of $(r, L, N)$-atoms,
then, for any $\vec{t} := \{\vec{t}_Q\}_{Q\in\mathcal{Q}_+} \in b^{s(\cdot)}_{p(\cdot),q(\cdot)}(W)$,
$\vec{f} := \sum_{Q\in\mathcal{Q}_+} \vec{t}_Q a_Q$ converges in $(\mathcal{S}')^m$
and, moreover,
$ \| \vec{f} \|_{B^{s(\cdot)}_{p(\cdot),q(\cdot)}(W)} \lesssim \| \vec{t} \|_{b^{s(\cdot)}_{p(\cdot),q(\cdot)}(W)}, $
where the implicit positive constant is independent of $\vec{t}$ and $\{a_Q\}_{Q\in\mathcal{Q}_+}$.
\end{itemize}
\end{theorem}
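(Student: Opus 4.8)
The plan is to obtain both parts as corollaries of the molecular characterization (Theorem \ref{mole com}) together with the Daubechies wavelet characterization (Theorem \ref{dep wave 1}), once one observes that $(r,L,N)$-atoms sit inside the family of synthesis molecules and that, conversely, suitably normalized Daubechies wavelets are $(r,L,N)$-atoms.

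Part (ii) is the quicker half. First I would verify that, under $L > J(W)-n-s_-$ and $N > s_+$, every $(r,L,N)$-atom $a_Q$ is, up to a harmless constant multiple, a $B^{s(\cdot)}_{p(\cdot),q(\cdot)}(W)$-synthesis molecule on $Q$: since $\supp a_Q \subset rQ$, conditions (i), (iii) and (iv) of Definition \ref{def mole} hold for arbitrarily large $K$ and $M$ (the decay tails being vacuous), while (ii) is exactly the atom's vanishing-moment condition; matching this against \eqref{syn index} then only forces $L_b := L \ge J(W)-n-s_-$ and $N_b := N > s_+$, the constraints on $K_b$ and $M_b$ being met by taking them large. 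Theorem \ref{mole com}(ii) then applies directly: for $\vec t \in b^{s(\cdot)}_{p(\cdot),q(\cdot)}(W)$ the series $\sum_{Q\in\cq_+}\vec t_Q a_Q$ converges in $(\cs')^m$ to some $\vec f$ with $\|\vec f\|_{B^{s(\cdot)}_{p(\cdot),q(\cdot)}(W)} \lesssim \|\vec t\|_{b^{s(\cdot)}_{p(\cdot),q(\cdot)}(W)}$.

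For part (i), given $L$ and $N$ I would fix $\cn\in\nn$ with $\cn > \max\{L,N\}$ --- which, by the hypotheses on $L$ and $N$, also guarantees $\cn > \max\{s_+, J(W)-n-s_-\}$, i.e. \eqref{con cn} --- and take $\{\theta^{(\zeron)},\theta^{(\lambda)}:\ \lambda\in\Lambda\}$ to be $C^\cn$ Daubechies wavelets as in Lemma \ref{ext wavelet}, with support-dilation constant $\gamma$; then $r$ is chosen as a fixed multiple of $\gamma$, so it depends only on $L$ and $N$. By Lemma \ref{ext wavelet}, after dividing by a constant depending only on $n$, $\cn$ and the $C^\cn$-norms of the wavelets, each $\theta^{(\zeron)}_P$ with $P\in\cq_0$ is an $(r,L,N)$-atom on $P$ (the moment condition being void since $l(P)=1$) and each $\theta^{(\lambda)}_Q$ with $Q\in\cq_+$ and $\lambda\in\Lambda$ is an $(r,L,N)$-atom on any chosen dyadic child $Q'$ of $Q$ (the inclusion $\gamma Q \subset r Q'$ being what pins down $r$). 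Now I would invoke Theorem \ref{dep wave 1} to get, for $\vec f \in B^{s(\cdot)}_{p(\cdot),q(\cdot)}(W)$, the expansion \eqref{dep wave} in $(\cs')^m$ and the norm equivalence $\|\vec f\|_{B^{s(\cdot)}_{p(\cdot),q(\cdot)}(W)} \sim \|\{\laz\vec f,\theta^{(\zeron)}_P\raz\}_{P\in\cq_0}\|_{b^{s(\cdot)}_{p(\cdot),q(\cdot)}(W)} + \sum_{\lambda\in\Lambda}\|\{\laz\vec f,\theta^{(\lambda)}_Q\raz\}_{Q\in\cq_+}\|_{b^{s(\cdot)}_{p(\cdot),q(\cdot)}(W)}$, and then re-index this over $\cq_+$: send each $P\in\cq_0$ to itself with the atom from $\theta^{(\zeron)}_P$; via a fixed injection of $\Lambda$ into the $2^n$ dyadic children of each cube, send the $\theta^{(\lambda)}_Q$-atoms to those children; and put $a_R:=0$, $\vec t_R := \vec 0$ on every cube of $\cq_+$ not so used (the zero function being a legitimate $(r,L,N)$-atom). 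The new series $\vec f = \sum_{Q\in\cq_+}\vec t_Q a_Q$ is \eqref{dep wave} with zero terms inserted, hence still converges in $(\cs')^m$, and passing to the re-indexed coefficient array alters the modular $\rho_{l^{q(\cdot)}(L^{p(\cdot)})}$ only by the bounded factor coming from the one-scale shift and the restriction to subcubes (using $s(\cdot)\in L^\infty$), so $\|\vec t\|_{b^{s(\cdot)}_{p(\cdot),q(\cdot)}(W)} \lesssim \|\vec f\|_{B^{s(\cdot)}_{p(\cdot),q(\cdot)}(W)}$.

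No new estimate beyond Theorems \ref{abo bound 1}, \ref{mole com} and \ref{dep wave 1} is needed; the only places that demand care are the parameter bookkeeping --- confirming that the atom exponents $L > J(W)-n-s_-$ and $N > s_+$ are precisely what is required for atoms to be synthesis molecules, and that one value of the wavelet regularity $\cn$ simultaneously turns the normalized Daubechies wavelets into $(r,L,N)$-atoms --- and the combinatorial relabeling of the wavelet index set $\cq_0 \sqcup (\Lambda\times\cq_+)$ by $\cq_+$ with zero padding. This relabeling, rather than any analytic difficulty, is the main (and essentially routine) obstacle.
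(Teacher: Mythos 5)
Your proposal is correct and follows essentially the same route as the paper: part (ii) is obtained by observing that compactly supported $(r,L,N)$-atoms are synthesis molecules (the decay parameters $K,M$ being free) so that Theorem \ref{mole com}(ii) applies verbatim, and part (i) is obtained by fixing $\cn > \max\{L,N\}$, normalizing the $C^{\cn}$ Daubechies wavelets of Theorem \ref{dep wave 1} into $(r,L,N)$-atoms, and re-indexing the wavelet family $\cq_0 \sqcup (\Lambda\times\cq_+)$ onto $\cq_+$ by placing the $\theta^{(\lambda)}_Q$ on dyadic children of $Q$ with zero padding — precisely the paper's one-level-shift relabeling, with the same appeal to $s\in L^\infty$ and the bounded overlap to control the resulting change of the $b^{s(\cdot)}_{p(\cdot),q(\cdot)}(W)$-norm.
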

\begin{remark}
When $p(\cdot)$, $q(\cdot)$, and $s(\cdot)$ all are constant exponents
and $W$ is an $\mathscr{A}_p$ matrix weight,
Theorem \ref{dep atom} reduces to \cite[Theorem 4.13]{bhyy23 3}.
Moreover, Theorem \ref{dep atom} in the unweighted scalar-valued
case coincides with \cite[Theorem 3]{d12}
(see also, for example, \cite{yzy15}).
\end{remark}
Now, we give the proof of Theorem \ref{dep atom}.
\begin{proof}[Proof of Theorem \ref{dep atom}]
Notice that, by the definition of atoms, we find that, for any $L\in \mathbb{Z}$ and $N\in \mathbb{R}$,
an $(r, L, N)$-atom is a $(K,L,M,N)$-molecule with any $K,M\in \mathbb{R}$.
Using this, \eqref{syn index}, and the assumptions that
$ L \in (\frac{n}{\alpha_W} - n -s_-,\infty)$ and $N \in ( s_+,\infty) $,
we obtain $\{a_Q\}_{Q\in\mathcal{Q}_+}$ is a family of
$B^{s(\cdot)}_{p(\cdot),q(\cdot)}$-synthesis molecules,
which, combined with Theorem \ref{mole com}(ii),
further implies that {\rm (ii)} holds.

Next, we prove {\rm (i)}.
Let $\mathcal{N} \in \mathbb{Z}_+$ be such that $\mathcal{N} > \max\{L, N\}$.
Then, applying this with Theorem \ref{dep wave 1},
we find that there exists a class of $C^{\mathcal{N}}$ Daubechies wavelets
$\{\theta^{(\mathbf{0})}, \theta^{(\lambda)}:\ \lambda\in \Lambda\}$
such that
\begin{align}\label{dep atom eq 1}
\vec{f} = \sum_{P\in\mathcal{Q}_0} \left\langle \vec{f}, \theta^{(\mathbf{0})}_P \right\rangle \theta^{(\mathbf{0})}_P +
\sum_{\lambda \in \Lambda} \sum_{Q\in \mathcal{Q}_+} \left\langle \vec{f}, \theta^{(\lambda)}_Q \right\rangle \theta^{(\lambda)}_Q
\ \ \text{in}\ \ (\mathcal{S}')^m
\end{align}
and
\begin{align}\label{dep atom eq 2}
\left\| \vec{f} \right\|_{B^{s(\cdot)}_{p(\cdot),q(\cdot)}(W)} \sim \left\| \left\{\left\langle \vec{f}, \theta^{(\mathbf{0})}_P\right\rangle \right\}_{P\in \mathcal{Q}_0} \right\|_{b^{s(\cdot)}_{p(\cdot),q(\cdot)}(W)}
 + \sum_{\lambda\in \Lambda}\left\| \left\{\left\langle \vec{f}, \theta^{(\lambda)}_Q\right\rangle \right\}_{Q\in \mathcal{Q}_+} \right\|_{b^{s(\cdot)}_{p(\cdot),q(\cdot)}(W)}.
\end{align}
Observe that, for any $\lambda\in \{0,1\}^n$,
$\theta^{(\lambda)}$ has compact support.
Using this and the definition of $\theta^{(\lambda)}_Q$,
we find that there exists $r\in (0,\infty)$ such that,
for any $\lambda\in \Lambda$ and $Q\in \mathcal{Q}_+$
(resp. $P\in \mathcal{Q}_0$), ${\mathop\mathrm{\,supp\,}} \theta^{(\lambda)}_Q \subset rQ$
(resp. ${\mathop\mathrm{\,supp\,}} \theta^{(\mathbf{0})}_P \subset rP$).
This, together with Lemma \ref{ext wavelet}, further implies that,
for any $Q \in \mathcal{Q}_+$ (resp. $P\in \mathcal{Q}_0$),
$L\in \mathbb{Z}_+$, and $N\in \mathbb{R}$ satisfying $\max\{L,N\} < \mathcal{N}$,
$\theta^{(\lambda)}_Q$ with $\lambda \in \Lambda$ (resp. $\theta^{(\mathbf{0})}_P$)
is an $(r,L,N)$-atom on $Q$ (resp. $P$) with harmless constant multiple.
From these, \eqref{dep atom eq 1}, and \eqref{dep atom eq 2},
we infer that, to show {\rm (i)},
it is sufficient to rearrange a new suitable order of $\{\theta^{(\mathbf{0})}_{P}:\ P\in\mathcal{Q}_0 \}
\cup \{ \theta^{(\lambda)}_{Q}:\ Q\in \mathcal{Q}_+\ \text{and}\ \lambda\in \Lambda\}$
such that, for any $\theta \in \{\theta^{(\mathbf{0})}_{P}:\ P\in\mathcal{Q}_0 \}
\cup \{ \theta^{(\lambda)}_{Q}:\ Q\in \mathcal{Q}_+\ \text{and}\ \lambda\in \Lambda\}$,
there exists a unique $a_R$ with $R\in \mathcal{Q}_+$
satisfying $a_R = \theta^{(\lambda)}_{Q}$ and
$ {\mathop\mathrm{\,supp\,}} \theta \subset r'R$ for some given $r' \in (1,\infty)$.

Now, we first give a new order of $\{\theta^{(\mathbf{0})}_P\}_{P\in \mathcal{Q}_0}$.
For any $Q\in \mathcal{Q}_0$, let $a_Q := c_1 \theta^{(\mathbf{0})}_Q$
and $\vec{t}_Q := c_1^{-1}\langle \vec{f}, \theta^{(\mathbf{0})}_Q \rangle$,
where $c_1$ is a harmless constant such that $\theta^{(\mathbf{0})}_Q$ is an $(r,L,N)$-atom on $Q$.
Then, we give a new order of $\bigcup_{\lambda\in \Lambda}\{\theta^{(\lambda)}_Q\}_{Q\in \mathcal{Q}_+}$.
For any $Q \in \mathcal{Q}_+$,
let $\{Q_i\}_{i = 1}^{2^n}$ be the enumeration of the dyadic child cubes of $Q$.
Then, by the previous obtained result that $\theta^{(\lambda)}_Q$ is
an $(r,L,N)$-atom on $Q$ with harmless constant multiple
and by the fact that, for any $Q\in \mathcal{Q}_+$ and any dyadic child cube $Q'$ of $Q$,
$Q\subset 3Q'$, we conclude that, for any $\lambda \in \Lambda$,
there exist constants $c_2$ and $r_2$ such that
$c_2 \theta^{(\lambda)}_Q$ is an $(r_2,L,N)$-atom on $Q_i$.
Rearranging $\theta^{(\lambda)}$ with $\lambda\in \Lambda$
by $\theta^{(i)}$ with $i\in \{1,\dots, 2^n-1\}$,
then let
\begin{align*}
a_{Q_i} :=
\begin{cases}
\displaystyle c_2 \theta_Q^{(i)} &\text{if}\ i\in\{1,\dots, 2^n-1\}, \\
\displaystyle 0 &\text{if}\ i= 2^n
\end{cases}
\end{align*}
and
\begin{align*}
\vec{t}_{Q_i} :=
\begin{cases}
\displaystyle  c^{-1}_2 \left\langle \vec{f}, \theta_Q^{(i)}\right\rangle &\text{if}\  i\in\{1,\dots, 2^n-1\}, \\
\displaystyle \mathbf{0} &\text{if}\ i= 2^n.
\end{cases}
\end{align*}
By this and \eqref{dep atom eq 1}, we obtain immediately
$\vec{f} = \sum_{Q\in\mathcal{Q}_+} \vec{t}_Q a_Q$ in $(\mathcal{S}')^m$.
Moreover, since $\vec{t}:= \{\vec{t}_Q\}_{Q\in\mathcal{Q}_+}$ is exactly
$\{ \langle \vec{f}, \theta^{(\mathbf{0})}_P \rangle \}_{P\in\mathcal{Q}_0} \cup
\{ \langle \vec{f}, \theta^{(\lambda)}_Q\rangle:\ Q\in \mathcal{Q}_+\ \text{and}\  \lambda\in \Lambda \}$
with shifted by one level at most,
it follows from the definition of the norm of $b^{s(\cdot)}_{p(\cdot),q(\cdot)}(W)$
that this changes the norm at most by a positive constant $C$,
where $C$ is independent of $\vec{t}$.
This finishes the proof of {\rm (i)} and hence Theorem \ref{dep atom}.
\end{proof}

\section{Boundedness of Classical Operators}\label{sec app}
In this section, we study the boundedness of some classical operators
on  matrix-weighted variable Besov spaces.
In Subsection \ref{sec trace}, we prove the boundedness of trace operators
and then, in Subsection \ref{sec CZ},
we show the boundedness of Calder\'on--Zygmund operators.
\subsection{Trace Operators}\label{sec trace}
In this subsection, we establish the trace theorem of matrix-weighted Besov spaces.
Since the trace operators map the factor from $\mathbb{R}^n$ to $\mathbb{R}^{n-1}$,
to avoid the confusion,
we keep the underlying space symbols $\mathbb{R}^n$ and $\mathbb{R}^{n-1}$ in this subsection.

We first recall some basic symbols.
For any $x\in\mathbb{R}^n$, let $x := (x', x_n)$,
where $x'\in \mathbb{R}^{n-1}$ and $x_n \in\mathbb{R}$.
We also denote $\lambda \in \{0,1\}^n$ by
$\lambda = (\lambda',\lambda_n)$ with $ \lambda'\in \{0,1\}^{n-1} $
and $\lambda_n \in \{0,1\}$.
In what follows,
for any variable exponent $p(\cdot)\in \mathcal{P}_0(\mathbb{R}^{n})$ and any $x'\in \mathbb{R}^{n-1}$,
let
\begin{align}\label{def wp}
\widetilde{p}(x') := p(x',0).
\end{align}
Let $\mathbf{0}_n$ (resp. $\mathbf{0}_{n-1}$)
be the origin of $\mathbb{R}^n$ (resp. $\mathbb{R}^{n-1}$)
and let $\Lambda_n := \{0,1\}^n\setminus \{\mathbf{0}_n\}$
and $\Lambda_{n-1} := \{0,1\}^{n-1}\setminus \{\mathbf{0}_{n-1}\}$.
To recall the concept of trace operators,
we first recall some properties of Daubechies wavelets
(see, for example, \cite{d88}).
\begin{lemma}\label{mul wave}
For any $\mathcal{N}\in\mathbb{N}$,
there exist two real-valued $C^{\mathcal{N}}(\mathbb{R})$ functions $\varphi$ and $\psi$
with compact support such that, for any $n\in\mathbb{N}$,
$\{ \theta^{(\mathbf{0}_n)}_Q\}_{Q\in \mathcal{Q}_0} \cup
\{ \theta_Q^{(\lambda)}:\ Q\in\mathcal{Q}_+(\mathbb{R}^n)\ \text{and}\ \lambda \in \Lambda_n\} $
form an orthonormal basis of $L^2(\mathbb{R}^n)$,
where, for any $\lambda:= (\lambda_1,\dots,\lambda_n)\in \{0,1\}^n$
and $x := (x_1,\dots, x_n)\in\mathbb{R}^n$,
\begin{align}\label{def theta}
\theta^{(\lambda)}(x):=  \prod_{i = 1}^n \phi^{(\lambda_i)}(x_i)
\end{align}
with $\phi^{(0)}:= \varphi$ and $\phi^{(1)}:= \psi$.
\end{lemma}
\begin{remark}\label{wave k0}
By \cite[Remark 5.2]{bhyy23 3},
we find that, for $\varphi$ as in Lemma \ref{mul wave},
there exists $k_0\in \mathbb{Z}$ such that $\varphi(-k_0) \neq 0$.
\end{remark}
For any $I\in \mathcal{Q}_+({\mathbb{R}}^{n-1})$ and $k\in\mathbb{Z}$,
let $Q(I,k) := I \times [l(I)k, l(I)(k+1)).$
By the construction of $Q(I,k)$,
it is easy to find that,
for any cube $Q\in \mathcal{Q}_+(\mathbb{R}^n)$,
there exist a unique $I\in \mathcal{Q}_+({\mathbb{R}}^{n-1})$
and a unique $k\in\mathbb{Z}$ such that $Q = Q(I,k)$
and we denote such $I$ by $I(Q)$.
Let $p(\cdot),q(\cdot) \in \mathcal{P}_0(\mathbb{R}^n)\cap LH(\mathbb{R}^n)$,
$s(\cdot)\in LH(\mathbb{R}^n)$, $W\in \mathscr{A}_{p(\cdot),\infty}(\mathbb{R}^n)$,
and $V\in \mathscr{A}_{\widetilde{p}(\cdot),\infty}({\mathbb{R}}^{n-1})$.
In what follows, let $\mathbb{A}(W) := \{A_{Q,W}\}_{Q\in \mathcal{Q}_+(\mathbb{R}^n)}$
[resp. $\mathbb{A}(V) := \{A_{I,W}\}_{I\in \mathcal{Q}_+(\mathbb{R}^{n-1})}$]
be the reducing operators of order $p(\cdot)$ for $W$
(resp. $\widetilde{p}(\cdot)$ for $V$).
Assume that $\mathcal{N}$ is large enough such that \eqref{con cn} holds
for $B^{s(\cdot)}_{p(\cdot),q(\cdot)}(W)$ and
$B^{\widetilde{s}(\cdot)-\frac{1}{\widetilde{p}(\cdot)}}_{\widetilde{p}(\cdot),\widetilde{q}(\cdot)}(V,\mathbb{R}^{n-1})$.
Then, by Theorem \ref{dep wave 1} and Lemma \ref{mul wave},
we find that there exist real-valued functions
$\varphi,\psi \in C^{\mathcal{N}}(\mathbb{R})$
such that $\{\theta^{(\lambda)}\}_{\lambda\in \{0,1\}^n} \subset C^{\mathcal{N}}(\mathbb{R}^n)$
is the Daubechies wavelets of $B^{s(\cdot)}_{p(\cdot),q(\cdot)}(W)$,
where, for any $\lambda \in \{0,1\}^n$,
$\theta^{(\lambda)}$ is as in \eqref{def theta}.

We now introduce the trace operator by using Daubechies wavelets.
For any $\lambda := (\lambda', \lambda_n)\in \{0,1\}^n$ and
any cube $Q:= Q(I,k) \in \mathcal{Q}_+(\mathbb{R}^n)$
with $I\in\mathcal{Q}_+({\mathbb{R}}^{n-1})$ and $k\in\mathbb{Z}$
and for any $x'\in\mathbb{R}^{n-1}$,
let
\begin{align}\label{def tr 1}
\left[ {\mathop \mathrm{\,Tr\,}}\theta^{(\lambda)}_Q \right](x') := \theta^{(\lambda)}_Q(x',0)
= [l(Q)]^{-\frac12} \theta^{(\lambda')}_{I(Q)}(x') \phi^{(\lambda_n)}(-k),
\end{align}
where $\theta^{(\lambda')}$ is defined as in \eqref{def theta} with $n$ replaced by $n-1$.
Observe that it follows from Theorem \ref{dep wave 1} that,
for any $\vec{f}\in B^{s(\cdot)}_{p(\cdot),q(\cdot)}(W)$,
$$ \vec{f} = \sum_{Q \in\mathcal{Q}_0} \left\langle \vec{f}, \theta^{(\mathbf{0}_n)}_Q \right\rangle \theta^{(\mathbf{0}_n)}_Q +
\sum_{\lambda \in \Lambda_n } \sum_{Q\in \mathcal{Q}_+} \left\langle \vec{f}, \theta^{(\lambda)}_Q \right\rangle \theta^{(\lambda)}_Q
\ \ \text{in}\ \ [\mathcal{S}'(\mathbb{R}^n)]^m. $$
Using this, for any $\vec{f}\in B^{s(\cdot)}_{p(\cdot),q(\cdot)}(W)$,
we define ${\mathop \mathrm{\,Tr\,}} \vec{f}$ as
\begin{align}\label{def tr}
{\mathop \mathrm{\,Tr\,}} \vec{f} := \sum_{Q \in\mathcal{Q}_0} \left\langle \vec{f}, \theta^{(\mathbf{0}_n)}_Q \right\rangle {\mathop \mathrm{\,Tr\,}} \theta^{(\mathbf{0}_n)}_Q +
\sum_{\lambda \in \Lambda_n } \sum_{Q\in \mathcal{Q}_+} \left\langle \vec{f}, \theta^{(\lambda)}_Q \right\rangle {\mathop \mathrm{\,Tr\,}} \theta^{(\lambda)}_Q.
\end{align}
Since, for any $\lambda \in \{0,1\}^n$, $\theta^{(\lambda)}$ has compact support,
it follows that there exists $N\in\mathbb{N}$ such that,
for any $\lambda\in \{0,1\}^n$,
${\mathop\mathrm{\,supp\,}} \theta^{(\lambda)} \subset B(\mathbf{0}_n, N)$.
This, together with the definition of $\theta_Q^{(\lambda)}$,
further implies that, for any $I\in\mathcal{Q}_+(\mathbb{R}^{n-1})$ and
$k\in\mathbb{Z}$ with $|k|>N$ and for any $\lambda\in \{0,1\}^n$ and $x'\in\mathbb{R}^{n-1}$,
\begin{align*}
\theta^{(\lambda)}_{Q(I,k)}(x',0)
= \left[ l(I) \right]^{-\frac n2} \theta^{(\lambda)}\left( \frac{x'-x_I}{l(I)}, -k \right) = 0.
\end{align*}
Applying this with both \eqref{def tr 1} and \eqref{def tr},
we conclude that, for any $\vec{f}\in B^{s(\cdot)}_{p(\cdot),q(\cdot)}(W)$,
\begin{align*}
{\mathop \mathrm{\,Tr\,}} \vec{f} &= \sum_{k=-N}^{N} \sum_{I \in\mathcal{Q}_0(\mathbb{R}^{n-1})} \vec{t}^{(\mathbf{0})}_{Q(I,k)} \psi(-k) \theta^{(\mathbf{0}_{n-1})}_{I}
 +  \sum_{\lambda \in \Lambda_n } \sum_{k=-N}^{N} \sum_{I\in \mathcal{Q}_+(\mathbb{R}^{n-1})} \vec{t}^{(\lambda)}_{Q(I,k)} \phi^{(\lambda_n)}(-k) \theta^{(\lambda')}_{I}.
\end{align*}
Next, we introduce the extension operator.
For any function $g$ on $\mathbb{R}^{n-1}$, any function $h$ on $\mathbb{R}$,
and any $x := (x',x_n) \in\mathbb{R}^n$, let $g\otimes h(x) := g(x')h(x_n)$.
Then, for any $\lambda' \in \Lambda_{n-1}$, $I\in\mathcal{Q}_+(\mathbb{R}^{n-1})$,
and $x:= (x',x_n)\in\mathbb{R}^n$, let
\begin{align}\label{def ext 1}
\left[ {\mathop \mathrm{\,Ext\,}} \theta_I^{(\lambda')} \right](x)
&:= \frac{ [l(Q)]^\frac12 }{\varphi(-k_0)} \left[ \theta^{(\lambda')} \otimes \varphi \right]_{Q(I,k_0)}(x)
 = \frac{ [l(Q)]^\frac12 }{\varphi(-k_0)} \theta^{(\lambda',0)}_{Q(I,k_0)}(x)\nonumber\\
& = \frac{ 1 }{\varphi(-k_0)} \theta^{(\lambda')}_{I}(x') \varphi\left( \frac{x_n}{l(I)} - k_0 \right),
\end{align}
where $\varphi$ is the same as in Lemma \ref{mul wave} and $k_0$ is the same as in Remark \ref{wave k0}.
By Lemma \ref{dep wave 1},
we find that, for any $\vec{g} \in B^{\widetilde{s}(\cdot)-\frac{1}{\widetilde{p}(\cdot)}}_{\widetilde{p}(\cdot),\widetilde{q}(\cdot)}(V,\mathbb{R}^{n-1})$,
\begin{align*}
\vec{g} = \sum_{I \in\mathcal{Q}_0(\mathbb{R}^{n-1})} \left\langle \vec{g}, \theta^{(\mathbf{0}_{n-1})}_I \right\rangle \theta^{(\mathbf{0}_{n-1})}_I +
\sum_{\lambda' \in \Lambda_{n-1} } \sum_{I\in \mathcal{Q}_+(\mathbb{R}^{n-1})} \left\langle \vec{g}, \theta^{(\lambda')}_I \right\rangle \theta^{(\lambda')}_I
\end{align*}
in $[\mathcal{S}'(\mathbb{R}^{n-1})]^m$.
Then, define the extension operator ${\mathop \mathrm{\,Ext\,}}$ by setting, for any
$\vec{g} \in B^{\widetilde{s}(\cdot)-\frac{1}{\widetilde{p}(\cdot)}}_{\widetilde{p}(\cdot),\widetilde{q}(\cdot)}(V,\mathbb{R}^{n-1})$,
\begin{align}\label{def ext}
{\mathop \mathrm{\,Ext\,}} \vec{g} := \sum_{I \in\mathcal{Q}_0(\mathbb{R}^{n-1})} \left\langle \vec{g}, \theta^{(\mathbf{0}_{n-1})}_I \right\rangle {\mathop \mathrm{\,Ext\,}} \theta^{(\mathbf{0}_{n-1})}_I +
\sum_{\lambda' \in \Lambda_{n-1} } \sum_{I\in \mathcal{Q}_+(\mathbb{R}^{n-1})} \left\langle \vec{g}, \theta^{(\lambda')}_I \right\rangle {\mathop \mathrm{\,Ext\,}} \theta^{(\lambda')}_I.
\end{align}

As the main result of this subsection, we have the following trace theorem
(see \cite[Theorem 6.1]{yzy15} for the trace theorem on scalar variable Besov-type spaces).

\begin{theorem}\label{trace 1}
Let $p(\cdot), q(\cdot)\in \mathcal{P}_0(\mathbb{R}^n)\cap LH(\mathbb{R}^n)$ and $s(\cdot)\in LH(\mathbb{R}^n)$
and let $W\in \mathscr{A}_{p(\cdot),\infty}(\mathbb{R}^n)$ and $V\in \mathscr{A}_{\widetilde{p}(\cdot),\infty}({\mathbb{R}}^{n-1})$
satisfy $(\widetilde{s}-\frac{1}{\widetilde{p}})_- > (n-1)(\frac{1}{\alpha_V} - 1)$,
where $\alpha_V$ is the same as in \eqref{def alphaW} with $W$ replaced by $V$.
Then the trace operator
$$ {\mathop \mathrm{\,Tr\,}}:\ B^{s(\cdot)}_{p(\cdot),q(\cdot)}(W,\mathbb{R}^n) \rightarrow B^{\widetilde{s}(\cdot)-\frac{1}{\widetilde{p}(\cdot)}}_{\widetilde{p}(\cdot),\widetilde{q}(\cdot)}(V,\mathbb{R}^{n-1}) $$
defined as in \eqref{def tr} is well-defined and bounded
if and only if, for any $I\in \mathcal{Q}_+(\mathbb{R}^{n-1})$ and $\vec{z}\in\mathbb{C}^m$,
\begin{align}\label{WV nec}
\frac{1}{\|\mathbf{1}_I\|_{L^{\widetilde{p}(\cdot)}(\mathbb{R}^{n-1})}} \left\|\, \left|V(\cdot) \vec{z}\right| \mathbf{1}_{I} \right\|_{L^{\widetilde{p}(\cdot)}(\mathbb{R}^{n-1})}
\lesssim \frac{1}{\|\mathbf{1}_{Q(I,0)}\|_{L^{p(\cdot)}(\mathbb{R}^n)}} \left\|\,\left| W(\cdot) \vec{z}\right| \mathbf{1}_{Q(I,0)}\right\|_{L^{p(\cdot)}(\mathbb{R}^n)},
\end{align}
where the implicit positive constant is independent of $I$ and $\vec{z}$.
\end{theorem}

\begin{remark}
When $p(\cdot)$, $q(\cdot)$, and $s(\cdot)$ are all constant exponents and $W\in \mathscr{A}_p$,
Theorem \ref{trace 1} coincides with \cite[Theorem 5.6]{bhyy23 3} in the case $\tau = 0$.
Moreover, Theorem \ref{trace 1} in the scalar-valued weighted case
is new, and   in the scalar-valued unweighted case it
coincides with \cite[Theorem 6.1]{yzy15} with $\phi \equiv 0$ therein.
\end{remark}

To prove Theorem \ref{trace 1},
we first give several necessary tools.
The following is the relationship between reducing operators of $V$ and $W$.
\begin{lemma}\label{WV k}
Let $p(\cdot), q(\cdot)\in \mathcal{P}_0(\mathbb{R}^n)\cap LH(\mathbb{R}^n)$, $s(\cdot)\in LH(\mathbb{R}^n)$,
$W\in \mathscr{A}_{p(\cdot),\infty}(\mathbb{R}^n)$, and $V\in \mathscr{A}_{\widetilde{p}(\cdot),\infty}({\mathbb{R}}^{n-1}) $.
If \eqref{WV nec} holds,
then, for any $I\in\mathcal{Q}_+(\mathbb{R}^{n-1})$, $k\in\mathbb{Z}$, and $\vec{z}\in\mathbb{C}^m$,
$$ \left| A_{I,V} \vec{z} \right| \lesssim \left( 1+ |k| \right)^{\Delta_W} \left| A_{Q(I,k),W} \vec{z} \right|,  $$
where $A_{I,V}$ (resp. $A_{Q(I,k),W}$) is the reducing operator of order $\widetilde{p}(\cdot)$ for $V$
(resp. $p(\cdot)$ for $W$), $\Delta_W$ is the same as in Lemma \ref{QP5},
and the implicit positive constant is independent of $I$, $k$, and $\vec{z}$.
\end{lemma}
\begin{proof}
For any $k\in\mathbb{Z}$, using the definition of $Q(I,k)$,
we obtain $l(Q(I,0)) = l(Q(I,k)) = l(I) $ and consequently,
for any $x\in Q(I,0)$ and $y\in Q(I,k)$,
$\frac{|x - y|}{l(I)} \lesssim |k|$.
From this and Lemma \ref{QP5} with $ Q := Q(I,0)$, $R := Q(I,k)$, and $\Delta := \Delta_W$,
we infer that
\begin{align*}
\left\|A_{Q(I,0),W}A_{Q(I,k),W}^{-1}\right\|
&\lesssim \max\left\{ \left[ \frac{l(Q(I,k))}{l(Q(I,0))} \right]^{d_1},
\left[ \frac{l(Q(I,0))}{l(Q(I,k))} \right]^{d_2} \right\}\left[ 1+ \frac{|x - y|}{l(Q(I,k))\vee l(Q(I,0))} \right]^{\Delta_W}\\
&\lesssim \left( 1 + |k| \right)^{\Delta_W} .
\end{align*}
Applying this, Definition \ref{def reducing operator}, and \eqref{WV nec},
we obtain, for any $\vec{z} \in \mathbb{C}^m$ and $k\in\mathbb{Z}$,
\begin{align*}
\left|A_{I,V} \vec{z}\right| &\sim \frac{1}{\|\mathbf{1}_I\|_{L^{\widetilde{p}(\cdot)}(\mathbb{R}^{n-1})}} \left\|\, \left|V(\cdot) \vec{z}\right| \mathbf{1}_{I} \right\|_{L^{\widetilde{p}(\cdot)}(\mathbb{R}^{n-1})}
\lesssim \frac{1}{\|\mathbf{1}_{Q(I,0)}\|_{L^{p(\cdot)}(\mathbb{R}^n)}} \left\|\,\left| W(\cdot) \vec{z}\right| \mathbf{1}_{Q(I,0)}\right\|_{L^{p(\cdot)}(\mathbb{R}^n)}\\
&\sim \left|A_{Q(I,0),W}\vec{z}\right|
\leq \left\|A_{Q(I,0),W}A_{Q(I,k),W}^{-1}\right\| \left|A_{Q(I,k),W}\vec{z}\right|
\lesssim \left( 1 + |k| \right)^{\Delta_W}\left|A_{Q(I,k),W}\vec{z}\right|,
\end{align*}
which completes the proof of Lemma \ref{WV k}.
\end{proof}
\begin{lemma}\label{lj s}
Let $s(\cdot) \in LH(\mathbb{R}^n)$.
Then, for any $j,l \in \mathbb{Z}_+$ and $x,y\in \mathbb{R}^n$ with $|x - y| \leq 2^{l-j}$,
\begin{align*}
2^{js(x)} \lesssim 2^{lC_{\rm log}(s)} 2^{js(y)},
\end{align*}
where the implicit positive constant is independent of $j$ and $l$.
\end{lemma}
\begin{proof}
By \eqref{clogp}, we find that, for any $x,y\in\mathbb{R}^n$ with $|x-y|\leq 2^{l-j}$,
\begin{align}\label{sclog}
\left| s(x)-s(y) \right| \leq C_{\rm log }(s) \frac{1}{\log(e + \frac{1}{|x-y|})} \leq C_{\rm{log}}(s).
\end{align}
If $j \leq l$, then, by \eqref{sclog}, we obtain
\begin{align*}
2^{js(y)} = 2^{js(x)} 2^{j[s(x)-s(y)]} \leq 2^{js(x)} 2^{j|s(x)-s(y)|}
\leq 2^{js(x)} 2^{jC_{\rm log}(s)} \leq 2^{lC_{\rm log}(s)} 2^{js(x)}.
\end{align*}
If $j > l$, then $2^{l-j} \leq 1$.
From this, \eqref{sclog}, and Lemma \ref{2js eq} with $p(\cdot) := s(\cdot)$, we deduce that
$$ 2^{js(y)} = 2^{j[s(x)- s(y)]} 2^{js(x)}
= 2^{l[s(x) - s(y)]} 2^{(j-l)[s(x) - s(y)]} 2^{js(x)}
\lesssim 2^{lC_{\rm log}(s)} 2^{js(x)}, $$
which completes the proof of Lemma \ref{lj s}.
\end{proof}

In what follows, for any $t := \{ t_I \}_{I\in\mathcal{Q}_+(\mathbb{R}^{n-1})}$
and $k\in\mathbb{Z}$,
let $t^{(k)} := \{ t^{(k)}_{Q} \}_{Q\in \mathcal{Q}_+(\mathbb{R}^n)}$,
where, for any cube $Q$ in $\mathbb{R}^n$,
\begin{align}\label{def tk}
t^{(k)}_{Q} :=
\begin{cases}
\displaystyle [l(I)]^\frac12 t_I &\ \text{if}\ Q = Q(I,k), \\
\displaystyle 0&\ \text{otherwise}.
\end{cases}
\end{align}
\begin{lemma}\label{tk}
Let $p_1(\cdot),p_2(\cdot),q_1(\cdot),q_2(\cdot) \in \mathcal{P}_0(\mathbb{R}^n)\cap LH(\mathbb{R}^n)$,
$s_1(\cdot),s_2(\cdot) \in LH(\mathbb{R}^n)$, and $N\in \mathbb{Z}_+$.
Assume that $p_1 = p_2$, $q_1=q_2$, and $s_1 = s_2$
on $\mathbb{R}^{n-1}\times [0,\infty)$ or $\mathbb{R}^{n-1}\times (-\infty,0]$.
Then, for any $k \in \{-N,\dots,N\}$ and
$t:= \{ t_I \}_{I \in\mathcal{Q}_+(\mathbb{R}^{n-1})}
\subset \mathbb{C}$,
\begin{align*}
\left\| t^{(k)}  \right\|_{b^{s_1(\cdot)}_{p_1(\cdot),q_1(\cdot)}(\mathbb{R}^{n})}
\sim \left\| t^{(k)}  \right\|_{b^{s_2(\cdot)}_{p_2(\cdot),q_2(\cdot)}(\mathbb{R}^{n})},
\end{align*}
where the implicit positive constant is independent of $t$ and $k$.
\end{lemma}
\begin{proof}
Without loss of generality,
we may assume that $p_1 = p_2$, $q_1 = q_2$, and $s_1 = s_2$
on $\mathbb{R}^{n-1}\times [0,\infty)$.
Then it is obvious that, for any $k \in \{0,\dots, N\}$,
\begin{align*}
\left\| t^{(k)}  \right\|_{b^{s_1(\cdot)}_{p_1(\cdot),q_1(\cdot)}(\mathbb{R}^{n})}
= \left\| t^{(k)}  \right\|_{b^{s_2(\cdot)}_{p_2(\cdot),q_2(\cdot)}(\mathbb{R}^{n})}.
\end{align*}

Next, fix $k\in \{-N,\dots,-1\}$.
Observe that, for any $j\in\mathbb{Z}_+$ and any cube $I\in \mathcal{Q}_+(\mathbb{R}^{n-1})$,
$Q(I,0) \subset (2N+1)Q(I,k)$ and $|Q(I,0)| = (2N+1)^{-n} |(2N+1)Q(I,k)|$.
Using this, Lemma \ref{Q EQ 1},
and the assumptions that $p_1 = p_2$, $q_1 = q_2$, and $s_1 = s_2$
on $\mathbb{R}^{n-1}\times [0,\infty)$,
we obtain
\begin{align*}
\left\| t^{(k)}  \right\|_{b^{s_1(\cdot)}_{p_1(\cdot),q_1(\cdot)}(\mathbb{R}^{n})}
&\sim \left\| \left\{ 2^{js_1(\cdot)} \sum_{I\in \mathcal{Q}_j(\mathbb{R}^{n-1})}  t_I \widetilde{\mathbf{1}}_{Q(I,0)} \right\}_{j\in\mathbb{Z}_+}  \right\|_{l^{q_1(\cdot)}(L^{p_1(\cdot)})(\mathbb{R}^{n})}\nonumber \\
&= \left\| \left\{ 2^{js_2(\cdot)} \sum_{I\in \mathcal{Q}_j(\mathbb{R}^{n-1})}  t_I \widetilde{\mathbf{1}}_{Q(I,0)} \right\}_{j\in\mathbb{Z}_+}  \right\|_{l^{q_2(\cdot)}(L^{p_2(\cdot)})(\mathbb{R}^{n})}
\end{align*}
and
\begin{align*}
\left\| t^{(k)}  \right\|_{b^{s_2(\cdot)}_{p_2(\cdot),q_2(\cdot)}(\mathbb{R}^{n})}
&\sim \left\| \left\{ 2^{js_2(\cdot)} \sum_{I\in \mathcal{Q}_j(\mathbb{R}^{n-1})}  t_I \widetilde{\mathbf{1}}_{Q(I,0)} \right\}_{j}  \right\|_{l^{q_2(\cdot)}(L^{p_2(\cdot)})(\mathbb{R}^{n})},
\end{align*}
which further implies that
$\| t^{(k)}  \|_{b^{s_1(\cdot)}_{p_1(\cdot),q_1(\cdot)}(\mathbb{R}^{n})}
\sim \| t^{(k)} \|_{b^{s_2(\cdot)}_{p_2(\cdot),q_2(\cdot)}(\mathbb{R}^{n})} $
and hence completes the proof of Lemma \ref{tk}.
\end{proof}

Repeating the proof of \cite[Corollary 4]{n16}
with \cite[Lemma 12]{n16} replaced by Lemma \ref{tk}
(see, for instance, \cite[Corollary 6.6]{yzy15} for a similar result),
we obtain the following result, the details being omitted here.

\begin{lemma}\label{wp p}
Let $p_1(\cdot),p_2(\cdot),q_1(\cdot),q_2(\cdot) \in \mathcal{P}_0(\mathbb{R}^n)\cap LH(\mathbb{R}^n)$,
$s_1(\cdot),s_2(\cdot) \in LH(\mathbb{R}^n)$, and $N\in \mathbb{Z}_+$.
Assume that $p_1 = p_2$, $q_1 = q_2$, and $s_1 = s_2$ on $\mathbb{R}^{n-1} \times \{0\}$.
Then, for any $t:= \{ t_I \}_{I \in\mathcal{Q}_+(\mathbb{R}^{n-1})}
\subset \mathbb{C}$ and $k\in\{-N,\dots,N\}$,
\begin{align*}
\left\| t^{(k)}  \right\|_{b^{s_1(\cdot)}_{p_1(\cdot),q_1(\cdot)}(\mathbb{R}^{n})}
\sim \left\| t^{(k)}  \right\|_{b^{s_2(\cdot)}_{p_2(\cdot),q_2(\cdot)}(\mathbb{R}^{n})},
\end{align*}
where the positive equivalence constants are independent of $t$ and $k$.
\end{lemma}
\begin{lemma}\label{wp p 1}
Let $p(\cdot),q(\cdot)\in \mathcal{P}_0(\mathbb{R}^n)\cap LH(\mathbb{R}^n)$,
$s(\cdot) \in LH(\mathbb{R}^n)$, $N\in \mathbb{Z}_+$.
Then, for any $k \in \{-N,\dots,N\}$ and
$t:= \{ t_I \}_{I \in\mathcal{Q}_+(\mathbb{R}^{n-1})}
\subset \mathbb{C}$,
\begin{align}\label{tr bound 8}
\left\| t  \right\|_{b^{\widetilde{s}(\cdot) - \frac{1}{\widetilde{p}(\cdot)}}_{\widetilde{p}(\cdot),\widetilde{q}(\cdot)}(\mathbb{R}^{n-1})}
\sim \left\| t^{(k)}  \right\|_{b^{s(\cdot)}_{p(\cdot),q(\cdot)}(\mathbb{R}^{n})},
\end{align}
where $\widetilde{p}(\cdot)$, $\widetilde{q}(\cdot)$, and $\widetilde{s}(\cdot)$
are the same as in \eqref{def wp} and
the positive equivalence constants are independent of $t$ and $k$.
\end{lemma}
\begin{proof}
For any $x:= (x',x_n) \in \mathbb{R}^n$,
let $\widehat{p}(x) := p(x',0)$,
$\widehat{q}(x) := q(x',0)$,
and $\widehat{s}(x) := s(x',0)$.
Then  $\widehat{p} = p$, $\widehat{q} = q$, and $\widehat{s} = s$
on  $\mathbb{R}^{n-1} \times \{0\}$.
Using this and Lemma \ref{wp p} yields that,
to show \eqref{tr bound 8},
we only need to prove that
\begin{align}\label{tr bound 1}
\left\| t  \right\|_{b^{\widetilde{s}(\cdot) - \frac{1}{\widetilde{p}(\cdot)}}_{\widetilde{p}(\cdot),\widetilde{q}(\cdot)}(\mathbb{R}^{n-1})}
\sim \left\| t^{(k)}  \right\|_{b^{\widehat{s}(\cdot)}_{\widehat{p}(\cdot),\widehat{q}(\cdot)}(\mathbb{R}^{n})}.
\end{align}

We first show that the left-hand side of \eqref{tr bound 1}
is not more than the right-hand one.
Similarly to the argument used in the proof of \eqref{Ajtj 12},
it is sufficient to prove that, for any $j\in\mathbb{Z}_+$,
\begin{align}\label{tr bound 11}
\left\| \delta_j^{-\frac{1}{\widetilde{q}(\cdot)}} 2^{j[\widetilde{s}(\cdot)-\frac{1}{\widetilde{p}(\cdot)}]} \sum_{I\in\mathcal{Q}_{j}(\mathbb{R}^{n-1})} |t_I| \widetilde{\mathbf{1}}_{I} \right\|_{L^{\widetilde{p}(\cdot)}(\mathbb{R}^{n-1})} \lesssim 1,
\end{align}
where
$$\delta_j := 2^{-j} + \left\| 2^{j\widehat{q}(\cdot)\widehat{s}(\cdot)} \sum_{I\in\mathcal{Q}_j(\mathbb{R}^{n-1})} \left|t_{Q(I,k)}^{(k)}\right|^{\widehat{q}(\cdot)} \left|Q(I,k)\right|^{-\frac{\widehat{q}(\cdot)}{2}} \mathbf{1}_{Q(I,k)} \right\|_{L^{\frac{\widehat{p}(\cdot)}{\widehat{q}(\cdot)}}(\mathbb{R}^n)}$$
and
$$  \sum_{j\in\mathbb{Z}_+} \left\| 2^{j\widehat{q}(\cdot)\widehat{s}(\cdot)} \sum_{I\in\mathcal{Q}_j(\mathbb{R}^{n-1})} \left|t_{Q(I,k)}^{(k)}\right|^{\widehat{q}(\cdot)} \left|Q(I,k)\right|^{-\frac{\widehat{q}(\cdot)}{2}} \mathbf{1}_{Q(I,k)} \right\|_{L^{\frac{\widehat{p}(\cdot)}{\widehat{q}(\cdot)}}(\mathbb{R}^n)} = 1. $$
To this end, by Lemma \ref{lj s},
we obtain, for any $j\in\mathbb{Z}_+$, $I \in \mathcal{Q}_j(\mathbb{R}^{n-1})$,
and $x \in Q(I,k)$,
$2^{j\widetilde{s}(x)} \sim 2^{j\widetilde{s}(I_0)}$.
From this, the disjointness of the dyadic cubes in $\mathcal{Q}_j(\mathbb{R}^{n-1})$,
and the definitions of $\widehat{p}$, $\widehat{q}$, and $\widehat{s}$,
we infer that
\begin{align}\label{tr bound 10}
&\rho_{L^{\widetilde{p}(\cdot)}(\mathbb{R}^{n-1})} \left( \delta_j^{-\frac{1}{\widetilde{q}(\cdot)}} 2^{j[\widetilde{s}(\cdot)-\frac{1}{\widetilde{p}(\cdot)}]} \sum_{I\in\mathcal{Q}_j(\mathbb{R}^{n-1})} |t_I|  \widetilde{\mathbf{1}}_{I} \right)\nonumber\\
&\quad = \int_{\mathbb{R}^{n-1}}  \delta_j^{-\frac{\widetilde{p}(x')}{\widetilde{q}(x')}} 2^{j[\widetilde{p}(x')\widetilde{s}(x')-1]} \sum_{I\in\mathcal{Q}_j(\mathbb{R}^{n-1})} |t_I|^{\widetilde{p}(x')} \left|I\right|^{-\frac{\widetilde{p}(x')}{2}} \mathbf{1}_{I}(x')\,dx'\nonumber\\
&\quad \sim \sum_{I\in\mathcal{Q}_j(\mathbb{R}^{n-1})} \int_{I}  \delta_j^{-\frac{\widetilde{p}(x')}{\widetilde{q}(x')}} 2^{j[\widetilde{p}(x')\widetilde{s}(x')-1]} \left|t_I\right|^{\widetilde{p}(x')} \left|I\right|^{-\frac{\widetilde{p}(x')}{2}}\,dx'\nonumber\\
&\quad = \sum_{I\in\mathcal{Q}_j(\mathbb{R}^{n-1})} \int_{kl(I)}^{(k+1)l(I)} \int_{I} \delta_j^{-\frac{\widetilde{p}(x')}{\widetilde{q}(x')}} 2^{j\widetilde{p}(x')\widetilde{s}(x')} \left|t_I\right|^{\widetilde{p}(x')} \left|I\right|^{-\frac{\widetilde{p}(x')}{2}} \,dx'\,dx_n\nonumber\\
&\quad = \sum_{I\in\mathcal{Q}_j(\mathbb{R}^{n-1})} \int_{Q(I,k)} \delta_j^{-\frac{\widehat{p}(x)}{\widehat{q}(x)}} 2^{j\widehat{p}(x)\widehat{s}(x)} \left|t_{Q(I,k)}^{k}\right|^{\widehat{p}(x)} \left|Q(I,k)\right|^{-\frac{\widehat{p}(x)}{2}}\,dx\nonumber\nonumber\\
&\quad = \rho_{L^{\widehat{p}(\cdot)}(\mathbb{R}^n)} \left(  \delta_j^{-\frac{1}{\widehat{q}(\cdot)}} 2^{j\widehat{s}(\cdot)} \sum_{I\in\mathcal{Q}_j(\mathbb{R}^{n-1})} \left|t_{Q(I,k)}^{(k)}\right| \widetilde{\mathbf{1}}_{Q(I,k)} \right).
\end{align}
Notice that, by the definition of $\delta_j$ and Lemma \ref{f pq}, we have
\begin{align*}
\left\| \delta_j^{-\frac{1}{\widehat{q}(\cdot)}} 2^{j\widehat{s}(\cdot)} \sum_{I\in\mathcal{Q}_j(\mathbb{R}^{n-1})} \left|t_{Q(I,k)}^{(k)}\right| \widetilde{\mathbf{1}}_{Q(I,k)} \right\|_{L^{\widehat{p}(\cdot)}(\mathbb{R}^n)} \leq 1,
\end{align*}
which, combined with Lemma \ref{f rho},
further implies that
\begin{align*}
\rho_{L^{\widehat{p}(\cdot)}(\mathbb{R}^n)} \left(  \delta_j^{-\frac{1}{\widehat{q}(\cdot)}} 2^{j\widehat{s}(\cdot)} \sum_{I\in\mathcal{Q}_j(\mathbb{R}^{n-1})} \left|t_{Q(I,k)}^{(k)}\right| \widetilde{\mathbf{1}}_{Q(I,k)} \right) \leq 1.
\end{align*}
Applying this with \eqref{tr bound 10} and Lemma \ref{f rho}
yields \eqref{tr bound 11},
which further implies that the left-hand side of \eqref{tr bound 1}
is not more than the right-hand one.

Next, we show that the right-hand side of \eqref{tr bound 1}
is not more than the left-hand one.
Similarly to the argument used in the proof of \eqref{Ajtj 12},
we only need to prove that, for any $j\in\mathbb{Z}_+$,
\begin{align}\label{tr bound 2}
\left\| \delta^{-\frac{1}{\widehat{q}(\cdot)}} 2^{j\widehat{s}(\cdot)} \sum_{I\in\mathcal{Q}_j(\mathbb{R}^{n-1})} \left|t_{Q(I,k)}^{(k)}\right| \left|Q(I,k)\right|^{-\frac12} \mathbf{1}_{Q(I,k)} \right\|_{L^{\widehat{p}(\cdot)}(\mathbb{R}^n)}
 \lesssim 1,
\end{align}
where
$$\delta_j := 2^{-j} +
\left\| 2^{j\widetilde{q}(\cdot)[\widetilde{s}(\cdot)-\frac{1}{\widetilde{p}(\cdot)}]} \sum_{I\in\mathcal{Q}_{j}(\mathbb{R}^{n-1})} |t_I|^{\widetilde{q}(\cdot)} \left|I\right|^{-\frac{\widetilde{q}(\cdot)}{2}} \mathbf{1}_{I} \right\|_{L^{\frac{\widetilde{p}(\cdot)}{\widetilde{q}(\cdot)}}(\mathbb{R}^{n-1})}$$
and
$$  \sum_{j\in\mathbb{Z}_+} \left\| 2^{j\widetilde{q}(\cdot)[\widetilde{s}(\cdot)-\frac{1}{\widetilde{p}(\cdot)}]} \sum_{I\in\mathcal{Q}_{j}(\mathbb{R}^{n-1})} |t_I|^{\widetilde{q}(\cdot)} \left|I\right|^{-\frac{\widetilde{q}(\cdot)}{2}} \mathbf{1}_{I} \right\|_{L^{\frac{\widetilde{p}(\cdot)}{\widetilde{q}(\cdot)}}(\mathbb{R}^{n-1})} = 1. $$
By the definitions of $t^{(k)}_Q$, $Q(I,k)$, $\widetilde{p}$, $\widetilde{q}$, and $\widetilde{s}$
and the disjointness of the dyadic cubes in $\mathcal{Q}_j(\mathbb{R}^{n-1})$,
we find that, for any $j\in \mathbb{Z}_+$,
\begin{align*}
&\rho_{L^{\widehat{p}(\cdot)}(\mathbb{R}^n)} \left( \delta^{-\frac{1}{\widehat{q}(\cdot)}} 2^{j\widehat{s}(\cdot)} \sum_{I\in\mathcal{Q}_j(\mathbb{R}^{n-1})} \left|t_{Q(I,k)}^{(k)}\right| \left|Q(I,k)\right|^{-\frac12} \mathbf{1}_{Q(I,k)} \right)\\
&\quad = \int_{\mathbb{R}^n} \delta^{-\frac{\widehat{p}(x)}{\widehat{q}(x)}} 2^{j\widehat{p}(x)\widehat{s}(x)} \sum_{I\in\mathcal{Q}_j(\mathbb{R}^{n-1})} \left|t_{Q(I,k)}^{(k)}\right|^{\widehat{p}(x)} \left|Q(I,k)\right|^{-\frac{\widehat{p}(x)}{2}} \mathbf{1}_{Q(I,k)}\,dx\\
&\quad = \sum_{I\in\mathcal{Q}_j(\mathbb{R}^{n-1})} \int_{Q(I,k)} \delta^{-\frac{\widehat{p}(x)}{\widehat{q}(x)}} 2^{j\widehat{p}(x)\widehat{s}(x)}  \left|t_I\right|^{\widehat{p}(x)} \left|l(I)\right|^{-(n-1)\frac{\widehat{p}(x)}{2}} \,dx\\
&\quad = \sum_{I\in\mathcal{Q}_j(\mathbb{R}^{n-1})} \int_{kl(I)}^{(k+1)l(I)} \int_{I} \delta^{-\frac{\widetilde{p}(x')}{\widetilde{q}(x')}} 2^{j\widetilde{p}(x')\widetilde{s}(x')}  \left|t_I\right|^{\widetilde{p}(x')} \left|I\right|^{-\frac{\widetilde{p'}(x)}{2}} \,dx'\,dx_n\\
&\quad =  \int_{\mathbb{R}^{n-1}} \delta^{-\frac{\widetilde{p}(x')}{\widetilde{q}(x')}} 2^{j\widetilde{p}(x')[\widetilde{s}(x')-\frac{1}{\widetilde{p}(x')}]} \sum_{I\in\mathcal{Q}_j(\mathbb{R}^{n-1})} \left|t_I\right|^{\widetilde{p}(x')} \left|l(I)\right|^{-(n-1)\frac{\widetilde{p'}(x)}{2}} \mathbf{1}_I \,dx'\\
&\quad = \rho_{L^{\widetilde{p}(\cdot)}(\mathbb{R}^{n-1})} \left( \delta_j^{-\frac{1}{\widetilde{q}(\cdot)}} 2^{j[\widetilde{s}(\cdot)-\frac{1}{\widetilde{p}(\cdot)}]} \sum_{I\in\mathcal{Q}_j(\mathbb{R}^{n-1})} |t_I|  \widetilde{\mathbf{1}}_{I} \right),
\end{align*}
which, together with the definition of $\delta_j$ and Lemma \ref{f pq},
further implies \eqref{tr bound 2} and hence \eqref{tr bound 8}.
This finishes the proof of Lemma \ref{wp p 1}.
\end{proof}

Now, we prove Theorem \ref{trace 1}.

\begin{proof}[Proof of Theorem \ref{trace 1}]
First, we show the necessity.
Suppose that the trace operator ${\mathop \mathrm{\,Tr\,}}$ is well-defined and bounded.
Then, for any fixed cube $I_0 \in \mathcal{Q}_+(\mathbb{R}^{n-1})$,
assume that $I_0\in \mathcal{Q}_{j_{I_0}}(\mathbb{R}^{n-1})$ and
$x_{I_0}'$ is the center of $I_0$,
and, for any $\vec{z} \in\mathbb{C}^m$,
let $\vec{t} := \{\vec{t}_I\}_{I\in\mathcal{Q}_+(\mathbb{R}^{n-1})}$
with
\begin{align*}
\vec{t}_{I} :=
\begin{cases}
\displaystyle \left[ l(I_0) \right]^{[\widetilde{s}(x_{I_0}')-\frac{1}{\widetilde{p}(x_{I_0}')}] - (n-1)(\frac{1}{\widetilde{p}(x_{I_0}')}-\frac12)} \vec{z} &\ \text{if}\ I= I_0, \\
\displaystyle \mathbf{0}_m &\ \text{otherwise}.
\end{cases}
\end{align*}
Now, for any given $\lambda' \in \Lambda_{n-1}$,
let $\vec{g} := \vec{t}_{I_0} \theta^{(\lambda')}_{I_0}$.
Then, from Theorems \ref{dep wave 1} and \ref{W aa 3}, \cite[Example 3.4]{ah10},
Lemma \ref{2js eq}, and the assumption that $\widetilde{s}(\cdot), \frac{1}{\widetilde{p}(\cdot)}\in LH(\mathbb{R}^{n-1})$,
we infer that
\begin{align}\label{WV k eq1}
\left\| \vec{g} \right\|_{B^{\widetilde{s}(\cdot)-\frac{1}{\widetilde{p}(\cdot)}}_{\widetilde{p}(\cdot),\widetilde{q}(\cdot)}(V,\mathbb{R}^{n-1})}
&\sim \left\| \vec{t} \right\|_{b^{\widetilde{s}(\cdot)-\frac{1}{\widetilde{p}(\cdot)}}_{\widetilde{p}(\cdot),\widetilde{q}(\cdot)}(\mathbb{A}(V),\mathbb{R}^{n-1})}
 = \left\| 2^{j[\widetilde{s}(\cdot)-\frac{1}{\widetilde{p}(\cdot)}]} |I_0|^{-\frac12} \left| A_{I_0,V} \vec{t}_{I_0} \right| \mathbf{1}_{I_0} \right\|_{L^{\widetilde{p}(\cdot)}(\mathbb{R}^{n-1})}\nonumber\\
& \sim 2^{j[\widetilde{s}(x_{I_0}') - \frac{1}{\widetilde{p}(x_{I_0}')}]} |I_0|^{-\frac12} \left| A_{I_0,V} \vec{t}_{I_0} \right| \left[l\left(I_0\right)\right]^{\frac{n-1}{\widetilde{p}(x_{I_0}')}}
\sim \left| A_{I_0,V} \vec{z} \right|.
\end{align}
Now, take $\vec{u}:= \{\vec{u}_Q\}_{Q\in\mathcal{Q}_+(\mathbb{R}^n)}$ with
\begin{align*}
\vec{u}_Q :=
\begin{cases}
\displaystyle  \left[ l(I_0) \right]^\frac12 \vec{t}_{I_0} &\ \text{if}\ Q = Q(I_0,k_0),\\
\displaystyle \mathbf{0}_m &\ \text{otherwise},
\end{cases}
\end{align*}
where $\mathbf{0}_m$ is the origin of $\mathbb{C}^m$,
and let, for any $x := (x',x_n)\in\mathbb{R}^n$,
$$ \vec{f}(x) := \vec{t}_{I_0}  \left[{\mathop \mathrm{\,Ext\,}}  \theta^{(\lambda')}_{I_0} \right](x)
 = \vec{t}_{I_0} \frac{ [l(Q)]^\frac12 }{\varphi(-k_0)} \left[ \theta^{(\lambda')} \otimes \varphi \right]_{Q(I_0,k_0)}(x), $$
where $\theta^{(\lambda')}$ and $\varphi$ are the same as in \eqref{def theta}.
Then, from this and \eqref{def tr},
it follows that, for any $x'\in\mathbb{R}^{n-1}$,
\begin{align}\label{WV k eq2}
\left( {\mathop \mathrm{\,Tr\,}} \vec{f} \right)(x') = \vec{t}_{I_0} \theta^{(\lambda')}_{I_0}(x') = g(x').
\end{align}
Observe that, for any $x:= (x',x_n) \in Q(I_0,k_0)$
with the fixed $k_0$ being the same as in Remark \ref{wave k0},
$s(x) \sim s(x_{I_0}',0) = \widetilde{s}(x_{I_0}')$
and $ \frac{1}{p(x)} \sim \frac{1}{p(x_{I_0}',0)} = \frac{1}{\widetilde{p}(x_{I_0}')} $,
where the equivalence positive constants depend on $k_0$.
Using this and Lemma \ref{est Q},
we find that
\begin{align}\label{WV eq 3}
\|\mathbf{1}_{Q(I_0,k_0)}\|_{L^{p(\cdot)}} \sim [l(I_0)]^{\frac{n}{p(x_{I_0}',0)}}
= [l(I_0)]^{\frac{n}{\widetilde{p}(x_{I_0}')}}.
\end{align}
Also, by Lemma \ref{mul wave}, we obtain
$ \{\theta^{(\lambda')} \otimes \psi, \theta^{(\lambda')} \otimes \varphi:\ \lambda'\in \Lambda_{n-1}\} $
is a subset of the Daubechies wavelets appearing in the wavelet characterization of $ B^{s(\cdot)}_{p(\cdot),q(\cdot)}(W,\mathbb{R}^n) $.
From this, \cite[Example 3.4]{ah10}, Theorems \ref{dep wave 1} and \ref{W aa 3},
and \eqref{WV eq 3},
we infer that
\begin{align*}
\left\| \vec{f} \right\|_{B^{s(\cdot)}_{p(\cdot),q(\cdot)}(W,\mathbb{R}^n)}
&\sim \left\| \vec{u} \right\|_{b^{s(\cdot)}_{p(\cdot),q(\cdot)}(\mathbb{A}(W),\mathbb{R}^n)}
 = \left\| 2^{j_{I_0}s(\cdot)} \left|Q(I_0,k_0)\right|^{-\frac12} \left| A_{Q(I_0,k_0),W} \vec{u}_{Q(I_0,k_0)} \right| \mathbf{1}_{Q(I_0,k_0)} \right\|_{L^{p(\cdot)}(\mathbb{R}^n)}\\
& \sim 2^{j_{I_0}\widetilde{s}(x_{I_0}')} \left|Q(I_0,k_0)\right|^{-\frac12} \left| A_{Q(I_0,k_0),W} \vec{u}_{Q(I_0,k_0)} \right| \left\| \mathbf{1}_{Q(I_0,k_0)} \right\|_{L^{p(\cdot)}(\mathbb{R}^n)}\\
&\sim \left[ l(I_0) \right]^{-\widetilde{s}(x_{I_0}') - \frac{n}{2} + \frac{n}{\widetilde{p}(x_{I_0}')} }\left| A_{Q(I_0,k_0),W} \vec{u}_{Q(I_0,k_0)} \right|
 \sim \left| A_{Q(I_0,k_0),W} \vec{z} \right|.
\end{align*}
Combining this with \eqref{WV k eq1}, \eqref{WV k eq2},
and the assumption that ${\mathop \mathrm{\,Tr\,}}$ is bounded,
we conclude that
\begin{align*}
\left| A_{I_0,V} \vec{z} \right|
\sim \left\| \vec{g} \right\|_{B^{\widetilde{s}(\cdot)-\frac{1}{\widetilde{p}(\cdot)}}_{\widetilde{p}(\cdot),\widetilde{q}(\cdot)}(V,\mathbb{R}^{n-1})}
\lesssim \left\| \vec{f} \right\|_{B^{s(\cdot)}_{p(\cdot),q(\cdot)}(W,\mathbb{R}^n)}
\sim \left| A_{Q(I_0,k_0),W} \vec{z} \right|,
\end{align*}
which, together with Lemma \ref{WV k}, further implies that
\begin{align*}
\left| A_{I_0,V} \vec{z} \right| \lesssim \left| A_{Q(I_0,k_0),W} \vec{z} \right|
\lesssim \left( 1+ |k_0| \right)^{\Delta_W} \left| A_{Q(I_0,0),W} \vec{z} \right|.
\end{align*}
Using this and Definition \ref{def reducing operator},
we obtain \eqref{WV nec}, which hence completes the proof of the necessity.

Next, we prove the sufficiency.
To this end, let $\vec{f} \in B^{s(\cdot)}_{p(\cdot),q(\cdot)}(W)$.
For any $\lambda \in \{0,1\}^n$
let $ \vec{u}^{(\lambda)} := \{ \vec{u}^{(\lambda)}_{Q} \}_{Q\in\mathcal{Q}_+(\mathbb{R}^n)}$,
where $ \vec{u}^{(\lambda)}_{Q} := \langle \vec{f}, \theta^{(\lambda)}_{Q} \rangle $
for any $Q\in\mathcal{Q}_+(\mathbb{R}^n)$
with the analogous definition when $\lambda = \mathbf{0}_n$.
Then let $\vec{t}^{(\lambda)} := \{ \vec{t}^{(\lambda)}_{Q} \}_{Q\in\mathcal{Q}_+(\mathbb{R}^n)}$
with $ \vec{t}^{(\lambda)}_{Q} := [l(Q)]^{-\frac12} \vec{u}^{(\lambda)}_Q $
for any $Q\in\mathcal{Q}_+(\mathbb{R}^n)$ and, moreover, for any $k\in\{-N,\dots,N\}$,
let $ \vec{t}_k^{(\lambda)} := \{ \vec{t}^{(\lambda)}_{Q(I,k)} \}_{I\in \mathcal{Q}_+(\mathbb{R}^{n-1})} $.
From the assumption that $(\widetilde{s}-\frac{1}{\widetilde{p}})_- > (n-1)(\frac{1}{\alpha_V} - 1)$,
we infer that, for any $(K_b,L_b,M_b,N_b)$-molecule $b$,
one necessary condition such that $b$ is a
$B^{\widetilde{s}(\cdot)-\frac{1}{\widetilde{p}(\cdot)}}_{\widetilde{p}(\cdot),\widetilde{q}(\cdot)}(V,\mathbb{R}^{n-1})$-synthesis molecules
is $L_b < 0$,
which holds naturally for any $\{\theta^{(\mathbf{0}_{n-1})}_{I}:\ I\in \mathcal{Q}_{0}(\mathbb{R}^{n-1})\}$
and $\{ \theta^{(\lambda')}_{I}:\ I\in \mathcal{Q}_+(\mathbb{R}^{n-1}) \}$ with $\lambda \in \Lambda_{n-1}$.
Using this, Theorem \ref{wave mole}, and the assumption that $\mathcal{N}$
satisfies \eqref{con cn} for $B^{\widetilde{s}(\cdot)-\frac{1}{\widetilde{p}(\cdot)}}_{\widetilde{p}(\cdot),\widetilde{q}(\cdot)}(V,\mathbb{R}^{n-1})$,
we find that both
$\{\theta^{(\mathbf{0}_{n-1})}_{I}:\ I\in \mathcal{Q}_{0}(\mathbb{R}^{n-1})\}$
and $\{ \theta^{(\lambda')}_{I}:\ I\in \mathcal{Q}_+(\mathbb{R}^{n-1}) \}$ with $\lambda \in \Lambda_{n-1}$
are families of $B^{\widetilde{s}(\cdot)-\frac{1}{\widetilde{p}(\cdot)}}_{\widetilde{p}(\cdot),\widetilde{q}(\cdot)}(V,\mathbb{R}^{n-1})$-synthesis molecules.
Applying this with Theorems \ref{mole com} and \ref{dep wave 1}
and the assumption that $ \{ \theta^{(\lambda)}\}_{\lambda\in \{0,1\}^{n}}$
is a family of Daubechies wavelets, we conclude that we only need to show,
for any $\lambda \in \{0,1\}^n$ and $k\in\{-N,\dots,N\}$,
\begin{align}\label{tr bound 7}
\left\| \vec{t}^{(\lambda)}_k \right\|_{b^{\widetilde{s}(\cdot)-\frac{1}{\widetilde{p}(\cdot)}}_{\widetilde{p}(\cdot),\widetilde{q}(\cdot)}(V,\mathbb{R}^{n-1})}
\lesssim \left\| \left\{ \vec{u}^{(\lambda)}_{Q(I,k)} \right\}_{I\in \mathcal{Q}_+(\mathbb{R}^{n-1})} \right\|_{b^{s(\cdot)}_{p(\cdot),q(\cdot)}(W)}.
\end{align}
Now, fix $\lambda \in \{0,1\}^n$ and $k \in \{-N,\dots,N\}$.
Then let $\widetilde{t} := \{\widetilde{t}_I\}_{I\in \mathcal{Q}_+(\mathbb{R}^{n-1})}$
with $\widetilde{t}_I := |A_{Q(I,k),W} \vec{t}_{Q(I,k)}|$ for any $I\in \mathcal{Q}_+(\mathbb{R}^{n-1})$
and let $\widetilde{u}_k := \{\widetilde{u}_{Q}\}_{Q\in \mathcal{Q}_+(\mathbb{R}^{n})}$
with
\begin{align*}
\widetilde{u}_{Q} :=
\begin{cases}
\displaystyle |A_{Q,W}\vec{u}^{(\lambda)}_{Q}| &\ \text{if}\ Q = Q(I,k), \\
\displaystyle 0 &\ \text{otherwise}.
\end{cases}
\end{align*}
Observe that $\widetilde{u}_{Q(I,k)} = t^{(k)}_{Q(I,k)}$,
where $t^{(k)}$ is the same as in \eqref{def tk}.
Using Lemma \ref{WV k} and the inequality that $(1+|k|)^{\Delta_W} \leq (1+|N|)^{\Delta_W}$
for any $|k| \leq N$,
we find that, for any $I\in \mathcal{Q}_+(\mathbb{R}^{n-1})$,
$ |A_{I,V}\vec{t}^{(\lambda)}_{Q(I,k)}| \lesssim |A_{Q(I,k),W}\vec{t}^{(\lambda)}_{Q(I,k)}|$.
Combining these with Lemma \ref{wp p 1},
we conclude that
\begin{align*}
\left\| \vec{t}^{(\lambda)}_k \right\|_{b^{\widetilde{s}(\cdot)-\frac{1}{\widetilde{p}(\cdot)}}_{\widetilde{p}(\cdot),\widetilde{q}(\cdot)}(V,\mathbb{R}^{n-1})}
\lesssim \left\| \widetilde{t} \right\|_{b^{\widetilde{s}(\cdot)-\frac{1}{\widetilde{p}(\cdot)}}_{\widetilde{p}(\cdot),\widetilde{q}(\cdot)}(\mathbb{R}^{n-1})}
\lesssim \left\| \widetilde{t}^{(k)} \right\|_{b^{s(\cdot)}_{p(\cdot),q(\cdot)}(\mathbb{R}^{n})}
 = \left\| \widetilde{u} \right\|_{b^{s(\cdot)}_{p(\cdot),q(\cdot)}(\mathbb{R}^{n})},
\end{align*}
which completes the proof of \eqref{tr bound 7}.
Thus, we have ${\mathop \mathrm{\,Tr\,}} f$ converges in $[\mathcal{S}'(\mathbb{R}^{n-1})]^m$
and hence ${\mathop \mathrm{\,Tr\,}}$ is well-defined.
Moreover, using Lemma \ref{seq norm} and Theorem \ref{mole com},
we conclude that
\begin{align*}
\left\| {\mathop \mathrm{\,Tr\,}} \vec{f} \right\|_{B^{s(\cdot)-\frac{1}{p(\cdot)}}_{p(\cdot),q(\cdot)}(V,\mathbb{R}^{n-1})}
&\lesssim \sum_{\lambda\in\{0,1\}^n} \sum_{k=-N}^{N} \left\| \vec{t}^{(\lambda)}_k \right\|_{b^{s(\cdot)-\frac{1}{p(\cdot)}}_{p(\cdot),q(\cdot)}(V,\mathbb{R}^{n-1})}
\lesssim \left\| \vec{f} \right\|_{B^{s(\cdot)}_{p(\cdot),q(\cdot)}(W,\mathbb{R}^n)},
\end{align*}
which further implies that ${\mathop \mathrm{\,Tr\,}}$ is bounded.
This finishes the proof of Theorem \ref{trace 1}.
\end{proof}

Next, we establish the extension theorem for matrix-weighted variable Besov spaces.
\begin{theorem}\label{WV ext}
Let $p(\cdot), q(\cdot)\in \mathcal{P}_0(\mathbb{R}^n) \cap LH(\mathbb{R}^n)$, $s(\cdot)\in LH(\mathbb{R}^n)$,
$W\in \mathscr{A}_{p(\cdot),\infty}(\mathbb{R}^n)$, and $V\in \mathscr{A}_{\widetilde{p}(\cdot),\infty}({\mathbb{R}}^{n-1}) $.
If, for any $I\in \mathcal{Q}_+(\mathbb{R}^{n-1})$ and $\vec{z} \in \mathbb{C}^m$,
\begin{align}\label{WV nec ext}
\frac{1}{\|\mathbf{1}_{Q(I,0)}\|_{L^{p(\cdot)}(\mathbb{R}^n)}} \left\|\, \left|W(\cdot) \vec{z}\right| \mathbf{1}_{Q(I,0)}\right\|_{L^{p(\cdot)}(\mathbb{R}^n)}
\lesssim \frac{1}{\|\mathbf{1}_I\|_{L^{\widetilde{p}(\cdot)}(\mathbb{R}^{n-1})}} \left\|\, \left|V(\cdot) \vec{z}\right| \mathbf{1}_{I}\right\|_{L^{\widetilde{p}(\cdot)}(\mathbb{R}^{n-1})},
\end{align}
where the implicit positive constant is independent of $I$ and $\vec{z}$,
then the extension operator
$$ {\mathop \mathrm{\,Ext\,}}:\ B^{\widetilde{s}(\cdot)-\frac{1}{\widetilde{p}(\cdot)}}_{\widetilde{p}(\cdot),\widetilde{q}(\cdot)}(V,\mathbb{R}^{n-1}) \rightarrow B^{s(\cdot)}_{p(\cdot),q(\cdot)}(W,\mathbb{R}^n) $$
defined as in \eqref{def ext} is well-defined and bounded.
Moreover, if $\widetilde{s}(\cdot)$ satisfies $(\widetilde{s}-\frac{1}{\widetilde{p}})_- > (n-1)(\frac{1}{\alpha_V} - 1) $
and \eqref{WV nec} holds, then ${\mathop \mathrm{\,Tr\,}} \circ {\mathop \mathrm{\,Ext\,}}$ is the identity on
$B^{\widetilde{s}(\cdot)-\frac{1}{\widetilde{p}(\cdot)}}_{\widetilde{p}(\cdot),\widetilde{q}(\cdot)}(V,\mathbb{R}^{n-1})$.
\end{theorem}
\begin{remark}
We note that, when $p(\cdot)$, $q(\cdot)$, and $s(\cdot)$ are constant exponents and $W\in \mathscr{A}_p$,
the range of $s$ in Theorem \ref{WV ext} in this case coincides with the corresponding one in \cite[Theorem 5.10]{bhyy23 3}.
\end{remark}

To prove Theorem \ref{WV ext},
we  need the following result, which is the converse estimate of Lemma \ref{WV k}.
\begin{lemma}\label{W Wk 1}
Let $p(\cdot), q(\cdot)\in \mathcal{P}_0(\mathbb{R}^n)\cap LH(\mathbb{R}^n)$,
$s(\cdot)\in LH(\mathbb{R}^n)$, $W\in \mathscr{A}_{p(\cdot),\infty}(\mathbb{R}^n)$,
and $V\in \mathscr{A}_{p(\cdot),\infty}({\mathbb{R}}^{n-1}) $.
If \eqref{WV nec ext} holds,
then, for any $I\in\mathcal{Q}_+(\mathbb{R}^{n-1})$, $k\in\mathbb{Z}$,
and $\vec{z}\in\mathbb{C}^m$,
$$ \left| A_{Q(I,k),W} \vec{z} \right| \lesssim \left( 1+ |k| \right)^{\Delta_W} \left| A_{I,V} \vec{z} \right|, $$
where the implicit positive constant is independent of $\vec{z}$, $I$, and $k$.
\end{lemma}
\begin{proof}
For any $k\in\mathbb{Z}$, by the definition of $Q(I,k)$,
we obtain $l(Q(I,0)) = l(Q(I,k)) = l(I) $ and consequently,
for any $x\in Q(I,0)$ and $y\in Q(I,k)$,
$\frac{|x - y|}{l(I)} \lesssim |k|$.
From this and Lemma \ref{QP5} with $ Q := Q(I,k)$, $R := Q(I,0)$, and $\Delta := \Delta_W$,
we deduce that, for any $k\in \mathbb{Z}$,
\begin{align*}
\left\|A_{Q(I,k),W}A_{Q(I,0),W}^{-1}\right\|
&\lesssim \max\left\{ \left[ \frac{l(Q(I,0))}{l(Q(I,k))} \right]^{d_1},
\left[ \frac{l(Q(I,k))}{l(Q(I,0))} \right]^{d_2} \right\}\left[ 1+ \frac{|x - y|}{l(Q(I,k))\vee l(Q(I,0))} \right]^{\Delta_W}\\
&\lesssim \left( 1 + |k| \right)^{\Delta_W} .
\end{align*}
Using this, Definition \ref{def reducing operator}, and \eqref{WV nec ext},
we obtain, for any $\vec{z} \in \mathbb{C}^m$ and $k\in\mathbb{Z}$,
\begin{align*}
\left|A_{Q(I,k),W}\vec{z}\right|
&\leq \left\|A_{Q(I,k),W}A_{Q(I,0),W}^{-1}\right\| \left|A_{Q(I,0),W}\vec{z}\right|
\lesssim \left( 1 + |k| \right)^{\Delta_W}\left|A_{Q(I,0),W}\vec{z}\right|\\
&\sim \frac{1}{\|\mathbf{1}_{Q(I,0)}\|_{L^{p(\cdot)}(\mathbb{R}^n)}} \left\|\,\left| W(\cdot) \vec{z}\right| \mathbf{1}_{Q(I,0)}\right\|_{L^{p(\cdot)}(\mathbb{R}^n)}
\lesssim \frac{1}{\|\mathbf{1}_I\|_{L^{\widetilde{p}(\cdot)}(\mathbb{R}^{n-1})}} \left\|\, \left|V(\cdot) \vec{z}\right| \mathbf{1}_{I} \right\|_{L^{\widetilde{p}(\cdot)}(\mathbb{R}^{n-1})}\\
&\sim \left|A_{I,V} \vec{z}\right|.
\end{align*}
This finishes the proof of Lemma \ref{W Wk 1}.
\end{proof}
Now, we give the proof of Theorem \ref{WV ext}.
\begin{proof}[Proof of Theorem \ref{WV ext}]
We first show that ${\mathop \mathrm{\,Ext\,}} \vec{f}$ is well-defined and ${\mathop \mathrm{\,Ext\,}}$
is a bounded linear operator.
For any $\lambda' \in \{0,1\}^{n-1}$,
let $\vec{t}^{(\lambda')} := \{\vec{t}^{(\lambda')}_I\}_{I\in \mathcal{Q}_+(\mathbb{R}^{n-1})}$
with $\vec{t}^{(\lambda')}_I := \langle \vec{f}, \theta^{(\lambda')}_I \rangle$
and, when $\lambda' = \mathbf{0}_{n-1}$,
$ \vec{t}^{(\mathbf{0}_{n-1})}_I:= 0 $ if $I\notin \mathcal{Q}_{0}$.
For any $\lambda' \in \{0,1\}^{n-1}$,
let $\vec{u}^{(\lambda')} := \{\vec{u}^{(\lambda')}_Q\}_{Q\in \mathcal{Q}_+(\mathbb{R}^{n})}$
with
\begin{align*}
\vec{u}^{(\lambda')}_Q :=
\begin{cases}
\displaystyle [l(I)]^\frac12 \vec{t}^{(\lambda')}_I &\text{if}\ Q = Q(I,k_0)\ \text{for some}\ I\in\mathcal{Q}_+(\mathbb{R}^{n-1}),\\
\displaystyle \mathbf{0}_m &\text{otherwise},
\end{cases}
\end{align*}
where $k_0$ is the same as in Remark \ref{wave k0}.
Thus, by this and \eqref{def ext 1},
we obtain, for any $\lambda' \in \{0,1\}^{n-1}$, $I\in\mathcal{Q}_+(\mathbb{R}^{n-1})$, and $x\in\mathbb{R}^n$,
\begin{align*}
\left\langle \vec{f}, \theta^{(\lambda')}_I \right\rangle {\mathop \mathrm{\,Ext\,}} \theta^{(\lambda')}_I(x)
 = \frac{1}{\varphi(-k_0)} \vec{u}^{(\lambda')}_{Q(I,k_0)}
\left[ \theta^{(\lambda')} \otimes \varphi \right]_{Q(I,k_0)}(x).
\end{align*}
Hence, using this
and \eqref{def ext}, we find that
\begin{align*}
{\mathop \mathrm{\,Ext\,}} \vec{f} &= \frac{1}{\varphi(-k_0)}\sum_{I\in\mathcal{Q}_0(\mathbb{R}^{n-1})} \vec{u}^{(\mathbf{0}_{n-1})}_{Q(I,k_0)}
\left[ \theta^{(\mathbf{0}_{n-1})} \otimes \varphi \right]_{Q(I,k_0)}\nonumber \\
&\quad + \frac{1}{\varphi(-k_0)}\sum_{\lambda' \in \Lambda_{n-1}} \sum_{I\in\mathcal{Q}_+(\mathbb{R}^{n-1})} \vec{u}^{(\lambda')}_{Q(I,k_0)}
\left[ \theta^{(\lambda')} \otimes \varphi \right]_{Q(I,k_0)}.
\end{align*}
This, together with Theorem \ref{dep wave 1} and the fact that
$\{[\theta^{\lambda'}\otimes \varphi]_{Q(I,k_0)}\}_{I\in\mathcal{Q}_+(\mathbb{R}^{n-1})}$
is a subset of
$\{ \theta^{(\mathbf{0})}_Q:\ Q\in \mathcal{Q}_0 \}\cup
\{ \theta_Q^{(\lambda)}:\ Q\in\mathcal{Q}_+, \lambda \in \Lambda_n\}$,
further implies that, to prove ${\mathop \mathrm{\,Ext\,}} \vec{f}$ converges in $[\mathcal{S}'(\mathbb{R}^{n-1})]^{m}$,
we only need to show that, for any $\lambda' \in \{0,1\}^{n-1}$,
\begin{align}\label{ext eq 3}
\left\| \vec{u}^{(\lambda')} \right\|_{b^{s(\cdot)}_{p(\cdot),q(\cdot)}(W,\mathbb{R}^n)}
\lesssim \left\| \vec{t}^{(\lambda')} \right\|_{b^{\widetilde{s}(\cdot)-\frac{1}{\widetilde{p}(\cdot)}}_{\widetilde{p}(\cdot),\widetilde{q}(\cdot)}(V,\mathbb{R}^{n-1})}.
\end{align}
Now, fixing $\lambda' \in \{0,1\}^{n-1}$,
let $ \widetilde{t} := \{\widetilde{t}_I\}_{I\in\mathcal{Q}_+(\mathbb{R}^{n-1})}$
with $ \widetilde{t}_I := |A_{I,V}\vec{t}^{(\lambda')}_I| $
and let $\widetilde{u} := \{\widetilde{u}_{Q}\}_{Q\in \mathcal{Q}_+(\mathbb{R}^{n})}$
with
\begin{align*}
\widetilde{u}_{Q} :=
\begin{cases}
\displaystyle |A_{Q}\vec{u}^{(\lambda)}_{Q}| &\ \text{if}\ Q = Q(I,k_0), \\
\displaystyle 0 &\ \text{otherwise}.
\end{cases}
\end{align*}
Observe that $ \widetilde{t}^{(k_0)} = \widetilde{u} $,
where $\widetilde{t}^{(k_0)}$ is as in \eqref{def tk}.
Using this and Lemmas \ref{wp p 1} and \ref{W Wk 1},
we find that
\begin{align}\label{ext eq 4}
\left\| \vec{t}^{(\lambda')} \right\|_{b^{\widetilde{s}(\cdot)-\frac{1}{\widetilde{p}(\cdot)}}_{\widetilde{p}(\cdot),\widetilde{q}(\cdot)}(V,\mathbb{R}^{n-1})}
\lesssim \left\| \widetilde{t} \right\|_{b^{\widetilde{s}(\cdot)-\frac{1}{\widetilde{p}(\cdot)}}_{\widetilde{p}(\cdot),\widetilde{q}(\cdot)}(\mathbb{R}^{n-1})}
\lesssim \left\| \widetilde{t}^{(k)} \right\|_{b^{s(\cdot)}_{p(\cdot),q(\cdot)}(\mathbb{R}^{n})}
 = \left\| \widetilde{u} \right\|_{b^{s(\cdot)}_{p(\cdot),q(\cdot)}(\mathbb{R}^{n})},
\end{align}
which completes the proof of \eqref{ext eq 3}.
Thus, from this, we infer that ${\mathop \mathrm{\,Ext\,}} \vec{f}$ converges in $(\mathcal{S}')^m$.
Furthermore, using \eqref{ext eq 4}, \eqref{ext eq 3}, and Theorem \ref{dep wave 1},
we conclude that
$$ \left\| {\mathop \mathrm{\,Ext\,}} \vec{f} \right\|_{B^{s(\cdot)}_{p(\cdot),q(\cdot)}(W,\mathbb{R}^n)}
\lesssim \left\| \vec{f} \right\|_{B^{s(\cdot)-\frac{1}{p(\cdot)}}_{p(\cdot),q(\cdot)}(V,\mathbb{R}^{n-1})}, $$
that is,  ${\mathop \mathrm{\,Ext\,}}$ is bounded.

Finally, by the definitions of ${\mathop \mathrm{\,Tr\,}}$ and ${\mathop \mathrm{\,Ext\,}}$,
we conclude that, for any $\vec{f} \in B^{s(\cdot)-\frac{1}{p(\cdot)}}_{p(\cdot),q(\cdot)}(V,\mathbb{R}^{n-1})$,
\begin{align*}
\left( {\mathop \mathrm{\,Tr\,}} \circ {\mathop \mathrm{\,Ext\,}} \right) \vec{f} = &\sum_{I \in\mathcal{Q}_0(\mathbb{R}^{n-1})} \left\langle \vec{f}, \theta^{(\mathbf{0}_{n-1})}_I \right\rangle \left( {\mathop \mathrm{\,Tr\,}} \circ {\mathop \mathrm{\,Ext\,}} \right) \theta^{(\mathbf{0}_{n-1})}_I\\
&\quad +  \sum_{\lambda' \in \Lambda_{n-1} } \sum_{I\in \mathcal{Q}_+(\mathbb{R}^{n-1})} \left\langle \vec{f}, \theta^{(\lambda')}_I \right\rangle \left( {\mathop \mathrm{\,Tr\,}} \circ {\mathop \mathrm{\,Ext\,}} \right) \theta^{(\lambda')}_I\\
& = \sum_{I \in\mathcal{Q}_0(\mathbb{R}^{n-1})} \left\langle \vec{f}, \theta^{(\mathbf{0}_{n-1})}_I \right\rangle  \theta^{(\mathbf{0}_{n-1})}_I + \sum_{\lambda' \in \Lambda_{n-1} } \sum_{I\in \mathcal{Q}_+(\mathbb{R}^{n-1})} \left\langle \vec{f}, \theta^{(\lambda')}_I \right\rangle \theta^{(\lambda')}_I
 = \vec{f}
\end{align*}
in $[\mathcal{S}'(\mathbb{R}^{n-1})]^m$.
This finishes the proof of Theorem \ref{WV ext}.
\end{proof}
\begin{remark}
When $p$, $q$, and $s$ are all constant exponents,
Theorems \ref{trace 1} and \ref{WV ext} in this case reduce to,
respectively,  \cite[Theorems 5.6 and 5.10]{bhyy23 3}
with $\tau = 0$ therein.
For the unweighted variable Besov space,
Theorem \ref{trace 1} in this case coincides with
\cite[{\rm (a)} of Theorem 1{\rm (i)}]{n16},
and Theorems \ref{trace 1} and \ref{WV ext} are new even
in the case where $W$ is a scalar variable weight.
\end{remark}

\subsection{Calder\'on--Zygmund Operators}\label{sec CZ}

In this subsection, under some mild assumptions
we establish the boundedness of Calder\'on--Zygmund operators
on $B^{s(\cdot)}_{p(\cdot),q(\cdot)}(W)$
(see, for example, \cite{t91,bhyy23 3}).

The \emph{Calder\'on--Zygmund operator $T$} is formally given by the expression of the form
$$ Tf(x) := \int_{\mathbb{R}^n} \mathcal{K}(x,y) f(y)\,dy, $$
where the \emph{kernel}
$$\mathcal{K}:\ \Delta := \{(x,y) \in \mathbb{R}^n \times \mathbb{R}^n:\ x\neq y\} \to \mathbb{C}$$
is a measurable mapping satisfying size, cancellation, and smoothness conditions
like those of the Hilbert kernel $\mathcal{K}(x,y) := \frac{1}{x-y}$ when $n = 1$
or the Riesz kernels $\mathcal{K}_i(x,y) := \frac{x_i-y_i}{|x-y|^{n+1}}$ with $i\in\{1,\dots,n\}$
when $n\geq 2$.
We refer to \cite{bhyy23 3} for the construction of Calder\'on--Zygmund operators
in Bseov spaces.
Let $\mathcal{D}$ be the set of all infinitely differentiable functions
on $\mathbb{R}^n$ with compact support,
equipped with the classical inductive limit topology,
and $\mathcal{D}'$ be the space of all continuous linear functionals on $\mathcal{D}$,
equipped with the weak-$\ast$ topology
(see, for instance, \cite[Chapters 2.2 and 2.3]{g14} for more details).
We note that, if the Calder\'on--Zygmund operator $T\in \mathcal{L}(\mathcal{S}, \mathcal{S}')$, \
then, by the well-known Schwartz kernel theorem,
we find that there exists $\mathcal{K}\in \mathcal{S}'(\mathbb{R}^n\times \mathbb{R}^n)$
such that,
for any $\varphi,\phi\in \mathcal{S}$,
$$ \left\langle T\varphi ,\phi \right\rangle = \left\langle \mathcal{K},\varphi \otimes \phi \right\rangle, $$
where $\mathcal{K}$ is called the \emph{Schwartz kernel} of $T$.

The following definition is about some basic assumptions of $\mathcal{K}$.
\begin{definition}
Let $T\in \mathcal{L}(\mathcal{S},\mathcal{S}')$ and $\mathcal{K}\in \mathcal{S}'(\mathbb{R}^n\times \mathbb{R}^n) $ be the Schwartz kernel of $T$.
\begin{itemize}
\item[{\rm (i)}] The operator $T$ is said to satisfy the
\emph{weak boundedness property}, denoted by $T\in \rm{WBP}$,
if, for any bounded subset $\mathcal{B}$ of $\mathcal{D}$,
there exists a positive constant $C$, depending on $\mathcal{B}$,
such that, for any $\varphi,\eta\in\mathcal{B}$, $h\in\mathbb{R}^n$, and $r\in (0,\infty)$,
$$ \left| \left\langle T\left(\varphi\left( \frac{\cdot-h}{r} \right) \right),\eta\left( \frac{\cdot-h}{r} \right)  \right\rangle \right|
\leq Cr^n. $$
\item[{\rm (ii)}] For any $l\in (0,\infty)$, we say $T$ has a \emph{Calder\'on--Zygmund kernel}
of order $l$, denoted by $T\in {\rm CZO}(l)$,
if the restriction of $\mathcal{K}$ on the set $\{(x,y)\in \mathbb{R}^n\times\mathbb{R}^n:\ x\neq y\}$
is a continuous function with continuous partial derivatives in the $x$ variable up to order
$ \lfloor\!\lfloor l \rfloor\!\rfloor $,
where $\lfloor\!\lfloor l \rfloor\!\rfloor$ is as in \eqref{def int},
satisfying that there exists a positive constant $C$ such that
\begin{itemize}
\item[{\rm (a)}] for any $\gamma \in\mathbb{Z}_+^n$ with
$ |\gamma| \leq \lfloor\!\lfloor l \rfloor\!\rfloor $ and
for any $x,y\in\mathbb{R}^n$ with $x\neq y$,
$ | \partial_x^\gamma \mathcal{K}(x,y) | \leq C |x-y|^{-n-|\gamma|} $,
\item[{\rm (b)}]for any $\gamma\in\mathbb{Z}_+^n$ with $|\gamma| = \lfloor\!\lfloor l \rfloor\!\rfloor$
and for any $x,y,h\in\mathbb{R}^n$ with $|h| < \frac12 |x-y|$,
$$ \left| \partial_x^\gamma \mathcal{K}(x,y) - \partial_x^\gamma \mathcal{K}(x+h,y) \right| \leq C \left|x-y\right|^{-n-l}|h|^{l^{\ast\ast}}, $$
where $l^{\ast \ast}$ is the same as in \eqref{def int}.
\end{itemize}
For any $l\in(-\infty,0]$, we interpret $T\in{\rm CZO}(l)$ as a void condition.
\end{itemize}
\end{definition}
\begin{remark}
By the definition of ${\rm CZO}(l)$,
it is obvious that, for any $l_1,l_2\in\mathbb{R}$ with $l_1<l_2$,
$ {\rm CZO}(l_1) \subset {\rm CZO}(l_2) $.
\end{remark}
To define the action of Calder\'on--Zygmund operators on polynomials,
we first recall the following result,
which is a special case of \cite[Lemma 2.2.12]{t91}.
\begin{lemma}\label{def pol}
Let $l\in (0,\infty)$ and $T\in {\rm CZO}(l)$,
and let $\{\phi_j\}_{j\in\mathbb{N}} \subset \mathcal{D}$ be a sequence of functions
such that $\sup_{j\in\mathbb{N}} \|\phi_j\|_{L^\infty} < \infty $
and, for any compact set $K$ in $\mathbb{R}^n$,
there exists $j_K\in\mathbb{N}$ such that,
for any $j\geq j_k$ and $x\in K$,
$\phi_j(x) = 1$.
Then the limit
\begin{align}\label{def Tfg}
\left\langle T(f),g \right\rangle := \lim\limits_{j\rightarrow \infty} \left\langle T\left( \phi_jf \right),g \right\rangle
\end{align}
exists for any polynomials $f(y) := y^\gamma$ with $|\gamma| \leq \lfloor\!\lfloor l \rfloor\!\rfloor $
and for any $g\in \mathcal{D}_{\lfloor\!\lfloor l \rfloor\!\rfloor}$, where
$$ \mathcal{D}_{\lfloor\!\lfloor l \rfloor\!\rfloor} := \left\{ g\in\mathcal{D}:\ \int_{\mathbb{R}^n} x^\gamma g(x) = 0
\ \text{for any}\ \gamma \in\mathbb{Z}_+^n \ \text{with}\ |\gamma| \leq \lfloor\!\lfloor l \rfloor\!\rfloor \right\}, $$
and \eqref{def Tfg} is independent of the choice of $\{\phi_j\}_{j\in\mathbb{N}}$.
\end{lemma}
Based on Lemma \ref{def pol}, we give the following definition.
\begin{definition}
Let $l\in (0,\infty)$.
For any $T\in {\rm CZO}(l)$ and $f(y) := y^\gamma$ with $y\in\mathbb{R}^n$
and $|\gamma| \leq \lfloor\!\lfloor l \rfloor\!\rfloor$,
we define $T(y^\gamma) = Tf : \mathcal{D}_{\lfloor\!\lfloor l \rfloor\!\rfloor} \rightarrow \mathbb{C}$
by \eqref{def Tfg}.
\end{definition}

\begin{definition}
Let $E,F\in\mathbb{R}$, $T\in \mathcal{L}(\mathcal{S},\mathcal{S}')$, and $\mathcal{K}\in \mathcal{S}'(\mathbb{R}^n\times \mathbb{R}^n)$ be its Schwartz kernel.
We say that $T\in {\rm CZK}^0(E;F)$ if the restriction of $\mathcal{K}$ to $\{(x,y)\in \mathbb{R}^n\times\mathbb{R}^n:\ x\neq y\}$
is a continuous function such that, for any $\alpha \in \mathbb{Z}_+^n$ with $|\alpha| \leq \lfloor\!\lfloor E \rfloor\!\rfloor$,
$\partial^\alpha_x \mathcal{K}$ exists as a continuous function
and there exists a positive constant $C$
such that $ | \partial^\alpha_x \mathcal{K}(x,y) | \leq C |x-y|^{-n-|\alpha|} $
for any $x,y\in\mathbb{R}^n$ with $x\neq y$,
$$ \left| \partial^\alpha_x \mathcal{K}(x+u,y) - \partial^\alpha_x \mathcal{K}(x,y) \right|
\leq C |u|^{E^{\ast\ast}}|x-y|^{-n-E} $$
for any $\alpha\in\mathbb{Z}_+^n$ with $|\alpha| = \lfloor\!\lfloor E \rfloor\!\rfloor$
and $x,y,u\in \mathbb{R}^n$ with $|u| < \frac12 |x-y|$,
and $$ \left| \partial^\alpha_x \partial^\beta_y \mathcal{K}(x,y) - \partial^\alpha_x\partial^\beta_y \mathcal{K}(x,y+v) \right|
\leq C |v|^{(F-|\alpha|)^{\ast\ast}}|x-y|^{-n-|\alpha|-(F-|\alpha|)} $$
for any $\alpha,\beta\in\mathbb{Z}_+^n$ with $|\alpha| \leq \lfloor\!\lfloor E \rfloor\!\rfloor$
and $|\beta| = \lfloor\!\lfloor F-|\alpha| \rfloor\!\rfloor$ and for any $x,y,v\in\mathbb{R}^n$ with $|v|<\frac12 |x-y|$.

We say that $T\in {\rm CZK}^1(E;F)$ if $T\in {\rm CZK}^0(E;F)$ and,
in addition, for any $\alpha,\beta\in\mathbb{Z}_+^n$ with $|\alpha| =\lfloor\!\lfloor E \rfloor\!\rfloor$
and $|\beta| = \lfloor\!\lfloor F-E \rfloor\!\rfloor$ and for any $x,y,u, v\in\mathbb{R}^n$ with $|u|+|v|<\frac12 |x-y|$,
\begin{align}\label{CZK1}
&\left| \partial^\alpha_x \partial^\beta_y \mathcal{K}(x,y) - \partial^\alpha_x\partial^\beta_y \mathcal{K}(x+u,y)
- \partial^\alpha_x\partial^\beta_y \mathcal{K}(x,y+v) + \partial^\alpha_x\partial^\beta_y \mathcal{K}(x+u,y+v)\right|\nonumber\\
&\quad \leq C |u|^{E^{\ast\ast}}|v|^{(F-E)^{\ast\ast}}|x-y|^{-n-E-(F-E)}.
\end{align}
We write just $T\in {\rm CZK}(E;F)$ if the parameter values are such that \eqref{CZK1}
is void and hence $T\in {\rm CZK}^0(E;F)$ and $T\in {\rm CZK}^1(E;F)$ coincide.
\end{definition}
Indeed, it is obvious that \eqref{CZK1} is void unless $F > E >0$.
\begin{definition}\label{def CZK2}
Let $\sigma\in \{0,1\}$ and $E,F,G,H\in\mathbb{R}$.
We say $T\in {\rm lnCZO}^\sigma(E,F,G,H)$
if $T\in \mathcal{L}(\mathcal{S},\mathcal{S}')$ and its Schwartz kernel $\mathcal{K}\in \mathcal{S}'(\mathbb{R}^n\times \mathbb{R}^n)$ satisfies
\begin{itemize}
\item[{\rm (i)}] $T\in {\rm WBP}$,
\item[{\rm (ii)}] $\mathcal{K}\in {\rm CZK}^\sigma(E;F)$,
\item[{\rm (iii)}] $T(y^\gamma) = 0$ for any $\gamma\in\mathbb{Z}_+^n$ with $|\gamma| \leq G$,
\item[{\rm (iv)}]  $T^\ast(x^\theta) = 0$ for any $\theta\in\mathbb{Z}_+^n$ with $|\theta| \leq H$,
where $T^\ast$ denotes the adjoint operator of $T$ on $L^2$,
\item[{\rm (v)}] there exists a positive constant $C$ such that, for any $\alpha\in\mathbb{Z}_+^n$
with $|\alpha| \leq \lfloor\!\lfloor E\rfloor\!\rfloor+1$ and any $x,y\in\mathbb{R}^n$ with $|x-y| >1$,
$ | \partial^\alpha_x\mathcal{K}(x,y) |\leq C|x-y|^{-(n+F)}. $
\end{itemize}
\end{definition}
\begin{remark}
In Definition \ref{def CZK2},
if we remove the condition (v) of ${\rm lnCZO}^\sigma(E,F,G,H)$,
then ${\rm lnCZO}^\sigma(E,F,G,H)$ reduces to ${\rm CZO}^\sigma(E,F,G,H)$,
which was defined in \cite[Definition 6.17]{bhyy23 3}.
\end{remark}

Now, we recall the definition of smooth atoms.
Differently from $(r,L,M)$-atoms as in Definition \ref{def atom}
with only finite times differentiability,
the following smooth atoms are infinitely differentiable.
\begin{definition}\label{def sm atom}
Let $L,N\in\mathbb{R}$.
A function $a_Q\in C^{\infty}_{\rm c}$ is called an \emph{$(L,M)$-atom} on cube $Q$
if
\begin{itemize}
\item[{\rm (i)}] ${\mathop\mathrm{\,supp\,}} a_Q \subset 3Q$,
\item[{\rm (ii)}] $\int_{\mathbb{R}^n} x^\gamma a_Q(x)\,dx = 0$ if $l(Q) < 1$ and $\gamma\in \mathbb{Z}_+^n$ with $|\gamma| \leq L$,
\item[{\rm (iii)}] $|D^\gamma a_Q(x)| \leq |Q|^{-\frac12-\frac{|\gamma|}{n}}$
for any $x\in\mathbb{R}^n$ and $\gamma\in\mathbb{Z}_+^n$ with $|\gamma| \leq N$.
\end{itemize}
\end{definition}

Applying an argument similar to that used in the proof
of \cite[Proposition 6.5]{bhyy23 3}
with \cite[Corollary 3.15]{bhyy23 3} replaced by Lemma \ref{mbpsi abo},
we obtain the following
conclusion; we omit the details here.

\begin{lemma}\label{CZ ext}
Let $p(\cdot),q(\cdot)\in \mathcal{P}_0\cap LH$, $s(\cdot)\in LH$,
$W\in \mathscr{A}_{p(\cdot),\infty}$, and $L,N\in (0,\infty)$.
If $T\in \mathcal{L}(\mathcal{S},\mathcal{S}')$ maps $(L,N)$-atoms
to $b^{s(\cdot)}_{p(\cdot),q(\cdot)}(W)$-synthesis molecules,
then there exists an operator
$\widetilde{T} \in \mathcal{L}(B^{s(\cdot)}_{p(\cdot),q(\cdot)}(W), B^{s(\cdot)}_{p(\cdot),q(\cdot)}(W))$
that agrees with $T$ on $(\mathcal{S})^m$.
\end{lemma}

Observing that Definition \ref{def sm atom} coincides with \cite[Definition 6,14]{bhyy24}
and Definition \ref{def mole} coincides with \cite[Definition 5.1]{bhyy24},
we can apply the following two lemmas, which are exactly
\cite[Proposition 6.19]{bhyy23 3} and \cite[Proposition 6.24]{bhyy24}, respectively,
and show that the Calder\'on--Zygmund operator maps atoms into molecules.

\begin{lemma}\label{CZK lem 1}
Let $\sigma \in \{0,1\}$, $E,F,G,H,K,L,M,N\in\mathbb{R}$,
and $Q\in \mathcal{Q}_+$.
Suppose that $T\in {\rm CZO}^\sigma(E,F,G,H)$.
Then $T$ maps sufficiently regular atoms on $Q$
to $(K,L,M,N)$-molecules on $Q$ provided that
\begin{align*}
\sigma \geq \mathbf{1}_{(0,\infty)}(N),\quad
\begin{cases}
\displaystyle E\geq N, \\
\displaystyle E > \lfloor N \rfloor^{(+)},
\end{cases}\
\begin{cases}
\displaystyle F\geq (K\wedge M)-n, \\
\displaystyle F > \lfloor L \rfloor,
\end{cases}\
G\geq \lfloor N \rfloor^{(+)},
\end{align*}
and $H\geq \lfloor L \rfloor^{(+)}.$
\end{lemma}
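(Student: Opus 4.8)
The plan is to verify directly, for a sufficiently regular atom $a_Q$ (smooth, supported in $3Q$, with vanishing moments up to order $L$ when $l(Q)<1$, and with derivative bounds $|D^\gamma a_Q|\lesssim |Q|^{-1/2-|\gamma|/n}$), that $m_Q:=Ta_Q$ satisfies the four defining conditions of a $(K,L,M,N)$-molecule on $Q$ in Definition \ref{def mole}, following the scheme of \cite[Proposition 6.19]{bhyy23 3} and \cite[Proposition 6.24]{bhyy24}. First I would normalize: translating and dilating $Q$ to a dyadic cube of side length $l(Q)=2^{-j_Q}\le 1$ centered at $x_Q$ (here $Q\in\cq_+$), and using that every molecular/atomic condition is homogeneous under dilation, it suffices to establish the estimates for $l(Q)=1$ and then rescale. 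The key consistency check is that the molecular moment condition (ii) is required only when $l(Q)<1$, which matches exactly the hypothesis that the atom has vanishing moments precisely when $l(Q)<1$; so the reduction is legitimate.

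Next I would split the pointwise estimates into a \emph{global} part $x\notin cQ$ and a \emph{local} part $x\in cQ$, where $c=c(n)$ exceeds the dilation constant of the atom's support. On the global part, $x$ lies away from $\supp a_Q\subset 3Q$, so $Ta_Q(x)=\int_{3Q}\ck(x,y)a_Q(y)\,dy$ and, for $|\gamma|\le\lfloor\!\lfloor E\rfloor\!\rfloor$, $\partial^\gamma_x Ta_Q(x)=\int_{3Q}\partial^\gamma_x\ck(x,y)a_Q(y)\,dy$. Using $\int y^\beta a_Q(y)\,dy=0$ for $|\beta|\le L$ (when $l(Q)<1$; when $l(Q)=1$ one just uses the raw kernel decay, consistent with molecules needing no moments then), I would Taylor-expand $y\mapsto\partial^\gamma_x\ck(x,y)$ about $x_Q$ to order $\lfloor L\rfloor$ and control the remainder by the mixed $y$-Hölder bounds on $\partial^\alpha_x\partial^\beta_y\ck$ with $|\beta|=\lfloor\!\lfloor F-|\alpha|\rfloor\!\rfloor$ built into $\ck\in{\rm CZK}^\sigma(E;F)$. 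This yields $|\partial^\gamma_x Ta_Q(x)|\lesssim |Q|^{-1/2}[l(Q)]^{-|\gamma|}\{1+[l(Q)]^{-1}|x-x_Q|\}^{-(n+|\gamma|)-L^{\ast\ast}}$, which under $F\ge(K\wedge M)-n$ and $F>\lfloor L\rfloor$ gives (i) and (iii) on the global region; condition (iv) follows by one more application of the $x$-Hölder (respectively mixed doubly-Hölder) estimate of the kernel, and it is exactly here that $\sigma\ge\one_{(0,\infty)}(N)$ enters, guaranteeing the ${\rm CZK}^1$ estimate is available precisely when $N>0$, i.e. when the top molecular derivative order $\lfloor\!\lfloor N\rfloor\!\rfloor\ge 0$.

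On the local part $x\in cQ$ I would combine the weak boundedness property $T\in{\rm WBP}$ with the kernel estimates to obtain $|\partial^\gamma_x Ta_Q(x)|\lesssim |Q|^{-1/2-|\gamma|/n}$ for $|\gamma|\le\lfloor\!\lfloor E\rfloor\!\rfloor$, with the top-order derivatives $E^{\ast\ast}$-Hölder continuous: split $a_Q=a_Q\one_{\{|y-x|<\rho\}}+a_Q\one_{\{|y-x|\ge\rho\}}$ at an appropriate scale $\rho$, treat the far piece by the kernel bounds and the near piece by a Whitney decomposition together with WBP applied to rescaled bumps, subtracting a Taylor polynomial of $a_Q$ of degree $\le G$ whose image under $T$ vanishes by the cancellation $T(y^\gamma)=0$ for $|\gamma|\le G$. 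The conditions $E\ge N$, $E>\lfloor N\rfloor^{(+)}$, and $G\ge\lfloor N\rfloor^{(+)}$ are precisely what make this deliver the required $N$-smoothness and Hölder regularity near $Q$; since $(u_K)_Q\sim(u_M)_Q\sim|Q|^{-1/2}$ on $cQ$, this finishes (i), (iii), (iv). Finally, for the moment condition (ii) (needed only when $l(Q)<1$), I would note that $a_Q$ then lies in the cancellation class $\cd_{\lfloor L\rfloor}\subset\cd_{\lfloor\!\lfloor H\rfloor\!\rfloor}$ by $H\ge\lfloor L\rfloor^{(+)}$, so the adjoint cancellation $T^\ast(x^\theta)=0$ for $|\theta|\le H$ gives $\int x^\gamma Ta_Q\,dx=\langle T^\ast(x^\gamma),a_Q\rangle=0$ for $|\gamma|\le L$; the left-hand pairing is an absolutely convergent integral by the global decay just proven, and the identity is justified via the limiting definition of $T^\ast$ on polynomials plus a truncation argument. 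The genuinely technical step is the local $T1$-type estimate of the previous paragraph — extracting the derivative and Hölder bounds of $Ta_Q$ near $Q$ purely from WBP and the kernel, with the correct dependence on $\sigma$ — while the remaining work is the patient bookkeeping matching $E,F,G,H$ against $K,L,M,N$; both are carried out in \cite{bhyy23 3,bhyy24}, so I would transcribe those arguments, adjusting only the normalization to cubes in $\cq_+$.
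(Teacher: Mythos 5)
Your proposal is correct and takes essentially the same approach as the paper. The paper gives no proof of this lemma but simply observes that the atom and molecule definitions here coincide verbatim with those in the cited references and then imports \cite[Proposition 6.19]{bhyy23 3} and \cite[Proposition 6.24]{bhyy24} directly; your sketch reconstructs the standard $T1$-type argument (global kernel estimates with Taylor remainders, local estimates via WBP and the cancellation $T(y^\gamma)=0$, moments via $T^\ast(x^\theta)=0$) underlying that proposition and then says you would transcribe it, which is the same route.
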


\begin{lemma}\label{CZK lem 2}
Let $\sigma \in \{0,1\}$, $E,F,G,H, K,M,N\in\mathbb{R}$,
and $Q\in \mathcal{Q}_0$.
If $T\in {\rm lnCZO}^\sigma(E,F,G,H)$,
then $T$ maps sufficiently regular non-cancellative atoms on $Q$
to $(K,-1,M,N)$-molecules on $Q$ provided that
\begin{align*}
\sigma \geq \mathbf{1}_{(0,\infty)}(N),\quad
\begin{cases}
\displaystyle E\geq N, \\
\displaystyle E > \lfloor N \rfloor^{(+)},
\end{cases}
F\geq (K\wedge M)-n, \ \text{and}\ G\geq \lfloor N \rfloor^{(+)}.
\end{align*}
\end{lemma}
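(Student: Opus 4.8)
The plan is to follow the scheme of the proof of Lemma~\ref{CZK lem 1} (equivalently, the arguments behind \cite[Proposition 6.19]{bhyy23 3} and \cite[Proposition 6.24]{bhyy24}), adapting it to the non-cancellative setting in which the atom $a_Q$ on a cube $Q\in\cq_0$ carries no moment condition and the target molecule need only be a $(K,-1,M,N)$-molecule, so that condition~(ii) of Definition~\ref{def mole} is vacuous. Hence it suffices to set $m_Q:=c\,Ta_Q$ for a suitable harmless constant $c$ and to verify conditions~(i), (iii), and~(iv) of Definition~\ref{def mole} for $m_Q$. Since $l(Q)=1$ and $|Q|=1$, every power of $l(Q)$ and $|Q|$ appearing in those conditions collapses, which removes the scaling bookkeeping present in the cancellative case.

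First I would split $\rn$ into a near region $B(x_Q,c_n)$ (a ball, with $c_n$ a dimensional constant, containing a fixed dilate of $\supp a_Q$) and the far region $\rn\setminus B(x_Q,c_n)$. On $B(x_Q,c_n)$, the size bound~(i), the derivative bounds~(iii) for $|\gamma|<N$, and the H\"older bound~(iv) for $|\gamma|=\lfloor\!\lfloor N\rfloor\!\rfloor$ follow, exactly as in the proof of Lemma~\ref{CZK lem 1}, from the weak boundedness property $T\in{\rm WBP}$, the interior Calder\'on--Zygmund estimates $T\in{\rm CZK}^\sigma(E;F)$, and the cancellation $T(y^\gamma)=0$ for $|\gamma|\le G$; here the hypotheses $E\ge N$, $E>\lfloor N\rfloor^{(+)}$, $G\ge\lfloor N\rfloor^{(+)}$, and $\sigma\ge\one_{(0,\infty)}(N)$ are precisely what is needed to differentiate $Ta_Q$ up to the required order and to control the modulus of continuity of its top-order derivative. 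None of these estimates uses vanishing moments of $a_Q$, so they transcribe essentially verbatim.

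The only genuinely new point, and the main obstacle, is the far region: for a non-cancellative atom one cannot manufacture the decay $(u_K)_Q$ by exploiting vanishing moments of $a_Q$. Instead I would invoke condition~(v) of ${\rm lnCZO}^\sigma(E,F,G,H)$, namely the global estimate $|\partial_x^\alpha\ck(x,y)|\lesssim|x-y|^{-(n+F)}$ for $|x-y|>1$ and $|\alpha|\le\lfloor\!\lfloor E\rfloor\!\rfloor+1$, together with its H\"older counterpart obtained by combining~(v) with the interior estimates. For $x\in\rn\setminus B(x_Q,c_n)$ and $y\in\supp a_Q$ one has $|x-y|\sim|x-x_Q|\gtrsim1$, so pairing the kernel and its $x$-derivatives against $a_Q$ yields $|\partial_x^\gamma Ta_Q(x)|\lesssim|x-x_Q|^{-(n+F)}$ and the corresponding continuity estimate when $|\gamma|=\lfloor\!\lfloor N\rfloor\!\rfloor$; since $F\ge(K\wedge M)-n$, these are dominated by $(u_K)_Q(x)$ and $(u_M)_Q(x)$ on the far region, giving~(i), (iii), and~(iv) there.

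Combining the near- and far-region bounds and choosing $c$ to absorb the implicit constants shows that $m_Q$ is a $(K,-1,M,N)$-molecule on $Q$. I expect the delicate part to be the bookkeeping of which derivatives land on the kernel and which on the atom, and the verification that condition~(v) indeed survives one extra $x$-derivative (this being the reason for the ``$+1$'' in $|\alpha|\le\lfloor\!\lfloor E\rfloor\!\rfloor+1$ in Definition~\ref{def CZK2}); but this is entirely local to the estimates already carried out in \cite{bhyy23 3,bhyy24}, and no idea beyond replacing the moment cancellation of the atom by the far-field decay of the kernel is needed.
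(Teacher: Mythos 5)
The paper gives no internal proof of this lemma: it records, just above the statement, that after matching the atom and molecule definitions with those of \cite{bhyy24}, Lemma~\ref{CZK lem 2} ``is just'' \cite[Proposition 6.24]{bhyy24}, and moves on. So there is no paper-internal argument to compare against, only the imported statement. That said, your reconstruction captures the right structure. Since the second molecule parameter is $-1$, condition~(ii) of Definition~\ref{def mole} is vacuous, which is exactly why the hypotheses $F>\lfloor L\rfloor$ and $H\ge\lfloor L\rfloor^{(+)}$ of Lemma~\ref{CZK lem 1} drop out of the present statement. Near the cube the same WBP/${\rm CZK}^\sigma$/$T(y^\gamma)=0$ machinery as in the cancellative case yields conditions~(i), (iii) and~(iv), which is why the constraints on $\sigma$, $E$, $G$ are unchanged. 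And in the far field the Taylor-expansion-against-vanishing-moments mechanism is unavailable and is replaced by the pointwise kernel decay supplied by condition~(v) of Definition~\ref{def CZK2}, the axiom that turns ${\rm CZO}^\sigma$ into ${\rm lnCZO}^\sigma$ and the reason the lemma needs the ``$\ln$'' class. That is the right reading.

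One step of your far-field argument is not right as you wrote it. You claim the kernel bound $|\partial^\gamma_x Ta_Q(x)|\lesssim|x-x_Q|^{-(n+F)}$ is ``dominated by $(u_K)_Q(x)$ and $(u_M)_Q(x)$'' because $F\ge(K\wedge M)-n$. Domination by $(u_K)_Q$ needs $n+F\ge K$, domination by $(u_M)_Q$ needs $n+F\ge M$, and together these give $F\ge(K\vee M)-n$, which is strictly stronger than the hypothesis $F\ge(K\wedge M)-n$ when $K\ne M$. So either the lemma is always invoked with $K=M$ (plausible here: condition~(v) supplies one and the same decay exponent $n+F$ for $m_Q$ and for all of its derivatives, so the natural choice is $K=M=n+F$), or there is some mechanism that makes only the smaller of $K,M$ binding which you have not written down. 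Since this discrepancy is inherited from the statement itself, it should be resolved against \cite[Proposition 6.24]{bhyy24} rather than filed under ``delicate bookkeeping''.
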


Combining Lemmas \ref{CZK lem 1}, \ref{CZK lem 2}, and \ref{CZ ext}
with Theorem \ref{mole abo},
we obtain the following result immediately;
we omit details here.

\begin{theorem}\label{CZ theo}
Let $p(\cdot), q(\cdot)\in \mathcal{P}_0\cap LH$ and $s(\cdot)\in LH$
and let $W\in \mathscr{A}_{p(\cdot)}$ and $\mathbb{A} := \{A_Q\}_{Q\in\mathcal{Q}_+}$
be reducing operators of order $p(\cdot)$ for $W$.
Assume that $T\in {\rm lnCZO}^\sigma(E,F,G,H)$,
where $\sigma \in \{0,1\}$ and $E,F,G,H\in\mathbb{R}$ satisfy
\begin{align*}
&\sigma \geq \mathbf{1}_{(0,\infty)}(s_+),\quad E\geq (s_+)^{(+)},
\quad F > \frac{n}{\alpha_W} - n +\left[-s_-\vee C(s,q)\right],
\quad G \geq \lfloor s_+ \rfloor^{(+)}, \\
&\quad\text{and}\quad H\geq \left\lfloor \frac{n}{\alpha_W} - n -s_- \right\rfloor^{(+)},
\end{align*}
where $C(s,q)$ is the same as in \eqref{abo bound con}.
Then $T$ is bounded on $B^{s(\cdot)}_{p(\cdot),q(\cdot)}(W)$.
\end{theorem}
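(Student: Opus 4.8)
The plan is to derive Theorem \ref{CZ theo} from the atomic decomposition of Theorem \ref{dep atom}, the molecular characterization of Theorem \ref{mole com}, and the already-recorded mapping properties of Calder\'on--Zygmund operators on atoms (Lemmas \ref{CZK lem 1} and \ref{CZK lem 2}), in the spirit of the classical Frazier--Jawerth argument. First I would fix $\vec{f}\in B^{s(\cdot)}_{p(\cdot),q(\cdot)}(W)$ and choose real numbers $L,N$ (large, but finite) with $L>J(W)-n-s_-$ and $N>s_+$, and with $L,N$ admissible as atom parameters in Lemmas \ref{CZK lem 1} and \ref{CZK lem 2}; since Daubechies wavelets of any order $\cn\in\nn$ are available (Lemma \ref{ext wavelet}), Theorem \ref{dep atom}(i) produces such $(r,L,N)$-atoms $\{a_Q\}_{Q\in\cq_+}$ together with $\vec{t}:=\{\vec{t}_Q\}_{Q\in\cq_+}\in b^{s(\cdot)}_{p(\cdot),q(\cdot)}(W)$ satisfying $\vec{f}=\sum_{Q\in\cq_+}\vec{t}_Q a_Q$ in $(\cs')^m$ and $\|\vec{t}\|_{b^{s(\cdot)}_{p(\cdot),q(\cdot)}(W)}\ls\|\vec{f}\|_{B^{s(\cdot)}_{p(\cdot),q(\cdot)}(W)}$. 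I would then set $T\vec{f}:=\sum_{Q\in\cq_+}\vec{t}_Q\,(Ta_Q)$, acting on each component, and show that the right-hand side is again a convergent superposition of synthesis molecules with the correct sequence norm.

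The key step is to split $\cq_+=\{Q\in\cq_+:\ l(Q)<1\}\cup\cq_0$ and apply the two mapping lemmas. For $Q$ with $l(Q)<1$ the atom $a_Q$ is cancellative, so Lemma \ref{CZK lem 1} --- applicable because ${\rm lnCZO}^\sigma(E,F,G,H)\subset{\rm CZO}^\sigma(E,F,G,H)$, by the Remark after Definition \ref{def CZK2} --- shows that $Ta_Q$ is a $(K_b,L_b,M_b,N_b)$-molecule on $Q$; for $Q\in\cq_0$, where $l(Q)=1$ so that the vanishing-moment clause of Definition \ref{def mole}(ii) is vacuous, Lemma \ref{CZK lem 2} (this is where condition (v) in the definition of ${\rm lnCZO}^\sigma$ is indispensable) shows that $Ta_Q$ is a $(K_b,-1,M_b,N_b)$-molecule on $Q$. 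In both instances the molecule indices $K_b$ and $M_b$ may be taken arbitrarily large (Lemmas \ref{CZK lem 1} and \ref{CZK lem 2} impose only $F\ge(K_b\wedge M_b)-n$), and one checks that the hypotheses $\sigma\ge\one_{(0,\infty)}(s_+)$, $E\ge(s_+)^{(+)}$, $F>J(W)-n+[-s_-\vee C(s,q)]$, $G\ge\lfloor s_+\rfloor^{(+)}$ and $H\ge\lfloor J(W)-n-s_-\rfloor^{(+)}$ are precisely what the admissibility inequalities of those lemmas require in order that $(K_b,L_b,M_b,N_b)$ (respectively $(K_b,-1,M_b,N_b)$) can be chosen to satisfy the synthesis conditions \eqref{syn index}, with $J(W)$ as in \eqref{def JW} and $C(s,q)$ as in \eqref{abo bound con}. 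Hence $\{Ta_Q\}_{Q\in\cq_+}$ is a family of $B^{s(\cdot)}_{p(\cdot),q(\cdot)}(W)$-synthesis molecules, and Theorem \ref{mole com}(ii) yields that $\sum_{Q\in\cq_+}\vec{t}_Q\,(Ta_Q)$ converges in $(\cs')^m$ with $\|T\vec{f}\|_{B^{s(\cdot)}_{p(\cdot),q(\cdot)}(W)}\ls\|\vec{t}\|_{b^{s(\cdot)}_{p(\cdot),q(\cdot)}(W)}\ls\|\vec{f}\|_{B^{s(\cdot)}_{p(\cdot),q(\cdot)}(W)}$.

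It remains to check that $T\vec{f}$ is well defined --- independent of the chosen atomic decomposition and consistent with the a priori action of $T$ --- and linear; the cleanest route is to use the canonical decomposition coming from the Daubechies wavelet expansion of Theorem \ref{dep wave 1} and to verify, via the $\varphi$-transform identity (Theorem \ref{phi bound}) together with Lemma \ref{mbpsi abo}, that any two admissible decompositions of the same $\vec{f}$ give the same element of $(\cs')^m$ after applying $T$. The main obstacle, to my mind, is the two-layer bookkeeping underlying the ``one checks'' above: first, threading the given inequalities on $(E,F,G,H)$ through the integer-part conditions ($\lfloor\cdot\rfloor^{(+)}$ and $\lfloor\!\lfloor\cdot\rfloor\!\rfloor$) in Lemmas \ref{CZK lem 1} and \ref{CZK lem 2} so that the output molecules genuinely meet \eqref{syn index}, while keeping the cancellative cells and the $\cq_0$-cells (where only Lemma \ref{CZK lem 2} applies) separate; and second, the well-definedness argument, which, though standard, must be reconciled with the fact that atoms and Daubechies wavelets lie only in $C^{\cn}_{\rm c}$ rather than in $\cs$. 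Once these points are settled, the asserted boundedness of $T$ on $B^{s(\cdot)}_{p(\cdot),q(\cdot)}(W)$ follows, which is exactly the content of Theorem \ref{CZ theo}.
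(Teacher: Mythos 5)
Your argument matches the paper's: the paper also derives Theorem \ref{CZ theo} by combining Lemmas \ref{CZK lem 1} and \ref{CZK lem 2} (atoms mapped to molecules, using ${\rm lnCZO}\subset{\rm CZO}$ and the extra decay condition (v) for the $\cq_0$-cells) with the synthesis condition \eqref{syn index}, then invoking the atomic decomposition of Theorem \ref{dep atom} and the molecular synthesis estimate of Theorem \ref{mole com}(ii). The paper states only ``we omit details here''; your proposal fills in precisely the steps the paper gestures at, including the split between cancellative cubes and $\cq_0$, and correctly flags the index bookkeeping and the well-definedness of $T\vec{f}$ as the places where the omitted details live.
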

\begin{remark}
When $p$, $q$, and $s$ are all constant exponents and $W\in \mathscr{A}_p$,
the ranges of $E$, $F$, $G$, and $H$ in Theorem \ref{CZ theo} in this case
coincide with the corresponding ones in \cite[Proposition 6.19]{bhyy23 3} in the case $\tau = 0$ therein.
\end{remark}

\noindent\textbf{Author Contributions}\quad 
All authors developed and discussed the results and contributed to the final
manuscript.

\medskip

\noindent\textbf{Data Availability}\quad Data sharing is not applicable 
to this article as no data sets were generated or analyzed.

\section*{Declarations}

\noindent\textbf{Conflict of interest}\quad All authors state no conflict of interest.

\medskip

\noindent\textbf{Informed Consent}\quad Informed consent has been obtained 
from all individuals included in this research work.

\bigskip

\noindent   Dachun Yang (Corresponding author), Wen Yuan and Zongze Zeng

\medskip

\noindent  Laboratory of Mathematics and Complex Systems
(Ministry of Education of China),
School of Mathematical Sciences, Beijing Normal University,
Beijing 100875, The People's Republic of China

\smallskip

\noindent {\it E-mails}:
\texttt{dcyang@bnu.edu.cn} (D. Yang)

\noindent\phantom{{\it E-mails:}}
\texttt{wenyuan@bnu.edu.cn} (W. Yuan)

\noindent\phantom{{\it E-mails:}}
\texttt{zzzeng@mail.bnu.edu.cn} (Z. Zeng)


\begin{thebibliography}{99}
\bibitem{ac21}
M. Abidin and J. Chen,
Global well-posedness for fractional Navier--Stokes equations in variable exponent
Fourier--Besov--Morrey spaces,
Acta Math. Sci. Ser. B (Engl. Ed.) 41 (2021), 164--176.

\vspace{-0.3cm}

\bibitem{ac21 2}
M. Abidin and J. Chen,
Global well-posedness of the generalized rotating
magnetohydrodynamics equations in variable exponent Fourier--Besov spaces,
J. Appl. Anal. Comput. 11 (2021), 1177--1190.

\vspace{-0.3cm}

\bibitem{adh18}
A. Almeida, L. Diening and P. H\"ast\"o,
Homogeneous variable exponent Besov and Triebel--Lizorkin spaces,
Math. Nachr. 291 (2018), 1177--1190.

\vspace{-0.3cm}

\bibitem{ah10}
A. Almeida and P. H\"ast\"o,
Besov spaces with variable smoothness and integrability,
J. Funct. Anal. 258 (2010), 1628--1655.

\vspace{-0.3cm}

\bibitem{ah14}
A. Almeida and P. H\"ast\"o,
Interpolation in variable exponent spaces,
Rev. Mat. Complut. 27 (2014), 657--676.

\vspace{-0.3cm}

\bibitem{b99}
O. V. Besov,
On spaces of functions of variable smoothness defined by pseudodifferential operators,
Tr. Mat. Inst. Steklova 227 (1999),
Issled. po Teor. Differ. Funkts. Mnogikh Perem. i ee Prilozh. 18, 56--74 (in Russian);
translation in Proc. Steklov Inst. Math. 227(4) (1999), 50--69.

\vspace{-0.3cm}

\bibitem{b03}
O.V. Besov,
Equivalent normings of spaces of functions of variable smoothness,
Tr. Mat. Inst. Steklova 243 (2003), Funkt. Prostran. Priblizh. Differ. Uravn.,
87--95 (in Russian);
translation in Proc. Steklov Inst. Math. 243(4) (2003), 80--88.

\vspace{-0.3cm}

\bibitem{b05}
O.V. Besov,
Interpolation, embedding, and extension of spaces of functions of variable smooth-ness,
Tr. Mat. Inst. Steklova 248 (2005), Issled. Teor. Funkts. Differ. Uravn., 52--63, (in Russian);
translation in Proc. Steklov Inst. Math. 248(1) (2005), 47--58.


\vspace{-0.3cm}

\bibitem{bpw16}
K. Bickel, S. Petermichl and B. D. Wick,
Bounds for the Hilbert transform with matrix $A_2$ weights,
J. Funct. Anal. 270 (2016), 1719--1743.

\vspace{-0.3cm}

\bibitem{b01}
M. Bownik,
Inverse volume inequalities for matrix weights,
Indiana Univ. Math. J. 50 (2001), 383--410.

\vspace{-0.3cm}

\bibitem{bc22}
M. Bownik and D. Cruz-Uribe,
Extrapolation and factorization of matrix weights,
Math. Ann. (to appaer) or arXiv:2210.09443.

\vspace{-0.3cm}

\bibitem{bcyy24}
F. Bu, Y. Chen, D. Yang and W. Yuan,
Maximal function and atomic characterizations of matrix-weighted Hardy spaces
with their applications to boundedness of Calder\'on--Zygmund operators,
arXiv:2501.18800.

\vspace{-0.3cm}

\bibitem{bhyy23}
F. Bu, T. Hyt\"onen, D. Yang, and W. Yuan,
Matrix-weighted Besov-type and Triebel--Lizorkin-type spaces I:
$A_p$-dimensions of matrix weights and $\psi$-transform characterizations,
Math. Ann. 391 (2025), 6105--6185.
\vspace{-0.3cm}

\bibitem{bhyy23 2}
F. Bu, T. Hyt\"onen, D. Yang, and W. Yuan,
Matrix-weighted Besov-type and Triebel--Lizorkin-type spaces II:
Sharp boundedness of almost diagonal operators,
J. Lond. Math. Soc. (2) 111 (2025), Paper No. e70094, 59 pp.

\vspace{-0.3cm}

\bibitem{bhyy23 3}
F. Bu, T. Hyt\"onen, D. Yang and W. Yuan,
Matrix-weighted Besov-type and Triebel--Lizorkin-type spaces III:
characterizations of molecules and wavelets, trace theorems,
and boundedness of pseudo-differential operators and Calder\'on--Zygmund operators,
Math. Z. 308 (2024), Paper No. 32, 67 pp.

\vspace{-0.3cm}

\bibitem{bhyy24}
F. Bu, T. Hyt\"onen, D. Yang and W. Yuan,
Besov--Triebel--Lizorkin-type spaces with matrix $A_\infty$ weights,
Sci. China Math. 69 (2026), 383--460.

\vspace{-0.3cm}

\bibitem{bhyy23 1}
F. Bu, D. Yang, W. Yuan and T. Hyt\"onen,
New characterizations and properties of matrix $A_\infty$ weights,
Acta Math. Sin. (Engl. Ser.) (2025),
https://doi.org/10.1007/s10114-025-5143-9.

\vspace{-0.3cm}

\bibitem{byyz25}
F. Bu, D. Yang, W. Yuan and M. Zhang,
Matrix-weighted Besov--Triebel--Lizorkin spaces of optimal scale:
Real-variable characterizations,
invariance on integrable index, and Sobolev-type embedding,
J. Differential Equations (2026), 463 (2026), Paper No. 114140, 101 pp.

\vspace{-0.3cm}

\bibitem{byyz26} F. Bu, D. Yang, W. Yuan and Y. Zhao,
Matrix weights,  maximal operators,
Calder\'on--Zygmund operators, and Besov--Triebel--Lizorkin-type
spaces---a survey, Anal. Theory Appl. 41 (2025), 371--468.

\vspace{-0.3cm}

\bibitem{bbd20}
H.-Q. Bui, T. A. Bui and X. T. Duong,
Weighted Besov and Triebel--Lizorkin spaces associated with operators and applications,
Forum Math. Sigma 8 (2020), Paper No. e11, 95 pp.

\vspace{-0.3cm}

\bibitem{b20}
T. A. Bui,
Besov and Triebel--Lizorkin spaces for Schr\"odinger operators with inverse-square
potentials and applications,
J. Differential Equations 269 (2020), 641--688.

\vspace{-0.3cm}

\bibitem{b20 2}
T. A. Bui,
Hermite pseudo-multipliers on new Besov and Triebel--Lizorkin spaces,
J. Approx. Theory 252 (2020), 105348, 16 pp.

\vspace{-0.3cm}

\bibitem{bd17}
T. A. Bui and X. T. Duong,
Laguerre operator and its associated weighted Besov and Triebel--Lizorkin spaces,
Trans. Amer. Math. Soc. 369 (2017), 2109--2150.

\vspace{-0.3cm}

\bibitem{bd21}
T. A. Bui and X. T. Duong,
Spectral multipliers of self-adjoint operators on Besov and Triebel--Lizorkin spaces
associated to operators,
Int. Math. Res. Not. IMRN 2021 (2021), 18181--18224.

\vspace{-0.3cm}

\bibitem{bd21 2}
T. A. Bui and X. T. Duong,
Higher-order Riesz transforms of Hermite operators
on new Besov and Triebel--Lizorkin spaces,
Constr. Approx. 53 (2021), 85--120.

\vspace{-0.3cm}

\bibitem{cyy25} Y. Chen, D. Yang and W. Yuan, Matrix-weighted
Campanato spaces: Duality and Calder\'on--Zygmund operators,
Acta Math. Sci. Ser. B (Engl. Ed.) (to appera) or arXiv:2508.15195.

\vspace{-0.3cm}

\bibitem{cms25}
R. Chichoune, Z. Mokhtari and K. Saibi, Khedoudj,
Weighted variable Besov space associated with operators,
Rend. Circ. Mat. Palermo (2) 74 (2025), Paper No. 26, 26 pp.

\vspace{-0.3cm}

\bibitem{c01}
M. Christ and M. Goldberg,
Vector $A_2$ weights and a Hardy--Littlewood maximal function,
Trans. Amer. Math. Soc. 353 (2001), 1995--2002.

\vspace{-0.3cm}

\bibitem{cgn17}
G. Cleanthous, A. G. Georgiadis and M. Nielsen,
Discrete decomposition of homogeneous mixed-norm Besov spaces,
in: Functional Analysis, Harmonic Analysis, and Image Processing:
A Collection of Papers in Honor of Bj\"orn Jawerth,
pp. 167--184, Contemp. Math. 693, Amer. Math. Soc., Providence, RI, 2017.

\vspace{-0.3cm}

\bibitem{cgn19}
G. Cleanthous, A. G. Georgiadis and M. Nielsen,
Molecular decomposition and Fourier multipliers for holomorphic Besov and Triebel--Lizorkin spaces,
Monatsh. Math. 188 (2019), 467--493.

\vspace{-0.3cm}

\bibitem{cdh11}
D. Cruz-Uribe, L. Diening and P. H\"ast\"o,
The maximal operator on weighted variable Lebesgue spaces,
Fract. Calc. Appl. Anal. 14 (2011), 361--374.

\vspace{-0.3cm}

\bibitem{cf13}
D. Cruz-Uribe and A. Fiorenza,
Variable Lebesgue Space. Foundations and Harmonic Analysis,
Appl. Number. Harmon. Anal., Birkh\"auser/Springer, Heidelberg, 2013.

\vspace{-0.3cm}

\bibitem{cfn12}
D. Cruz-Uribe, A. Fiorenza and C. J. Neugebauer,
Weighted norm inequalities for the maximal operator on variable Lebesgue spaces,
J. Math. Anal. Appl. 394 (2012), 744--760.


\vspace{-0.3cm}

\bibitem{cp23}
D. Cruz-Uribe and M. Penrod,
Convolution operators in matrix weighted, variable Lebesgue spaces,
Anal. Appl. (Singap.) 22 (2024), 1133--1157.

\vspace{-0.3cm}

\bibitem{cp24}
D. Cruz-Uribe and M. Penrod,
The reverse H\"older inequality for $\mathscr{A}_{p(\cdot)}$
weights with applications to matrix weights,
arXiv: 2411.12849

\vspace{-0.3cm}

\bibitem{cs23}
D. Cruz-Uribe and B. Sweeting,
Weighted weak-type inequalities for maximal operators and singular integrals,
Rev. Mat. Complut. 38 (2025), 183--205.

\vspace{-0.3cm}

\bibitem{cw17}
D. Cruz-Uribe and L. D. Wang,
Extrapolation and weighted norm inequalities in the variable Lebesgue spaces,
Trans. Amer. Math. Soc. 369 (2017), 1205--1235.

\vspace{-0.3cm}

\bibitem{d88}
I. Daubechies,
Orthonormal bases of compactly supported wavelets,
Comm. Pure Appl. Math. 41 (1988), 909--996.

\vspace{-0.3cm}

\bibitem{dhl20}
F. Di Plinio, T. Hyt\"onen and K. Li,
Sparse bounds for maximal rough singular integrals via the Fourier transform,
Ann. Inst. Fourier (Grenoble) 70 (2020), 1871--1902.

\vspace{-0.3cm}

\bibitem{dhr17}
L. Diening, P. Harjulehto, P. H\"ast\"o and M.  R\r{u}{\v z}i{\v c}ka,
Lebesgue and Sobolev Spaces with Variable Exponents,
Lecture Notes in Mathematics 2017, Springer, Heidelberg, 2011.

\vspace{-0.3cm}

\bibitem{dhr09}
L. Diening, P. H\"ast\"o and S. Roudenko,
Function spaces of variable smoothness and integrability,
J. Funct. Anal. 256 (2009), 1731--1768.

\vspace{-0.3cm}

\bibitem{dx14}
B. Dong and J. Xu,
Local characterizations of Besov and Triebel--Lizorkin spaces with variable exponent,
J. Funct. Spaces 2014 (2014), Art. ID 417341, 8 pp.


\vspace{-0.3cm}

\bibitem{d12}
D. Drihem,
Atomic decomposition of Besov spaces with variable smoothness and integrability,
J. Math. Anal. Appl. 389 (2012), 15--31.

\vspace{-0.3cm}

\bibitem{d15}
D. Drihem,
Some properties of variable Besov-type spaces,
Funct. Approx. Comment. Math. 52 (2015), 193--221.

\vspace{-0.3cm}

\bibitem{d15 1}
D. Drihem,
Some characterizations of variable Besov-type spaces,
Ann. Funct. Anal. 6 (2015), 255--288.

\vspace{-0.3cm}

\bibitem{dh17}
D. Drihem and W. Hebbache,
Boundedness of non regular pseudodifferential operators on variable Besov spaces,
J. Pseudo-Differ. Oper. Appl. 8 (2017), 167--189.

\vspace{-0.3cm}

\bibitem{fj90}
M. Frazier and B, Jawerth,
A discrete transform and decompositions of distribution spaces,
J. Funct. Anal. 93 (1990), 34--170.

\vspace{-0.3cm}

\bibitem{fr04}
M. Frazier and S. Roudenko,
Matrix-weighted Besov spaces and conditions of $A_p$ type for
$0<p\leq 1$,
Indiana Univ. Math. J. 53 (2004), 1225--1254.

\vspace{-0.3cm}

\bibitem{fr21}
M. Frazier and S. Roudenko,
Littlewood--Paley theory for matrix-weighted function spaces,
Math. Ann. 380 (2021), 487--537.

\vspace{-0.3cm}

\bibitem{fx11}
J. Fu and J. Xu,
Characterizations of Morrey type Besov and Triebel--Lizorkin spaces with variable exponents,
J. Math. Anal. Appl. 381 (2011), 280--298.

\vspace{-0.3cm}

\bibitem{gkkp17}
A. G. Georgiadis, G. Kerkyacharian, G. Kyriazis and P. Petrushev,
Homogeneous Besov and Triebel--Lizorkin spaces associated to non-negative self-adjoint operators,
J. Math. Anal. Appl. 449 (2017), 1382--1412.

\vspace{-0.3cm}

\bibitem{gkkp19}
A. G. Georgiadis, G. Kerkyacharian, G. Kyriazis and P. Petrushev,
Atomic and molecular decomposition of homogeneous spaces of distributions
associated to non-negative selfadjoint operators,
J. Fourier Anal. Appl. 25 (2019), 3259--3309.

\vspace{-0.3cm}

\bibitem{gn16}
A. G. Georgiadis and M. Nielsen,
Pseudo differential operators on mixed-norm Besov and Triebel--Lizorkin spaces,
Math. Nachr. 289 (2016), 2019--2036.

\vspace{-0.3cm}

\bibitem{g03}
M. Goldberg, Matrix $A_p$ weights via maximal functions,
Pacific J. Math. 211 (2003), 201--220.

\vspace{-0.3cm}

\bibitem{g14}
L. Grafakos, Classical Fourier Analysis, third edition, Grad. Texts in Math. 249, Springer,
New York, 2014.

\vspace{-0.3cm}

\bibitem{gwx23}
P. Guo, S. Wang and J. Xu,
Continuous characterizations of weighted Besov spaces of variable smoothness and integrability,
Filomat 37 (2023), 9913--9930.

\vspace{-0.3cm}

\bibitem{hsz24}
Y. He, Q. Sun and C. Zhuo,
Pointwise characterizations of variable Besov and Triebel--Lizorkin spaces via Haj{\l}asz gradients,
Fract. Calc. Appl. Anal. 27 (2024), 944--969.

\vspace{-0.3cm}

\bibitem{kv12}
H. Kempka and J. Vyb\'iral,
Spaces of variable smoothness and integrability:
characterizations by local means and ball means of differences,
J. Fourier Anal. Appl. 18 (2012), 852--891.

\vspace{-0.3cm}

\bibitem{l89}
H. G. Leopold,
On Besov spaces of variable order of differentiation,
Arch. Math. 53(2) (1989) 178--187.

\vspace{-0.3cm}

\bibitem{l89 2}
H. G. Leopold,
Interpolation of Besov spaces of variable order of differentiation,
Arch. Math. (Basel) 53(2) (1989), 178--187.

\vspace{-0.3cm}

\bibitem{l91}
H. G. Leopold,
On function spaces of variable order of differentiation,
Forum Math. 3(3) (1991), 1--21.

\vspace{-0.3cm}

\bibitem{ls96}
H. G. Leopold and E. Schrohe,
Trace theorems for Sobolev spaces of variable order of differentiation,
Math. Nachr. 179 (1996), 223--245.

\vspace{-0.3cm}

\bibitem{llor24}
A. Lerner, K. Li, S. Ombrosi and I. Rivera-R\'ios,
On the sharpness of some quantitative Muckenhoupt--Wheeden inequalities,
C. R. Math. Acad. Sci. Paris 362 (2024), 1253--1260.

\vspace{-0.3cm}

\bibitem{llor24 2}
A. Lerner, K. Li, S. Ombrosi and I. Rivera-R\'ios,
On some improved weighted weak type inequalities,
Ann. Sc. Norm. Super. Pisa Cl. Sci. (5) (2024),
https://doi.org/10.2422/2036-2145.202407012.

\vspace{-0.3cm}

\bibitem{m93}
Y. Meyer,
Wavelets and Operators, in: Different Perspectives on Wavelets, pp. 35--58,
Proc. Sympos. Appl. Math. 47, Amer. Math. Soc., Providence, RI, 1993.

\vspace{-0.3cm}

\bibitem{mns13}
S. Moura, J. Neves and C. Schneider,
On trace spaces of 2-microlocal Besov spaces with variable integrability,
Math. Nachr. 286 (2013), 1240--1254

\vspace{-0.3cm}

\bibitem{nptv17}
F. Nazarov, S. Petermichl, S. Treil and A. Volberg,
Convex body domination and weighted estimates with matrix weights,
Adv. Math. 318 (2017), 279--306.

\vspace{-0.3cm}

\bibitem{nt96}
F. Nazarov and S. R. Treil,
The hunt for a Bellman function: applications to estimates for singular integral operators and to other classical problems of harmonic analysis, (Russian),
translated from Algebra i Analiz 8 (1996), 32--162, St. Petersburg Math. J. 8 (1997), 721--824.

\vspace{-0.3cm}

\bibitem{n13}
M. Nielsen,
Summation of multiple Fourier series in matrix weighted $L_p$-spaces,
J. Math. 2013, Art. ID 135245, 7 pp.

\vspace{-0.3cm}

\bibitem{nr18}
M. Nielsen and M. G. Rasmussen,
Projection operators on matrix weighted $L^p$ and a simple sufficient Muckenhoupt condition,
Math. Scand. 123 (2018), 72--84.

\vspace{-0.3cm}

\bibitem{ns21}
M. Nielsen and H. {\v S}iki\'c,
Muckenhoupt matrix weights,
J. Geom. Anal. 31 (2021), 8850--8865.

\vspace{-0.3cm}

\bibitem{ns25}
M. Nielsen and H. {\v S}iki\'c,
Muckenhoupt matrix weights for general bases, in: The Mathematical Heritage of
Guido Weiss, pp. 361--386, Appl. Numer. Harmon. Anal., Birkh\"auser/ Springer, Cham, 2025.

\vspace{-0.3cm}

\bibitem{ns26}
M. Nielsen and H. {\v S}iki\'c,
Matrix weights on compact and non-compact domains,
J. Math. Anal. Appl. 555 (2026), Paper No. 130069, 23 pp.

\vspace{-0.3cm}

\bibitem{np25}
Z. Nieraeth and M. Penrod,
Matrix-weighted bounds in variable Lebesgue spaces,
Ann. Fenn. Math. 50 (2025), 519--548.

\vspace{-0.3cm}

\bibitem{n16}
T. Noi,
Trace and extension operators for Besov spaces and Triebel--Lizorkin spaces with variable exponents,
Rev. Mat. Complut. 29 (2016), 341--404.


\vspace{-0.3cm}

\bibitem{os24}
T. Ogawa and S. Shimizu,
Free boundary problems of the incompressible Navier--Stokes equations with non-flat initial surface in the critical Besov space,
Math. Ann. 390 (2024), 3155--3219.

\vspace{-0.3cm}

\bibitem{oao24}
F. Ouidirne, C. Allalou and M. Oukessou,
Analyticity for the fractional Navier--Stokes equations in critical
Fourier--Besov--Morrey spaces with variable exponents,
Bol. Soc. Parana. Mat. (3) 42 (2024), 11 pp.

\vspace{-0.3cm}

\bibitem{r03}
S. Roudenko,
Matrix-weighted Besov spaces,
Trans. Amer. Math. Soc. 355 (2003), 273--314.

\vspace{-0.3cm}

\bibitem{tx05}
L. Tang and J. Xu,
Some properties of Morrey type Besov-Triebel spaces,
Math. Nachr. 278 (2005), 904--917.

\vspace{-0.3cm}

\bibitem{tv97}
S. Treil and A. Volberg,
Wavelets and the angle between past and future, J. Funct. Anal.
143 (1997), 269--308.

\vspace{-0.3cm}

\bibitem{t91}
R. H. Torres,
Boundedness Results for Operators with Singular Kernels on Distribution Spaces,
Mem. Amer. Math. Soc. 90 (1991), no. 442, viii+172 pp.

\vspace{-0.3cm}

\bibitem{v97}
A. Volberg,
Matrix $A_p$ weights via $\mathcal{S}$-functions,
J. Amer. Math. Soc. 10 (1997), 445--466.

\vspace{-0.3cm}

\bibitem{whhy21}
F. Wang, Y. Han, Z. He and D. Yang,
Besov and Triebel--Lizorkin spaces on spaces of homogeneous
type with applications to boundedness of Calder\'on--Zygmund operators,
Dissertationes Math. 565 (2021), 1--113.

\vspace{-0.3cm}

\bibitem{wgx24}
S. Wang, P. Guo and J. Xu,
Characterizations of weighted Besov spaces with variable exponents,
Acta Math. Sin. (Engl. Ser.) 40 (2024), 2855--2878.

\vspace{-0.3cm}

\bibitem{wgx25}
S. Wang, P. Guo and J. Xu,
Precompact sets in matrix weighted Lebesgue spaces with variable exponent,
Georgian Math. J. 32 (2025), 1071--1083.

\vspace{-0.3cm}

\bibitem{wx22}
S. Wang and J. Xu,
Weighted Besov spaces with variable exponents,
J. Math. Anal. Appl. 505 (2022), Paper No. 125478, 27 pp.

\vspace{-0.3cm}

\bibitem{wm58}
N. Wiener and P. Masani,
The prediction theory of multivariate stochastic processes. II. The
linear predictor, Acta Math. 99 (1958), 93--137.

\vspace{-0.3cm}

\bibitem{x05}
J. Xu,
A characterization of Morrey type Besov and Triebel--Lizorkin spaces,
Vietnam J. Math. 33 (2005), 369--379.


\vspace{-0.3cm}

\bibitem{x08}
J.  Xu, Variable Besov and Triebel--Lizorkin spaces, Ann. Acad. Sci. Fenn.
Math. 33 (2008), 511--522.

\vspace{-0.3cm}

\bibitem{x08 2}
J.  Xu,
The relation between variable Bessel potential spaces and Triebel--Lizorkin spaces,
Integral Transforms Spec. Funct. 19 (2008), 599--605.

\vspace{-0.3cm}

\bibitem{xf12}
J. Xu and J. Fu,
Well-posedness for the 2D dissipative quasi-geostrophic equations in the Morrey type Besov space,
Math. Appl. (Wuhan) 25 (2012), 624--630.

\vspace{-0.3cm}

\bibitem{yy08}
D. Yang and W. Yuan,
A new class of function spaces connecting Triebel--Lizorkin spaces and Q spaces,
J. Funct. Anal. 255 (2008), 2760--2809.


\vspace{-0.3cm}

\bibitem{yyz25}
D. Yang, W. Yuan and Z. Zeng,
Variable Muckenhoupt $A_\infty$ weights, arXiv:2509.07786.

\vspace{-0.3cm}

\bibitem{yymz25}	
D. Yang, W. Yuan and M. Zhang,
Matrix-weighted Besov--Triebel--Lizorkin spaces of
optimal scale: Boundedness of pseudo-differential operators,
the trace operator, and Calder\'{o}n--Zygmund operators,
Proc. Steklov Inst. Math. (2025), DOI: 10.4213/tm4504.

\vspace{-0.3cm}

\bibitem{yzy15}
D. Yang, C. Zhuo and W. Yuan,
Besov-type spaces with variable smoothness and integrability,
J. Funct. Anal. 269 (2015), 1840--1898.

\vspace{-0.3cm}

\bibitem{ysy10}
W. Yuan, W. Sickel and D. Yang,
Morrey and Campanato Meet Besov, Lizorkin and Triebel,
Lecture Notes in Mathematics 2005, Springer-Verlag,
Berlin, 2010.

\vspace{-0.3cm}

\bibitem{zd23}
Z. Zeghad and D. Drihem,
Variable Besov-type spaces,
Acta Math. Sin. (Engl. Ser.) 39 (2023), 553--583.

\vspace{-0.3cm}

\bibitem{zy20}
C. Zhuo and D. Yang,
Variable Besov spaces associated with heat kernels,
Constr. Approx. 52 (2020), 479--523.

\end{thebibliography}
\end{document}